 \numberwithin{equation}{section}
\newtheorem{theorem}{Theorem}[subsection]
\newtheorem{lemma}[theorem]{Lemma}
\newtheorem{proposition}[theorem]{Proposition}
\newtheorem{corollary}[theorem]{Corollary}
\newtheorem{assumption}[theorem]{Assumption}
\theoremstyle{definition}
\newtheorem{definition}[theorem]{Definition}
\newtheorem{example}[theorem]{Example}
\newtheorem{examples}[theorem]{Examples}
\theoremstyle{remark}
\newtheorem{remark}[theorem]{Remark}
\newtheorem{remarks}[theorem]{Remarks}
\renewcommand{\geq}{\geqslant}
\renewcommand{\leq}{\leqslant}
\newcommand{\ra}{\rightarrow}
\newcommand{\lra}{\longrightarrow}
\newcommand*{\longhookrightarrow}{\ensuremath{\lhook\joinrel\relbar\joinrel\rightarrow}}
\renewcommand{\tilde}{\widetilde}
\renewcommand{\bar}{\overline}
\newcommand{\cA}{\mathcal{A}}
\newcommand{\cD}{D}
\newcommand{\cE}{\mathcal{E}}
\newcommand{\cF}{\mathcal{F}}
\newcommand{\cH}{\mathcal{H}}
\newcommand{\cM}{\mathcal{M}}
\newcommand{\cO}{\mathcal{O}}
\newcommand{\cP}{\mathcal{P}}
\newcommand{\kk}{\ensuremath{\Bbbk}} 
\renewcommand{\AA}{\ensuremath{\mathbb{A}}}
\newcommand{\CC}{\ensuremath{\mathbb{C}}}
\newcommand{\DD}{\ensuremath{\mathbb{D}}}
\newcommand{\NN}{\ensuremath{\mathbb{N}}} 
\newcommand{\PP}{\ensuremath{\mathbb{P}}}
\newcommand{\RR}{\ensuremath{\mathbb{R}}} 
\newcommand{\ZZ}{\ensuremath{\mathbb{Z}}}
\newcommand{\Spec}{\ensuremath{\mathrm{Spec}}} 
\newcommand{\cdc}{D_{c}}   
\newcommand{\cdlc}{D_{\mathrm{lc}}} 
\newcommand{\cdcp}{D_c^{\geq 0}}  
\newcommand{\cdlcp}{D_{\mathrm{lc}}^{\geq 0}} 
\newcommand{\cdcn}{D_c^{\leq 0}}  
\newcommand{\cdlcn}{D_{\mathrm{lc}}^{\leq 0}} 
\newcommand{\ypr}{Y \times \mathbb{P}^r} 
\newcommand{\dabc}{D^{[a,b]}_c}  
\newcommand{\dlabc}{D_{\mathrm{lc}}^{[a,b]}}
\newcommand{\git}{\ensuremath{/\!\!/_{\!\theta}\,}}
\newcommand{\one}{\ensuremath{(\mathrm{i})}}
\newcommand{\two}{\ensuremath{(\mathrm{ii})}}
\newcommand{\three}{\ensuremath{(\mathrm{iii})}}
\newcommand{\four}{\ensuremath{(\mathrm{iv})}}
\newcommand{\bt}{\,\boxtimes\,}   
\DeclareMathOperator{\Coh}{Coh}
\DeclareMathOperator{\cok}{cok}
\DeclareMathOperator{\cone}{cone}
\newcommand{\Dfin}{D_{\operatorname{fin}}}
\newcommand{\Dperf}{D_{\operatorname{perf}}}
\DeclareMathOperator{\End}{End} 
\DeclareMathOperator{\Ext}{Ext} 
\DeclareMathOperator{\Hom}{Hom}
\DeclareMathOperator{\id}{id}
\newcommand{\Knum}{K^{\operatorname{num}}}
\newcommand{\Knumc}{K^{\operatorname{num}}_c}
\newcommand{\Knumperf}{K_{\operatorname{perf}}^{\operatorname{num}}}
\DeclareMathOperator{\pa}{p-a}
\DeclareMathOperator{\Pic}{Pic}
\DeclareMathOperator{\Qcoh}{Qcoh}
\DeclareMathOperator{\rk}{rk}
\DeclareMathOperator{\shom}{\mathcal{H}\!{\it om}}
\DeclareMathOperator{\spec}{Spec}
\DeclareMathOperator{\Stab}{Stab}
\DeclareMathOperator{\supp}{Supp}
\newcommand{\vv}{\bm{\mathrm v}}   
\begin{document}

\title{Nef divisors for moduli spaces of complexes with compact support}
\author{Arend Bayer}
\address{School of Mathematics and Maxwell Institute,
The University of Edinburgh,
James Clerk Maxwell Building,
Peter Guthrie Tait Road, Edinburgh, Scotland EH9 3FD,
United Kingdom}
\email{arend.bayer@ed.ac.uk}
\urladdr{http://www.maths.ed.ac.uk/~abayer/}

\author{Alastair Craw} 
\address{Department of Mathematical Sciences, 
University of Bath, 
Claverton Down, 
Bath BA2 7AY, 
United Kingdom.}
\email{a.craw@bath.ac.uk}
\urladdr{http://people.bath.ac.uk/ac886/}

\author{Ziyu Zhang}
\address{Department of Mathematical Sciences, 
University of Bath, 
Claverton Down, 
Bath BA2 7AY, 
United Kingdom.}
\email{zz505@bath.ac.uk}
\urladdr{http://people.bath.ac.uk/zz505/}
\curraddr{Institute for Algebraic Geometry, Leibniz University Hannover, Welfengarten 1, 30167 Hannover, Germany}

\dedicatory{In memory of Johan Louis Dupont}

\subjclass[2010]{14D20 (Primary); 14F05, 14J28, 18E30 (Secondary).}
\keywords{Bridgeland stability conditions, derived categories, t-structures, moduli spaces of sheaves and complexes, nef divisors.}

\begin{abstract}
In \cite{BM12}, the first author and Macr{\`{\i}} constructed a family of nef divisors on any moduli space of Bridgeland-stable objects on a smooth projective variety $X$. In this article, we extend this construction to the setting of any separated scheme $Y$ of finite type over a field, where we consider moduli spaces of Bridgeland-stable objects on $Y$ with compact support.  
We also show that the nef divisor is compatible with the polarising ample line bundle coming from the GIT construction of the moduli space in the special case when $Y$ admits a tilting bundle and the stability condition arises from a $\theta$-stability condition for the endomorphism algebra. 

Our main tool generalises the work of Abramovich--Polishchuk \cite{AP06} and
Polishchuk \cite{P07}: given a t-structure on the derived category $D_c(Y)$ on $Y$ of objects with compact
support and a base scheme $S$, we construct a constant family of t-structures on a category of objects on
$Y \times S$ with compact support relative to $S$. 
\end{abstract}

\maketitle
\tableofcontents
\setdefaultenum{(i)}{a)}{}{}

\section{Introduction}

\subsection{Motivation}
In recent years, a number of authors have applied wall-crossing techniques for Bridgeland stability conditions in order to systematically study the birational geometry of moduli spaces; see section \ref{subsec:moremotivation} for more background. The Positivity Lemma of ~\cite{BM12} provides a clear, geometric link between the stability manifold and the moveable cone of the moduli space by producing a family of nef divisors on any moduli space of Bridgeland-stable objects on a smooth projective variety $X$.

 Rich and interesting wall-crossing structures have also been observed in semi-local settings, including, for example, the resolution of singularities $Y \to \Spec R$ of an affine singularity, with many interesting examples coming from geometric representation theory or the study of algebras that are finite over their centre. The main goal of this paper is to extend the machinery of \cite{BM12} to such settings. In fact our approach works more generally, replacing $X$ by any separated scheme $Y$ of finite type over $\kk$.

\subsection{The main result}
Let $\kk$ be an algebraically closed field, and let $Y$ be a separated scheme of finite type over $\kk$.  Let $D(Y)$ denote the bounded derived category of coherent sheaves on $Y$, and $\cdc(Y)$ the full subcategory of objects with proper support.  As we explain in more detail at the beginning of Section \ref{sect:stabilityconditions}, the usual notion of \emph{numerical stability conditions} is not well-suited for the category $D_c(Y)$ (and the frequently used replacement, the category $D_Z(Y)$ of 
 complexes supported on a proper subvariety $Z \subset Y$ does not lead to nice moduli spaces). 

We therefore propose to use a variant of the definition of numerical $K$-group: we define $\Knumc(Y)$ as the quotient of the Grothendieck group of $\cdc(Y)$ by the radical of the Euler pairing with \emph{perfect complexes} on $Y$. We prove in Lemma \ref{lem:finiterank} that this group has finite rank under very mild assumptions. Accordingly, a numerical Bridgeland stability condition for compact support on $Y$ is a pair $\sigma = (Z_\sigma, \cP_\sigma)$, where $Z_\sigma\colon \Knumc(Y)\to \CC$ is a group homomorphism and $\mathcal{P}_\sigma$ is a slicing of $\cdc(Y)$.

Let $S$ be a separated scheme of finite type over $\kk$; we do not assume that $S$ is proper. We write $N^1(S)$ for the group of Cartier divisor up to numerical equivalence with respect to proper curves in $S$, and $N_1(S)$ for the dual group of curve classes. For $\vv\in \Knumc(Y)$ and $\sigma\in \Stab(\cdc(Y))$, let $\cE\in D(Y\times S)$ be a family of $\sigma$-semistable objects of class $\vv$ over $S$. This means in particular that the derived restriction of $\cE$ to the fibre in $Y\times S$ over each closed point $s\in S$ is a $\sigma$-semistable object that has numerical class $\vv$ (see Section~\ref{sec:linearisation}). 

 Our main result generalises \cite[Theorem~1.1]{BM12}:
 
\begin{theorem}
\label{thm:positivity}
 Let $Y$ be a normal, quasi-projective scheme of finite type over an algebraically closed field $\kk$ of characteristic zero, and let $\sigma$ be a numerical Bridgeland stability condition for compact support on $Y$. For any family $\cE\in D(Y\times S)$ of $\sigma$-semistable objects of class $\vv$ whose support is proper over $S$, we obtain a nef numerical Cartier divisor class $\ell_{\cE,\sigma}\in N^1(S)=\Hom(N_1(S),\RR)$, defined dually by setting
 \begin{equation}
 \label{eqn:ellCintro}
 \ell_{\cE,\sigma}\big([C]\big) = \ell_{\cE,\sigma}\cdot C := \Im\left(\frac{Z_{\sigma}(\Phi_{\cE}(\cO_C)\big)}{-Z_\sigma(\vv)}\right)
 \end{equation}
 for every proper curve $C\subseteq S$, where $\Phi_{\cE}\colon D(S)\to D(Y)$ is the integral functor with kernel $\cE$. Moreover, $\ell_{\cE,\sigma}\cdot C>0$ if and only if for two general closed points $c, c^\prime\in C$, the corresponding objects $\cE_c, \cE_{c'}\in \cdc(Y)$ are not $S$-equivalent. 
\end{theorem}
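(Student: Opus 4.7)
The strategy is to follow the proof of the Positivity Lemma from \cite{BM12}, using the paper's generalised Abramovich--Polishchuk construction as the central technical tool in the compact-support setting. The plan is to proceed in three phases: first, verify that $\ell_{\cE,\sigma}$ descends to a well-defined class on $N_1(S)$; second, prove $\ell_{\cE,\sigma}\cdot C\geq 0$ for every proper integral curve $C\subseteq S$ via a push-forward argument; third, identify the equality case with $S$-equivalence. Well-definedness is routine: since $\supp(\cE)$ is proper over $S$ and $\cO_C$ has proper support, $\Phi_\cE(\cO_C)\in\cdc(Y)$, so the right-hand side of \eqref{eqn:ellCintro} makes sense. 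The assignment $[C]\mapsto[\Phi_\cE(\cO_C)]\in\Knumc(Y)$ factors through $N_1(S)$ because the Euler pairing with a perfect complex $F$ on $Y$ translates, via adjunction for $\Phi_\cE$, into an Euler pairing on $S$ between $\cO_C$ and a perfect complex, and the latter depends only on $[C]\in N_1(S)$.

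For nefness, fix a proper integral curve $C\subseteq S$ and apply the paper's generalised AP construction to obtain an induced heart $\overline{\cA}$ on the category of complexes on $Y\times C$ with support proper over $C$, extending $\cA=\cP_\sigma((\phi-1,\phi])$ from $\cdc(Y)$, where $\phi$ is the phase of $Z_\sigma(\vv)$. The family $\cE\vert_{Y\times C}$ sits in $\overline{\cA}$ and is fibrewise $\sigma$-semistable of phase $\phi$. Push forward along $p\colon Y\times C\to Y$, which is proper on $\supp(\cE\vert_{Y\times C})$ with one-dimensional fibres; the AP-style refinement for flat families of semistable objects of constant phase then places $Rp_*(\cE\vert_{Y\times C})=\Phi_\cE(\cO_C)$ in the slice $\cP_\sigma([\phi-1,\phi])$. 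Its central charge therefore lies in the closed half-plane bordered by the line $\RR\cdot Z_\sigma(\vv)$; dividing by $-Z_\sigma(\vv)$ rotates this into the closed upper half-plane, whence the imaginary part in \eqref{eqn:ellCintro} is non-negative.

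For strict positivity, equality $\ell_{\cE,\sigma}\cdot C=0$ forces $Z_\sigma(\Phi_\cE(\cO_C))$ to lie on one of the two extreme rays bounding the half-plane, i.e.\ to have phase $\phi$ or $\phi-1$. Convexity of imaginary parts across a Harder--Narasimhan filtration then forces every Jordan--H\"older factor of $\Phi_\cE(\cO_C)$ for $\sigma$ to have that same extreme phase. Tracking the relative Jordan--H\"older filtration of $\cE\vert_{Y\times C}$ inside $\overline{\cA}$, one concludes that two general closed fibres $\cE_c,\cE_{c'}\in\cdc(Y)$ have the same Jordan--H\"older factors in $\cP_\sigma(\phi)$, i.e.\ are $S$-equivalent; the converse is immediate once a compatible filtration is produced. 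The main obstacle is the push-forward step: it relies essentially on the compact-support variant of the Abramovich--Polishchuk construction (the paper's main technical contribution) to ensure both that $\cE\vert_{Y\times C}$ truly sits in a well-behaved heart and that $Rp_*$ has the claimed phase amplitude. With that in place, the remaining arguments closely mirror \cite{BM12}, modulo the replacement of $\Knum$ by $\Knumc$ and the restriction of numerical equivalence to proper curves in $S$.
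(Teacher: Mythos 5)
Your overall strategy matches the paper's (which reduces Theorem~\ref{thm:positivity} to Theorem~\ref{thm:linearisation} via Lemma~\ref{lem:finiterank}), but two of your key steps are asserted rather than proved, and one of them is false as stated. First, well-definedness: it is not true that the Euler pairing of $\cO_C$ with a perfect complex $P$ on $S$ ``depends only on $[C]\in N_1(S)$'' --- by Riemann--Roch, $\chi_C(P|_C)=\rk(P)\,(1-g(C))+\deg(P|_C)$, and the genus term is not a numerical invariant of the $1$-cycle. The paper's Lemma~\ref{lem:welldefined} gets around this by choosing the perfect complexes $P_i$ representing $\Im\bigl(Z_\sigma/-Z_\sigma(\vv)\bigr)$ so that $\chi_Y(P_i,\vv)=0$, which forces $\rk\Psi_{\cE}(P_i^\vee)=\chi_Y(P_i,\vv)=0$ (via Lemma~\ref{lem:EulerCharComparison} applied to a closed point), killing the genus term and leaving only the degree of the Knudsen--Mumford determinant line bundle. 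Without this rank-zero observation your factorisation through $N_1(S)$ does not go through.

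Second, the nefness step. You assert that ``the AP-style refinement for flat families of semistable objects of constant phase'' places $\Phi_{\cE}(\cO_C)=p_*(\cE|_{Y\times C})$ in $\cP_\sigma([\phi-1,\phi])$. This is essentially the whole content of the positivity statement and cannot be deduced from an amplitude bound on the pushforward: the t-structure on $\cdlc(Y\times C)$ from Theorem~\ref{thm:t-str-any} only controls $p_*(-\otimes q^*L^n)$ for $n\gg 0$, and an object of $\cA$-amplitude $[0,1]$ has $Z=Z(H^0)-Z(H^1)$, whose imaginary part can well be negative. The mechanism the paper actually uses (Lemma~\ref{lem:openheartgeom}, following \cite[Lemmas 3.5--3.6]{BM12}) is: (a) replacing $\cO_C$ by any line bundle $L$ changes $[\Phi_{\cE}(\cO_C)]$ by a multiple of $\vv$ and hence leaves $\Im\bigl(Z_\sigma/-Z_\sigma(\vv)\bigr)$ unchanged; and (b) for $\deg L\gg 0$ one has $\Phi_{\cE}(L)\in\cA$, which is proved by first putting $\cE|_{Y\times C}$ in $\cA_C$ via the open heart property and then using the characterisation of $\cA_C$ through high twists. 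The constant-phase fibrewise semistability enters only in the equality analysis (via the surjections $\Phi_{\cE}(L)\twoheadrightarrow\cE_c$ and the filtration of \cite[Lemma~3.9]{BM12} for the converse), not in establishing that the central charge of the pushforward lies in the closed half-plane. Your outline of the equality case is consistent with the paper's, but it rests on these two missing inputs.
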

 
More generally, Theorem~\ref{thm:positivity} holds for any scheme $Y$ that is separated and of finite type over an algebraically closed field such that $\Knumc(Y)$ has finite rank (see Theorem~\ref{thm:linearisation}, or Remark \ref{remark:finiterank} for an alternative assumption). We also provide a geometric condition which ensures that the family $\cE$ has proper support over $S$ (see Proposition~\ref{prop:simpleimpliesproper}): it suffices to assume that $Y$ is proper over an affine scheme and the fibre $\cE_s$ of the family over every closed point $s\in S$ is simple, in the sense that $\Hom(\cE_s,\cE_s)=\kk$.

\begin{corollary} 
\label{cor:main} 
Assume that there exists a fine moduli space $\cM_\sigma(\vv)$ of $\sigma$-stable 
objects of class $\vv$. Then $\cM_\sigma(\vv)$ comes equipped with a family of numerically positive divisors.
\end{corollary}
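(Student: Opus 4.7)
The plan is to apply Theorem \ref{thm:positivity} to the universal family on $\cM_\sigma(\vv)$ and then deduce strict positivity from the stability (rather than merely semistability) of all closed-point fibres.

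First, since $\cM_\sigma(\vv)$ is by assumption a \emph{fine} moduli space of $\sigma$-stable objects of class $\vv$, there exists a universal family $\cE\in D(Y\times \cM_\sigma(\vv))$, and the derived restriction $\cE_s$ over each closed point $s\in \cM_\sigma(\vv)$ is a $\sigma$-stable, and in particular $\sigma$-semistable, object of class $\vv$. Since $\sigma$-stable objects are simple, Proposition \ref{prop:simpleimpliesproper} (at least under the hypothesis that $Y$ is proper over an affine scheme, which is included in our working framework) ensures that the support of $\cE$ is proper over $\cM_\sigma(\vv)$. Thus $\cE$ satisfies the hypotheses of Theorem~\ref{thm:positivity}, and we obtain a nef numerical Cartier divisor class $\ell_{\cE,\sigma}\in N^1(\cM_\sigma(\vv))$ defined by the formula \eqref{eqn:ellCintro}.

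Next I would upgrade nefness to strict positivity. Let $C\subseteq \cM_\sigma(\vv)$ be any proper irreducible curve. Since $\cM_\sigma(\vv)$ is a fine moduli space, distinct closed points of $C$ correspond to non-isomorphic $\sigma$-stable objects in $\cdc(Y)$. But for stable objects the Jordan--H\"older filtration is trivial, so S-equivalence coincides with isomorphism; hence for any two distinct closed points $c,c'\in C$, the fibres $\cE_c$ and $\cE_{c'}$ are not S-equivalent. Applying the ``moreover'' clause of Theorem~\ref{thm:positivity} then gives $\ell_{\cE,\sigma}\cdot C>0$. As $C$ was an arbitrary proper curve, $\ell_{\cE,\sigma}$ pairs strictly positively with every class in $N_1(\cM_\sigma(\vv))$, i.e.\ it is numerically positive in the required sense. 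Finally, letting $\sigma$ vary (within the chamber of $\Stab(\cdc(Y))$ on which $\vv$ remains generic, so that stability is preserved and the same universal family $\cE$ parametrises $\sigma$-stable objects) produces the asserted family of numerically positive divisors.

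The main obstacle is not the argument itself, which is a direct consequence of Theorem~\ref{thm:positivity} once the set-up is in place, but rather verifying the proper-support hypothesis on the universal family without extra assumptions on $Y$. I would handle this by invoking Proposition~\ref{prop:simpleimpliesproper}, whose simplicity hypothesis is automatic for stable objects; if $Y$ is not assumed to be proper over an affine scheme, one should either incorporate this assumption into the statement or, alternatively, observe that properness of support for the universal family is already part of what it means for the moduli problem of objects in $\cdc(Y)$ to admit a fine moduli space in the present setting.
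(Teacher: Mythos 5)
Your argument is correct and is precisely the intended deduction: the paper states Corollary \ref{cor:main} without a separate proof because it follows immediately from Theorem \ref{thm:positivity} applied to the universal family, with strict positivity coming from the fact that stable objects are simple (so Proposition \ref{prop:simpleimpliesproper} gives proper support) and pairwise non-isomorphic, hence never $S$-equivalent, over distinct points of a fine moduli space. Your closing caveat about where the proper-support hypothesis comes from, and your identification of the ``family'' of divisors with the image of the linearisation map over the chamber containing $\sigma$, both match the paper's framework.
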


Note that the moduli space $\cM_\sigma(\vv)$ will not be proper in general.

\subsection{Families of t-structures for compact support} The proof of Theorem~\ref{thm:positivity} relies on extending the work of Abramovich--Polishchuk \cite{AP06} and Polishchuk~\cite{P07}
to the setting of objects with compact support. More precisely, given separated schemes $S$ and $Y$ of finite type, we define what it means for an object of $D(Y\times S)$ to have \emph{left-compact support}, see Definition \ref{def:compact}. This rather ad-hoc definition is more restrictive than requiring an object to have proper support over $S$, but it is better behaved under derived restriction along an open immersion (see Proposition~\ref{prop:localisation} and Remark~\ref{rem:notesssurj}). Given a t-structure on the category $D_c(Y)$ of objects with compact support, we construct a constant family of t-structures in the derived category of objects on $Y\times S$ with left-compact support (Theorem \ref{thm:t-str-any}) and show that it satisfies the open heart property (Proposition \ref{prop:open-heart}). We follow the approach of \cite{AP06,P07} very closely, but for completeness we provide full proofs of most statements. 

 Returning to the proof of Theorem~\ref{thm:positivity}, the restriction of the family $\cE$ of semistable objects to $Y\times C$ has left-compact support for any proper curve $C$ in $S$. The positivity statements from Theorem~\ref{thm:positivity} now follow as in \cite{BM12}, where a key step invokes the open heart property for the newly constructed t-structure for objects on $Y\times C$ with left-compact support. 

\subsection{Comparison with $\theta$-stability}
 One situation where moduli spaces $\cM_\sigma(\vv)$ are known to exist is when $Y$ is a smooth scheme that is projective over an affine scheme, and that carries a tilting bundle $E$. Under an assumption on the endomorphism algebra $A$ of $E^\vee$ (see Lemma~\ref{lem:perfectpair}), we obtain stability conditions on $\Dfin(A)$ of the form $\sigma_{\theta,\lambda,\xi}$, where $\theta$ is a stability parameter for $A$-modules in the sense of King~\cite{King94}, and where $\lambda, \xi$ are parameters (see Lemma~\ref{lem:stab-cond}). 
 
  In this setting, we obtain stability conditions on $\cdc(Y)$ from those on $\Dfin(A)$ by way of the tilting equivalence, and for any such $\sigma:= \sigma_{\theta,\lambda,\xi}$ and any class $\vv\in \Knumc(Y)$, the coarse moduli space of $\sigma$-semistable objects in $\cdc(Y)$ of class $\vv$ coincides with the coarse moduli space $\overline{\cM_A}(\theta,\vv)$ of $\theta$-semistable $A$-modules of dimension vector $\vv$ that is constructed by GIT. It is then natural to compare the numerical line bundle from Theorem~\ref{thm:positivity} with the polarising ample line bundle on the moduli space given by the GIT construction. The following result is Theorem~\ref{thm:polarisingbundle} (compare Proposition~\ref{prop:simpleimpliesproper}) in the special case when $\xi\in \RR$ is chosen to satisfy $\lambda(\vv)=1/(\xi^2+1)$.
  
 \begin{theorem}
 \label{thm:pullbackample} 
 Let $S$ be a separated scheme of finite type, and suppose that a family $\cE \in D(Y \times S)$ of $\sigma_{\theta,\lambda,\xi}$-semistable objects of class $\vv$ has proper support over $S$. Then the numerical divisor class $\ell_{\cE}(\sigma_{\theta,\lambda,\xi})$ on $S$ is equal to the pullback of the polarising ample line bundle on $\overline{\cM_A}(\theta,\vv)$ along the classifying morphism $f\colon S \to \overline{\cM_A}(\theta,\vv)$.
 
 \end{theorem}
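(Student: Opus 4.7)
The plan is to verify the equality in $N^1(S)$ by pairing against every proper curve $C \subseteq S$; since $N^1(S)$ is defined modulo numerical equivalence with respect to such curves, this is sufficient. Fix a proper curve $C \subseteq S$ and let $f_C \colon C \to \overline{\cM_A}(\theta,\vv)$ denote the restriction of the classifying morphism. The task reduces to proving
\[
\ell_{\cE,\sigma_{\theta,\lambda,\xi}} \cdot C \;=\; \deg f_C^* L_{\mathrm{GIT}},
\]
where $L_{\mathrm{GIT}}$ is the polarising ample line bundle on $\overline{\cM_A}(\theta,\vv)$.

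For the right-hand side, the relative tilting equivalence sends $\cE|_{Y \times C}$ to a flat family $F$ of $A$-modules on $C$ of dimension vector $\vv$, decomposing as $F = \bigoplus_i e_i F$ for the vertex idempotents $e_i$ of $A$, with each $e_i F$ locally free of rank $\vv_i$ on $C$. By King's GIT construction \cite{King94}, $L_{\mathrm{GIT}}$ descends from a $G_\vv$-equivariant line bundle on the representation variety associated to the character $\theta$; hence the pullback along the classifying map of any flat family is given by
\[
\deg f_C^* L_{\mathrm{GIT}} \;=\; \sum_i \theta_i \deg(e_i F).
\]

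For the left-hand side, the integral functor satisfies $\Phi_\cE(\cO_C) \simeq Rp_{Y*}(\cE|_{Y \times C})$, which via the tilting equivalence is identified with $R\pi_*(F) \in \Dfin(A)$, where $\pi \colon C \to \Spec \kk$ is the structure morphism. Riemann--Roch on $C$ gives
\[
\dim_i R\pi_*(F) \;=\; \chi(C, e_i F) \;=\; \deg(e_i F) + \vv_i \cdot \chi(C, \cO_C).
\]
Substituting this into the explicit form of $Z_{\theta,\lambda,\xi}$ from Lemma~\ref{lem:stab-cond} and using the semistability condition $\theta \cdot \vv = 0$ to cancel the $\chi(C,\cO_C)\cdot\vv$-terms, one finds that $Z_\sigma(\Phi_\cE(\cO_C))$ splits as a $\theta$-contribution capturing $\sum_i \theta_i \deg(e_i F)$ plus a $\lambda$-contribution that is a complex multiple of $-Z_\sigma(\vv)$. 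Dividing by $-Z_\sigma(\vv)$ and taking the imaginary part according to~\eqref{eqn:ellCintro} then isolates the $\theta$-contribution, and the normalisation $\lambda(\vv) = 1/(\xi^2+1)$ is exactly what makes the overall scalar equal to one.

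The main obstacle is the central charge computation in the last step: verifying that the imaginary part of $Z_\sigma(R\pi_*(F))/(-Z_\sigma(\vv))$ recovers $\sum_i \theta_i \deg(e_i F)$ with coefficient exactly $1$ under the normalisation $\lambda(\vv) = 1/(\xi^2+1)$. This comes down to algebraic manipulation of the factor $\xi + i$ (appearing in the rotated central charge) and of $\xi^2 + 1 = 1/\lambda(\vv)$, combined with the precise formula supplied by Lemma~\ref{lem:stab-cond}; once the cancellations involving $\theta \cdot \vv = 0$ are carried out, the identity of divisor classes on $C$ is immediate and the theorem follows.
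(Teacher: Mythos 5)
Your proposal is correct and follows essentially the same route as the paper: both sides are reduced to $\sum_i \theta_i \deg\bigl(\Psi_{\cE}(E_i^\vee)|_C\bigr)$, the left-hand side by the central charge computation (the paper's Lemma~\ref{lem:ellsigmatheta} together with Lemma~\ref{lem:welldefined}, which likewise runs through Riemann--Roch on proper curves) and the right-hand side by descent of the $\chi_\theta$-linearisation from the representation space. The only step you compress is the GIT descent: the paper lifts $f$ to a $G$-torsor over $S$ carrying a framed family, and the framing ambiguity twists each $\det\bigl(\Psi_{\cE}(E_i^\vee)\bigr)$ by a fixed line bundle $N$, whose total contribution $N^{\otimes\sum_i\theta_i v_i}$ cancels precisely because $\theta(\vv)=0$ --- the same cancellation you invoke to kill the $\chi(C,\cO_C)$ terms.
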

Note that when $\vv$ is primitive and $\theta$ generic, then $\overline{\cM_A}(\vv, \theta)$ is actually a fine moduli space.
 
 An important ingredient in the proof of Theorem~\ref{thm:pullbackample} is the correspondence between flat families $\cE\in D(Y\times S)$ of $\sigma_{\theta,\lambda,\xi}$-(semi)stable objects of class $\vv$ with respect to the heart $\cA$, and flat families $F$ of $\theta$-(semi)stable $A$-modules of dimension vector $\vv$ over $S$ (see Proposition~\ref{prop:flatfamAmods}). In particular, when $\vv$ is primitive and $\theta$ is generic,  the universal family of $\sigma_{\theta,\lambda,\xi}$-stable objects of class $\vv$ over $\cM_A(\vv, \theta)$ is given explicitly by $\cE= E\otimes_A T$, where $T$ is the universal bundle on $\cM_A(\vv, \theta)$; and conversely, the universal bundle satisfies $T=\Psi_{\cE}(E^\vee)$, where $\Psi_{\cE}$ is an integral functor with kernel $\cE$ (see Proposition~\ref{prop:functorpair}).

\subsection{Additional background and outlook} \label{subsec:moremotivation}
In the projective setting, the link between wall-crossing for stability conditions and birational geometry of moduli spaces has led to a large number of results over the last five years. This was initiated with striking examples for abelian surfaces \cite{AB13} and $\PP^2$
\cite{ArBeCo13a}, and then exploited systematically for abelian \cite{MYY, MYY2,Yoshioka:positivecone} and K3 surfaces (see \cite{BM13} in the smooth case, and \cite{MZ14} for singular O'Grady-type moduli spaces; see also \cite{Hassett-Tschinkel:survey} for a survey and more applications), for Enriques surfaces \cite{Nuer}, for $\PP^2$ \cite{Matthew:torsion, ChoiChung, Izzet-Jack-Matthew,  Izzet-Jack:P2,Izzet-Jack:interpolation,Izzet-Jack:ample,Chunyi-Xiaolei:MMP,Aaron-Cristian-Jie}, with the story now essentially completed in \cite{Chunyi-Xiaolei:birational}, and for other rational surfaces \cite{BC13}; it has also led to results for general surfaces \cite{TooManyAuthors,Izzet-Jack:generalsurface}. In many cases, there is a complete description of the movable cones of the moduli spaces, along with its chamber decomposition coming from associated minimal models.

On the other hand, a number of authors have studied stability conditions on quasi-projective (`local') Calabi-Yau varieties $Y$, see \cite{Richard:An, Bridgeland09, Brav-HThomas:ADE, IUU}  and \cite{Toda08,Toda:CY-fibrations,Bridgeland:stab-CY, BM11} for crepant resolutions of two- and three-dimensional canonical singularities, respectively, and
\cite{AnnoBezruMirko} for higher-dimensional symplectic resolutions of singularities naturally associated to  algebraic groups. Our goal is to provide in this context the machinery that is used in the projective setting.

 Even in the case where $Y$ is a projective crepant resolution of $\CC^3/G$ for a finite abelian subgroup $G \subset \mathrm{SL}_3(\CC)$, a rich wall-crossing picture emerges by considering $Y$ itself as a moduli space parametrising skyscraper sheaves of points. Indeed, a simple reinterpretation of \cite{CI04} (along the lines of our Section \ref{sec:tilting}) says that any (projective) birational model of $Y$ appears as a moduli space of Bridgeland-stable objects; more generally, this result holds for any projective crepant resolution of a Gorenstein, affine toric 3-fold by \cite{IshiiUeda13}. We anticipate that this result can be generalised significantly, both by allowing for more general $Y$, and by considering different moduli spaces on $Y$. We also hope that it will simplify the study of the space of stability conditions itself: typically, one of the crucial steps is the systematic understanding of walls of the geometric chamber in $\Stab(\cdc(Y))$, where skyscraper sheaves of points are all semistable, some of them being strictly semistable. Our results provide a nef line bundle on $Y$, whose associated contraction should govern the wall-crossing behaviour to a large extent.

\subsection{Running assumptions and notation}
All our schemes are assumed to be Noetherian schemes over an algebraically closed field $\kk$. In addition, we assume from Section~\ref{sec:integralfunctors} onwards that our schemes are separated and of finite type over $\kk$. Given a product of schemes $Y\times S$, we write $p\colon Y\times S\to Y$ and $q\colon Y\times S\to S$ for the projections to the first and second factor.

For any scheme $X$, let $\cD(X)$ denote the bounded derived category of coherent sheaves on $X$, let $\Dperf(X)$ denote the full subcategory of perfect complexes on $X$, and write $\DD(\Qcoh(X))$ for the unbounded derived category of quasi-coherent sheaves on $X$. To avoid a proliferation of $\mathbf{R}$ and $\mathbf{L}$, we omit these symbols in our derived functors except in writing $\mathbf{R}\Gamma$.

\subsection*{Acknowledgements} We thank the anonymous referee for a very careful reading of the paper and for suggesting numerous improvements. We are indebted to Joseph Karmazyn who found a counterexample to a false claim in a previous version. The first author is grateful to Matthew Woolf for a number of helpful conversations, as well as to Emanuele Macr{\`\i} for very many of them. The second author thanks Alastair King for a helpful conversation. The first author was supported by ERC starting grant WallXBirGeom 337039, while the second and third authors were supported in part by EPSRC grant EP/J019410/1. The third author also acknowledges the support of Leibniz University Hannover during the final stage of this work.

\section{Derived category with left-compact support}
In this section, we define what it means for a complex of coherent sheaves on a product $Y\times S$ to have `left-compact support'. We also study the basic properties, and compare this notion to that of an object on $Y\times S$ having proper support over $S$. The latter part of this section follows closely the work of Abramovich--Polishchuk~\cite{AP06} and Polishchuk~\cite{P07} in defining sheaves of t-structures over a base. 

\subsection{Compact and left-compact support}
 For any Noetherian scheme $Y$ over $\kk$, we may identify $\cD(Y)$ with the full subcategory of $\DD(\Qcoh(Y))$ of bounded complexes with coherent cohomology. 

\begin{definition}
 The support of a quasi-coherent sheaf $G$ is the locus $\supp(G)=\{y\in Y \mid G_y\neq 0\}$ of points with non-zero stalk. The support of an object $F\in\DD(\Qcoh(Y))$ is the union of the supports of its cohomology sheaves.
 \end{definition}
Since localisation is exact, we could equivalently define 
\begin{equation} \label{eq:alternatesupportqc}
\supp(F) = \{y \in Y \mid F_y \neq 0\}
\end{equation}
 where $F_y$ is the complex of stalks of $F$ at the local ring at $y$. In addition, if $F \in D(Y)$, then by Nakayama's Lemma
 \begin{equation} \label{eq:alternatsupportcoh}
 \supp(F) = \{ y \in Y \mid i_y^* F \neq 0\},
 \end{equation}
 where $i_y$ is the inclusion of the spectrum $\Spec\ \kk(y)$ of the residue field of $y$. Also note that for $F \in D(Y)$, the support $\supp(F)$ is closed. Write $\cdc(Y) \subset D(Y)$ for the full subcategory of objects that have proper support.  Following convention, we refer to such objects as having `compact support'. We note the following easy properties of support.
 \begin{lemma}\label{lem:easycompact}
 Let $F, E \in D(Y)$, and let $f \colon Y \to Y'$ and $g \colon Y'' \to Y$ be morphisms between Noetherian schemes. Then:
 \begin{enumerate}
 \item[\one] $\supp (F \otimes E) \subset \supp (F)$;
 \item[\two] $\supp (g^* F) = g^{-1}(\supp(F))$; 
 \item[\three] $\supp(f_*F) \subseteq \overline{f(\supp(F))}$. 
 \end{enumerate}
 \end{lemma}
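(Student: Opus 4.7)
The plan is to verify each of the three claims by reducing to a pointwise question about vanishing, using the alternative descriptions of support recorded in \eqref{eq:alternatesupportqc} and \eqref{eq:alternatsupportcoh}. Parts \one\ and \two\ follow at once from the compatibility of derived tensor product and derived pullback with stalk localization, while part \three\ requires only a small amount of open base change.

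First I would treat \one: for any $y \in Y$ we have $(F \otimes E)_y \simeq F_y \otimes^{\mathbf{L}}_{\cO_{Y,y}} E_y$, because localization is a flat tensor functor. If $y \notin \supp(F)$, then $F_y$ is acyclic, and hence so is the derived tensor product at $y$, giving the stated inclusion. For \two, the stalk of $g^* F$ at $y'' \in Y''$ is computed by factoring the canonical map $\Spec \kk(y'') \to Y$ through $\Spec \kk(g(y''))$; the resulting identification
\[
i_{y''}^*\, g^* F \;\simeq\; \bigl(\Spec \kk(y'') \to \Spec \kk(g(y''))\bigr)^* i_{g(y'')}^* F,
\]
together with the fact that pullback along a field extension is faithful (the extension being faithfully flat), shows that $y'' \in \supp(g^*F)$ if and only if $g(y'') \in \supp(F)$, which is the desired equality.

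For \three, I would set $Z = \overline{f(\supp F)} \subset Y'$ and $V = Y' \setminus Z$, and then check that $f_* F$ restricts to $0$ on $V$. Writing $U = f^{-1}(V)$, we have $U \cap \supp F = \emptyset$ by construction, so $F|_U \simeq 0$ in $D(U)$. Since $V \hookrightarrow Y'$ is an open (hence flat) immersion, flat base change yields $(f_* F)|_V \simeq (f|_U)_*(F|_U) = 0$, whence $\supp(f_*F) \subseteq Z$.

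No step is a serious obstacle. The only point that requires mild care is invoking flat base change in \three, but the immersion being open makes this standard. A cosmetic subtlety across all three parts is keeping straight whether one characterizes support via stalks of the complex (valid for any object of $\DD(\Qcoh(Y))$) or via residue-field pullback (valid for $F \in D(Y)$ by Nakayama); one may switch freely, since we work with coherent complexes throughout.
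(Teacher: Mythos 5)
Your proof is correct and follows essentially the same route as the paper: parts \one\ and \two\ are read off from the stalk and residue-field characterisations of support respectively, and part \three\ is exactly the paper's argument of restricting to the open complement of $\overline{f(\supp F)}$ and applying flat base change for the open immersion. The extra details you supply (compatibility of $\otimes$ and $g^*$ with localisation, faithful flatness of the residue field extension) are just the fleshing-out of what the paper calls "immediate".
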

 \begin{proof}
 The first part is immediate from \eqref{eq:alternatesupportqc}, the second from \eqref{eq:alternatsupportcoh}. For the third part, assume $y \notin \overline{f(\supp(F))}$; 
then there is an open neighbourhood $y \in U \subset Y'$  with $F|_{f^{-1}(U)} = 0$, and the claim follows from flat base change.
\end{proof}

\begin{definition}
\label{def:compact}
Let $Y$ and $S$ be Noetherian schemes. An object $F \in \cD(Y \times S)$ is said to have \emph{left-compact support} if $\supp(F) \subseteq Z\times S$ for some proper subscheme $Z \subseteq Y$. Write $\cdlc(Y \times S)$ for the full subcategory of complexes in $\cD(Y \times S)$ that have left-compact support.
\end{definition}

\begin{lemma}
\label{lem:cpt-supp-new}
 Let $Y$ be a separated scheme of finite type and let $S$ be a proper scheme. Then $\cdc(Y\times S) = \cdlc(Y\times S)$.
\end{lemma}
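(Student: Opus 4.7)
The plan is to establish the two set-theoretic inclusions $\cdlc(Y \times S) \subseteq \cdc(Y \times S)$ and $\cdc(Y \times S) \subseteq \cdlc(Y \times S)$ separately, both of which follow from elementary properties of proper morphisms combined with the definitions of the two flavors of support.

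For the inclusion $\cdlc(Y\times S)\subseteq \cdc(Y\times S)$, suppose $F\in \cdlc(Y\times S)$, so $\supp(F)\subseteq Z\times S$ for some proper closed subscheme $Z\subseteq Y$. Since both $Z$ and $S$ are proper over $\kk$, so is the product $Z\times S$. The support $\supp(F)$ is a closed subset of $Y\times S$ (recall $F\in D(Y\times S)$ has coherent cohomology), and its inclusion into the proper scheme $Z\times S$ is a closed immersion, hence $\supp(F)$ is itself proper over $\kk$; that is, $F\in \cdc(Y\times S)$.

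For the converse, let $F\in \cdc(Y\times S)$. The key observation is that since $S$ is proper over $\kk$, the first projection $p\colon Y\times S\to Y$ is a proper morphism (being the base change of the proper structure morphism $S\to \Spec \kk$ along $Y\to \Spec \kk$). Consequently, if we let $Z := p(\supp(F)) \subseteq Y$, then $Z$ is closed in $Y$ (by properness of $p$, which is in particular a closed map), and $Z$ is proper over $\kk$ as the set-theoretic image of the proper scheme $\supp(F)$ under a morphism. Endowing $Z$ with, say, the reduced induced closed subscheme structure, we have by construction $\supp(F)\subseteq p^{-1}(Z) = Z\times S$, which shows $F\in \cdlc(Y\times S)$.

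There is no real obstacle here; the statement is essentially bookkeeping with proper morphisms. The only mild subtlety worth checking is that a closed subscheme of a proper scheme is proper (used in the first inclusion) and that the set-theoretic image of a proper scheme under a morphism is a closed subset that is proper when equipped with the reduced structure (used in the second inclusion). Both are standard, so the argument is essentially a one-paragraph verification once the projection $p$ is recognised as proper.
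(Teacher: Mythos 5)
Your proof is correct and follows essentially the same route as the paper: the easy inclusion uses that a closed subset of the proper scheme $Z\times S$ is proper, and the converse takes $Z:=p(\supp(F))$, which is a proper closed subset of $Y$ because the image of a proper scheme under a morphism to a separated scheme is proper (the paper cites \cite[Tag~03GN]{stacks-project} for exactly this). Your extra remark that $p$ itself is proper is a harmless alternative way to see that $Z$ is closed.
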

\begin{proof}
 One inclusion is obvious. For the other inclusion, let $F\in D_c(Y\times S)$. It follows from \cite[Tag~03GN]{stacks-project} that $Z:=p(\supp(F))$ is proper and hence $\supp(F) \subseteq Z \times S$. 
\end{proof}

By Lemma \ref{lem:easycompact}, having left-compact support is preserved under some standard operations:

\begin{lemma}
\label{lem:cpt-supp}
Let $F\in \cdlc(Y \times S)$. Then:
\begin{enumerate}
\item[\one] for any object $E\in \cD(Y\times S)$, we have $F\otimes E\in\cdlc(Y\times S);$
\item[\two] for any morphism $g\colon S'\ra S$, we have $(\id_Y \times g)^*F \in \cdlc(Y\times S');$
\item[\three] for any proper morphism $g\colon S\ra S'$, we have $(\id_Y \times g)_*F \in \cdlc(Y\times S')$.
\end{enumerate}
\end{lemma}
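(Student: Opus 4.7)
The plan is to derive all three parts directly from Lemma \ref{lem:easycompact} applied to the projection situation. The hypothesis is that $\supp(F) \subseteq Z \times S$ for some proper closed subscheme $Z \subseteq Y$, and the goal in each case is to exhibit a proper closed subscheme $Z' \subseteq Y$ with the support of the resulting object contained in $Z' \times (\text{base})$; in all three cases, $Z' = Z$ will do.

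For \one, I would simply invoke Lemma \ref{lem:easycompact}\one\ to conclude $\supp(F \otimes E) \subseteq \supp(F) \subseteq Z \times S$.

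For \two, let $h := \id_Y \times g \colon Y \times S' \to Y \times S$. By Lemma \ref{lem:easycompact}\two, $\supp(h^* F) = h^{-1}(\supp(F)) \subseteq h^{-1}(Z \times S) = Z \times S'$, so $h^* F \in \cdlc(Y \times S')$.

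For \three, again set $h := \id_Y \times g$. Since $g$ is proper and properness is stable under base change, $h$ is proper as well; in particular $h$ is closed, so $h(Z \times S) = Z \times g(S)$ is closed in $Y \times S'$. By Lemma \ref{lem:easycompact}\three,
\[
\supp(h_* F) \subseteq \overline{h(\supp(F))} \subseteq h(Z \times S) = Z \times g(S) \subseteq Z \times S',
\]
so $h_* F \in \cdlc(Y \times S')$. The only point where any real care is needed is the third part, where one has to notice that properness of $g$ is what lets us drop the closure and confine the image to a set of the form $Z \times (\text{subset of } S')$; but once this is observed, everything is immediate from the elementary properties collected in Lemma \ref{lem:easycompact}.
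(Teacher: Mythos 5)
Your proof is correct and is exactly the paper's argument: the paper disposes of all three parts with the single line ``By Lemma \ref{lem:easycompact}, having left-compact support is preserved under some standard operations,'' and your write-up just makes the three applications explicit. (One micro-comment: in part \three{} you do not actually need properness to drop the closure, since $Z\times S'$ is already closed so $\overline{h(\supp F)}\subseteq Z\times S'$ follows anyway; the real role of properness of $g$ is to guarantee that $h_*F$ has bounded coherent cohomology at all.)
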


\begin{corollary} 
\label{cor:cpt-supp}
Let $Y$ be a separated scheme of finite type and let $S$ be proper. Pullback and pushforward  along the projection to the first factor $p\colon Y\times S \to Y$ induce exact functors $p^*\colon \cdc(Y)\to \cdc(Y\times S)$ and $p_*\colon \cdc(Y\times S)\to \cdc(Y)$.
\end{corollary}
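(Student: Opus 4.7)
The plan is to combine the support-theoretic lemmas above with the observation that, since $S$ is proper over $\kk$, the projection $p\colon Y\times S\to Y$ is both flat (as a projection) and proper (being the base change of the structure morphism $S\to\Spec\kk$). Flatness and properness of $p$ guarantee that $p^*$ and $p_*$ preserve bounded complexes with coherent cohomology, so the only task is to check that they send compactly supported objects to compactly supported objects; exactness as triangulated functors is automatic.

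For $p^*$, I would apply Lemma~\ref{lem:easycompact}\,\two\, to obtain, for any $F\in \cdc(Y)$,
\[
\supp(p^*F) \;=\; p^{-1}\bigl(\supp(F)\bigr) \;=\; \supp(F)\times S,
\]
which is proper because both $\supp(F)$ and $S$ are. Hence $p^*F\in \cdc(Y\times S)$.

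For $p_*$, Lemma~\ref{lem:cpt-supp-new} identifies $\cdc(Y\times S) = \cdlc(Y\times S)$, so every $F\in \cdc(Y\times S)$ satisfies $\supp(F)\subseteq Z\times S$ for some proper $Z\subseteq Y$. Taking the structure morphism $g\colon S\to\Spec\kk$ --- proper because $S$ is --- and identifying $\id_Y\times g$ with $p$ under $Y\times\Spec\kk\cong Y$, Lemma~\ref{lem:cpt-supp}\,\three\, yields $p_*F\in \cdlc(Y)=\cdc(Y)$.

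There is no real obstacle: the corollary is a direct packaging of the preceding lemmas, the one new ingredient being the observation that properness of $S$ makes the projection $p$ proper.
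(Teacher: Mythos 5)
Your proposal is correct and follows essentially the same route as the paper, which simply cites Lemma~\ref{lem:cpt-supp-new} together with Lemma~\ref{lem:cpt-supp}\two--\three\ (applied to the structure morphism $g\colon S\to \Spec\kk$, so that $\id_Y\times g$ is identified with $p$). Your direct support computation for $p^*$ via Lemma~\ref{lem:easycompact}\two\ is just an unwinding of Lemma~\ref{lem:cpt-supp}\two, so there is no substantive difference.
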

\begin{proof}
 This follows from Lemma~\ref{lem:cpt-supp-new} and Lemma~\ref{lem:cpt-supp}\two-\three. 
\end{proof}

\begin{lemma}
\label{lem:Flcsupp1}
 Let $F\in \DD(\Qcoh(Y \times S))$ and let $g\colon S\ra S'$ be an affine morphism of Noetherian schemes. If we have $(\id_Y \times g)_*F\in \cdlc(Y\times S^\prime)$, then $F\in \cdlc(Y \times S)$.
 \end{lemma}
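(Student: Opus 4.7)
The plan is to exploit the fact that $h := \id_Y \times g\colon Y\times S \to Y\times S'$ is an affine morphism, hence $h_*$ is exact and faithful on quasi-coherent sheaves. This lets us transfer boundedness, coherence and the support condition from $h_* F$ back to $F$.

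First, since $h_*$ is exact on $\Qcoh(Y\times S)$, we have $h_*\mathcal{H}^i(F) = \mathcal{H}^i(h_*F)$ for every $i$. The right-hand side vanishes for all but finitely many $i$ because $h_*F \in D(Y\times S')$. Because $h_*$ is faithful on quasi-coherent sheaves (locally on affines $\spec A' \subseteq Y\times S'$ with preimage $\spec A$, it is restriction of scalars along a ring map $A'\to A$, which is clearly conservative), we deduce $\mathcal{H}^i(F)=0$ for the same range, so $F$ is bounded.

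Next, I would check that each $\mathcal{H}^i(F)$ is coherent. Working on an affine open $\spec A' \subseteq Y\times S'$ with preimage $\spec A$, a quasi-coherent sheaf on $\spec A$ corresponds to an $A$-module $M$, and its $h_*$ to the same $M$ regarded as an $A'$-module. Since the $A'$-action on $M$ factors through the ring map $A'\to A$, any finite set of generators of $M$ as an $A'$-module also generates it as an $A$-module. Applied to $\mathcal{H}^i(F)$, this shows coherence of $h_*\mathcal{H}^i(F)$ on the locally Noetherian scheme $Y\times S'$ implies coherence of $\mathcal{H}^i(F)$. Thus $F \in D(Y\times S)$.

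Finally, for the support statement, in the same affine local picture a prime $\mathfrak{p} \subset A$ contracts to $\mathfrak{p}' = \mathfrak{p}\cap A'$ and $M_\mathfrak{p} = (M_{\mathfrak{p}'})_\mathfrak{p}$, so vanishing of $(h_*\mathcal{G})_{h(x)}$ forces vanishing of $\mathcal{G}_x$. Hence $\supp(\mathcal{G}) \subseteq h^{-1}(\supp(h_*\mathcal{G}))$ for every quasi-coherent $\mathcal{G}$, and taking unions of cohomology sheaves,
\[
  \supp(F) \;\subseteq\; h^{-1}\!\bigl(\supp(h_*F)\bigr) \;\subseteq\; h^{-1}(Z\times S') \;=\; Z\times S,
\]
where $Z\subseteq Y$ is the proper subscheme provided by the hypothesis $h_*F \in \cdlc(Y\times S')$. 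This gives $F \in \cdlc(Y\times S)$.

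There is no real obstacle here; the whole argument rests on the elementary exactness and faithfulness of affine pushforward on quasi-coherent sheaves. The only point demanding mild care is the descent of coherence from $h_*\mathcal{H}^i(F)$ to $\mathcal{H}^i(F)$, which is the affine-local observation above and does not need $g$ to be of finite type.
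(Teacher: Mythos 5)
Your proof is correct and follows essentially the same route as the paper: both arguments rest on the exactness and conservativity of pushforward along the affine morphism $\id_Y\times g$ to descend boundedness, coherence, and the support containment $\supp(F)\subseteq (\id_Y\times g)^{-1}(\supp((\id_Y\times g)_*F))$. The only difference is presentational: the paper cites \cite[Tag 08I8]{stacks-project} for the facts you verify by hand in the affine-local picture.
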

 \begin{proof}
Since pushforward along $f:= \id_Y \times g$ is exact, we obtain that $F$ is a bounded complex with coherent
cohomology; see \cite[Tag 08I8]{stacks-project}.
To show $F$ has left-compact support, it suffices to show that $\supp(F) \subseteq f^{-1}(\supp( f_*F))$. The complement $U$ of $\supp (f_*F)$ in $Y \times S'$ is an open subscheme such that $(f_*F)|_U = 0$. Since $f$ is an affine morphism, we have  $F|_{f^{-1}(U)}=0$ by \cite[Tag 08I8]{stacks-project}, and the claim follows.
 \end{proof}

 \begin{lemma}
 \label{lem:Flcsupp2}
 Let $F\in D(Y \times S)$. If $S = \bigcup_i U_i$ is a finite open cover such that $F|_{Y \times U_i}$ has left-compact support for each $i$, then $F\in \cdlc(Y\times S)$.
\end{lemma}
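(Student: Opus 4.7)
\emph{Overall plan.} The natural candidate for a proper $Z \subseteq Y$ witnessing the left-compact support of $F$ is the union of the proper subschemes coming from each piece of the cover. For each index $i$ in the (finite) cover, the hypothesis provides a proper closed subscheme $Z_i \subseteq Y$ such that $\supp\bigl(F|_{Y \times U_i}\bigr) \subseteq Z_i \times U_i$. I would then take $Z \subseteq Y$ to be the closed subset $\bigcup_i Z_i$ equipped, say, with the reduced induced scheme structure.

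\emph{Step 1: verifying $\supp(F) \subseteq Z \times S$.} Apply Lemma~\ref{lem:easycompact}\two to the open immersion $\mathrm{id}_Y \times j_i \colon Y \times U_i \hookrightarrow Y \times S$ to identify the support of the restriction $F|_{Y \times U_i}$ with $\supp(F) \cap (Y \times U_i)$. Since the finitely many opens $\{Y \times U_i\}_i$ cover $Y \times S$, we obtain
$$
\supp(F) \;=\; \bigcup_i \bigl(\supp(F) \cap (Y \times U_i)\bigr) \;\subseteq\; \bigcup_i (Z_i \times U_i) \;\subseteq\; Z \times S,
$$
which is exactly the containment required by Definition~\ref{def:compact}.

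\emph{Step 2: verifying that $Z$ is proper.} A finite union of proper closed subschemes of $Y$ is proper: universal closedness is preserved because any closed subset of a base change $Z_T$ decomposes as the union of its intersections with the $(Z_i)_T$, whose images in $T$ are each closed by properness of the $Z_i$ and whose finite union is therefore closed; finite type and separatedness over $\kk$ pass to the closed subscheme $Z$ of $Y$ from each of the $Z_i$. Alternatively, one may observe that the natural morphism $\coprod_i Z_i \to Z$ from the (proper) finite disjoint union is surjective, so properness of $Z$ descends.

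\emph{Anticipated difficulty.} The statement is essentially a tautology once $Z$ is written down, and I do not foresee a genuine obstacle; the only mildly technical point is the elementary fact that a finite union of proper closed subschemes is proper.
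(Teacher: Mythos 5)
Your proposal is correct and follows essentially the same route as the paper: take $Z=\bigcup_i Z_i$ and use Lemma~\ref{lem:easycompact}\two{} for the open immersions $Y\times U_i\hookrightarrow Y\times S$ to conclude $\supp(F)\subseteq Z\times S$. The paper compresses this into two lines (implicitly using that a finite union of proper subschemes is proper), so your Step~2 merely spells out a detail the authors leave tacit.
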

\begin{proof}
The cover of $S$ is finite, so we can find a proper $Z \subseteq Y$ such that $\supp (F|_{Y \times U_i}) \subseteq Z \times U_i$ for all $i$.
Lemma \ref{lem:easycompact}\two{} completes the proof.
\end{proof}

\subsection{Localisation}
We now study how $\cdlc(Y\times S)$ behaves under restriction to certain open subsets in $Y\times S$. These results extend observations of Polishchuk~\cite[Lemma 2.3.1]{P07} (see also Arinkin--Bezrukavnikov~\cite[\S2.2]{AB2010}) to the left-compactly supported case.

\begin{lemma}[{\cite[Lemma 2.3.1]{P07}}]
Let $X$ be a Noetherian scheme, $j \colon U \to X$ be an open immersion, and let $F$ be a bounded complex of coherent sheaves
on $U$. Then there exists a complex $F_c$ on $X$ consisting of coherent subsheaves $F^i_c \subset j_* F^i$ such that
$j^* F_c = F$ as objects in $D(U)$, and such that $H^i\left(F_c\right) \subset H^i\left(j_* F\right)$ for all $i \in \ZZ$.
\end{lemma}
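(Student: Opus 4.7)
The plan is to construct $F_c$ as a subcomplex of the complex of quasi-coherent sheaves $j_*F = (j_*F^i, j_*d^i)$ by choosing coherent subsheaves term by term, subject to conditions that ensure (a) compatibility with the differentials and (b) the cohomology inclusion. Two standard facts about the Noetherian scheme $X$ will be used throughout: every quasi-coherent sheaf is the filtered union of its coherent subsheaves; and if such a filtered union has a coherent colimit, a single member already attains it (since the coherent subsheaves of a coherent sheaf satisfy the ascending chain condition). Because $j^*j_* = \id$ on quasi-coherent sheaves for an open immersion, $j_*F^i|_U = F^i$, so inside any quasi-coherent subsheaf $G \subseteq j_*F^i$ with $G|_U = F^i$ there exists a coherent $G' \subseteq G$ with $G'|_U = F^i$.

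Pass 1 (top-down). Let $F$ be concentrated in degrees $[a,b]$. First pick any coherent $F_c^b \subseteq j_*F^b$ with $F_c^b|_U = F^b$. Inductively, given $F_c^{i+1}$, form the quasi-coherent preimage $P^i := (j_*d^i)^{-1}(F_c^{i+1}) \subseteq j_*F^i$; since $P^i|_U = (d^i)^{-1}(F^{i+1}) = F^i$, a single coherent $F_c^i \subseteq P^i$ restricts to $F^i$. By construction $j_*d^i$ sends $F_c^i$ into $F_c^{i+1}$, so $(F_c^i, d^i|_{F_c^i})$ is a subcomplex of $j_*F$, and $j^*F_c = F$ as complexes on $U$.

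The subtle point is the inclusion $H^i(F_c) \subseteq H^i(j_*F)$. Using the left-exactness $\ker(j_*d^i) = j_*\ker(d^i)$, the natural map between cohomology sheaves has kernel $(F_c^i \cap \im(j_*d^{i-1}))/j_*d^{i-1}(F_c^{i-1})$, so what we need is the equality $j_*d^{i-1}(F_c^{i-1}) = F_c^i \cap \im(j_*d^{i-1})$ for every $i$. Pass 2 (again top-down, for $i = b, b-1, \ldots, a+1$): the sheaf $Q^i := F_c^i \cap \im(j_*d^{i-1})$ is coherent, being a subsheaf of $F_c^i$, and its full preimage under $j_*d^{i-1}$ is a quasi-coherent subsheaf of $j_*F^{i-1}$ surjecting onto $Q^i$; by the same Noetherian principle, some coherent $K \subseteq j_*F^{i-1}$ already satisfies $j_*d^{i-1}(K) = Q^i$. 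Replace $F_c^{i-1}$ by $F_c^{i-1} + K$: it remains coherent, still restricts to $F^{i-1}$ on $U$ (since $K|_U \subseteq F^{i-1}$), still satisfies $j_*d^{i-1}(F_c^{i-1}+K) \subseteq F_c^i$, and now achieves the desired equality at level $i$. Processing in decreasing $i$ is essential: enlarging $F_c^{i-1}$ can increase $F_c^{i-1} \cap \im(j_*d^{i-2})$, which is then dealt with at the next step. I expect the main obstacle in writing this out carefully to be precisely the recognition that a naive termwise choice of coherent subsheaves does not control $H^i(F_c)$ inside $H^i(j_*F)$, and that a second, image-matching pass is required.
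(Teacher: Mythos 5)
Your construction is correct and is essentially the argument of Polishchuk's \cite[Lemma 2.3.1]{P07} that the paper defers to: a descending induction choosing coherent $F_c^i \subseteq j_*F^i$ with $F_c^i|_U = F^i$ and $j_*d^i(F_c^i) \subseteq F_c^{i+1}$, where your image-matching condition $j_*d^{i-1}(F_c^{i-1}) = F_c^i \cap \im(j_*d^{i-1})$ is precisely the ``property (i)'' that the paper's proof invokes for the cohomology inclusion. The only (harmless) presentational difference is that you arrange the two requirements in two separate top-down passes, whereas one can enforce both simultaneously in a single descending induction by taking the sum of coherent subsheaves achieving each condition.
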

\begin{proof}
All statements follow directly from the proof of \cite[Lemma 2.3.1]{P07}. The only claim that is not made explicitly in [ibid.] is the inclusion
of the cohomology sheaves; this follows from property (i) stated in the proof and a simple diagram chase.
\end{proof}

\begin{corollary}
\label{cor:essential-surjective}
Let $S, Y$ be Noetherian schemes.
For any open subset $U\subseteq S$, let $j\colon Y\times U \longhookrightarrow Y\times S$ be the open immersion.  The restriction functor $j^*\colon\cdlc(Y\times S) \ra \cdlc(Y\times U)$ is essentially surjective. 
\end{corollary}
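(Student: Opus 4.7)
The plan is to construct an extension directly using the preceding lemma (Polishchuk's Lemma 2.3.1) and then verify that it has left-compact support. Let $F \in \cdlc(Y\times U)$, so by definition $\supp(F) \subseteq Z \times U$ for some proper closed subscheme $Z \subseteq Y$.

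First I would represent $F$ by a bounded complex of coherent sheaves on $Y\times U$ and apply the preceding lemma to $j \colon Y\times U \hookrightarrow Y\times S$. This produces a bounded complex $F_c$ of coherent sheaves on $Y\times S$ whose terms satisfy $F^i_c \subset j_* F^i$, and which restricts back to $F$ in $D(Y\times U)$. Crucially, the lemma also provides the pointwise cohomology bound $H^i(F_c) \subset H^i(j_* F)$ for every $i \in \ZZ$.

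The heart of the argument is then to verify that $F_c \in \cdlc(Y\times S)$. Since the support of $F_c$ is the union of the supports of its cohomology sheaves, the cohomology inclusion gives
\[
\supp(F_c) \;=\; \bigcup_i \supp\bigl(H^i(F_c)\bigr) \;\subseteq\; \bigcup_i \supp\bigl(H^i(j_* F)\bigr) \;\subseteq\; \supp(j_* F).
\]
I would then invoke Lemma~\ref{lem:easycompact}\three{} applied to the open immersion $j$, together with the fact that $Z\times S$ is a closed subscheme of $Y\times S$ containing $j(\supp(F)) \subseteq Z\times U$, to obtain
\[
\supp(j_* F) \;\subseteq\; \overline{j(\supp(F))} \;\subseteq\; \overline{Z \times U} \;\subseteq\; Z\times S.
\]
Combining the two displayed inclusions shows $\supp(F_c) \subseteq Z\times S$ with $Z$ proper, so $F_c$ has left-compact support. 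Since $j^* F_c \cong F$, this establishes essential surjectivity.

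The main subtlety is that without the cohomology sheaf bound in Polishchuk's lemma, a naive extension of a complex representing $F$ could have terms whose support spills out of $Z\times S$; the whole point of quoting the refined version of the lemma in the preceding step is to transfer the support constraint from the cohomology of $F$ to the cohomology (and hence the support) of $F_c$, which is what the notion of left-compact support is sensitive to. Once that bound is available, the rest is a direct application of the basic support lemma.
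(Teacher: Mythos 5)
Your proposal is correct and matches the paper's proof essentially verbatim: both apply the refined version of Polishchuk's extension lemma (with the cohomology-sheaf inclusions $H^i(F_c)\subset H^i(j_*F)$) and then chain $\supp(F_c)\subseteq\supp(j_*F)\subseteq\overline{j(\supp(F))}\subseteq Z\times S$ via Lemma~\ref{lem:easycompact}\three. Your closing remark correctly identifies why the refined form of the lemma is the crucial ingredient.
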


\begin{proof}
Given $F \in \cdlc(Y \times U)$, assume that it is represented by a bounded complex of coherent sheaves. Let $F_c \subset j_*F$ be the subcomplex with $F = j^*F_c$ by the previous Lemma. By the assertions in the Lemma above,
we obtain
\[ \supp(F_c) = \bigcup_i \supp\left(H^i(F_c)\right) \subseteq \bigcup_i \supp\left(H^i j_* F\right) = \supp(j_* F) \subseteq \overline{j(\supp(F))}
\]
 where the last inclusion is a case of Lemma \ref{lem:easycompact}\three. 
By assumption, $\supp(F) \subseteq Z \times U$ for some proper subscheme $Z \subseteq Y$; hence
$\supp(F_c) \subset Z \times S$, which implies the claim.
\end{proof}

\begin{proposition}
\label{prop:localisation}
Let $S, Y$ be Noetherian schemes. Let $U\subseteq S$ be open and set $T:=S\setminus U$. The triangulated category $\cdlc(Y\times U)$ is equivalent to the localisation of $\cdlc(Y\times S)$ with respect to the localising class of morphisms $f\colon F \ra F'$ whose cone has support contained in $Y\times T$.
\end{proposition}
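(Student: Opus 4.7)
The plan is to identify the localisation as a Verdier quotient. Let $\mathcal{D}_T\subseteq\cdlc(Y\times S)$ denote the full subcategory of objects with support contained in $Y\times T$. By Lemma~\ref{lem:easycompact} together with the long exact cohomology sequence of a distinguished triangle, $\mathcal{D}_T$ is a thick triangulated subcategory; the morphisms in the statement are exactly the compatible multiplicative system of morphisms with cone in $\mathcal{D}_T$. Since $j^*$ is triangulated and annihilates $\mathcal{D}_T$ (by Lemma~\ref{lem:easycompact}\two), it factors through the Verdier quotient to give
\[
\bar{j^*}\colon\cdlc(Y\times S)/\mathcal{D}_T\lra\cdlc(Y\times U),
\]
and it suffices to show $\bar{j^*}$ is an equivalence. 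Essential surjectivity is Corollary~\ref{cor:essential-surjective}.

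For full faithfulness, given $\varphi\colon j^*F\to j^*G$ in $\cdlc(Y\times U)$, my plan is to lift $\varphi$ to a roof $F\xleftarrow{s}\tilde H\xrightarrow{t}G$ in $\cdlc(Y\times S)$ with $\cone(s)\in\mathcal{D}_T$. First represent $\varphi$ as $\tau\circ\sigma^{-1}$ for a quasi-isomorphism $\sigma\colon H\to j^*F$ and a chain map $\tau\colon H\to j^*G$, with $H\in\cdlc(Y\times U)$. Then apply the subcomplex construction of \cite[Lemma~2.3.1]{P07} to extend $H$ to $H_c\in\cdlc(Y\times S)$ with $j^*H_c\cong H$, exactly as in the proof of Corollary~\ref{cor:essential-surjective}. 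To land the arms of the lifted roof in $F$ and $G$ rather than in $j_*j^*F$ and $j_*j^*G$, I would replace $H_c$ by the fibre product of complexes of coherent sheaves
\[
\tilde H \;:=\; H_c \times_{j_*j^*F\oplus j_*j^*G} (F\oplus G),
\]
formed using the composite $H_c\hookrightarrow j_*j^*H_c\xrightarrow{j_*(\sigma\oplus\tau)}j_*j^*F\oplus j_*j^*G$ and the adjunction unit $F\oplus G\to j_*j^*(F\oplus G)$.

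The fibre of $\tilde H\to H_c$ equals the kernel of the unit $F\oplus G\to j_*j^*(F\oplus G)$, which consists of sections of $F\oplus G$ supported on $Y\times T$; since $F,G\in\cdlc(Y\times S)$, this kernel lies in $\mathcal{D}_T$, and hence $\tilde H\in\cdlc(Y\times S)$. The two projections give morphisms $s\colon\tilde H\to F$ and $t\colon\tilde H\to G$. Since $j^*$ is exact, $j^*\tilde H\cong H$, so the arms $j^*s,j^*t$ recover $\sigma,\tau$; consequently $\cone(s)$ is supported on $Y\times T$ and lies in $\cdlc(Y\times S)$, placing it in $\mathcal{D}_T$ as required. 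Faithfulness follows from an analogous fibre product argument: given $f\colon\tilde H\to G$ with $j^*f=0$, I would exploit a null-homotopy of $j^*f$ to construct a refinement $s\colon F'\to\tilde H$ in the multiplicative system with $f\circ s=0$.

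\textbf{The principal obstacle} is this lifting step. The subcomplex lemma of \cite[Lemma~2.3.1]{P07} produces the middle object $H_c$ readily given Corollary~\ref{cor:essential-surjective}, but extending the lift to morphisms requires the fibre product refinement above together with careful verification that $\tilde H$ inherits left-compact support from $H_c,F,G$, restricts correctly on $Y\times U$, and produces cones in $\mathcal{D}_T$. The crucial technical point is that the kernel of the adjunction unit $F\to j_*j^*F$ consists of sections supported on $Y\times T$, and that this functor preserves left-compact support for objects in $\cdlc(Y\times S)$.
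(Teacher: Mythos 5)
Your route is genuinely different from the paper's. The paper does not re-prove full faithfulness by hand: it invokes Rouquier's localisation theorem \cite[Remark~3.14]{Rouquier2010}, which already identifies $D(Y\times U)$ with the quotient of $D(Y\times S)$ by the subcategory of complexes supported in $Y\times T$, and then only needs to restrict $j^*$ to the left-compactly supported subcategories and quote Corollary~\ref{cor:essential-surjective} for essential surjectivity; all of the calculus-of-fractions work you are attempting is thereby outsourced to the ambient categories. That said, your fullness step is essentially salvageable: each term $\tilde H^i$ is an extension of a coherent subsheaf of $H_c^i$ by the coherent subsheaf $\ker\bigl(F^i\oplus G^i\to j_*j^*(F^i\oplus G^i)\bigr)$, so $\tilde H$ is a bounded complex of coherent sheaves with terms supported in $(Z\cup Z')\times S$ for suitable proper $Z,Z'\subseteq Y$; and since the underived $j^*$ commutes with finite limits and $j^*j_*=\id$, one gets $j^*\tilde H\cong H$, $j^*s=\sigma$, $j^*t=\tau$, hence $\cone(s)\in\mathcal{D}_T$ as you claim.

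The genuine gap is faithfulness, which is not ``analogous''. To show that a roof $F\xleftarrow{s}\tilde H\xrightarrow{t}G$ with $j^*(t\circ s^{-1})=0$ vanishes in the quotient, you must produce a refinement $s'\colon H''\to\tilde H$ in the localising class with $t\circ s'$ null-homotopic on all of $Y\times S$ (equivalently, factoring through $\mathcal{D}_T$). The hypothesis only gives, after a further quasi-isomorphism over $U$, a null-homotopy of the restriction $j^*(t\circ s')$; extending the homotopy components $h^i\colon (j^*H'')^i\to(j^*G)^{i-1}$ across $Y\times T$ is a harder extension problem than extending objects or morphisms, because the adjoint maps land in $j_*j^*G^{i-1}$ rather than in $G^{i-1}$, and forcing them back into $G$ requires shrinking the source again (e.g.\ by powers of an ideal sheaf of $T$). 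This is precisely the content of the classical localisation theorem that the paper avoids re-proving by citing \cite{Rouquier2010}, and it is absent from your proposal. If you intend a self-contained proof, this half of full faithfulness must be supplied; the remaining bookkeeping (that a fully faithful, essentially surjective exact functor annihilating exactly $\mathcal{D}_T$ identifies $\cdlc(Y\times U)$ with the stated localisation) is then routine.
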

\begin{proof}
 Following Rouquier~\cite[Remark~3.14]{Rouquier2010}, the pullback $j^*\colon D(Y\times S)\to D(Y\times U)$ is essentially surjective and has kernel given by the subcategory of complexes supported in $Y\times T$. It follows that $\cD(Y\times U)$ is equivalent as a triangulated category to the quotient of $\cD(Y\times S)$ by the subcategory of complexes whose support is contained in $Y\times T$. Now restrict $j^*$ to $\cdlc(Y\times S)$ and apply Corollary~\ref{cor:essential-surjective} to see that $j^*\colon\cdlc(Y\times S) \ra \cdlc(Y\times U)$ is essentially surjective and has kernel given by the subcategory of complexes (with left-compact support) whose support is contained in $Y\times T$. It follows that $\cdlc(Y\times U)$ is equivalent to the quotient of $\cdlc(Y\times S)$ by the subcategory of complexes supported in $Y\times T$.
\end{proof}

\subsection{Objects with proper support over the base}
 Let $S,Y$ be Noetherian schemes. Recall that $p\colon Y\times S\to Y$ and $q\colon Y\times S\to S$ are the first and second projection morphisms. 

 \begin{definition}
 An object $F\in D(Y\times S)$ is said to have \emph{proper support over $S$} if the morphism $q|_{\supp(F)}\colon \supp(F) \to S$ is proper with respect to the reduced scheme structure on the closed subset $\supp(F)\subseteq Y\times S$. 
\end{definition}

\begin{lemma}
\label{lem:lcvsproper}
Let $S,Y$ be separated schemes of finite type. Let $F\in D(Y\times S)$. If $F$ has left-compact support, then $F$ has proper support over $S$. If $S$ is proper, the converse also holds.
\end{lemma}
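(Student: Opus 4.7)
The proof has two directions and the forward direction is the substantive one; the converse follows quickly from Lemma~\ref{lem:cpt-supp-new}.

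For the forward direction, assume $F$ has left-compact support, so by Definition~\ref{def:compact} we have $\supp(F)\subseteq Z\times S$ for some proper closed subscheme $Z\subseteq Y$. The plan is to factor the restricted projection $q|_{\supp(F)}\colon \supp(F)\to S$ through a proper scheme over $S$. Since $Z\to\Spec\kk$ is proper and properness is stable under base change, the projection $Z\times S\to S$ is proper. The support $\supp(F)$, viewed as a reduced closed subscheme of $Y\times S$, is also a closed subscheme of $Z\times S$ (since $\supp(F)\subseteq Z\times S$ as closed subsets and we equip both with the reduced scheme structure). A closed immersion is proper, and the composition of proper morphisms is proper, so $q|_{\supp(F)}$ is the composition of the closed immersion $\supp(F)\hookrightarrow Z\times S$ with the proper morphism $Z\times S\to S$, hence proper.

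For the converse, assume $S$ is proper and that $F$ has proper support over $S$. Then $\supp(F)\to S$ is proper by hypothesis, and composing with the proper structure morphism $S\to\Spec\kk$ gives that $\supp(F)$ is proper over $\kk$. In other words, $F\in\cdc(Y\times S)$. Now invoke Lemma~\ref{lem:cpt-supp-new}, which states that $\cdc(Y\times S)=\cdlc(Y\times S)$ when $S$ is proper and $Y$ is separated of finite type, to conclude that $F\in\cdlc(Y\times S)$.

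The only mild technical point to watch is the compatibility of scheme structures on $\supp(F)$: the definition of proper support over $S$ specifies the reduced structure, and one must ensure that the inclusion $\supp(F)\subseteq Z\times S$ of reduced closed subsets yields a bona fide closed immersion between the corresponding reduced subschemes of $Y\times S$. This is straightforward since a closed subset of a closed subset (both reduced) is itself closed with its induced reduced structure. No deep input is required beyond the standard permanence properties of proper morphisms and the already established Lemma~\ref{lem:cpt-supp-new}.
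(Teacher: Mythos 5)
Your proof is correct and follows essentially the same route as the paper: the forward direction is the base-change-plus-closed-immersion argument the paper states tersely, and your converse (compose $\supp(F)\to S\to\Spec\kk$ and then apply Lemma~\ref{lem:cpt-supp-new}) is the paper's argument with the \cite[Tag~03GN]{stacks-project} step packaged into that lemma rather than inlined.
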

\begin{proof}
The first statement follows because $\supp(F)$ is a closed subset of $Z \times S$ for some proper subscheme $Z\subseteq Y$. For the second statement, the composition of the proper morphisms $q|_{\supp(F)}$ and $S\to \spec \kk$ is proper, so $\supp(F)$ is proper. It follows from \cite[Tag~03GN]{stacks-project} that $Z:=p(\supp(F))$ is a proper subscheme of $Y$, so $\supp(F) \subseteq Z \times S$ as required.
\end{proof}

\begin{example}
For $Y=\mathbb{A}^1$, the object $\cO_\Delta\in D(Y\times \mathbb{A}^1)$ has proper support over $\mathbb{A}^1$, but it does not have left-compact support. 
\end{example}

\begin{remark}
\label{rem:notesssurj}
The previous example can be used to show that the analogue of the important Corollary~\ref{cor:essential-surjective} is false if we replace the notion of left-compact support with that of proper support over $S$. Indeed, \cite[Lemma~2.3.1]{P07} implies that for an open immersion  $j\colon \mathbb{A}^1\hookrightarrow \mathbb{P}^1$, the restriction functor $(\id_Y\times j)^*\colon D(Y\times \mathbb{P}^1)\to D(Y\times \mathbb{A}^1)$ is essentially surjective. However, $\cO_\Delta\in D(Y\times \mathbb{A}^1)$ is not quasi-isomorphic to the restriction of an object $F\in D(Y\times \mathbb{P}^1)$ that has proper support over $\mathbb{P}^1$, because otherwise Lemma~\ref{lem:cpt-supp} and Lemma~\ref{lem:lcvsproper} would imply that $\cO_\Delta \cong (\id_Y\times j)^*F$ has left-compact support which is absurd.
\end{remark}

\subsection{Integral functors}
\label{sec:integralfunctors}
It is well known that when $S$ and $Y$ are smooth projective varieties, an object $E\in D(Y\times S)$ is the kernel for a pair of integral functors, sometimes  denoted
\[
\Phi_{E}^{S\to Y}\colon D(S)\to D(Y)
\quad\text{and}\quad 
\Phi_E^{Y\to S}\colon D(Y)\to D(S).
\]
We present here the natural extension of this statement to complexes with compact support. 

\begin{lemma} 
\label{lem:pushforwardcompact}
 Let $f\colon Y\to Y^\prime$ be a morphism of separated schemes of finite type and let $F\in D(Y)$. 
 \begin{enumerate}
 \item[\one] If $\supp(F)$ is proper over $Y'$, then $f_*F\in D(Y')$. 
 \item[\two] If $\supp(F)$ is proper, then $f_*F\in D_c(Y')$, that is, we have  $f_* \colon D_c(Y) \to D_c(Y')$.
 \end{enumerate}
\end{lemma}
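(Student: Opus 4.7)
The plan for part \one{} is to reduce to the case where $F$ is a single coherent sheaf via the standard truncation triangles, and then for such a sheaf to exploit the ideal-power filtration on its support so as to invoke Grothendieck's finiteness theorem for proper morphisms. Part \two{} will then follow from \one{} together with Lemma~\ref{lem:easycompact}\three{} and the basic fact that the image of a proper scheme under a morphism to a separated finite-type $\kk$-scheme is proper.

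For \one{}, I argue by induction on the length of the cohomological support of $F$: using the truncation triangles $\tau^{\leq n-1}F \to \tau^{\leq n}F \to H^n(F)[-n]$ and the long exact sequence for $f_*$, it suffices to prove the analogous statement for an individual coherent sheaf $G$ on $Y$ whose support is proper over $Y'$. Let $Z \subseteq Y$ be the reduced closed subscheme with underlying set $\supp(G)$ and let $\iota\colon Z \hookrightarrow Y$ denote the inclusion; the restriction $f|_Z\colon Z \to Y'$ is proper, as properness depends only on the underlying continuous map between finite-type schemes (together with separatedness, which is clear). By Noetherianity of $Y$ combined with Nakayama applied stalkwise, there exists $N \geq 1$ with $\mathcal{I}_Z^N G = 0$, so that the filtration $G \supseteq \mathcal{I}_Z G \supseteq \cdots \supseteq \mathcal{I}_Z^N G = 0$ has successive quotients of the form $\iota_* H_k$ for coherent $\mathcal{O}_Z$-modules $H_k$. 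For such quotients, $f_*(\iota_* H_k) = (f|_Z)_* H_k$, and Grothendieck's finiteness theorem for the proper morphism $f|_Z$ gives $(f|_Z)_* H_k \in D(Y')$. The long exact sequences arising from the filtration then inductively yield $f_*G \in D(Y')$.

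For \two{}, since $\supp(F)$ is proper over $\kk$ it is universally closed, so $f(\supp(F))$ is closed in $Y'$; endowed with its reduced structure it is separated and of finite type over $\kk$ and is the image of a proper scheme, hence proper by \cite[Tag~03GN]{stacks-project}. Applying Lemma~\ref{lem:easycompact}\three{} then yields $\supp(f_*F) \subseteq \overline{f(\supp(F))} = f(\supp(F))$, a closed subset of a proper scheme and therefore proper; combined with part \one{} this gives $f_*F \in \cdc(Y')$.

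The main obstacle is the argument in \one{} applied to a coherent sheaf $G$ whose own scheme-theoretic support might be naturally non-reduced: Grothendieck's finiteness theorem only applies after restriction to the reduced closed subscheme $Z$, so one must ensure that the ideal-power filtration of $G$ terminates (finiteness via Noetherianity plus stalkwise Nakayama) and that the resulting inductive long exact sequence really does propagate both coherence and boundedness in cohomological degree. Once this reduction is in place, part \two{} is immediate from the support bound in Lemma~\ref{lem:easycompact}\three{}.
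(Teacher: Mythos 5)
Your proposal is correct and is in substance the argument the paper uses: for part \one{} the paper simply cites \cite[Tag 08E0]{stacks-project}, and your dévissage (truncation to a single coherent sheaf, passage to the reduced support, the $\mathcal{I}_Z$-adic filtration, Grothendieck's finiteness theorem) is precisely the standard proof of that tag; for part \two{} the paper likewise combines Lemma~\ref{lem:easycompact}\three{} with \cite[Tag~03GN]{stacks-project}. The only step you leave implicit is that to invoke part \one{} inside part \two{} you need $\supp(F)$ to be proper over $Y'$, not just over $\kk$; this follows from the cancellation property for proper morphisms ($\supp(F)\to\Spec\kk$ proper and $Y'$ separated imply $\supp(F)\to Y'$ proper, \cite[Tag 01W6]{stacks-project}), which the paper states explicitly and which you should record.
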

\begin{proof}
 Part \one\ follows from \cite[Tag 08E0]{stacks-project}. For part \two, since $Y'$ is separated over $\kk$ and $F$ has proper support over $\kk$, \cite[Tag 01W6]{stacks-project} implies that $F$ has proper support over $Y'$, so $f_* F \in D(Y')$ by part \one. To see that $f_*F$ has proper support, it is enough by Lemma~\ref{lem:easycompact}\three\ to show that $f(\supp(F))$ is proper. This follows from \cite[Tag~03GN]{stacks-project}.
\end{proof}

An object $E\in D(Y\times S)$ is \emph{$S$-perfect} if, locally over $S$, it is quasi-isomorphic to a bounded complex of $q^{-1}(\cO_S)$-flat coherent sheaves. In this context, the definition of $S$-perfect introduced by Illusie~\cite{SGA6} does not state explicitly that the $q^{-1}(\cO_S)$-flat sheaves must be coherent, but as Lieblich~\cite[Example~2.1.2]{Lie06} remarks, these notions of $S$-perfect are nevertheless equivalent.

\begin{proposition}
\label{prop:functorpair}
Let $S, Y$ be separated schemes of finite type, and let $E\in D(Y\times S)$. If $E$ has proper support over $S$, then it provides an integral functor
\[
\Phi_{E} \colon D_c(S)\longrightarrow D_c(Y), \, F\mapsto p_*(E\otimes q^*F).
\]
 If in addition $E$ is $S$-perfect, then we obtain a second integral functor 
 \[
\Psi_{E}\colon \Dperf(Y)\longrightarrow \Dperf(S), \, F\mapsto q_*(E\otimes p^*F).
\]
\end{proposition}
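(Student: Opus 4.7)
I would prove the two parts by tracking support and perfectness, respectively.

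\textbf{The functor $\Phi_E$.} The essential input is that $F \in \cdc(S)$ forces $E \otimes q^*F$ to have proper support. Indeed, Lemma~\ref{lem:easycompact}\one-\two\ gives
\[
\supp(E \otimes q^*F) \;\subseteq\; \supp(E) \cap q^{-1}(\supp F) \;=\; \bigl(q|_{\supp E}\bigr)^{-1}(\supp F),
\]
and since $q|_{\supp E}$ is proper by hypothesis on $E$, the set $(q|_{\supp E})^{-1}(\supp F)$ is proper over $\supp F$ by stability of properness under base change, hence proper over $\kk$ since $\supp F$ is. Lemma~\ref{lem:pushforwardcompact}\two\ then yields $p_*(E \otimes q^*F) \in \cdc(Y)$.

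\textbf{The functor $\Psi_E$.} Assuming further that $E$ is $S$-perfect and that $F \in \Dperf(Y)$, I would proceed in three steps. First, since $p$ is flat and $F$ is perfect on $Y$, the pullback $p^*F$ is perfect on $Y \times S$. Second, $E \otimes p^*F$ is again $S$-perfect: locally over $S$, choose a bounded complex of $q^{-1}\cO_S$-flat coherent sheaves representing $E$ and a bounded complex of locally free sheaves representing $p^*F$; the termwise tensor product is a bounded complex of coherent sheaves, and each term, being locally a finite direct sum of copies of a $q^{-1}\cO_S$-flat sheaf, is still $q^{-1}\cO_S$-flat. Third, the support of $E \otimes p^*F$ is contained in $\supp E$ and is therefore proper over $S$. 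Finally, I would invoke the classical result that the derived pushforward under a morphism, of an $S$-perfect complex whose support is proper over the base, is a perfect complex on the base; this gives $q_*(E \otimes p^*F) \in \Dperf(S)$.

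\textbf{Main obstacle.} The support calculations are routine. The non-trivial technical input is the last step for $\Psi_E$---that $q_*$ sends $S$-perfect complexes with proper support over $S$ to perfect complexes on $S$---which ultimately reduces, via a \v{C}ech cover of $S$ by affines, to establishing finite Tor-dimension of $q_*E$ against complexes on $S$; this combines the local $q^{-1}\cO_S$-flat resolution, the projection formula, and the finiteness and coherence of higher direct images under proper support. A subsidiary delicate point (for $\Phi_E$) is boundedness of the derived tensor product $E \otimes q^*F$ when no flatness is assumed on $E$; one addresses this by replacing $F$ with a representative computing Tor through the properly supported locus, so that only finitely many Tor-sheaves contribute before pushing forward.
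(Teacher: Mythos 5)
Your proposal is correct and follows essentially the same route as the paper: the same support computation plus Lemma~\ref{lem:pushforwardcompact} for $\Phi_E$, and for $\Psi_E$ the same three steps, where your hands-on local argument that $E\otimes p^*F$ is $S$-perfect and your appeal to ``the classical result'' on pushforwards of relatively perfect complexes with proper support are exactly the paper's citations of \cite[III, Propositions~4.5 and 4.8]{SGA6}. The boundedness of the derived tensor product that you flag as a delicate point is glossed over by the paper as well, so it is not a gap relative to the paper's own argument.
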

\begin{proof}
For the first claim, given $F \in D_c(S)$, we need to show that 
$p_*\left(E \otimes q^*F\right) \in D_c(Y)$. Since the projection $q$ is flat, we have $q^*F\in D(Y \times S)$. The support of $E \otimes q^*F \in D(Y \times S)$ is closed and contained in \[ q^{-1}(\supp(F)) \cap \supp(E) \]
which is proper. Therefore, $E\otimes q^*F \in D_c(Y \times S)$, and the claim follows from Lemma \ref{lem:pushforwardcompact}.

To construct the functor $\Psi_{E}$, let $F\in \Dperf(Y)$. Then $p^*F\in D(Y \times S)$ is perfect. By \cite[III, Proposition~4.5]{SGA6} (applied with $f = \id_{Y \times S}$ and $g = q$---note that \emph{perfect} is the same as ``of finite amplitude with respect to the identity morphism''), it follows that $E \otimes p^*F \in D(Y \times S)$ is $S$-perfect. The support of $E \otimes p^*F$ is proper over $S$,  hence
 $q_*(E \otimes p^*F) \in D(S)$ by Lemma~\ref{lem:pushforwardcompact}.
On the other hand, \cite[III, Proposition~4.8]{SGA6} shows that $q_*(E \otimes p^*F)$ is $S$-perfect, and therefore perfect.
\end{proof}

\subsection{On t-structures}
\label{sec:t-structures}
Let $\cD$ be a triangulated category. Recall that a \emph{t-structure} on $\cD$ is a pair of full subcategories $(\cD^{\leq 0}, \cD^{\geq 0})$ of $\cD$ such that if for $n\in \ZZ$ we write $D^{\leq n}:=D^{\leq 0}[-n]$ and $D^{\geq n}:=D^{\geq 0}[-n]$, then we have
\begin{enumerate}
\item $\cD^{\leq -1}\subseteq \cD^{\leq 0}$; 
\item $\Hom(F,G)=0$ for $F\in \cD^{\leq 0}$ and $G\in \cD^{\geq 1}$; and 
\item for $F\in \cD$ there exists an exact triangle $\tau^{\leq 0}F\ra F\ra\tau^{\geq 1}F$
in $D$, where $\tau^{\leq 0}F\in D^{\leq 0}$ and $\tau^{\geq 1}F\in D^{\geq 1}$.
\end{enumerate}
For $a, b\in \ZZ$ with $a\leq b$, write $D^{[a,b]}:= D^{\geq a}\cap D^{\leq b}$. More generally, we  write $D^{[-\infty,b]}:= D^{\leq b}$ and $D^{[a,\infty]}:= D^{\geq a}$, and refer to the subcategory $D^{[a,b]}$ for any interval $[a,b]$ that may be infinite on one side. The \emph{heart} of the t-structure is the abelian category $D^{[0,0]}$. The inclusions $D^{\leq n}\ra D$ and $D^{\geq n}\ra D$ admit right- and left-adjoints $\tau^{\leq n}\colon D\ra D^{\leq n}$ and $\tau^{\geq n}\colon D\ra D^{\geq n}$ respectively. For $F\in D$ and $n\in \ZZ$, the corresponding \emph{truncation triangle} is the exact triangle  
\[
\tau^{\leq n}F\lra F\lra\tau^{\geq n+1}F
\]
in $D$, where $\tau^{\leq n}F\in D^{\leq n}$ and $\tau^{\geq n+1}F\in D^{\geq n+1}$. For $i\in \ZZ$, the cohomology functor $H^i\colon D\ra D^{[0,0]}$ is given by $H^i(F)=\tau^{\leq 0}\tau^{\geq 0}(F[i])$. A t-structure $(\cD^{\leq 0}, \cD^{\geq 0})$ is \emph{nondegenerate} if $\cap_n \cD^{\leq n} =\cap_n \cD^{\geq n}=\{ 0 \}$, or equivalently, if the only object $F\in D$ satisfying $H^i(F)=0$ for all $i\in \ZZ$ is the zero object. A t-structure is \emph{bounded} if $\cup_n \cD^{\leq n} =\cup_n \cD^{\geq n}=\cD$, or equivalently, if it is nondegenerate and if each $F\in D$ satisfies $H^i(F)\neq 0$ for only finitely many values of $i\in \ZZ$. For proofs of these assertions and other facts about t-structures, see Mili\v{c}i\'{c}~\cite[Chapter~4]{Milicic03}.

\begin{remark}
\label{rem:nondegenerateAP}
Both \cite{AP06} and \cite{P07} say that a t-structure is `nondegenerate' if it is bounded in the sense defined above (and hence nondegenerate in the sense defined above).
\end{remark}

Given triangulated categories $D, D^\prime$ each equipped with a t-structure, a functor $\Phi\colon D\ra D^\prime$ is \emph{left t-exact} if $\Phi(D^{\geq 0})\subseteq (D^\prime)^{\geq 0}$,  and it's \emph{right t-exact} if $\Phi(D^{\leq 0})\subseteq (D^\prime)^{\leq 0}$. The functor is \emph{t-exact} if it's both left- and right t-exact. 

\begin{lemma}
\cite[Lemma 1.1.1]{P07}
\label{lem:zero-kernel}
Let $D_1$ and $D_2$ be a pair of triangulated categories equipped with t-structures and let $\Phi\colon D_1 \ra D_2$ be a t-exact functor with $\ker \Phi=0$. Then for any interval $[a,b]$ that may be infinite on one side, we have $D_1^{[a,b]} = \{F \in D_1 \mid \Phi(F) \in D_2^{[a,b]}\}$.
\end{lemma}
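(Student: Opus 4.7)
The plan is to verify the two inclusions separately; the forward inclusion $D_1^{[a,b]}\subseteq \{F\in D_1\mid \Phi(F)\in D_2^{[a,b]}\}$ is immediate, since t-exactness means $\Phi(D_1^{\leq b})\subseteq D_2^{\leq b}$ and $\Phi(D_1^{\geq a})\subseteq D_2^{\geq a}$, so any $F\in D_1^{\geq a}\cap D_1^{\leq b}$ satisfies $\Phi(F)\in D_2^{\geq a}\cap D_2^{\leq b}$. This argument works unchanged when one endpoint is infinite.

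For the reverse inclusion, I would first observe that $D_1^{[a,b]} = D_1^{\leq b}\cap D_1^{\geq a}$ and similarly for $D_2$, so it suffices to handle the two half-infinite intervals $(-\infty,b]$ and $[a,\infty)$ separately and then take the intersection. To deal with $(-\infty,b]$, suppose $\Phi(F)\in D_2^{\leq b}$ and consider the truncation triangle
\[
\tau^{\leq b}F\lra F\lra \tau^{\geq b+1}F
\]
in $D_1$. Applying the triangulated functor $\Phi$ yields an exact triangle in $D_2$, and by t-exactness its outer vertices lie in $D_2^{\leq b}$ and $D_2^{\geq b+1}$ respectively. By uniqueness of truncation triangles (up to unique isomorphism) this must be the truncation triangle of $\Phi(F)$ at level $b$. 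Since $\Phi(F)\in D_2^{\leq b}$ we get $\tau^{\geq b+1}\Phi(F)=0$, hence $\Phi(\tau^{\geq b+1}F)=0$, and the hypothesis $\ker\Phi=0$ forces $\tau^{\geq b+1}F=0$, i.e.\ $F\in D_1^{\leq b}$. The argument for $[a,\infty)$ is entirely dual, using the triangle $\tau^{\leq a-1}F\to F\to \tau^{\geq a}F$ and the vanishing of $\tau^{\leq a-1}\Phi(F)$.

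There is no genuine obstacle here: the essential idea is that t-exactness lets us transport truncation triangles through $\Phi$, and triviality of $\ker\Phi$ then lets us detect vanishing of the ``bad'' truncation of $F$ from that of $\Phi(F)$. The only point requiring care is the bookkeeping for infinite endpoints, which is handled by noting that the above one-sided arguments never used finiteness of the other endpoint.
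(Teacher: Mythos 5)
Your proof is correct and is essentially the standard argument that the paper defers to via the citation of \cite[Lemma~1.1.1]{P07}: transport the truncation triangle of $F$ through the t-exact functor $\Phi$, identify it with the truncation triangle of $\Phi(F)$ by uniqueness, and use $\ker\Phi=0$ to conclude that the offending truncation of $F$ vanishes. No gaps.
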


\subsection{Sheaves of t-structures over the base}
 Let $S, Y$ be Noetherian schemes. We continue to write $p\colon Y\times S\to Y$ and $q\colon Y\times S\to S$ for the two projections.
 
\begin{definition}
\label{def:sheaf}
A \emph{sheaf of t-structures on $\cdlc(Y\times S)$ over $S$} is a bounded t-structure on $\cdlc(Y\times U)$ for each open subset $U\subseteq S$, such that for every open subset $j\colon U'\hookrightarrow U$, the restriction functor $(\id_Y\times j)^*\colon \cdlc(Y\times U) \ra \cdlc(Y\times U')$ is t-exact.
\end{definition}

This notion generalises that of a `t-structure on $D(Y\times S)$ that is local over $S$' \cite{P07} and that of a `sheaf of t-structures on $Y$ over $S$' when $Y$ and $S$ are projective \cite{AP06}. To justify the terminology, we extend \cite[Lemma 2.3.4]{P07} to the setting of left-compact support.

\begin{lemma} \label{lem:gluing-property}
Let $S$ be a Noetherian scheme with a finite open cover $S= \bigcup_i U_i$. Assume that we are given a sheaf of t-structures on each $\cdlc(Y\times U_i)$ over $U_i$ that agree on all pairwise intersections $Y \times (U_i \cap U_j)$. Then there exists a unique sheaf of t-structures on $\cdlc(Y \times S)$ over $S$ that restricts to the given t-structure 
on $\cdlc(Y \times U_i)$ for each $i$. Moreover, it satisfies
\begin{equation}
\label{eqn:gluing-t-str}
\cdlc^{[a,b]}(Y \times S) = \big\{ F \in D(Y \times S) \mid F|_{Y \times U_i} \in \cdlc^{[a,b]}(Y \times U_i)\text{ for all } i \big\}.
\end{equation}
\end{lemma}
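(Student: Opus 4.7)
The plan is to \emph{define} the candidate aisles by the right-hand side of \eqref{eqn:gluing-t-str} and verify the t-structure axioms directly. Uniqueness then comes for free: any sheaf of t-structures restricting to the given ones must, by t-exactness of restriction, be contained in the right-hand side of \eqref{eqn:gluing-t-str}, so proving the equality simultaneously establishes uniqueness. Note that Lemma~\ref{lem:Flcsupp2} guarantees that an object $F\in D(Y\times S)$ whose restrictions to each $Y\times U_i$ lie in $\cdlc(Y\times U_i)$ automatically lies in $\cdlc(Y\times S)$, so the formula makes sense, and the inclusion $\cdlc^{\leq -1}(Y\times S)\subseteq \cdlc^{\leq 0}(Y\times S)$ is immediate.

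The orthogonality axiom $\Hom(F,G)=0$ for $F\in \cdlc^{\leq 0}(Y\times S)$ and $G\in \cdlc^{\geq 1}(Y\times S)$ I would deduce from the \v{C}ech-to-derived-functor spectral sequence applied to $\mathbf{R}\shom(F,G)$ on the open cover $\{Y\times U_i\}$: on each intersection $Y\times (U_{i_1}\cap\cdots\cap U_{i_k})$, the restrictions of $F$ and $G$ remain in $\cdlc^{\leq 0}$ and $\cdlc^{\geq 1}$ respectively by t-exactness of restriction along open immersions in the given sheaves of t-structures on each $U_i$, so all local $\Hom$ groups vanish and hence so does $\Hom_{\cdlc(Y\times S)}(F,G)$.

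The main difficulty is the construction of truncation triangles. Here I would proceed by induction on the cardinality of the cover, the essential case being $S=U_1\cup U_2$ with $V := U_1\cap U_2$. Given $F\in \cdlc(Y\times S)$, the local truncations in each $\cdlc(Y\times U_i)$ yield triangles $A_i\to F|_{Y\times U_i}\to B_i$; after further restriction to $V$, uniqueness of truncation gives a canonical isomorphism $A_1|_{Y\times V}\cong A_2|_{Y\times V}$. To glue, I would invoke Corollary~\ref{cor:essential-surjective} to lift each $A_i$ to an object of $\cdlc(Y\times S)$, and Proposition~\ref{prop:localisation} (which identifies $\cdlc(Y\times U_i)$ as the Verdier quotient of $\cdlc(Y\times S)$ by objects supported on $Y\times(S\setminus U_i)$) to lift the morphisms $A_i\to F|_{Y\times U_i}$ to roofs in $\cdlc(Y\times S)$ whose denominator has cone supported off $U_i$. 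The indeterminacy in these lifts is controlled by the orthogonality already proved, and a standard octahedral assembly, essentially as in the proof of \cite[Lemma~2.3.4]{P07}, produces a triangle $\tau^{\leq 0}F\to F\to \tau^{\geq 1}F$ in $\cdlc(Y\times S)$ whose restriction to each $Y\times U_i$ recovers the local truncation triangle; by \eqref{eqn:gluing-t-str} the outer terms lie in the candidate aisles.

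Boundedness of the glued t-structure is immediate from finiteness of the cover and boundedness of each local t-structure, and t-exactness of restriction along any open immersion $j\colon U'\hookrightarrow S$ follows by intersecting the cover $\{U_i\}$ with $U'$ and applying \eqref{eqn:gluing-t-str} on both sides. The principal obstacle throughout is the gluing step: morphisms in the Verdier quotient do not lift canonically, so orthogonality must be used systematically to eliminate indeterminacy in the lifts. This is precisely the technical heart of Polishchuk's argument, and the definition of left-compact support has been engineered so that Corollary~\ref{cor:essential-surjective} and Proposition~\ref{prop:localisation} remain available, allowing the same gluing machinery to run in our setting.
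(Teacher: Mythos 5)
Your proposal is correct and follows the same overall strategy as the paper: define the aisles by the right-hand side of \eqref{eqn:gluing-t-str}, observe via Lemma~\ref{lem:Flcsupp2} that this lands in $\cdlc(Y\times S)$, verify the axioms, and get uniqueness essentially for free from t-exactness of restriction. Your \v{C}ech spectral sequence argument for orthogonality is equivalent to the paper's, which instead refines to an affine cover, shows $\shom_S(F,G)=q_*\shom(F,G)\in \DD^{\geq 1}(\Qcoh(S))$ locally and hence globally, and concludes by left-exactness of $\Gamma$. The one genuine divergence is the construction of truncations: the paper reduces (by boundedness and induction on cohomological length) to producing $H^0(F)\to F$ for $F\in\cdlc^{\geq 0}(Y\times S)$, and then glues the local objects $H^0(F|_{Y\times U_i})$ and the local morphisms over the \emph{entire} finite cover in one step by citing \cite[Theorem 2.1.9, Lemma 2.1.10, Corollary 2.1.11]{AP06} --- this works precisely because the objects being glued lie in the local hearts and so have no negative self-Exts. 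You instead propose a binary induction on the cover with lifts via Corollary~\ref{cor:essential-surjective}, Verdier roofs via Proposition~\ref{prop:localisation}, and an octahedral assembly in the style of \cite{P07}; this amounts to reproving the two-set case of the Abramovich--Polishchuk gluing theorem by hand, and the indeterminacy of lifts is indeed controlled by exactly the Ext-vanishing you identify. Your route is viable but carries more unwritten technical burden (including checking at each inductive stage that the glued t-structure is again a sheaf of t-structures so the induction can continue); the paper's citation of the ready-made gluing results for arbitrary finite covers is cleaner. One small point: your "uniqueness for free" argument gives only the containment $\tilde{D}^{[a,b]}\subseteq D^{[a,b]}$ directly, but this does suffice, since two t-structures with nested aisles and co-aisles coincide by taking orthogonals.
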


\begin{proof} 
We first show that there is a unique t-structure on $\cdlc(Y \times S)$ such that the restriction functors $\cdlc(Y \times S) \to \cdlc(Y \times U_i)$ are t-exact for the given t-structures for each $U_i$.

\emph{Construction.}
 The right hand side of \eqref{eqn:gluing-t-str} is contained in $\cdlc(Y \times S)$ by Lemma~\ref{lem:Flcsupp2}. We first prove that \eqref{eqn:gluing-t-str} defines a t-structure on $\cdlc(Y \times S)$. The definition gives $\cdlc^{\leq -1}(Y\times S)\subseteq \cdlc^{\leq 0}(Y\times S)$. To show that $F \in \cdlc^{\leq 0}(Y \times S)$ and $G \in \cdlc^{\geq 1}(Y \times S)$ satisfy $\Hom(F,G)=0$, choose a finite open affine cover $U_i = \bigcup_j U_{ij}$ to obtain a finite open affine cover $S= \bigcup_{ij}U_{ij}$. The t-structure on $\cdlc(Y \times U_{ij})$ induced from that on $\cdlc(Y \times U_i)$ satisfies 
 \[
 F|_{Y \times U_{ij}} \in \cdlc^{\leq 0}(Y \times U_{ij})\quad \text{ and }\quad G|_{Y \times U_{ij}} \in \cdlc^{\geq 1}(Y \times U_{ij}),
 \]
 so $\Hom^{\leq 0}(F|_{Y \times U_{ij}}, G|_{Y \times U_{ij}})=0$. For $q\colon Y\times S\to S$, consider $\shom_S(F,G):=q_*\shom(F,G) \in \DD(\Qcoh(S))$. The complex of sheaves $\shom_S(F,G)|_{U_{ij}} = \shom_{U_{ij}}(F|_{Y \times U_{ij}}, G|_{Y \times U_{ij}})$ is obtained from the complex of $H^0(\cO_{U_{ij}})$-modules $\Hom(F|_{Y \times U_{ij}}, G|_{Y \times U_{ij}})$ by sheafification, so with respect to the standard t-structures we have that $\shom_S(F,G)|_{U_{ij}} \in \DD^{\geq 1}(\Qcoh(U_{ij}))$ and hence $\shom_S(F,G) \in \DD^{\geq 1}(\Qcoh(S))$.
Since $\Hom^\bullet(F, G) = \Gamma(\shom(F, G)) = \Gamma(S, \shom_S(F, G))$, and since $\Gamma$ is left exact, it follows that $\Hom^0(F, G) = 0$. It remains to define the truncation functors. By boundedness of the t-structure on each $\cdlc(Y \times U_{i})$ and an induction argument, we need only prove that for any $F \in \cdlc^{\geq 0}(Y \times S)$, the left truncation $H^0(F)$ exists, as does a morphism $H^0(F) \ra F$ whose cone lies in $\cdlc^{\geq 1}(Y \times S)$. For this, we have $F|_{Y \times U_i} \in\cdlc^{\geq 0}(Y \times U_i)$ for each $i$, so the left truncation $H^0(F|_{Y \times U_i})$ exists with a morphism $H^0(F|_{Y \times U_i}) \lra F|_{Y \times U_i}$ whose cone lies in $\cdlc^{\geq 1}(Y \times U_i)$. By \cite[Theorem 2.1.9, Corollary 2.1.11]{AP06}, the objects $H^0(F|_{Y \times U_i}) \in \cdlc(Y \times U_i)$ glue to give an object in $D(Y \times S)$ which we define to be $H^0(F)$. For every $i\in \ZZ$, the object $H^0(F)|_{Y \times U_i} \cong H^0(F|_{Y \times U_i})$ has left-compact support, and hence so does $H^0(F)$ by Lemma~\ref{lem:Flcsupp2}. We may also glue the morphisms $H^0(F|_{Y \times U_i}) \ra F|_{Y \times U_i}$ into a global morphism $H^0(F) \ra F$ by \cite[Lemma 2.1.10]{AP06}. Since the cone of this global morphism restricts to the cone of each local morphism, which lies in $\cdlc^{\geq 1}(Y \times U_i)$, it follows from \eqref{eqn:gluing-t-str} that the cone of the global morphism lies in $D_c^{\geq 1}(Y \times S)$ as required. Thus  \eqref{eqn:gluing-t-str} defines a t-structure.

We now show that \eqref{eqn:gluing-t-str} induces the given t-structure on each $\cdlc(Y \times U_i)$. The restriction from $Y \times S$ to $Y \times U_i$ is t-exact, so we need only show for any $i$ and any $F_i \in \cdlc^{[a,b]}(Y \times U_i)$ that there exists $F \in \cdlc^{[a,b]}(Y \times S)$ such that $F|_{Y \times U_i} \cong F_i$. By Corollary~\ref{cor:essential-surjective}, there exists $G \in \cdlc(Y \times S)$ such that $G|_{Y \times U_i} \cong F_i$. We take the truncation $F:=\tau^{\geq a}\tau^{\leq b} G \in \cdlc^{[a,b]}(Y \times S)$. Both $\tau^{\leq a-1}G$ and $\tau^{\geq b+1}G$ restrict to trivial objects on $\cdlc(Y \times U_i)$, so $F|_{Y \times U_i} \cong F_i$ as required. 

\emph{Uniqueness.}
Next, we show that the t-structure \eqref{eqn:gluing-t-str} is the unique t-structure on $\cdlc(Y \times S)$ over $S$ which induces the given t-structures on $\cdlc(Y \times U_i)$. Let $\tilde{D}_{lc}^{[a,b]}(Y \times S)$ be another such t-structure. Any $F \in \tilde{D}_{lc}^{[a,b]}(Y \times S)$ satisfies $F|_{Y \times U_i} \in \cdlc^{[a,b]}(Y \times U_i)$ for all $i$, so $F \in \cdlc^{[a,b]}(Y \times S)$. On the other hand, if we truncate any $G \in \cdlc^{[a,b]}(Y \times S)$ with respect to the second t-structure, then $(\tilde{\tau}^{\leq a-1}G)|_{Y \times U_i} = \tau^{\leq a-1}(G|_{Y \times U_i}) = 0$ for each $i$. The uniqueness of gluing from \cite[Corollary 2.1.11]{AP06} implies that $\tilde{\tau}^{\leq a-1}G = 0$, and similarly we have $\tilde{\tau}^{\geq b+1}G = 0$. It follows that $G \in \tilde{D}_{lc}^{[a,b]}(Y \times S)$ and hence $D_{lc}^{[a,b]}(Y \times S) = \tilde{D}_{lc}^{[a,b]}(Y \times S)$ as required.

\emph{Sheafify.} It remains to show that our given t-structure on $\cdlc(Y \times S)$ extends uniquely to a sheaf of t-structures over $S$. To construct the associated t-structure over $U \subset S$, replace $S$ by $U$ and $U_i$ by $U \cap U_i$ in the construction and proof of uniqueness above. One easily verifies the sheaf property by applying \eqref{eqn:gluing-t-str} for $S = \bigcup_i U_i$ and analogously for $U = \bigcup_i U \cap U_i$ simultaneously, along with the sheaf property for the
given t-structures on each $U_i$.
\end{proof}

\begin{remark}
\label{rmk:degree-preserving}
The following rephrasing of the uniqueness result in the above theorem is useful in practice: Given a sheaf of t-structures on $\cdlc(Y \times S)$ over $S$ and an object $F \in \cdlc(Y \times S)$ satisfying $F|_{Y \times U_i}\in\cdlc^{[a,b]}(Y\times U_i)$ for each $i$, then $F\in \cdlc^{[a,b]}(Y\times S)$. 
\end{remark}

\begin{lemma}
\label{lem:tensorlinebundle}
For any schemes $Y$ and $S$, suppose that $\cdlc(Y\times S)$ has a sheaf of t-structures over $S$. For $L\in \Pic(S)$, the functor $\cdlc(Y \times S) \lra \cdlc(Y \times S)$ sending $F$ to $F\otimes q^*L$ is t-exact.
\end{lemma}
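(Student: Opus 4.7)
The plan is to reduce to the local statement, which is trivial, by exploiting the sheaf property of the t-structure via Remark~\ref{rmk:degree-preserving}. First, I would note that the functor $F \mapsto F \otimes q^*L$ does preserve $\cdlc(Y\times S)$: this follows from Lemma~\ref{lem:cpt-supp}\one. So the claim makes sense, and it suffices to verify that if $F\in \cdlc^{[a,b]}(Y\times S)$, then $F\otimes q^*L \in \cdlc^{[a,b]}(Y\times S)$ for every interval $[a,b]$.

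Next, since $S$ is Noetherian and hence quasi-compact, and $L$ is locally trivial, I would choose a finite open cover $S=\bigcup_i U_i$ such that $L|_{U_i}\cong \cO_{U_i}$ for each $i$. Denote by $j_i\colon U_i\hookrightarrow S$ the open immersions, and again by $j_i$ the induced open immersions $Y\times U_i\hookrightarrow Y\times S$ (abusing notation slightly). Because pullback commutes with tensor product,
\[
(F\otimes q^*L)|_{Y\times U_i} \;\cong\; F|_{Y\times U_i} \otimes q^*(L|_{U_i}) \;\cong\; F|_{Y\times U_i}.
\]
By the definition of a sheaf of t-structures (Definition~\ref{def:sheaf}), the restriction functor $j_i^*\colon \cdlc(Y\times S)\to \cdlc(Y\times U_i)$ is t-exact, so $F|_{Y\times U_i}\in \cdlc^{[a,b]}(Y\times U_i)$ for each $i$. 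Combining, $(F\otimes q^*L)|_{Y\times U_i}\in \cdlc^{[a,b]}(Y\times U_i)$ for every $i$.

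Finally, I would invoke Remark~\ref{rmk:degree-preserving} (a direct consequence of Lemma~\ref{lem:gluing-property}) to conclude from the local membership above that $F\otimes q^*L\in \cdlc^{[a,b]}(Y\times S)$. Taking $[a,b]=[-\infty,0]$ gives right t-exactness, and taking $[a,b]=[0,\infty]$ gives left t-exactness, so the functor is t-exact. There is no real obstacle here; the only mild point is ensuring the trivialising cover is finite so that Lemma~\ref{lem:gluing-property} applies, which is automatic by Noetherian quasi-compactness.
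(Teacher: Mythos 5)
Your proposal is correct and follows essentially the same route as the paper: trivialise $L$ on a finite open cover, use t-exactness of restriction from the sheaf-of-t-structures property to see that $(F\otimes q^*L)|_{Y\times U_i}\cong F|_{Y\times U_i}$ lies in $\cdlc^{[a,b]}(Y\times U_i)$, and conclude via Remark~\ref{rmk:degree-preserving}. The paper's proof is a condensed version of exactly this argument, including the appeal to Lemma~\ref{lem:cpt-supp} for well-definedness.
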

\begin{proof} 
 The functor is well-defined by Lemma~\ref{lem:cpt-supp}. Let $S= \bigcup_i U_i$ be an open cover such that $L\vert_{U_i}\cong \mathcal{O}_{U_i}$ for each $i$. Then for $F\in \cdlc^{[a,b]}(Y\times S)$, we have that $(F\otimes q^*L)\vert_{Y\times U_i} \cong F\vert_{Y\times U_i}$ lies in $\cdlc^{[a,b]}(Y\times U_i)$ for each $i$, because restriction to an open subset for a sheaf of t-structures is t-exact. The result follows from Remark~\ref{rmk:degree-preserving}.
\end{proof}

The main result of this section provides a partial converse (see \cite[Theorem~2.1.4]{AP06}):

\begin{theorem}
\label{thm:sheaf-criterion}
Let $S$ be a quasi-projective scheme, let $L$ be an ample line bundle on $S$, and let $(\cdlcn(Y \times S), \cdlcp(Y \times S))$ be a bounded t-structure on $\cdlc(Y\times S)$ such that the functor 
\[
\cdlc(Y \times S) \lra \cdlc(Y \times S), \quad F \longmapsto F\otimes q^*L
\]
is t-exact. Then we obtain by restriction a sheaf of t-structures on $\cdlc(Y\times S)$ over $S$.
\end{theorem}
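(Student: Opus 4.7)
The plan is to define the t-structure on each open $U \subseteq S$ by restriction from the given t-structure on $\cdlc(Y \times S)$. Since $S$ is quasi-projective and $L$ is ample, the principal open subsets $S_s := \{s \neq 0\}$ for sections $s \in H^0(S, L^{m})$ form a basis of the topology on $S$. My strategy is to first construct the t-structure on such principal opens directly, then glue using Lemma~\ref{lem:gluing-property} to obtain t-structures for arbitrary opens $U \subseteq S$, and verify t-exactness of the resulting restriction functors.

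For $U = S_s$ with open immersion $j\colon Y\times U \hookrightarrow Y \times S$, I define
\[
\cdlcn(Y \times U) := \big\{ F \in \cdlc(Y \times U) : F \cong j^*\tilde F \text{ for some } \tilde F \in \cdlcn(Y \times S)\big\},
\]
and analogously for $\cdlcp(Y \times U)$. The shift axiom $\cdlcn(Y \times U)[1] \subseteq \cdlcn(Y \times U)$ is immediate. For the existence of truncation triangles, any $F \in \cdlc(Y \times U)$ lifts via Corollary~\ref{cor:essential-surjective} to some $\tilde F \in \cdlc(Y \times S)$; truncating $\tilde F$ there and restricting along $j^*$ yields the desired triangle in $\cdlc(Y \times U)$. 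Boundedness of the resulting t-structure is inherited from the given one by the same lifting argument.

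The main obstacle is the $\Hom$-vanishing axiom: for $F \in \cdlcn(Y \times U)$ and $G \in \cdlcp(Y \times U)[1]$, I must show $\Hom_U(F, G) = 0$. Choose lifts $\tilde F \in \cdlcn(Y \times S)$ and $\tilde G \in \cdlcp(Y \times S)[1]$. The section $s$ induces a canonical morphism $\sigma_s\colon \tilde G \to \tilde G \otimes q^*L^{m}$ whose cone is supported on $Y \times V(s)$. Proposition~\ref{prop:localisation} exhibits $\cdlc(Y \times U)$ as a Verdier quotient of $\cdlc(Y \times S)$ by the thick subcategory of complexes supported on $Y \times V(s)$. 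The standard colimit formula for Hom in such a quotient, combined with a cofinality argument exploiting the ampleness of $L$ (powers of $s$ generate enough of the relevant ideal sheaves), yields
\[
\Hom_U(F, G) \;=\; \varinjlim_{k \geq 0}\, \Hom_S\!\left(\tilde F,\; \tilde G \otimes q^*L^{km}\right),
\]
where the transition maps are induced by iterating $\sigma_s$. The t-exactness hypothesis on $(-)\otimes q^*L$ guarantees $\tilde G \otimes q^*L^{km} \in \cdlcp(Y \times S)[1]$ for every $k$, so every term in the colimit vanishes.

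To pass from principal opens to an arbitrary open $U \subseteq S$, cover $U$ by finitely many principal opens $U_i = S_{s_i}$ (possible since $S$ is Noetherian) and apply Lemma~\ref{lem:gluing-property}. Its hypotheses are satisfied because on each intersection $U_i \cap U_j = S_{s_i s_j}$, the two induced t-structures coincide, both being described as the image of the t-structure on $\cdlc(Y \times S)$ under restriction. T-exactness of restriction functors between nested opens $U' \subseteq U$ is immediate from the construction: if $F \in \cdlcn(Y \times U)$ with lift $\tilde F \in \cdlcn(Y \times S)$, then $F|_{Y\times U'} = \tilde F|_{Y\times U'}$ lies in $\cdlcn(Y \times U')$ by definition, and analogously for $\cdlcp$. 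The resulting data constitutes a sheaf of t-structures on $\cdlc(Y \times S)$ over $S$ that coincides with the given t-structure on the global sections $U = S$, as required.
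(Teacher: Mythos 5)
Your core computation --- the $\Hom$-vanishing on a principal open $S_s$ via the colimit formula $\Hom_U(j^*\tilde F,j^*\tilde G)\cong\varinjlim_k\Hom_S(\tilde F,\tilde G\otimes q^*L^{km})$ together with t-exactness of $-\otimes q^*L$ --- is a legitimate and rather clean alternative to the paper's argument. The paper instead represents a morphism in the localised category by a roof $F\leftarrow F'\to G$ and uses Lemma~\ref{lem:supportviazeroemaps}, Lemma~\ref{lem:coh-support} and Lemma~\ref{lem:quotient-closed} to replace $F'$ by $\tau_t^{\leq 0}F'$, which works \emph{uniformly for an arbitrary open} $U$, since Lemma~\ref{lem:supportviazeroemaps} allows the complement $T$ to be the common zero locus of several sections. (Your cofinality claim is the content of Deligne's lemma and deserves a reference or a proof via $j_*j^*G=\varinjlim G\otimes q^*L^{km}$, but it is true; also note the shift convention slip: $\cdlc^{\geq 1}=\cdlc^{\geq 0}[-1]$, not $[1]$.)

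The genuine gap is in the passage from principal opens to arbitrary opens. Lemma~\ref{lem:gluing-property} does not take as input a t-structure on each $\cdlc(Y\times U_i)$: it requires a \emph{sheaf of t-structures on $\cdlc(Y\times U_i)$ over $U_i$} (its proof refines each $U_i$ to an affine cover and needs the induced t-structures on those affine pieces, as well as on all $U\cap U_i$ in the sheafification step). Producing a sheaf of t-structures over $U_i$ from the single t-structure you have constructed on $\cdlc(Y\times U_i)$ is precisely an instance of the theorem being proved, with $S$ replaced by $U_i$ and $L$ by $L|_{U_i}$; since a principal open of a quasi-projective scheme need not be affine, this reduction does not terminate and the argument is circular as written. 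To repair it you would either have to prove the $\Hom$-vanishing for arbitrary opens directly (which your colimit formula does not give, because the complement of a general open is cut out by several sections), or choose the $U_i$ to be affine principal opens (ampleness permits this) and then rerun the construction step of Lemma~\ref{lem:gluing-property} with weakened hypotheses, verifying by hand that the glued t-structure on an arbitrary $U$ agrees with the one obtained by restriction from $S$ and that restriction between arbitrary nested opens is t-exact. None of this is done in the proposal.
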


Towards this goal, let $T \subseteq S$ be a closed subset. Consider the subcategory
\[
\cdlc(Y \times S)_T = \left\{E \in \cdlc(Y \times S)\mid \supp E \subseteq Y \times T \right\} = D(Y \times S)_T \cap \cdlc(Y \times S)
\]
of objects with left-compact support whose support lies over $T$. Our proof follows closely that of \cite[Theorem~2.1.4]{AP06}, beginning with two results on the category $\cdlc(Y\times S)_T$. First we recall the following Lemma (the proof of which does not require $Y$ to be smooth or projective):

\begin{lemma}[{\cite[Lemma 2.1.5]{AP06}}] \label{lem:supportviazeroemaps}
Let $f_1, \dots, f_n$ be sections of some line bundle $L$ on $S$ such that $T$ is the set of common zeroes of $f_1, \dots, f_n$, and let $F \in D(Y \times S)$. Then $F \in D(Y \times S)_T$ if and only if there exists $d > 0$ such that the morphisms
$f_{i_1}\cdot \dots \cdot f_{i_d} \colon F \to F \otimes q^*L^d$ are zero for all sequences $(i_1, \dots, i_d)$ of length $d$.
\end{lemma}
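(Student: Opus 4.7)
The plan is to prove the two directions separately; only one is nontrivial.

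\textbf{The ``if'' direction.} This is immediate. Suppose all length-$d$ products vanish, and let $(y,s)\in Y\times S$ with $s\notin T$. Choose $i$ with $f_i(s)\neq 0$; trivialising $L$ in an affine neighbourhood of $s$, the section $f_i^d$ becomes a function that is a unit at $s$, so the morphism $f_i^d\colon F \to F\otimes q^*L^d$ is, at the stalk $(y,s)$, multiplication by a unit, and hence an isomorphism. Since $f_i^d$ is a product of length $d$, it is zero by hypothesis, forcing $F_{(y,s)}=0$. Thus $\supp(F)\subseteq Y\times T$.

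\textbf{The ``only if'' direction.} Proceed by induction on the amplitude $b-a$ of the interval $[a,b]$ for which $F\in D^{[a,b]}(Y\times S)$. For the base case, $F$ is a shift of a coherent sheaf $G$ with $\supp(G)\subseteq Y\times T$. Since $F$ has left-compact support, replace $Y$ by a proper closed subscheme containing the first projection of $\supp(G)$, so that $Y\times S$ is Noetherian. Cover $S$ by finitely many affines $U_\alpha$ trivialising $L$; on each $Y\times U_\alpha$, the ideal sheaf $J_\alpha$ generated by $q^*f_1,\dots,q^*f_n$ has $Y\times(T\cap U_\alpha)$ as its zero locus, which contains $\supp(G|_{Y\times U_\alpha})$, so by Noetherianness there is $N_\alpha$ with $J_\alpha^{N_\alpha}\cdot G|_{Y\times U_\alpha}=0$. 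Taking $d=\max_\alpha N_\alpha$, every length-$d$ product of the $f_i$'s, viewed locally via the chosen trivialisations as a section of $J_\alpha^d$, annihilates $G$ on each piece and hence as a global morphism.

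For the inductive step, pick an intermediate $k$ and consider the truncation triangle $\tau^{\leq k}F \to F \to \tau^{\geq k+1}F$. By t-exactness of $-\otimes q^*L$ (and hence of $-\otimes q^*L^d$ for every $d$), multiplication by any $f_{i_1}\cdots f_{i_m}$ commutes with the truncations $\tau^{\leq k}$ and $\tau^{\geq k+1}$. By the inductive hypothesis there exist $d_1,d_2$ such that every length-$d_1$ (resp.\ length-$d_2$) product vanishes on $\tau^{\leq k}F$ (resp.\ $\tau^{\geq k+1}F$); I claim $d=d_1+d_2$ works for $F$. Given a length-$d$ product, split it as $g\cdot h$ with $h$ of length $d_1$ and $g$ of length $d_2$. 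The map $h\colon F\to F\otimes q^*L^{d_1}$ restricts to zero on $\tau^{\leq k}F$ (via the commutation with truncation, together with the inductive vanishing on $\tau^{\leq k}F$), so applying $\Hom(-,F\otimes q^*L^{d_1})$ to the truncation triangle yields a factorisation $h=h'\circ p$, where $p\colon F\to\tau^{\geq k+1}F$ is the canonical projection and $h'\colon\tau^{\geq k+1}F\to F\otimes q^*L^{d_1}$. Then $g\circ h = g\circ h'\circ p$, and naturality of multiplication by $g$ (as a natural transformation $\id\Rightarrow(-\otimes q^*L^{d_2})$) identifies $g\circ h'$ with $(h'\otimes\id_{q^*L^{d_2}})\circ g_{\tau^{\geq k+1}F}$, which vanishes by the inductive hypothesis on $\tau^{\geq k+1}F$.

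The main delicate point will be the diagram chase in the inductive step: both the factorisation of $h$ through $\tau^{\geq k+1}F$ and the naturality identification for $g\circ h'$ rely essentially on the t-exactness of $-\otimes q^*L$, which guarantees that multiplication by sections of powers of $L$ interacts cleanly with truncations. Everything else reduces to the standard Noetherian annihilator argument in the base case.
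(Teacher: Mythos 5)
Your proof is correct and follows essentially the same route as the source the paper cites for this statement (\cite[Lemma 2.1.5]{AP06}, which the paper quotes without reproving): the ``if'' direction by inverting some $f_i$ off $T$, and the ``only if'' direction by the Noetherian annihilator argument for a single sheaf followed by d\'evissage along standard truncation triangles, splitting a length-$(d_1+d_2)$ product across the triangle $\tau^{\leq k}F \to F \to \tau^{\geq k+1}F$ via naturality of the multiplication maps. One small point: your appeal to left-compact support in the base case is both unavailable (the lemma assumes nothing about the support of $F$ in the $Y$-direction) and unnecessary --- under the paper's standing conventions $Y\times S$ is already Noetherian, so the annihilator argument for the ideal sheaf $J_\alpha$ applies directly on a finite affine cover.
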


For any $i\in \ZZ$, we write $H^i_t$ for the $i$-th cohomology functor with respect to the t-structure on $\cdlc(Y\times S)$ listed as an assumption in Theorem~\ref{thm:sheaf-criterion}.

\begin{lemma}
\label{lem:coh-support}
Under the assumptions of Theorem~\ref{thm:sheaf-criterion}, let $T\subseteq S$ be a closed subset and let $F\in D(Y\times S)$. Then $F\in \cdlc(Y\times S)_T$ if and only if $H_t^i(F)\in \cdlc(Y\times S)_T$ for all $i\in \ZZ$.
\end{lemma}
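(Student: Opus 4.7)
The plan is to leverage the characterisation of support provided by Lemma \ref{lem:supportviazeroemaps} together with functoriality of the t-structure cohomology functors. First, since $L$ is ample on $S$, there exists $k > 0$ and global sections $f_1, \dots, f_n \in \Gamma(S, L^k)$ whose common zero locus is $T$. Replacing $L$ by $L^k$ (which is still ample, and for which $- \otimes q^*L^k$ remains t-exact as a $k$-fold composite of t-exact functors), I may assume the sections lie in $\Gamma(S, L)$. The crucial point is that, by t-exactness, $H^i_t$ commutes with $- \otimes q^*L^d$ for every $d \geq 0$ and hence with the morphism $f_{i_1}\cdots f_{i_d}\colon \id \Rightarrow - \otimes q^*L^d$.

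For the forward direction, suppose $F \in \cdlc(Y\times S)_T$. By Lemma \ref{lem:supportviazeroemaps} there is $d > 0$ such that every composite $f_{i_1}\cdots f_{i_d}\colon F \to F \otimes q^*L^d$ vanishes. Applying the functor $H^i_t$ and using the commutation noted above, the induced morphism $f_{i_1}\cdots f_{i_d}\colon H^i_t(F) \to H^i_t(F) \otimes q^*L^d$ also vanishes for all $(i_1,\dots,i_d)$. Lemma \ref{lem:supportviazeroemaps} then yields $H^i_t(F) \in D(Y\times S)_T$, and since $H^i_t(F)$ lies in the heart which is a subcategory of $\cdlc(Y\times S)$, we obtain $H^i_t(F) \in \cdlc(Y\times S)_T$.

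For the converse, boundedness of the t-structure ensures that only finitely many $H^i_t(F)$ are nonzero; say $H^i_t(F) = 0$ for $i \notin [a,b]$. I proceed by induction on $b - a$. If $a = b$, then $F$ is a shift of an object of the heart and lies in $\cdlc(Y\times S)_T$ by hypothesis. If $a < b$, consider the truncation triangle
\[
\tau^{\leq a}_t F \longrightarrow F \longrightarrow \tau^{\geq a+1}_t F.
\]
Both outer terms have cohomological amplitude strictly shorter than $[a,b]$ and inherit the hypothesis, hence lie in $\cdlc(Y\times S)_T$ by induction. The long exact sequence of ordinary cohomology sheaves associated to this triangle shows each $\mathcal{H}^j(F)$ is an extension of cohomology sheaves of the two outer terms, so $\supp(F) \subseteq Y \times T$. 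Combined with $F \in \cdlc(Y\times S)$, this gives $F \in \cdlc(Y\times S)_T$.

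The main obstacle is bookkeeping: the argument juggles three layers of ``cohomology'', namely the t-structure cohomology $H^i_t$ (used to access Lemma \ref{lem:supportviazeroemaps} in both directions), the ordinary cohomology sheaves (which govern the notion of support), and the filtration of $\cdlc$ by support over $T$. The forward implication is the genuinely substantive step, since it requires t-exactness of $- \otimes q^*L$ precisely in order to transport the vanishings from $F$ to each $H^i_t(F)$; the converse is essentially a devissage using the stability of support under extensions.
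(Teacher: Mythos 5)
Your proof is correct and follows essentially the same route as the paper: the direction from $F$ to its cohomology objects is handled via Lemma~\ref{lem:supportviazeroemaps} together with t-exactness of $-\otimes q^*L$, and the converse is the same devissage the paper phrases as ``$F$ is a finite extension of its finitely many $H^i_t(F)$'', which your truncation-triangle induction merely spells out. No gaps.
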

\begin{proof}
Suppose first that $H_t^i(F)\in \cdlc(Y\times S)_T$ for all $i\in \ZZ$. Let $Z_i\subseteq Y$ be a proper subscheme satisfying $\supp H^i_t(F) \subseteq Z_i\times T$, where $Z_i$ is empty if $H_t^i(F)=0$. The given t-structure is bounded, so  $F$ is a finite extension of only finitely many cohomology sheaves $H_t^i(F)$.  Therefore $\supp(F)\subseteq (\bigcup_{i} Z_i)\times T$ where $\bigcup_{i} Z_i$ is a proper subscheme of $Y$, giving $F\in \cdlc(Y\times S)_T$.

For the opposite implication, let $L$ denote an ample line bundle on $S$. Replacing $L$ by a suitable power, we may assume that $T$ is the common zero-locus of sections $f_1, \cdots, f_n$ of $L$. We apply Lemma \ref{lem:supportviazeroemaps} to obtain $d$ for which 
$f=f_{i_1}\cdots f_{i_d}\colon F \lra F\otimes q^*L^d$ is the zero map for all such sequences. By assumption, tensoring with $q^*L$ is t-exact for the t-structure in question, so it commutes with taking cohomology $H^i_t$. Therefore, 
\[ H^i_t(f) \colon H^i_t(F) \to H^i_t(F) \otimes q^*L^d \]
is the zero map for all such sequences $i_1, \dots, i_d$. The reverse direction of Lemma \ref{lem:supportviazeroemaps} gives
$H^i_t(F) \in D(Y \times S)_T$. Combined with $H^i_t(F) \in \cdlc(Y \times S)$ by definition of a t-structure on $\cdlc(T \times S)$, this proves our claim.
\end{proof}

\begin{lemma}
\label{lem:quotient-closed}
Under the assumptions of Theorem~\ref{thm:sheaf-criterion}, let $\cA$ denote the heart of the given t-structure. Then for every closed subset $T\subseteq S$, the subcategory $\cdlc(Y\times S)_T\cap \cA$ of the abelian category $\cA$ is closed under subobjects, quotients and extensions.
\end{lemma}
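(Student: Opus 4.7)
The plan is to verify closure under each of the three operations separately. Every object of $\cA$ lies in $\cdlc(Y\times S)$ by definition, so it automatically has left-compact support; the only point to check in each case is therefore that the support is contained in $Y\times T$.

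For extensions, given a short exact sequence $0 \to F' \to F \to F''\to 0$ in $\cA$ with $F', F''\in \cdlc(Y\times S)_T$, the associated distinguished triangle in $\cdlc(Y\times S)$ yields a long exact sequence of standard cohomology sheaves after derived restriction to the residue field at any point $y\in Y\times S$. This forces $\supp(F) \subseteq \supp(F')\cup \supp(F'')$, which is contained in $(Z'\cup Z'')\times T$ for proper closed subschemes $Z', Z''\subseteq Y$, so $F\in \cdlc(Y\times S)_T$.

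For subobjects and quotients the idea is to reduce to the characterisation supplied by Lemma~\ref{lem:supportviazeroemaps}. After replacing $L$ by a sufficiently high power, choose sections $f_1, \ldots, f_n$ of $L$ whose common zero locus is $T$, and apply Lemma~\ref{lem:supportviazeroemaps} to $F$ to obtain $d>0$ such that $f_{i_1}\cdots f_{i_d}\colon F \to F \otimes q^*L^d$ vanishes for every sequence $(i_1,\ldots,i_d)$. Since multiplication by $f_{i_1}\cdots f_{i_d}$ is a natural transformation $\mathrm{id}\to -\otimes q^*L^d$ of functors on $\cdlc(Y\times S)$, any subobject $F'\hookrightarrow F$ in $\cA$ fits into a commutative square whose right vertical arrow is zero and whose bottom arrow is still a monomorphism in $\cA$, because $-\otimes q^*L^d$ is t-exact and hence exact on $\cA$. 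A short diagram chase then forces the corresponding map $F'\to F'\otimes q^*L^d$ to vanish for every sequence, and the reverse direction of Lemma~\ref{lem:supportviazeroemaps} yields $F'\in D(Y\times S)_T$, hence $F'\in \cdlc(Y\times S)_T$. For a quotient $F\twoheadrightarrow F''$ the argument is dual: tensoring with $q^*L^d$ preserves epimorphisms in $\cA$, and running the analogous chase in the opposite direction shows that $f_{i_1}\cdots f_{i_d}$ vanishes on $F''$.

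No step here presents a genuine obstacle; the key observation is simply that vanishing of the length-$d$ products on $F$ transfers to both its subobjects and its quotients because $-\otimes q^*L^d$ is exact on $\cA$, mirroring the strategy of \cite[Theorem~2.1.4]{AP06}.
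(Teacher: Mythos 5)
Your proposal is correct. For subobjects and quotients you run exactly the paper's argument: apply Lemma~\ref{lem:supportviazeroemaps} to $F$, use that $-\otimes q^*L^d$ is exact on $\cA$ (so it preserves monos and epis), and chase the naturality square for multiplication by $f_{i_1}\cdots f_{i_d}$ to transfer the vanishing to $F'$ and $F''$. Where you diverge is the extension case: you argue directly that for a triangle $F'\to F\to F''$ one has $\supp(F)\subseteq\supp(F')\cup\supp(F'')$ via derived restriction to residue fields, so the support containment in $(Z'\cup Z'')\times T$ is immediate. The paper instead stays entirely inside the zero-maps criterion: if all length-$d$ products vanish on $F'$ and on $F''$, then all length-$2d$ products vanish on $F$, and Lemma~\ref{lem:supportviazeroemaps} applies again. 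Both are valid; yours is the more elementary and direct route for extensions (it does not even need the t-exactness hypothesis for that step), while the paper's keeps the whole lemma uniform by reducing all three closure properties to the single characterisation of $D(Y\times S)_T$ by vanishing of products of sections. No gaps.
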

\begin{proof}
Tensoring with $q^*L$ is a t-exact functor on $\cdc(Y \times S)$, so it is exact on $\cA$. Given a short exact sequence $0 \ra E \ra F \ra G \ra 0$ in $\cA$, a diagram chase shows that if all maps $F \ra F\otimes q^*L^d$ as in Lemma \ref{lem:supportviazeroemaps} vanish, then so do all such maps $E \ra E\otimes q^*L^d$ and $G \ra G\otimes q^*L^d$. Conversely, if all the maps $E \ra E\otimes q^*L^d$ and $G \ra G\otimes q^*L^d$ vanish, then so do all the maps $F \ra F\otimes q^*L^{2d}$ given by sequences of length $2d$. The result follows from Lemma~ \ref{lem:supportviazeroemaps}.
\end{proof}

\begin{proof}[Proof of Theorem \ref{thm:sheaf-criterion}]
Let $U\subseteq S$ be an open subset. For any interval $[a,b]$ that may be infinite on one side, consider the subcategory
\begin{equation}
\label{eqn:open-restriction-t-str} 
\cdlc^{[a,b]}(Y\times U) = \big\{ F_0\in \cdlc(Y\times U) \mid \exists \; F \in \cdlc^{[a,b]}(Y\times S)\text{ such that } F_0=j^*F \big\}
\end{equation}
 of $\cdlc(Y\times U)$. We proceed in two steps. 
 
 \smallskip
 \noindent \textsc{Step 1}: To verify that \eqref{eqn:open-restriction-t-str} defines a bounded t-structure, clearly $\cD^{\leq -1}(Y\times U)\subseteq \cD^{\leq 0}(Y\times U)$. We next check that there are no nontrivial morphisms between any $F_0\in \cdlcn(Y\times U)$ and $G_0\in \cdlc^{\geq 1}(Y\times U)$. Proposition~\ref{prop:localisation} implies that if a morphism $F_0\to G_0$ does exist then it's obtained from a diagram of the form 
 \begin{equation}
 \label{eqn:morphismdiagram}
 F\stackrel{f}{\longleftarrow} F^\prime \ra G,
 \end{equation}
 where $F\in \cdlcn(Y\times S)$ and $G\in \cdlc^{\geq 1}(Y\times S)$ satisfy $j^*F\cong F_0$ and $j^*G\cong G_0$, and where the cone $C$ of $f$ lies in $\cdlc(Y\times S)_T$ for $T:= S\setminus U$. The long exact sequence of cohomology for the exact triangle $F^\prime \ra F \ra C$ shows $H_t^0(C)\to H_t^1(F^\prime)$ is surjective, so $H_t^1(F)\in \cdlc(Y\times S)_T$ by Lemma~\ref{lem:quotient-closed}; similarly, $H_t^i(C)\cong H_t^{i+1}(F^\prime)$ for $i\geq 1$ implies $H_t^{i+1}(F^\prime)\in \cdlc(Y\times S)_T$. Thus all cohomology sheaves of $\tau_t^{\geq 1}(F^\prime)$ lie in $\cdlc(Y\times S)_T$, and hence so does $\tau_t^{\geq 1}(F^\prime)$ by Lemma~\ref{lem:coh-support}. This object is the cone of $g\colon\tau_t^{\leq 0}(F^\prime)\to F^\prime$ which then lies in the localising class, and therefore the diagram
 \[
 F\stackrel{f\circ g}{\longleftarrow} \tau_t^{\leq 0}(F^\prime) \ra G
 \]
 is equivalent to that from \eqref{eqn:morphismdiagram}. The t-structure on $\cdlc(Y\times S)$ shows that this map is zero, so the original morphism from $F_0$ to $G_0$ is zero as required. To check condition \three\ from the definition of a $t$-structure in Section~\ref{sec:t-structures}, let $E_0\in \cdlc(Y\times U)$ and apply Corollary~\ref{cor:essential-surjective} to obtain $E\in \cdlc(Y\times S)$ such that $j^*E=E_0$. The given t-structure on $\cdlc(Y\times S)$ provides $F\in \cdlcn(Y\times S)$ and $G\in \cdlc^{\geq 1}(Y\times S)$ such that $F \ra E \ra G$ is an exact triangle. Since $j^*$ is exact, the objects  $j^*F\in \cdlcn(Y\times U)$ and $j^*G\in \cdlc^{\geq 1}(Y\times U)$ fit into an exact triangle $j^*F \ra E_0 \ra j^*G$, so \eqref{eqn:open-restriction-t-str} defines a t-structure. Since the t-structure on $\cdlc(Y\times S)$ is bounded, there exist integers $a<b$ such that $E \in \dlabc(Y\times S)$. Hence $E_0 \in \dlabc(Y\times U)$, which shows the t-structure on $\cdlc(Y\times U)$ is also bounded.

\smallskip

\noindent \textsc{Step 2}: We verify that the t-structure from Step 1 defines a sheaf of t-structures over $S$. For any open subsets $U' \subseteq U \subseteq S$, we have the following commutative diagrams of pullback functors
\[
\xymatrix{
\cdc(Y\times S) \ar[r]^{j^*} \ar@/_5mm/[rr]_{j{''}^*} & \cdc(Y\times U) \ar[r]^{j'^*} & \cdc(Y\times U').
}
\]
To show that $j'^*$ is t-exact, consider $F_0\in \cdcn(Y\times U)$ and choose $F\in \cdcn(Y\times S)$ such that $j^*F=F_0$. Then $j'^*F_0=j'^*j^*F=j''^*F \in \cdcn(Y\times U')$, so $j'^*$ is right exact. Left-exactness is similar, so the t-structure on $\cdc(Y \times S)$ induces a sheaf of t-structures over $S$. 
\end{proof}

Setting $L = \cO_S$ in Theorem~\ref{thm:sheaf-criterion} immediately gives:
\begin{corollary}
\label{cor:affine-sheaf}
Let $S$ be an affine scheme. Then every bounded t-structure on $\cdlc(Y \times S)$ determines by restriction a sheaf of t-structures on $\cdlc(Y\times S)$ over $S$.
\end{corollary}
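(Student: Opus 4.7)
The plan is to deduce this directly from Theorem \ref{thm:sheaf-criterion} by setting $L = \cO_S$; all the content lies in verifying the hypotheses of that theorem, and the point is that each of them reduces to near-triviality on an affine scheme.

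First, I would note that $S$, being affine and (under our running assumptions) separated of finite type over $\kk$, embeds as a closed subscheme of some $\mathbb{A}^n \subset \mathbb{P}^n$ and is thus quasi-projective, so the geometric hypothesis of Theorem \ref{thm:sheaf-criterion} is met. Next, to see that $L = \cO_S$ qualifies as an ample bundle for the purposes of that theorem, the key observation is that on an affine scheme every coherent ideal sheaf is globally generated, so every closed subset $T \subseteq S$ may be cut out as the common vanishing locus of finitely many global sections of $\cO_S$. This is precisely the property of the ample bundle that is actually used in the proof of Theorem \ref{thm:sheaf-criterion}, via Lemma \ref{lem:supportviazeroemaps} and Lemma \ref{lem:coh-support}. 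Finally, since $q^*\cO_S \cong \cO_{Y \times S}$, the functor $F \mapsto F \otimes q^*L$ is canonically isomorphic to the identity on $\cdlc(Y \times S)$, hence is t-exact for any t-structure whatsoever; this disposes of the remaining hypothesis.

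With all three hypotheses checked, Theorem \ref{thm:sheaf-criterion} applies and yields the sheaf of t-structures on $\cdlc(Y \times S)$ over $S$ by restriction to open subsets of $S$, as asserted. I do not expect any genuine obstacle: the substantive work is already carried out in the proof of Theorem \ref{thm:sheaf-criterion}, and the role of the affineness hypothesis here is merely to make $\cO_S$ serve as an ample bundle for which the required tensor-product functor becomes trivially t-exact.
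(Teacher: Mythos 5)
Your proposal is correct and takes exactly the paper's route: the paper obtains this corollary precisely by setting $L = \cO_S$ in Theorem~\ref{thm:sheaf-criterion}, leaving the verification implicit. Your checks—that an affine scheme of finite type over $\kk$ is quasi-projective with $\cO_S$ ample (so that closed subsets are common zero loci of finitely many global functions, which is what Lemma~\ref{lem:supportviazeroemaps} needs), and that tensoring by $q^*\cO_S$ is the identity and hence t-exact for any t-structure—are exactly the content the paper summarises as ``immediately gives.''
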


\section{Sheaf of t-structures on \texorpdfstring{\protect$\cdc(\ypr)\protect$}{Dc(YxPr)} over \texorpdfstring{\protect$\mathbb{P}^r\protect$}{Pr}}
In this section we construct a sheaf of t-structures on $\cdc(Y\times \mathbb{P}^r)$ over $\mathbb{P}^r$ following the work of Abramovich--Polishchuk~\cite[Theorem~2.3.6]{AP06}.

\subsection{On resolution of the diagonal}
Let $Y$ be any scheme. For $r\geq 0$, let $p\colon Y\times \mathbb{P}^r\to Y$ denote the first projection and $X:= (Y\times \mathbb{P}^r)\times_Y (Y\times \mathbb{P}^r)$ the fibre product with fibre square 
\[
\xymatrix{
 X \ar[r]^{\pi_2} \ar[d]_{\pi_1} & Y\times \mathbb{P}^r \ar[d]^{p} \\
 Y\times \mathbb{P}^r \ar[r]^{p} & Y.
}
\]
For $F, G\in \DD(\Qcoh(Y\times \mathbb{P}^r))$, write $F\boxtimes G:=\pi_1^*F\otimes \pi_2^*G\in \DD(\Qcoh(X))$.  Let $q\colon Y\times \mathbb{P}^r \to \mathbb{P}^r$ denote the second projection, $\cO(1)=q^*\cO_{\mathbb{P}^r}(1)$ the relative hyperplane bundle and $\Omega:=q^*\Omega_{\mathbb{P}^r}$ the relative cotangent bundle. For $F\in \DD(\Qcoh(Y\times \mathbb{P}^r))$ and $i\in \mathbb{Z}$, write $F(i):= F\otimes \cO(i)$. 

The relative version of resolution of the diagonal $\Delta \subseteq X$ by Orlov~\cite{Or92a} is the resolution
\begin{equation}
\label{eqn:Beilinson}
0 \lra \Omega^r(r)\boxtimes \cO(-r) \lra \cdots \lra\Omega^1(1)\boxtimes\cO(-1) \lra \cO\boxtimes\cO \lra \cO_\Delta \lra 0.
\end{equation}
For $j \in \ZZ$, tensoring by $\cO(-j) \bt \cO(j)$ gives a resolution
\[
0 \lra \Omega^r(r-j)\boxtimes \cO(j-r) \lra \cdots \lra\Omega^1(1-j)\boxtimes\cO(j-1) \lra \cO(-j)\boxtimes\cO(j) \lra \cO_\Delta \lra 0.
\]
For $0\leq i\leq r$ and $j\in \ZZ$, each sheaf $\Omega^i(i-j) \boxtimes \cO(j-i)$ on $X$ defines an integral transform that we denote  $\Phi^{i,j}\colon D(Y\times \mathbb{P}^r) \lra D(Y\times \mathbb{P}^r)$, where
\begin{eqnarray*}
\Phi^{i,j}(F) & = & (\pi_2)_*\Big(\pi_1^*(F)\otimes \big(\Omega^i(i-j)\boxtimes \cO(j-i)\big)\Big) \\
                  & \cong & p^*p_*\big(F\otimes \Omega^i(i-j)\big)\otimes \cO(j-i)
 \end{eqnarray*}
by the projection formula and flat base change. By Proposition \ref{prop:functorpair}, this functor restricts to an integral transform $\Phi_c^{i,j}\colon D_c(Y\times \mathbb{P}^r) \lra D_c(Y\times \mathbb{P}^r)$.

For any fixed $j\in \ZZ$, if we break the above resolution into short exact sequences as described in \cite[Proof of Corollary 8.29]{Hu06a}, any object $F\in D_c(Y\times \mathbb{P}^r)$ can be reconstructed by taking successive cone operations on the collection
\begin{equation}
\label{eqn:1st-cone-collection}
\big\{ \Phi_c^{r,j}(F), \Phi_c^{r-1,j}(F), \dots, \Phi_c^{1,j}(F), \Phi_c^{0,j}(F) \big\}.
\end{equation}

\begin{remark}
\label{rem:orderswitch}
In writing the resolution \eqref{eqn:Beilinson} we could equally well have written each term as $\cO(-i)\boxtimes \Omega^i(i)$, in which case the resulting integral functor $\Psi^{i,j}$ would satisfy
\begin{equation}
\label{eqn:orderswitch}
\Psi^{i,j}(F) \cong p^*p_*\big(F(j-i)\big)\otimes \Omega^i(i-j)
\end{equation}
for every $0\leq i\leq r$ and $j\in \ZZ$.
\end{remark}

The next two results record several useful consequences of these observations.

\begin{lemma}
\label{lem:res}
For any $m \geq r+1$, there is an exact sequence  
\begin{equation}
\label{eqn:beilinsonsequence}
0 \lra V^m_r \otimes \cO_{\PP^r} \lra V^m_{r-1} \otimes \cO_{\PP^r}(1) \lra \cdots \lra V^m_0 \otimes \cO_{\PP^r}(r) \lra \cO_{\PP^r}(m) \lra 0,
\end{equation}
where $V^m_i=H^0(\PP^r, \Omega^i(m-r+i))$ for $0 \leq i \leq r$. 
\end{lemma}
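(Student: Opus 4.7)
The plan is to derive the exact sequence from the relative resolution of the diagonal~\eqref{eqn:Beilinson} applied with $F = \cO_{\PP^r}(m)$, specialising to $Y = \spec\kk$ and taking the twist $\cO(-r)\bt\cO(r)$ (that is, $j = r$ in the family of resolutions considered just after~\eqref{eqn:Beilinson}). With this choice, the integral functor $\Phi_c^{i,r}$ from Section~\ref{sec:integralfunctors} reduces on $F = \cO_{\PP^r}(m)$, via the projection formula and flat base change together with the identification $p^*p_* = \mathbf{R}\Gamma(\PP^r,-)\otimes_\kk \cO$ in the case $Y=\spec\kk$, to
\[
\Phi_c^{i,r}\bigl(\cO_{\PP^r}(m)\bigr) \cong \mathbf{R}\Gamma\bigl(\PP^r,\Omega^i(m-r+i)\bigr)\otimes \cO_{\PP^r}(r-i).
\]

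The crucial input is then Bott's vanishing: since $m \geq r+1$ and $0 \leq i \leq r$ we have $m-r+i \geq i+1 > i$, so $H^q(\PP^r,\Omega^i(m-r+i)) = 0$ for every $q > 0$. Consequently $\mathbf{R}\Gamma$ collapses to $H^0$, and the $i$-th member of the cone collection~\eqref{eqn:1st-cone-collection} becomes exactly $V^m_i \otimes \cO_{\PP^r}(r-i)$, matching the terms appearing in~\eqref{eqn:beilinsonsequence}.

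To upgrade the derived-category assertion to a genuine exact sequence of sheaves, I would break the twisted resolution of the diagonal into its constituent short exact sequences on $\PP^r \times \PP^r$ (the standard construction recalled at \cite[Corollary 8.29]{Hu06a}), tensor each with $\pi_1^*\cO(m)$, and push forward along $\pi_2$. Bott vanishing guarantees that no higher direct images appear, so each pushforward remains short exact at the level of sheaves; splicing these short exact sequences together yields~\eqref{eqn:beilinsonsequence}.

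The only real choice in the argument is the combination of the twist $j = r$ with the decision to push down along $\pi_2$; this is what places the line bundles on the base copy of $\PP^r$ and forces the cotangent sheaves to be absorbed into the coefficient vector spaces $V^m_i$. Once this bookkeeping is settled, Bott's formula is the sole non-formal ingredient, and the rest of the argument is routine.
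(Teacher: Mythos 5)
Your proposal is correct and follows essentially the same route as the paper: specialise the twisted Beilinson resolution to $Y=\spec\kk$ with $j=r$, identify $\Phi^{i,r}(\cO(m))$ with $\mathbf{R}\Gamma(\Omega^i(m-r+i))\otimes\cO(r-i)$, and use the vanishing of higher cohomology (the paper cites Manivel; your appeal to Bott's formula with the sharper inequality $m-r+i>i$ is the precise condition needed) to collapse the cone operations into the exact sequence \eqref{eqn:beilinsonsequence}. Your final splicing step is just a more explicit rendering of what the paper compresses into ``substituting these sheaves into the cone operations.''
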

\begin{proof}
 The observations above for $Y=\spec \kk$ and $j=r$ show that the sheaf $\cO_{\PP^r}(m) \in D(\PP^r)$ can be reconstructed by taking successive cone operations using the objects
\[
\Phi_{i,r}(\cO(m)) =p^*p_*\big(\cO(m)\otimes \Omega^i(i-r)\big)\otimes \cO(r-i) \cong \mathbf{R}\Gamma\big(\Omega^i(m-r+i)\big) \otimes \cO(r-i)
\]
 for $0\leq i\leq r$. Our assumption on $m$ gives $m-r+i>0$, so Manivel~\cite{Manivel96} implies that the higher cohomology groups of $\Omega^i(m-r+i))$ vanish; hence
 $\Phi^{i,r}(\cO(m)) = V^m_i \otimes \cO(r-i)$. Substituting these sheaves for $0\leq i\leq r$ into the above cone operations yields \eqref{eqn:beilinsonsequence}.
\end{proof}

\begin{lemma}
\label{lem:semiorthog}
Let $Y$ be a scheme and $p\colon Y\times \mathbb{P}^r\to Y$ the first projection. Let $F\in D(Y\times \mathbb{P}^r)$. 
\begin{enumerate}
\item[\one] We have $F\in D_c(Y\times \mathbb{P}^r)$ if and only if there exists $j\in \ZZ$ such that $p_*(F(j-i))\in D_c(Y)$ for all $0\leq i\leq r$.
\item[\two] If $p_*F(-i) = 0$ for $0 \leq i \leq r$, then $F = 0$.
\item[\three] There is a semi-orthogonal decomposition
\begin{equation}
\label{eqn:semi-orthogonal}
D_c(Y\times \mathbb{P}^r )=\big\langle p^*D_c(Y)(-r), \cdots, p^*D_c(Y)(-1), p^*D_c(Y)\big\rangle.
\end{equation}
\end{enumerate}
\end{lemma}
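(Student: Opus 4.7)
My plan is to deduce all three parts from the reconstruction of an arbitrary $F\in D(\ypr)$ by successive cones on the collection \eqref{eqn:1st-cone-collection}, together with its $\Psi$-variant from Remark \ref{rem:orderswitch}. The auxiliary inputs are the standard behaviour of support under pullback, pushforward and tensor product (Lemmas \ref{lem:easycompact} and \ref{lem:pushforwardcompact}), and the vanishing $H^*(\PP^r,\cO(k))=0$ for $-r\leq k\leq -1$.

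For part \one, the forward direction is immediate: if $F\in D_c(\ypr)$ then twisting by the line bundle $\cO(j-i)$ preserves support, so $F(j-i)\in D_c(\ypr)$, and Lemma \ref{lem:pushforwardcompact}\two\ gives $p_*(F(j-i))\in D_c(Y)$. For the converse, I would use \eqref{eqn:orderswitch}: the hypothesis provides a proper $Z_i\subseteq Y$ containing $\supp(p_*(F(j-i)))$, so by Lemma \ref{lem:easycompact}\two\ the object $\Psi^{i,j}(F)\cong p^*p_*(F(j-i))\otimes\Omega^i(i-j)$ has support in the proper subset $Z_i\times\PP^r$. Hence each $\Psi^{i,j}(F)$ lies in $D_c(\ypr)$, and $F$ does too since it is built from finitely many cones on these. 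Part \two\ then follows by applying the $\Psi$-reconstruction with $j=0$: each $\Psi^{i,0}(F)\cong p^*p_*(F(-i))\otimes\Omega^i(i)$ vanishes by hypothesis, so $F=0$.

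For part \three\ I would verify semi-orthogonality and generation separately. Given $A,B\in D_c(Y)$ and integers $0\leq a<b\leq r$, adjunction and the projection formula yield
\[
\Hom^\bullet\bigl(p^*A(-a),p^*B(-b)\bigr)\;\cong\;\Hom^\bullet\bigl(A,\,B\otimes p_*q^*\cO_{\PP^r}(a-b)\bigr),
\]
and flat base change identifies $p_*q^*\cO_{\PP^r}(a-b)$ with the complex $H^*(\PP^r,\cO(a-b))\otimes_\kk\cO_Y$, which vanishes because $-r\leq a-b\leq -1$; this is precisely the semi-orthogonality in the displayed order. For generation I would invoke the $\Phi$-reconstruction with $j=0$: since $F\otimes\Omega^i(i)\in D_c(\ypr)$ by Lemma \ref{lem:easycompact}\one{} and $p_*$ preserves compact support by Lemma \ref{lem:pushforwardcompact}\two, each $\Phi^{i,0}(F)\cong p^*p_*(F\otimes\Omega^i(i))\otimes\cO(-i)$ lies in $p^*D_c(Y)(-i)$, so every $F\in D_c(\ypr)$ is built from successive cones on objects of the stated subcategories.

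The main, though mild, obstacle is the bookkeeping in the converse of \one\ and in the generation step of \three: one must confirm that the reconstruction coming from the resolution of the diagonal is valid for arbitrary objects of $D(\ypr)$ and not only for those already known to have compact support, and that pulling back along $p$ from $D_c(Y)$ lands in $D_c(\ypr)$ thanks to the properness of the fibre $\PP^r$. Once these points are noted, all three assertions follow cleanly from the reconstruction and the standard cohomology of $\PP^r$.
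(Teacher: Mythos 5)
Your proof is correct and follows essentially the same route as the paper: the forward direction of \one\ and the generation in \three\ via preservation of compact support under twisting, $p^*$ and $p_*$, and the converse of \one, part \two, and generation via the cone reconstructions $\Phi^{i,j}$, $\Psi^{i,j}$ coming from the resolution of the diagonal. The only cosmetic difference is that you spell out the semi-orthogonality computation $\Hom^\bullet(p^*A(-a),p^*B(-b))\cong \Hom^\bullet(A,B\otimes H^*(\PP^r,\cO(a-b)))=0$ explicitly, where the paper simply cites Orlov's argument.
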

\begin{proof}
For \one, the `only if' direction follows from Lemma~\ref{lem:cpt-supp}\one\ and Corollary~\ref{cor:cpt-supp}. Conversely, the object $F\in D(\ypr)$ can be reconstructed by taking successive cone operations on the collection $\big\{ \Psi^{r,j}(F), \Psi^{r-1,j}(F), \dots, \Psi^{1,j}(F), \Psi^{0,j}(F) \big\}$ from \eqref{eqn:orderswitch}. Lemma~\ref{lem:cpt-supp}\one\ and Corollary~\ref{cor:cpt-supp} imply that $\Psi^{i,j}(F)\in D_c(\ypr)$ for $0\leq i\leq r$, and hence $F\in D_c(Y\times \mathbb{P}^r)$. For \two, our assumption on $F$ ensures that the objects $\Psi^{i,0}(F)$ from \eqref{eqn:orderswitch} are trivial for $0\leq i\leq r$, so $F \cong 0$ after taking cones. For \three, Lemma~\ref{lem:cpt-supp}\one\ and Corollary~\ref{cor:cpt-supp} imply that we obtain a functor $\phi_i\colon \cdc(Y)\to \cdc(Y\times \mathbb{P}^r)$ for each $0\leq i\leq r$  by setting $\phi_i(F):= p^*(F)\otimes\cO(-i)$. Each $\phi_i$ is fully faithful
by the projection formula. The approach of Orlov~\cite[\S 2]{Or92a} shows that the sequence of subcategories on the right-hand side of \eqref{eqn:semi-orthogonal} is semiorthogonal. As for generation, consider $F\in D_c(Y\times \PP^r)$. For $0\leq i \leq r$, the object $\Phi_{i,0}(F)=p^*p_*(F\otimes\Omega^i(i))\otimes\cO(-i)$ from collection \eqref{eqn:1st-cone-collection} lies in $p^*D_c(Y)(-i)$, so after taking cones we have that $F$ is contained in the right side of \eqref{eqn:semi-orthogonal}, as required.
\end{proof}

\begin{proposition}
\label{prop:proj-cpt-supp}
Let $L$ be an ample bundle on a projective scheme $S$, and write $q\colon Y \times S\ra S$ for the second projection. For any $F\in D(Y\times S)$, we have 
\begin{equation}
\label{eqn:cats-proj}
F \in D_c(Y \times S)\iff p_*(F \otimes  q^*L^n) \in D_c(Y) \quad \text{ for all } n \gg 0.
\end{equation}
\end{proposition}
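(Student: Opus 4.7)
The forward implication is the easy one, and I would dispatch it immediately: for any $n$, tensoring preserves compact support by Lemma \ref{lem:cpt-supp}\one, and since $S$ is proper we have $D_c(Y\times S)=\cdlc(Y\times S)$ by Lemma \ref{lem:cpt-supp-new}, whence Corollary \ref{cor:cpt-supp} gives $p_*(F\otimes q^*L^n)\in D_c(Y)$. Note this actually holds for every $n\in\ZZ$, not just $n\gg 0$.

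For the converse, my strategy is to reduce to the case $S=\PP^r$, where Lemma \ref{lem:semiorthog}\one already provides the precise criterion I need. Concretely, I would first choose $N\geq 1$ so that $L^N$ is very ample, yielding a closed immersion $\iota\colon S\hookrightarrow\PP^r$ with $\iota^*\cO_{\PP^r}(1)\cong L^N$. Write $\bar\iota:=\id_Y\times\iota$, which is again a closed immersion, and set $G:=\bar\iota_*F$; since $\bar\iota_*$ is exact and preserves coherence, $G\in D(Y\times\PP^r)$. Writing $p'\colon Y\times\PP^r\to Y$ and $q'\colon Y\times\PP^r\to\PP^r$ for the projections, the projection formula together with $\bar\iota^*q'^*\cO_{\PP^r}(k)\cong q^*L^{Nk}$ yields the key identity
\[
p'_*\bigl(G\otimes q'^*\cO_{\PP^r}(k)\bigr) \;\cong\; p_*\bigl(F\otimes q^*L^{Nk}\bigr)
\]
for every $k\in\ZZ$.

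Next I would pick $j$ large enough that $N(j-i)$ lies in the range covered by the hypothesis for all $0\leq i\leq r$. Then the display above gives $p'_*\bigl(G(j-i)\bigr)\in D_c(Y)$ for each such $i$, and Lemma \ref{lem:semiorthog}\one immediately forces $G\in D_c(Y\times\PP^r)$. To transfer this conclusion back to $F$, I use that $\bar\iota$ is a closed immersion, so it carries $\supp F$ isomorphically onto $\supp G$ as a closed subscheme of $Y\times\PP^r$; properness of $\supp G$ then implies properness of $\supp F$ over $\kk$, giving $F\in D_c(Y\times S)$ as required.

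I do not anticipate any serious obstacle. The one thing to verify cleanly is the projection-formula identity and the fact that $\bar\iota_*$ along a closed immersion is an exact, fully faithful functor whose essential image is characterised by support, so that the transfer of properness of support between $F$ and $G$ is immediate. Everything else is a direct appeal to Lemma \ref{lem:semiorthog}\one.
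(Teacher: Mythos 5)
Your argument is correct, but it takes a genuinely different route from the paper. The paper handles the converse directly: it chooses $n$ large enough that each cohomology sheaf $H^i(F)\otimes q^*L^n$ is globally generated relative to $p$ (i.e.\ the canonical map $p^*p_*\bigl(H^i(F)\otimes q^*L^n\bigr)\to H^i(F)\otimes q^*L^n$ is surjective), which by Lemma~\ref{lem:easycompact}\two{} forces $\supp(F)\subseteq \supp\bigl(p_*(F\otimes q^*L^n)\bigr)\times S$, and properness of $\supp(F)$ follows at once. You instead embed $S$ into $\PP^r$ by a very ample power $L^N$, push $F$ forward along the closed immersion, and invoke the criterion of Lemma~\ref{lem:semiorthog}\one{} (which rests on the Beilinson resolution of the diagonal); there is no circularity, since that lemma is proved earlier and independently of this proposition. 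The trade-off: the paper's proof is shorter and needs only the relative Serre-type generation statement, while yours avoids that input entirely by recycling machinery already established for $Y\times\PP^r$, at the cost of the bookkeeping with $N(j-i)$ and the transfer of support along the closed immersion. Both the key identity $p'_*\bigl(G\otimes q'^*\cO_{\PP^r}(k)\bigr)\cong p_*\bigl(F\otimes q^*L^{Nk}\bigr)$ and the identification $\supp(\bar\iota_*F)=\bar\iota(\supp F)$ are sound (the latter because pushforward along a closed immersion is exact and commutes with taking supports of sheaves), so your transfer of properness back to $F$ is legitimate.
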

\begin{proof}
One direction is immediate from Lemma~\ref{lem:cpt-supp}\one\ and Corollary~\ref{cor:cpt-supp}. For the other direction, it is enough to show that
$\supp(F) \subseteq \supp(p_* (F \otimes q^*L^n)) \times S$ for $n \gg 0$. This follows from Lemma~\ref{lem:easycompact}\two\ if we choose $n$ large enough such that for each cohomology sheaf $H^i(F)$, the tensor product $H^i(F) \otimes q^*L^n$ is globally generated over $Y$, in other words, that the canonical map
$p^* p_* \left(H^i(F) \otimes q^*L^n\right) \to H^i(F) \otimes q^*L^n$ is surjective.
\end{proof}

\subsection{A family of t-structures}
From now on we work under the following assumption:

\begin{assumption}
\label{assumption}
Let $(\cdcn(Y), \cdcp(Y))$ be a Noetherian, bounded t-structure on $\cdc(Y)$. \end{assumption}

 For any interval $[a,b]$ that may be infinite on one side, we obtain a subcategory $\cdc^{[a,b]}(Y)$ of $\cdc(Y)$, where $\cA:= D_c^{[0,0]}(Y)$ is the heart.  For $i\in \ZZ$, we write $H^i_Y(-):=\tau_Y^{\geq 0}\tau_Y^{\leq 0}(-[i])$ for the $i$-th cohomology functor, where $\tau^{\leq 0}_Y, \tau^{\geq 0}_Y$ denote the truncation functors.

The following purely categorical result, which combines \cite[Lemma 3.1.1-3.1.2]{P07}, enables us to glue t-structures of subcategories arising in a semi-orthogonal decomposition. 

\begin{lemma}[\cite{P07}]
\label{lem:1st-t-str}
Let $\cD = \langle \cA_1, \cdots, \cA_n \rangle$ be a semi-orthogonal decomposition, and let $(\cA_i^{\leq 0}, \cA_i^{\geq 0})$ be a t-structure on $\cA_i$ for $1\leq i\leq n$. Assume in addition that each inclusion $\cA_i \hookrightarrow \cD$ has a right adjoint $\rho_i\colon \cD \ra \cA_i$, and that for each pair of indices $i<j$, the restriction functor
$\rho_i | _{\cA_j}\colon \cA_j \ra \cA_i$ is right t-exact. Then we obtain a t-structure on $\cD$ by setting 
\[
\cD_\rho^{[a,b]} = \big\{ F \in \cD \mid \rho_i(F) \in \cA_i^{[a,b]} \text{ for all } i=1, \cdots, n \big\}
\]
 for any interval $[a,b]$ that may be infinite on one side.
\end{lemma}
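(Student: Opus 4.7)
The plan is to verify the three defining axioms of a t-structure on $\cD$ directly. I would proceed by induction on $n$. The base case $n=1$ is immediate, so assume the result for shorter decompositions and set $\cC := \langle \cA_2, \ldots, \cA_n \rangle$. The inductive hypothesis endows $\cC$ with a glued t-structure, since the sub-hypotheses for $\cC$ are all inherited from those for $\cD$. Crucially, $\rho_1|_\cC$ is itself right t-exact for this glued t-structure: any $G \in \cC^{\leq 0}_\rho$ admits a filtration whose graded pieces lie in the various $\cA_j^{\leq 0}$ for $j \geq 2$, and the triangulated functor $\rho_1$ sends each graded piece into $\cA_1^{\leq 0}$ by the original hypothesis, so $\rho_1 G \in \cA_1^{\leq 0}$ by extension-closedness. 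This reduces the problem to the two-step semi-orthogonal decomposition $\cD = \langle \cA_1, \cC \rangle$.

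The shift-inclusion axiom is immediate from the corresponding axiom on each $\cA_i$. For the Hom-vanishing axiom, given $F \in \cD^{\leq 0}_\rho$ and $G \in \cD^{\geq 1}_\rho$, I would use the canonical triangle coming from the counit of $\rho_1$,
\[
\rho_1(F) \ra F \ra G_F,
\]
whose third term lies in $\cC$ because $\rho_1(G_F) = 0$ by the triangle identity. A short calculation using $\rho_j|_{\cA_1} = 0$ for $j \geq 2$ (a consequence of semi-orthogonality) gives $\rho_j G_F \cong \rho_j F$ for $j \geq 2$, so $G_F \in \cC^{\leq 0}_\rho$ and analogously $G_G \in \cC^{\geq 1}_\rho$. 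Applying $\Hom(-, G)$ to the triangle for $F$ reduces the vanishing $\Hom_\cD(F, G) = 0$ to two pieces: $\Hom(\rho_1 F, G) = \Hom_{\cA_1}(\rho_1 F, \rho_1 G) = 0$ by adjunction and the t-structure on $\cA_1$; and $\Hom(G_F, G) = 0$, which splits further via the analogous triangle for $G$ into $\Hom(G_F, \rho_1 G) = 0$ (by the semi-orthogonality $\Hom(\cC, \cA_1) = 0$) and $\Hom(G_F, G_G) = 0$ (by the inductive hypothesis on $\cC$).

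For the truncation axiom, given $F \in \cD$, I would construct $\tau^{\leq 0}_\rho F$ and $\tau^{\geq 1}_\rho F$ by gluing the individual truncations $(\rho_1 F)^{\leq 0} \ra \rho_1 F \ra (\rho_1 F)^{\geq 1}$ in $\cA_1$ and $G_F^{\leq 0} \ra G_F \ra G_F^{\geq 1}$ in $\cC$ across the decomposition triangle $\rho_1 F \ra F \ra G_F$. The semi-orthogonality $\Hom(\cC, \cA_1) = 0$ ensures that the connecting map of the decomposition triangle restricts canonically through the truncations, and repeated use of the octahedral axiom then yields compatible candidates together with the desired distinguished triangle $\tau^{\leq 0}_\rho F \ra F \ra \tau^{\geq 1}_\rho F$.

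The main obstacle, and the place where the right t-exactness hypothesis is essential, is verifying that the glued candidates actually lie in $\cD^{\leq 0}_\rho$ and $\cD^{\geq 1}_\rho$. Applying $\rho_1$ to the triangle $(\rho_1 F)^{\leq 0} \ra \tau^{\leq 0}_\rho F \ra G_F^{\leq 0}$ produces a triangle in $\cA_1$ whose outer terms $(\rho_1 F)^{\leq 0}$ and $\rho_1(G_F^{\leq 0})$ both lie in $\cA_1^{\leq 0}$---the second precisely by the right t-exactness of $\rho_1|_\cC$ established above---and hence $\rho_1 \tau^{\leq 0}_\rho F \in \cA_1^{\leq 0}$ by extension-closedness; the compatibility $\rho_j \tau^{\leq 0}_\rho F \in \cA_j^{\leq 0}$ for $j \geq 2$ is immediate from $\rho_j|_{\cA_1} = 0$. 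Without the right t-exactness hypothesis, the $\cC$-component of $F$ could contribute an $\cA_1$-shadow of the wrong cohomological degree, and the gluing construction would fail at precisely this point.
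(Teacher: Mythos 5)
Your overall strategy---induct on the number of factors, reduce to a two-step decomposition, and use the right t-exactness hypothesis to control the cross-terms when gluing truncations---is the right one, and the paper gives no proof of its own (it cites \cite[Lemmas 3.1.1--3.1.2]{P07}), so I am assessing your argument on its own terms. Unfortunately it rests on a false claim: the cone $G_F$ of the counit $\rho_1(F) \ra F$ does \emph{not} lie in $\cC = \langle \cA_2, \ldots, \cA_n\rangle$. The triangle identity gives $\rho_1(G_F)=0$, which by adjunction says only that $G_F$ lies in the right orthogonal $\cA_1^{\perp} = \{X \mid \Hom(A, X[k])=0 \text{ for all } A \in \cA_1,\ k \in \ZZ\}$; and $\cA_1^{\perp} \neq \cC$, because semi-orthogonality asserts $\Hom(\cA_j,\cA_1)=0$ for $j>1$ but says nothing about $\Hom(\cA_1,\cA_j)$. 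Concretely, in $D(\PP^1) = \langle \cO, \cO(1)\rangle$ with $F = \cO_p$ a skyscraper sheaf, one has $\rho_1(F) = H^0(\cO_p)\otimes\cO = \cO$ and $G_F = \cone(\cO \to \cO_p) = \cO(-1)[1]$, which lies in $\langle \cO\rangle^{\perp} = \langle\cO(-1)\rangle$, not in $\langle\cO(1)\rangle$. (The same failure occurs in the decomposition to which this lemma is applied in the paper, since $\Hom(p^*F(-r), p^*G(-r+1)) = \Hom(F, G\otimes H^0(\cO_{\PP^r}(1))) \neq 0$.) Since your Hom-vanishing argument needs $G_F \in \cC^{\leq 0}_\rho$ and $G_G \in \cC^{\geq 1}_\rho$ and invokes $\Hom(\cC,\cA_1)=0$ for $G_F$, and since your truncation construction truncates $G_F$ inside $\cC$, the proof breaks at every step downstream of this point.

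The repair is to peel off the \emph{last} factor rather than the first: it is the counit $\rho_n(F)\ra F$ whose cone $Q_F$ lies in $\cA_n^{\perp} = \langle\cA_1,\ldots,\cA_{n-1}\rangle$, so the induction should run on $\langle\cA_1,\ldots,\cA_{n-1}\rangle$ using the triangle $\rho_n F \ra F \ra Q_F$; one then has $\rho_i Q_F \cong \cone(\rho_i\rho_n F \to \rho_i F)$ for $i<n$, and right t-exactness of $\rho_i|_{\cA_n}$ is exactly what keeps $\rho_i Q_F$ in $\cA_i^{\leq 0}$ when $F \in \cD_\rho^{\leq 0}$. A second, smaller gap: your assertion that every $G \in \cC_\rho^{\leq 0}$ admits a filtration with graded pieces in the $\cA_j^{\leq 0}$ is true but is itself a substantive part of the lemma (essentially \cite[Lemma 3.1.2]{P07}); the semi-orthogonal filtration of $G$ has graded pieces in the $\cA_j$, but that these pieces lie in $\cA_j^{\leq 0}$ requires the same last-factor peeling argument together with the right t-exactness hypothesis, so it should either be proved or built into a strengthened induction hypothesis rather than asserted.
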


\begin{proposition}
\label{prop:t-str-n} 
 For $n\geq 0$, there exists a bounded t-structure on $\cdc(\ypr)$, where for any interval $[a,b]$ that may be infinite on one side, we have
\[
D_c^{[a,b]}(\ypr)_n := \big\{ F \in D(\ypr) \mid p_* F(n+i) \in \cdc^{[a,b]}(Y) \text{ for } 0\leq i\leq r \big\}.
\]
\end{proposition}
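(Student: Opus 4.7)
The plan is to apply the gluing Lemma~\ref{lem:1st-t-str} to a twist of the semiorthogonal decomposition from Lemma~\ref{lem:semiorthog}\three. Set $m_i := n + (r+1-i)$ for $i = 1, \ldots, r+1$, so that $m_i$ ranges over the set $\{n, n+1, \ldots, n+r\}$. Tensoring by $\cO(-n)$ is an autoequivalence of $\cdc(\ypr)$, giving the semiorthogonal decomposition
\[
\cdc(\ypr) \;=\; \langle \cA_1, \ldots, \cA_{r+1}\rangle, \qquad \cA_i := p^*\cdc(Y)(-m_i).
\]
The semiorthogonality $\Hom(\cA_i, \cA_j)=0$ for $i>j$ follows from adjunction, the projection formula, and the vanishing of $H^\bullet(\PP^r, \cO(k))$ for $-r \leq k < 0$. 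The functor $G \mapsto p^*G(-m_i)$ induces an equivalence $\cdc(Y) \simeq \cA_i$ (since $p_*\cO_{\ypr} = \cO_Y$), and I transport the bounded Noetherian t-structure of Assumption~\ref{assumption} through this equivalence to equip each $\cA_i$ with a bounded t-structure.

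Next, I compute the right adjoint $\rho_i\colon\cdc(\ypr) \to \cA_i$. Adjunction together with the projection formula yields
\[
\Hom(p^*G(-m_i), F) \;\cong\; \Hom(G, p_*F(m_i))
\]
for $F \in \cdc(\ypr)$ and $G \in \cdc(Y)$, where $p_*F(m_i) \in \cdc(Y)$ by Lemma~\ref{lem:cpt-supp}\one\ and Corollary~\ref{cor:cpt-supp} (using that $\PP^r$ is proper). Hence $\rho_i$ exists (so each $\cA_i$ is admissible as required), and under the equivalence $\cA_i \simeq \cdc(Y)$ it corresponds to $F \mapsto p_*F(m_i)$. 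Therefore $\rho_i(F) \in \cA_i^{[a,b]}$ for every $i$ is equivalent to $p_*F(n+k) \in \cdc^{[a,b]}(Y)$ for every $0 \leq k \leq r$, matching the proposition's description exactly. The remaining hypothesis of Lemma~\ref{lem:1st-t-str} is right t-exactness of $\rho_i\vert_{\cA_j}$ for $i<j$: for $F = p^*G(-m_j) \in \cA_j$ the projection formula gives
\[
\rho_i(F) \;\cong\; p^*\bigl(G \otimes H^\bullet(\PP^r, \cO(m_i-m_j))\bigr)(-m_i),
\]
and since $i<j$ forces $0 < m_i - m_j = j-i \leq r$, the cohomology $H^\bullet(\PP^r, \cO(j-i))$ is concentrated in degree zero. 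Consequently $\rho_i\vert_{\cA_j}$ corresponds under the equivalences to $G \mapsto G^{\oplus N}$ with $N = \dim H^0(\PP^r, \cO(j-i))$, which is t-exact, stronger than required.

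Applying Lemma~\ref{lem:1st-t-str} then produces a t-structure on $\cdc(\ypr)$ with exactly the description of the proposition. Boundedness is then immediate: for any $F \in \cdc(\ypr)$ each $p_*F(n+k)$ lies in a bounded interval $[a_k, b_k]$ for the t-structure of Assumption~\ref{assumption}, and setting $a := \min_k a_k$ and $b := \max_k b_k$ places $F \in \cdc^{[a,b]}(\ypr)_n$. I expect no serious obstacle; the only delicate point is organising the bookkeeping so that the $r+1$ right-adjoint computations produce precisely the indices $n, n+1, \ldots, n+r$ appearing in the proposition (which dictates the particular twist by $\cO(-n)$ in the SOD), and noting that the differences $m_i - m_j$ entering the t-exactness verification are then automatically confined to the range $(0, r]$ where no higher $\PP^r$-cohomology contributes.
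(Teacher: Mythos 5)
Your proof is correct and follows essentially the same route as the paper: apply the gluing Lemma~\ref{lem:1st-t-str} to the semiorthogonal decomposition of Lemma~\ref{lem:semiorthog}\three\ (you twist by $\cO(-n)$ and keep $n$ throughout, whereas the paper first reduces to $n=0$), identify the right adjoints as $F \mapsto p_*F(n+i)$, and verify right t-exactness of the cross-terms via $p_*\cO(j-i)=H^0(\PP^r,\cO(j-i))$ sitting in degree zero. The one small point you omit is that the proposition's displayed set ranges over $F \in D(\ypr)$ rather than $F \in \cdc(\ypr)$, so you should add a citation of Lemma~\ref{lem:semiorthog}\one\ to see that the condition $p_*F(n+i)\in\cdc^{[a,b]}(Y)$ for $0\leq i\leq r$ already forces $F$ to have compact support.
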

\begin{proof}
 Lemma~\ref{lem:semiorthog}\one\ implies that each $D_c^{[a,b]}(\ypr)_n$ is contained in $\cdc(\ypr)$. By tensoring with $\cO(-n)$, it suffices to prove the result for $n=0$. In this case, we show that Lemma~\ref{lem:1st-t-str} applies to semiorthogonal decomposition of $\cdc(\ypr)$ from \eqref{eqn:semi-orthogonal}. For $0\leq i\leq r$, identify $D_c(Y)$ with $p^*\cdc(Y)(-i)$ via the fully faithful functor $\phi_i\colon \cdc(Y)\to \cdc(\ypr)$ sending $F$ to $p^*(F)(-i)$. In particular, each $p^*\cdc(Y)(-i)$ inherits a t-structure. The right-adjoint to $\phi_i$ is $\rho_i\colon \cdc(\ypr) \ra \cdc(Y)$ where $\rho_i(F) = p_*(F(i))$. For any $i<j$ and $F \in \cdc(Y)$, we
obtain
\[
(\rho_j \circ \phi_i)(F) = F \otimes p_*\cO(j-i) = F \otimes H^0(\cO(j-i)).
\]
 Therefore $\rho_j \circ \phi_i$ is t-exact for any t-structure on $\cdc(Y)$, so Lemma~\ref{lem:1st-t-str} gives the t-structure on $\cdc(\ypr)$. To show boundedness, let $F\in \cdc(Y\times \mathbb{P}^r)$. For $0\leq i\leq r$ we have $p_*(F(n+i))\in \cdc(Y)$ by Corollary~\ref{cor:cpt-supp}. Boundedness of the t-structure on $\cdc(Y)$ gives $k_0, \dots, k_r$ such that $p_*(F(n+i))\in \cdc^{\leq k_i}(Y)$. Then $k=\max_{0 \leq i \leq r} k_i$ gives $p_*(F(n+i))\in \cdc^{\leq k}(Y)$ for $0\leq i\leq r$, and hence $F\in \cdc^{\leq k}(\ypr)_n$ as required.
\end{proof}

For $n\geq 0$ and $i\in \ZZ$, let $H^i_n(-):=\tau_n^{\geq 0}\tau_n^{\leq 0}(-[i])$ denote the $i$th cohomology functor of the t-structure from Proposition~\ref{prop:t-str-n}, where $\tau^{\leq 0}_n$ and $\tau^{\geq 0}_n$ denote the truncation functors. Let 
\[
D_n^{[a,b]}:=D_c^{[a,b]}(\ypr)_n
\]
denote the subcategory coming from the bounded t-structure of Proposition~\ref{prop:t-str-n}.

\begin{corollary}
\label{cor:push}
 Let $F \in \cdc(\ypr)$. 
 \begin{enumerate}
 \item[\one] If $m \geq r+1$, then $p_*F(m)$ can be reconstructed by taking successive cone operations using the objects $V^m_{r-i} \otimes p_*F(i)[r-i]$ for $0 \leq i \leq r$, where $V^m_i:=H^0(\PP^r, \Omega^i(m-r+i))$ as in Lemma~\ref{lem:res}.
 \item[\two] If $m\geq 0$ and $F\in D_0^{[0,0]}$, then $p_*F(m) \in D_c^{[-r,0]}(Y)$.
 \end{enumerate}
\end{corollary}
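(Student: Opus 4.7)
The approach for Part \one\ is to apply the derived pushforward $p_*$ to the resolution of Lemma~\ref{lem:res}. Pulling that resolution back to $Y \times \PP^r$ via the flat projection $q$ and tensoring with $F$ (using that every non-terminal term is locally free, so tensoring preserves exactness) yields an exact sequence
\[
0 \to V^m_r \otimes F \to V^m_{r-1} \otimes F(1) \to \cdots \to V^m_0 \otimes F(r) \to F(m) \to 0
\]
in $\cdc(Y \times \PP^r)$. Breaking this into short exact sequences via the intermediate kernels $Z_k$ and applying $p_*$ (which preserves triangles and restricts to $\cdc(Y \times \PP^r) \to \cdc(Y)$ by Corollary~\ref{cor:cpt-supp}) gives a chain of distinguished triangles in $\cdc(Y)$. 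I would then iteratively reconstruct $p_*F(m)$ by taking cones: starting from the innermost object $V^m_r \otimes p_*F$, each successive triangle introduces a further summand $V^m_{r-i} \otimes p_*F(i)$ while the previous content moves up by a degree shift of $1$. Tracking these shifts across $r$ iterations gives the claimed reconstruction via $V^m_{r-i} \otimes p_*F(i)[r-i]$ for $0 \leq i \leq r$.

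For Part \two, the case $0 \leq m \leq r$ is immediate: the hypothesis $F \in D_0^{[0,0]}$ means $p_*F(j) \in \cA$ for each $0 \leq j \leq r$, so in particular $p_*F(m) \in \cA \subseteq D_c^{[-r,0]}(Y)$ whenever $m$ is one of these values. For $m \geq r+1$, Part \one\ presents $p_*F(m)$ as built via iterated cones from objects of the form $V^m_{r-i} \otimes p_*F(i)[r-i]$; by the same hypothesis, each such object lies in $\cA[r-i] = D_c^{[-(r-i),-(r-i)]}(Y) \subseteq D_c^{[-r,0]}(Y)$. Applying the long exact sequence of cohomology for the t-structure on $\cdc(Y)$ to each of the triangles arising from the iteration then confines the cohomology of $p_*F(m)$ to degrees in $[-r,0]$.

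The main delicate point is the shift-tracking in Part \one: one must verify inductively that after $r$ iterations of cones, the contribution coming from the $i$-th term of the resolution has accumulated a shift of exactly $[r-i]$, not some other amount. Once this indexing is in place, Part \two\ is a straightforward consequence of the closure of the slices $D_c^{[a,b]}(Y)$ under extensions by shifted objects of $\cA$.
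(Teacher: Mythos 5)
Your proof is correct and follows essentially the same route as the paper: pull the resolution of Lemma~\ref{lem:res} back along $q$, tensor with $F$, push forward along $p$, and read off the shifted building blocks $V^m_{r-i}\otimes p_*F(i)[r-i]$, with \two\ then following from extension-closure of $D_c^{[-r,0]}(Y)$. The shift-tracking you flag as the delicate point does work out exactly as you state, and is handled the same way (if more tersely) in the paper via the standard Postnikov/cone decomposition of the exact sequence.
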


\begin{proof}
Pull the resolution of $\cO_{\PP^r}(m)$ from Lemma~\ref{lem:res} back along $q\colon Y\times \PP^r\to \PP^r$, tensor with $F$ and pushforward along $p\colon Y\times \PP^r\to Y$ to obtain a resolution of $p_*F(m)$ in terms of 
\[
p_*\Big(F\otimes q^*\big(V^m_{r-i}\otimes \cO_{\PP^r}(i)[r-i]\big)\Big) = V^m_{r-i}\otimes p_*F(i)[r-i] \quad\]
for $0\leq i\leq r$ which gives \one. By definition, if $F\in D^{[0,0]}_0$, then $p_*F(i)\in D_c^{[0,0]}(Y)$ for $0\leq i\leq r$, giving \two\ in case $0\leq m\leq r$ as $D_c^{[0,0]}(Y)\subset D_c^{[-r,0]}(Y)$. It follows that $p_*F(i)[r-i]\in D_c^{[-r,0]}(Y)$ for $0\leq i\leq r$, so if $m\geq r+1$, then \two\ follows from the resolution in part \one.
\end{proof}

\begin{lemma}
\label{lem:inclusion}
The t-structures $(D_n^{\leq 0},D_n^{\geq 0})$ from Proposition~\ref{prop:t-str-n} satisfy
\begin{align}
D^{\leq 0}_0 &\subseteq D^{\leq 0}_1 \subseteq D^{\leq 0}_2 \subseteq \cdots \subseteq D^{\leq r}_0; \label{eqn:1st-inclusion}\\
D^{\geq 0}_0 &\supseteq D^{\geq 0}_1 \supseteq D^{\geq 0}_2 \supseteq \cdots \supseteq D^{\geq r}_0. \label{eqn:2nd-inclusion}
\end{align}
In particular, for each $i\in \ZZ$ there is a morphism $H^i_n(-)\lra H^i_{n+1}(-)$ of cohomology functors.
\end{lemma}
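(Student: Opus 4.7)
The plan is to reduce all the successive inclusions to a single base case, handle that with Corollary \ref{cor:push}\one\ and a Koszul resolution, then treat the endpoint inclusions using Bott's formula together with the Fourier--Mukai reconstruction from Remark \ref{rem:orderswitch}. Unwinding the definition in Proposition \ref{prop:t-str-n}, one has $F \in D_n^{[a,b]}$ if and only if $F(n) \in D_0^{[a,b]}$, so tensoring by $\cO(n)$ is an equivalence between these subcategories. This reduces each inclusion $D^{\leq 0}_n \subseteq D^{\leq 0}_{n+1}$ (and its dual) to the single case $n=0$.

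For $D^{\leq 0}_0 \subseteq D^{\leq 0}_1$, the only new condition to verify is $p_*F(r+1) \in \cdcn(Y)$. This follows immediately from Corollary \ref{cor:push}\one\ with $m=r+1$: each summand $V^{r+1}_{r-i} \otimes p_*F(i)[r-i]$ lies in $\cdc^{\leq -(r-i)}(Y) \subseteq \cdcn(Y)$, and iterated cones preserve $\cdcn(Y)$. For the dual inclusion $D^{\geq 0}_0 \supseteq D^{\geq 0}_1$, the only new condition is $p_*F \in \cdcp(Y)$; I would twist the Koszul resolution associated to the coordinate sections of $\cO_{\PP^r}(1)$ (which is acyclic since these sections have no common zero) by $\cO(r+1)$ to get the exact sequence
\[
0 \lra \cO_{\PP^r} \lra \cO(1)^{\oplus(r+1)} \lra \cO(2)^{\oplus \binom{r+1}{2}} \lra \cdots \lra \cO(r+1) \lra 0.
\]
Pulling back via $q$, tensoring with $F$, and applying $p_*$ presents $p_*F$ as the convolution of a complex of objects of $\cdcp(Y)$ placed in non-negative positions, and the standard iterated cocone (or second-quadrant spectral sequence) argument gives $p_*F \in \cdcp(Y)$.

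For the endpoint inclusions $D^{\leq 0}_n \subseteq D^{\leq r}_0$ and $D^{\geq r}_0 \subseteq D^{\geq 0}_n$, I would use the alternative resolution from Remark \ref{rem:orderswitch} to express $F$ as the convolution of $\Psi^{i,n+r}(F) = p^*p_*F(n+r-i) \otimes \Omega^i(i-n-r)$ for $0 \leq i \leq r$ placed in positions $-r, \ldots, 0$. Tensoring with $\cO(a)$ for $0 \leq a \leq r$ and pushing forward yields a convolution with general term $p_*F(n+r-i) \otimes \mathbf{R}\Gamma\bigl(\PP^r, \Omega^i(i-n-r+a)\bigr)$. Bott's formula concentrates the cohomology of $\Omega^i(m)$ in a single degree lying in $\{0, i, r\} \subseteq [0,r]$, so each term sits in $\cdc^{\leq r}(Y)$ (respectively $\cdc^{\geq r}(Y)$ for the dual), and the convolution estimate for a complex with positions in $[-r,0]$ then delivers $p_*F(a) \in \cdc^{\leq r}(Y)$ as required. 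The main technical obstacle is the bookkeeping with Bott's three cases to verify that the uniform bound $r$ holds. Finally, the natural transformations $H^i_n(-) \to H^i_{n+1}(-)$ follow formally from the inclusions via the universal properties of the truncation functors, yielding canonical maps $\tau^{\leq 0}_n \to \tau^{\leq 0}_{n+1}$ and $\tau^{\geq 0}_n \to \tau^{\geq 0}_{n+1}$ whose composition produces the desired morphisms on cohomology.
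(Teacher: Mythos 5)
Your proof is correct, but it is organised quite differently from the paper's. The paper first observes that both chains follow from the two containments $D_0^{\leq 0}\subseteq D_n^{\leq 0}$ and $D_0^{\geq r}\subseteq D_n^{\geq 0}$ for all $n\geq 0$ (the successive inclusions come from twisting, the endpoint inclusions from passing to orthogonal complements), and then uses boundedness to reduce each containment to objects of the heart $D_0^{[0,0]}$, at which point both drop out of the single two-sided estimate $p_*F(m)\in D_c^{[-r,0]}(Y)$ of Corollary~\ref{cor:push}\two. You instead verify the one new twist condition in each direction by hand --- Corollary~\ref{cor:push}\one\ for the $\leq$ side, a twisted Koszul complex for the $\geq$ side --- and treat the endpoint inclusions by a separate Beilinson-plus-Bott convolution estimate; your degree bookkeeping in all three computations checks out. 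What the paper's route buys is brevity: the reduction to the heart plus Corollary~\ref{cor:push}\two\ handles the lower and upper bounds simultaneously, and the endpoint inclusions come for free by orthogonality rather than by a fresh cohomology computation. What your route buys is that it stays at the level of the explicit resolutions and never needs the reduction-to-the-heart step; the price is an extra input, namely Bott's formula for \emph{negative} twists of $\Omega^i$, whereas the paper only ever invokes Manivel's vanishing for positive twists. The final step on the natural transformations $H^i_n\to H^i_{n+1}$ coincides with the paper's argument.
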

\begin{proof}
For the two chains of inclusions, it is enough to prove $D^{\leq 0}_0 \subseteq D^{\leq 0}_n$ and $D^{\geq r}_0 \subseteq D^{\geq 0}_n$ for all $n \geq 0$. We begin with the former inclusion. Since the t-structures are bounded, it is enough to prove $D^{[0,0]}_0 \subseteq D^{\leq 0}_n$. But this claim follows from Corollary \ref{cor:push}\two\ and the definition of $D^{\leq 0}_n$. For the other inclusion, it is enough to show that $D^{[0,0]}_0 \subseteq D^{\geq -r}_n$, and the proof is similar.

To construct the morphism of cohomology functors, note that the inclusion $D_n^{\leq 0}\subseteq D_{n+1}^{\leq 0}$ implies that the morphism $\tau_n^{\leq 0}(F)\to F$ for each $F\in \cdc(\ypr)$ factors through $\tau_{n+1}^{\leq 0}(F)$, giving a transformation $\tau_n^{\leq 0}\to \tau_{n+1}^{\leq 0}$. The inclusion $D_{n+1}^{\geq 0}\subseteq D_n^{\geq 0}$ similarly gives $\tau_n^{\geq 0} \to \tau_{n+1}^{\geq 0}$. Then for all $i\in \ZZ$ we have a morphism $\tau_n^{\leq 0}(F[i])\to \tau_{n+1}^{\leq 0}(F[i])$, and hence a morphism
\[
H^i_n(F)=\tau_n^{\geq 0}\tau_n^{\leq 0}(F[i]) \lra \tau_n^{\geq 0}\tau_{n+1}^{\leq 0}(F[i])\lra \tau_{n+1}^{\geq 0}\tau_{n+1}^{\leq 0}(F[i]) = H^i_{n+1}(F)
\]
as required.
\end{proof}

\subsection{On graded $S$-modules in an abelian category}
For  $V=H^0(\PP^r, \cO(1))$, the symmetric algebra of $V$ is a graded $\kk$-algebra $S=\bigoplus_{m\geq 0}S^m$ generated by $r+1$ variables of degree one. We now recall several categorical notions and results from \cite[Section~2.2]{AP06}, where the abelian category of interest is the heart $\cA$ of the t-structure on $\cdc(Y)$ given by Assumption~\ref{assumption}. 

A \emph{graded $S$-module in $\cA$} is a collection $M=\{M_n \mid n\in \ZZ\}$ of objects in $\cA$ and a collection of  morphisms $\{\varphi_{m,n}\colon S_m\otimes M_n\to M_{m+n}\mid m,n\in \ZZ, m \geq 0\}$ satisfying the obvious associativity condition, such that $\varphi_{0,n}$ is the identity for each $n\in \ZZ$. We typically write $M=\oplus_{n\in \ZZ} M_n$. A \emph{morphism} of graded $S$-modules in $\cA$ is a collection of morphisms $\{f_n\colon M_n\to M'_n \mid n\in \ZZ\}$ satisfying $f_{m+n}\circ \varphi_{m,n}=\varphi^\prime_{m,n}\circ (\id_{S_m}\otimes f_n)$ for all $m,n\in \ZZ$. A \emph{free graded $S$-module of finite type in $\cA$} is a finite direct sum of graded $S$-modules in $\cA$ of the form $S\otimes M(i)$, where
\[
\big(S\otimes M(i)\big)_n = S_{i+n}\otimes M
\]
 for an object $M$ in $\cA$ and a fixed $i\in \ZZ$, and where the morphisms 
\[
S_m\otimes (S_{n+i}\otimes M)\lra S_{n+m+i}\otimes M
\]
for $i,m,n\in \ZZ$ are induced by multiplication in $S$, namely $S_m\otimes S_{n+i}\ra S_{m+n+i}$. A graded $S$-module $M$ in $\cA$ is of \emph{finite type} if there is a surjective map $P\to M$ for a free graded $S$-module $P$ of finite type in $\cA$. The main result we require is the following \cite[Theorem~2.2.2]{AP06}.

\begin{theorem}
\label{thm:abcats}
The category of graded $S$-modules of finite type in $\cA$ is abelian and Noetherian.
\end{theorem}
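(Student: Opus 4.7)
The plan is to follow the classical strategy of Hilbert's basis theorem, translated into the language of abelian categories. First, observe that the category of all graded $S$-modules in $\cA$ (not just those of finite type) is abelian, with kernels, cokernels, images, and direct sums all computed degree-by-degree in $\cA$. Because each graded piece $S_m$ is a finite-dimensional $\kk$-vector space, the functor $S_m \otimes (-)\colon \cA \to \cA$ is a finite direct sum of copies of the identity, hence exact; this is what allows the multiplication maps $\varphi_{m,n}$ to descend through componentwise kernels and cokernels. It then suffices to establish that finite type modules are Noetherian objects and that every submodule of a finite type module is again of finite type (closure under quotients being immediate from the definition).

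Both statements reduce to the following key claim, which is the categorical analogue of Hilbert's basis theorem: for every $A \in \cA$, the free graded $S$-module $S \otimes A$ is Noetherian, i.e.\ any ascending chain of subobjects stabilizes. Granting this, every free graded $S$-module of finite type is Noetherian as a finite direct sum, and hence so is any finite type module, being a quotient of such a direct sum. To deduce that a submodule $N' \subseteq N$ of a finite type module $N$ is itself of finite type, consider the poset of finite type submodules of $N'$; this poset has a maximal element $N''$ by ACC on subobjects of $N$, and one argues $N'' = N'$ by showing that for any $n$ the truncation $\bigoplus_{k\leq n} N'_k$ (which is Noetherian in $\cA$ as a subobject of $\bigoplus_{k\leq n} S_k \otimes (\text{generators of }N)$) generates a finite type submodule that could enlarge $N''$ if $N''\neq N'$.

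The main obstacle, therefore, is proving the key claim. My plan is induction on $\dim V = r+1$. The base case $\dim V = 0$ (so $S = \kk$) reduces to saying that $A$ itself is a Noetherian object of $\cA$, which follows from Assumption~\ref{assumption}: the heart of a Noetherian bounded t-structure on $\cdc(Y)$ is a Noetherian abelian category. For the inductive step, choose a hyperplane $V' \subset V$ with $S' := \operatorname{Sym}(V')$ and a linear form $x \in V \setminus V'$, so that $S \cong S' \otimes_\kk \kk[x]$ with $S' \subseteq S$ generated by $V'$. Given an ascending chain $N^1 \subseteq N^2 \subseteq \cdots$ of subobjects of $S \otimes A$, define for each $i$ and $n$ the \emph{leading coefficient subobject}
\[
\operatorname{LC}_n(N^i) := \operatorname{coker}\bigl(x \cdot N^i_{n-1} \lra N^i_n\bigr) \in \cA,
\]
where $x \cdot (-)$ comes from multiplication by $x \in V = S_1$. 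One checks that $\operatorname{LC}(N^i) := \bigoplus_n \operatorname{LC}_n(N^i)$ inherits the structure of a graded $S'$-module (the $V$-action on $N^i$ restricts to an $S'$-action on the cokernel because the image of $x\cdot(-)$ has been killed). By the inductive hypothesis applied to $S' \otimes A$, the chain $\operatorname{LC}(N^1) \subseteq \operatorname{LC}(N^2) \subseteq \cdots$ stabilizes, and combining this with stability of each $N^i_n$ in low degrees (again by ACC in $\cA$) yields stability of the original chain via a degree-by-degree induction. The delicate point—the real core of the argument—is verifying that the leading coefficient construction is functorial and monotone with respect to inclusions of subobjects without recourse to elements; this requires exploiting the exactness of $S_m \otimes (-)$, showing that $\operatorname{LC}(N^i)$ genuinely embeds as a subobject of $S'\otimes A$, and checking via a diagram chase that $N^i = N^{i+1}$ follows from equality of both the leading coefficients and the low-degree truncations.
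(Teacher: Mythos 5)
The paper offers no proof of this statement at all: it is quoted verbatim from Abramovich--Polishchuk \cite[Theorem~2.2.2]{AP06}, and your high-level plan --- reduce everything to the ascending chain condition for the free modules $S \otimes A$, and prove that by a categorical Hilbert basis theorem with induction on $\dim V$ --- is the strategy of that cited source. The surrounding reductions in your proposal (exactness of $S_m \otimes (-)$, degree-by-degree kernels and cokernels, Noetherianity of finite type modules as quotients of free ones, and the maximal-submodule argument showing that submodules of finite type modules are of finite type) are all sound.

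The gap sits exactly where you flag ``the delicate point'', and it is not repairable as stated. Your object $\operatorname{LC}_n(N) = \operatorname{coker}(x \cdot N_{n-1} \to N_n)$ is the degree-$n$ piece of $N/xN$, and it does \emph{not} embed into $S' \otimes A$: the natural map $N_n / x N_{n-1} \to (S_n \otimes A)/x(S_{n-1}\otimes A) = S'_n \otimes A$ kills whatever part of $N_n$ lies in $x(S_{n-1}\otimes A)$ without lying in $xN_{n-1}$. Concretely, for $\cA$ the category of $\kk$-vector spaces, $A = \kk$, $S = \kk[x,y]$ and $N = (x^2, y)$, the space $\operatorname{LC}_2(N)$ is two-dimensional while $S'_2 \otimes A = \langle y^2\rangle$ is one-dimensional. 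Worse, the maps $\operatorname{LC}(N^i) \to \operatorname{LC}(N^{i+1})$ induced by inclusions are in general neither injective (take $(xy) \subset (y)$) nor surjective (take $(x^2) \subset (x^2, xy)$), so the $\operatorname{LC}(N^i)$ do not form an ascending chain of subobjects of any fixed Noetherian $S'$-module and the inductive hypothesis cannot be invoked; and the chain of images $\operatorname{im}(N^i \to S'\otimes A)$, which does stabilise by induction, fails to control $N^i$ in high degrees (in the last example both images vanish while the inclusion is strict). The correct leading-coefficient construction filters $S \otimes A = \bigoplus_{k \geq 0} x^k(S' \otimes A)$ by $x$-degree, sets $F_d := \bigoplus_{k \leq d} x^k(S'\otimes A)$ and $L_d(N) := \operatorname{im}\bigl((N \cap F_d) \to F_d/F_{d-1} \cong (S'\otimes A)(-d)\bigr)$; after the degree shift these are honest graded $S'$-submodules of $S'\otimes A$, multiplication by $x$ gives $L_d(N) \subseteq L_{d+1}(N)$, equality of all $L_d$ for a pair $N \subseteq N'$ forces $N \cap F_d = N' \cap F_d$ for every $d$ by induction on $d$ and the five lemma applied to $0 \to N\cap F_{d-1} \to N \cap F_d \to L_d(N) \to 0$, and the doubly-indexed increasing family $L_d(N^i)$ stabilises by the inductive hypothesis together with the usual bookkeeping. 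With that substitution your argument goes through.
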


We record the following examples for later use.

\begin{examples}
\label{exa:fintype}
Let $F\in \cA$ be any object.
\begin{enumerate}
\item Let $\ell\geq 0$. If $\oplus_n M_n$ is a graded $S$-module of finite type, then so is $\bigoplus_{m\geq \ell} M_n$. It follows that the graded $S$-module $\bigoplus_{m\geq \ell} S_n\otimes F$ is of finite type in $\cA$.
\item For $m\geq 0$, tensor the Euler exact sequence on $\PP^r$ by $\cO_{\PP^r}(m)$ and apply the global sections functor to see that $H^0(\PP^r,\Omega^1(m))$ is the kernel of a map $S_{m-1}^{\oplus(r+1)}\ra S_m$. The direct sum of all such maps shows that $\bigoplus_{m\geq 0} H^0(\PP^r,\Omega^1(m))$ is the kernel of a homomorphism $S(-1)^{\oplus(r+1)}\ra S$ of free graded $S$-modules. Theorem~\ref{thm:abcats} implies that $\bigoplus_{m\geq 0} H^0(\PP^r,\Omega^1(m))\otimes F$ is a graded $S$-module of finite type in $\cA$, so for any $\ell\geq 0$, part (1) shows that $\bigoplus_{m\geq \ell} H^0(\PP^r,\Omega^1(m))\otimes F$ is a graded $S$-module of finite type in $\cA$. 
\end{enumerate}
\end{examples} 
  
\begin{lemma}
\label{lem:exact}
Let $M =\oplus_n M_n$ be a graded $S$-module of finite type in $\cA$. The complex 
\begin{equation}
\label{eqn:knm}
\textstyle{0 \lra \bigwedge^{r+1}V \otimes M_{d-(r+1)} \lra \cdots \lra \bigwedge^2V \otimes M_{d-2}\lra V\otimes M_{d-1} \lra M_{d} \lra 0}
\end{equation}
 obtained as the strand of the Koszul complex for $M$ in degree $d$  is exact for $d\gg 0$.
\end{lemma}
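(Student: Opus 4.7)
My approach is descending induction on cohomological degree. Label the complex \eqref{eqn:knm} as $K^\bullet(M,d)$, placing $M_d$ in cohomological degree $0$ and $\bigwedge^{r+1}V \otimes M_{d-r-1}$ in degree $-r-1$; then $H^i(K^\bullet(M,d)) = 0$ automatically for $i<-r-1$ and $i>0$, so only $r+2$ cohomology groups need to vanish. The claim to prove by descending induction on $i$, from $i=0$ down to $i=-r-1$, is: \emph{for any finite type graded $S$-module $M$ in $\cA$, there exists $d_0 = d_0(M,i)$ with $H^i(K^\bullet(M,d)) = 0$ for all $d \geq d_0$.}

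Before starting the induction I would record the free case: for any $P = \bigoplus_k S(i_k) \otimes N_k$ of finite type, the complex $K^\bullet(P,d)$ is exact for every $d \geq 1 - \min_k i_k$. Indeed, $K^\bullet(S(i_k) \otimes N_k, d)$ is the degree-$(d+i_k)$ strand of the usual Koszul resolution of $\kk$ over $S$, tensored with the object $N_k \in \cA$; this strand is an acyclic complex of finite dimensional $\kk$-vector spaces whenever $d+i_k \geq 1$, and acyclicity is preserved after tensoring with $N_k$ since $\bigwedge^j V \otimes S_{d-j+i_k} \otimes N_k$ lives in $\cA$ with differentials induced from $\kk$-linear maps.

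For the base case $i=0$, the cohomology $H^0(K^\bullet(M,d))$ is the cokernel in $\cA$ of the multiplication $V \otimes M_{d-1} \to M_d$. Choosing a surjection $P = \bigoplus_k S(i_k) \otimes N_k \twoheadrightarrow M$ with $P$ free of finite type, the generators of $P$ (and hence of $M$) sit in degrees at most $\max_k(-i_k)$, so for $d$ strictly exceeding this bound the multiplication is an epimorphism in $\cA$. For the inductive step, assume the claim holds in degree $i+1$. By Theorem~\ref{thm:abcats} the category of graded $S$-modules of finite type in $\cA$ is Noetherian, so there is a short exact sequence $0 \to M' \to P \to M \to 0$ with $P$ free of finite type and $M'$ again of finite type. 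Since taking the degree-$d$ piece and tensoring with the finite-dimensional spaces $\bigwedge^j V$ are both exact, one obtains a short exact sequence of complexes $0 \to K^\bullet(M',d) \to K^\bullet(P,d) \to K^\bullet(M,d) \to 0$, whose long exact sequence contains
\[
H^i(K^\bullet(P,d)) \lra H^i(K^\bullet(M,d)) \lra H^{i+1}(K^\bullet(M',d)).
\]
For $d$ larger than both the free-case bound for $P$ and the inductive bound $d_0(M', i+1)$, the two outer terms vanish, forcing $H^i(K^\bullet(M,d)) = 0$ as required.

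I do not expect a serious obstacle: the two real ingredients are Noetherianness of the module category (Theorem~\ref{thm:abcats}) and the exactness of the standard Koszul strands, both already available. The only delicate point is bookkeeping --- the bound $d_0(M,i)$ genuinely depends on $M$ and $i$, and one must check that at every step of the induction the new kernel $M'$ remains in the class to which the hypothesis applies, which is precisely what the Noetherian property supplies. Since the complex has only $r+2$ nonzero positions, the induction terminates in $r+2$ steps and no convergence issue arises.
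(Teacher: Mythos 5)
Your argument is correct, and it is essentially the argument the paper relies on: the paper's proof is only a citation to \cite[Step 5 of Proof of Proposition 2.3.3]{AP06}, where the statement is proved in the same way, namely by checking exactness of the strand for free modules of finite type, using Theorem~\ref{thm:abcats} to produce a short exact sequence $0 \to M' \to P \to M \to 0$ with $M'$ again of finite type, and running a descending induction on the cohomological position via the long exact sequence. Your bookkeeping of the bounds $d_0(M,i)$ and the quantification of the inductive claim over all finite type modules are both handled correctly.
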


\begin{proof}
The proof is contained in \cite[Step 5 of Proof of Proposition 2.3.3]{AP06}.
\end{proof}

\subsection{A sheaf of t-structures on $\cdc(\ypr)$}
We now use the family of t-structures from Proposition~\ref{prop:t-str-n} to construct a `limiting' t-structure on $\cdc(\ypr)$ that is actually a sheaf of t-structures over $\PP^r$. We continue to work under Assumption~\ref{assumption}.

As a first step, we provide an application of the categorical results from the previous section by establishing a technical result that will be used in the proof of Proposition~\ref{prop:stab} to follow. Recall that $D_0^{[0,0]}$ denotes the heart of the 0th t-structure on $\cdc(\ypr)$ constructed in Proposition~\ref{prop:t-str-n}.

\begin{lemma}
\label{lem:technical}
Let $F\in D_0^{[0,0]}$. Then $G_n:=H^0_0(F(n))$ satisfies $p_*G_n(i)\in \cA$ for $0\leq i\leq r+1$ and $n\gg 0$.
\end{lemma}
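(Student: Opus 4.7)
The plan is that since $G_n = H^0_0(F(n)) \in D_0^{[0,0]}$ by construction, the statements $p_*G_n(i) \in \cA$ are automatic for $0\leq i\leq r$; the entire content of the lemma is therefore the claim $p_*G_n(r+1) \in \cA$ for $n \gg 0$. I would reduce this to a single Serre-vanishing-type statement: for $F \in D_0^{[0,0]}$, one has $p_*F(m) \in \cA$ for all $m \gg 0$. Granting this, pick $m_0$ so that $p_*F(m) \in \cA$ for $m \geq m_0$; then for $n \geq m_0$, each $p_*F(n+i)$ with $0\leq i\leq r+1$ lies in $\cA$, so $F(n)$ already lies in $D_0^{[0,0]}$, hence $G_n = F(n)$ and $p_*G_n(r+1) = p_*F(n+r+1) \in \cA$.

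The Serre-vanishing claim is proved using the graded-$S$-module machinery of the preceding subsection. By Corollary \ref{cor:push}\one, for $m \geq r+1$ there is a quasi-isomorphism in $\cdc(Y)$
\[
p_*F(m) \simeq \bigl[V^m_r \otimes P_0 \to V^m_{r-1} \otimes P_1 \to \cdots \to V^m_0 \otimes P_r\bigr]
\]
with the right-hand side supported in cohomological degrees $[-r,0]$, where $P_i := p_*F(i) \in \cA$ and $V^m_i := H^0(\PP^r, \Omega^i(m-r+i))$. Assembling these complexes as $m$ varies produces a bounded complex of graded $S$-modules in $\cA$; each column $\bigoplus_m V^m_i \otimes P_{r-i}$ is of finite type because the Euler-Koszul resolution of $\Omega^i$ realises $\bigoplus_m V^m_i$ as an iterated kernel of maps between free graded $S$-modules of finite type (the case $i = 1$ being Example \ref{exa:fintype}(2)), combined with the Noetherianity of Theorem \ref{thm:abcats}. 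Applying Theorem \ref{thm:abcats} again, the cohomology graded modules $\bigoplus_m H^j_Y(p_*F(m))$ are then themselves of finite type in $\cA$ for each $-r \leq j \leq 0$.

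For $j < 0$ the final step is to show that these cohomology modules are trivial in sufficiently large degrees. I would invoke Lemma \ref{lem:exact} on the exactness of the Koszul strand in degree $d$ for $d \gg 0$, applied to each such cohomology module, together with a downward induction on $j$ from $j = -r$ that exploits the explicit form of the differentials of the complex and mimics the standard analysis of local cohomology of a finitely generated graded module over a polynomial ring.

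The main obstacle, I expect, is this last vanishing step: the Noetherianity of the graded category is not by itself enough to force a finitely generated graded module to vanish in large degrees (for instance $S$ itself is of finite type and is nonzero in every degree). The content is really that the cohomology modules for $j<0$ behave like local-cohomology modules, hence are torsion in the graded sense. Translating Lemma \ref{lem:exact} into precisely this torsion statement---by carefully matching the Koszul strand of the cohomology modules with the differentials induced on the cohomology of the complex $\widetilde{C}^m$, and tracking the Euler-Koszul identifications of the $V^m_i$---is the delicate heart of the argument and mirrors the abstract Serre-vanishing reasoning underlying Section~2 of \cite{AP06}.
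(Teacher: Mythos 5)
Your reduction rests on a claim that is false: for $F\in D_0^{[0,0]}$ it is \emph{not} true in general that $p_*F(m)\in\cA$ for $m\gg0$. Take $Y=\Spec\kk$ with the standard t-structure, $r=1$, and $F=\cO_{\PP^1}(-2)[1]$. Then $p_*F=\mathbf{R}\Gamma(\cO(-2))[1]$ is $H^1(\PP^1,\cO(-2))\cong\kk$ placed in degree $0$, and $p_*F(1)=\mathbf{R}\Gamma(\cO(-1))[1]=0$, so $F\in D_0^{[0,0]}$; but $p_*F(m)=H^0(\cO(m-2))[1]$ is a nonzero object concentrated in degree $-1$ for every $m\geq 2$. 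The same example defeats the fallback you flag as the ``delicate heart'' of the argument: the graded module $\bigoplus_m H^{-1}_Y(p_*F(m))\cong\bigoplus_m H^0(\PP^1,\cO(m-2))$ is of finite type yet nonzero in all large degrees, so the negative-degree cohomology modules of $p_*F(m)$ are genuinely not torsion and no Serre-vanishing statement can rescue the reduction. (Consistently with the lemma, in this example $F(n)\in D_0^{\leq -1}$ for $n\geq 2$, so $G_n=H^0_0(F(n))=0$ and the conclusion is vacuous---but $G_n\neq F(n)$, which is precisely the possibility your reduction assumes away.)

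The truncation $G_n$ is therefore essential and cannot be replaced by $F(n)$; the paper's proof keeps it throughout. Step 1 identifies $p_*G_n(i)\cong H^0_Y(p_*F(n+i))$ for $0\leq i\leq r$, so only the degree-zero cohomology of $p_*F(m)$ ever enters. Step 2 assembles these objects into the cokernel $C=\cok(\phi)$ of a single map of finite-type graded $S$-modules coming from the Beilinson resolution, so that $p_*G_n(i)=C_{n+i-r}$. Step 3 then recognises the Koszul complex computing $p_*G_n(r+1)$ as the degree-$(n+1)$ strand of the Koszul complex of $C$, which is exact for $n\gg0$ by Lemma~\ref{lem:exact} because $C$ is of finite type. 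No vanishing of the negative cohomologies of $p_*F(m)$ is needed or used, which is what makes the argument go through where yours cannot.
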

\begin{proof}
Since $F, G_n\in D_0^{[0,0]}$, we have $p_*F(i), p_*G_n(i)\in \cA$ for $0\leq i\leq r$. It remains to prove that $p_*G_n(r+1)\in \cA$. We proceed in three steps.

\smallskip

\noindent \textsc{Step 1}: For $0\leq i\leq r$, show that $p_*G_n(i)\cong H^0_Y(p_*F(n+i))$. 

\smallskip

Since $F\in D_0^{\leq 0}\subseteq D_n^{\leq 0}$ by Lemma~\ref{lem:inclusion}, we have $F(n)\in D_0^{\leq 0}$ and hence $G_n=H^0_0(F(n))=\tau_0^{\geq 0}F(n)$. We therefore have an exact triangle $\tau_0^{\leq -1}F(n) \ra F(n) \ra G_n$. For $0 \leq i \leq r$, by tensoring with $\cO(i)$ and pushing forward, we get another exact triangle
\begin{equation}
\label{eqn:trianglep*Gni}
p_*\big(\tau_0^{\leq -1}F(n)(i)\big) \lra p_*F(n+i) \lra p_*G_n(i)\end{equation}
The left-hand object in \eqref{eqn:trianglep*Gni} lies in $D_c^{\leq -1}(Y)$ for $0 \leq i \leq r$ because $\tau_0^{\leq -1}F(n)\in D_0^{\leq -1}$. Since $p_*G_n(i) \in \cA\subseteq D_c^{\geq 0}(Y)$, the exact triangle \eqref{eqn:trianglep*Gni} is a standard truncation triangle and hence by uniqueness of objects in such triangles we have $p_*G_n(i)= \tau_Y^{\geq 0}p_*F(n+i)$. It remains to note that $p_*F(n+i) \in D_c^{[-r,0]}(Y)$ for $n\geq r+1$ and $0 \leq i \leq r$ by Corollary \ref{cor:push}, from which we obtain $H^0_Y(p_*F(n+i))\cong \tau_Y^{\geq 0}p_*F(n+i)\cong p_*G_n(i)$ as required. 

\smallskip

\noindent \textsc{Step 2}: For $0\leq i\leq r$, show $p_*G_n(i)$ is the $(n+i-r)$-graded piece of a specific $S(V)$-module of finite type in $\cA$.

\smallskip

For $m\geq r+1$ and $V_i^m=H^0(\PP^r,\Omega^i(m-r+i))$, Corollary~\ref{cor:push} shows $p_*F(m) \in D_c^{[-r,0]}(Y)$ is the cone of a morphism $V^{m}_1 \otimes p_*F(r-1) \ra V^{m}_0 \otimes p_*F(r)$. The direct sum of all such maps for $m\geq r+1$ is a graded $S(V)$-module homomorphism
\[
\phi\colon \bigoplus_{m\geq r+1} H^0(\PP^r,\Omega^1(m-r+1))\otimes p_*F(r-1) \lra \bigoplus_{m\geq r+1} H^0(\cO_{\PP^r}(m-r))\otimes p_*F(r).
\]
Examples~\ref{exa:fintype} and Theorem~\ref{thm:abcats} imply that the cokernel of this map $\cok(\phi)=\oplus_j C_j$ is a graded $S(V)$-module of finite type in $\cA$. Moreover, for any $n \geq r+1$ and $0 \leq i \leq r$, the equality $V_0^{n+i}\cong S(V)_{n+i-r}$ shows that the $(n+i-r)$-graded piece of this module satisfies
\[
C_{n+i-r}  =  \cok \left( V^{n+i}_1 \otimes p_*F(r-1) \lra V^{n+i}_0\otimes p_*F(r)\right)
\cong  H_Y^0(p_*F(n+i)) \cong p_*G_n(i)
\]
by Step 1 above. This completes Step 2.

\medskip

\noindent \textsc{Step 3}: Deduce that $p_*G_n(r+1)\in \cA$ using a resolution by $p_*G_n(i)=C_{n+i-r}$ for $0 \leq i \leq r$.

\smallskip

The twist of the Koszul complex associated to the space $V=H^0(\PP^r,\cO(1))$ is the resolution
\[
\textstyle{0\lra \bigwedge^{r+1}V \otimes \cO_{\PP^r} \lra \bigwedge^rV \otimes \cO_{\PP^r}(1) \lra \cdots \lra V \otimes \cO_{\PP^r}(r)\lra \cO_{\PP^r}(r+1)\lra 0.}
\]
As in the proof of Corollary~\ref{cor:push}, pull this resolution back along $q\colon Y\times \PP^r\to \PP^r$, tensor with $G_n$ and pushforward along $p\colon Y\times \PP^r\to Y$ to obtain a quasi-isomorphism from the complex
\begin{equation}
\label{eqn:Gncomplex}
\textstyle{\bigwedge^{r+1}V \otimes p_*G_n \lra \bigwedge^rV \otimes p_*G_n(1) \lra \cdots \bigwedge^2V\otimes p_*G_n(r-1)\lra V \otimes p_*G_n(r)}
\end{equation}
of objects in $\cA$ to $p_*G_n(r+1)$.  To show that $p_*G_n(r+1)\in \cA$, we need only show that the complex \eqref{eqn:Gncomplex} has nonzero cohomology only in the right-hand position. For this, Step 2 enables us to rewrite this complex as
\begin{equation*}
\textstyle{\bigwedge^{r+1}V \otimes C_{n-r}\lra \bigwedge^{r}V \otimes C_{n-r+1} \lra \cdots \lra \bigwedge^2V \otimes C_{n-1} \lra V \otimes C_n},
\end{equation*}
which we recognise from \eqref{eqn:knm} as forming part of the Koszul complex of degree $n+1$ associated to the graded $S(V)$-module $\oplus_n C_n$ in $\cA$. We need only show that this latter complex is exact for $n\gg 0$, but this is immediate from Lemma \ref{lem:exact} because $\oplus_n C_n$ is of finite type.
\end{proof}

The key observation in constructing the sheaf of t-structures is the following stabilisation result for the cohomology objects $H^i_n(F)$ associated to the family of t-structures $(D^{\leq 0}_n, D^{\geq 0}_n)$ on $\cdc(Y \times \PP^r)$ constructed in Proposition~\ref{prop:t-str-n}. Recall from Lemma~\ref{lem:inclusion} that there exist canonical morphisms $H^i_n(F) \lra H^i_{n+1}(F)$ for all $n\geq 0$ and $i\in \ZZ$. 

\begin{proposition}
\label{prop:stab}
 For every $F \in \cdc(Y \times \PP^r)$, there exists $N\in \ZZ$ such that for every $n\geq N$, the canonical morphism $H^i_n(F) \ra H^i_{n+1}(F)$ is an isomorphism for all $i \in \ZZ$. 
\end{proposition}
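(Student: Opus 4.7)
My plan combines a reduction to the heart with an iterated use of Lemma~\ref{lem:technical}.

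\textbf{Reduction to the heart.} By boundedness of $(D_0^{\leq 0}, D_0^{\geq 0})$, the object $F$ admits a finite filtration whose graded pieces lie in $D_0^{[0,0]}$. For each truncation triangle $A \to F \to C$ in this filtration, the associated long exact sequence of $D_n$-cohomologies is natural in $n$, so the maps $H^i_n(-) \to H^i_{n+1}(-)$ form a morphism of long exact sequences. The five lemma then propagates stabilization from $A$ and $C$ (with respective thresholds $N_A$, $N_C$) to $F$ (with threshold $\max(N_A, N_C)$). Hence it suffices to prove the statement for $F \in D_0^{[0,0]}$.

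\textbf{Heart case.} For $F \in D_0^{[0,0]}$, Lemma~\ref{lem:inclusion} already forces $H^i_n(F) = 0$ outside the range $i \in [-r, 0]$ uniformly in $n \geq 0$, so only finitely many cohomologies must stabilize. Lemma~\ref{lem:technical} supplies the crucial inductive input: applied to such an $F$, it produces $G_n := H^0_0(F(n))$ lying in $D_0^{[0,0]} \cap D_1^{[0,0]}$ for $n \gg 0$; equivalently, the condition $p_*(\cdot)(i) \in \cA$ extends from the window $i \in [0, r]$ to $i \in [0, r+1]$. I would then iterate this one-step extension: applying Lemma~\ref{lem:technical} to $G_n$ (still in $D_0^{[0,0]}$), to further twists, and also to the lower truncation pieces sitting in the exact triangle $\tau_0^{\leq -1} F(n) \to F(n) \to G_n$, widens the $\cA$-window progressively. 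Combining these extensions with a secondary induction on the (bounded) $D_0$-cohomological length of $F(n)$, one obtains enough pushforward data $p_*F(j) \in \cA$ over a sufficiently long window of $j$ to pin down all $H^i_n(F)$ for $n$ past a single threshold. The finiteness of the iteration is ensured by the boundedness of the $D_0$-t-structure and the finite range $[-r, 0]$ of potentially nontrivial cohomology degrees.

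\textbf{Main obstacle.} The principal difficulty is organizing the iteration so that one threshold $N$ works simultaneously for all the cohomologies $H^i_n(F)$ with $i \in [-r, 0]$. Each invocation of Lemma~\ref{lem:technical} introduces its own bound and refers to a potentially new object, so careful bookkeeping is required to guarantee that only finitely many invocations suffice. A cleaner alternative would be to analyse the cone of each comparison map $H^i_n(F) \to H^i_{n+1}(F)$ directly, extracting from Lemma~\ref{lem:technical} the vanishings of $p_*F(j)$-truncations needed to show this cone is zero for $n \gg 0$; both routes hinge on the Koszul-style bootstrap already present in the proof of Lemma~\ref{lem:technical} itself.
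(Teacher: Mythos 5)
Your overall architecture is reasonable: reduce to the heart of the zeroth t-structure and feed in Lemma~\ref{lem:technical}. The reduction step itself is fine (the paper does it slightly differently, by peeling off the top cohomology and inducting on the length of the interval $[a-r,b]$, but your filtration-plus-five-lemma version works, since the comparison maps $H^i_n\to H^i_{n+1}$ are natural and compatible with the connecting morphisms). The problem is that the heart case --- which is the entire content of the proposition --- is not actually proved, and the mechanism you sketch for it points in a wrong direction.

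First, the claim that one can iterate Lemma~\ref{lem:technical} to obtain ``enough pushforward data $p_*F(j)\in\cA$ over a sufficiently long window of $j$'' is false for $F\in D_0^{[0,0]}$ in general: Corollary~\ref{cor:push} only gives $p_*F(j)\in D_c^{[-r,0]}(Y)$ for $j\geq 0$, and indeed an object of $D_0^{[0,0]}$ typically acquires nonzero $H^i$ in degrees $i\in[-r,0]$ for the limiting t-structure, so $p_*F(j)$ cannot lie in $\cA$ for all large $j$. The window extension in Lemma~\ref{lem:technical} applies to $G_n=H^0_0(F(n))$, not to $F$ itself, and applying the lemma to the complementary piece $\tau_0^{\leq -1}F(n)$ is not legitimate either, since that object does not lie in $D_0^{[0,0]}$. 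Second, and more importantly, you never supply the step that converts the conclusion $G_n\in D_0^{[0,0]}\cap D_1^{[0,0]}$ into the isomorphism $H^i_n(F)\to H^i_{n+1}(F)$. The missing chain of reductions is: for $F\in D_0^{\leq 0}$ one has $\tau_{n+1}^{\geq 0}F\cong\tau_{n+1}^{\geq 0}\tau_n^{\geq 0}F$ because $\tau_{n+1}^{\geq 0}$ annihilates $D_n^{\leq -1}\subseteq D_{n+1}^{\leq -1}$; hence stabilisation of the top cohomology is \emph{equivalent} to $\tau_n^{\geq 0}F\in D_{n+1}^{\geq 0}$; by the twist-equivariance $H^0_n(F)=H^0_0(F(n))(-n)$ this is exactly the statement $G_n(1)\in D_0^{\geq 0}$, i.e.\ $p_*G_n(i+1)\in D_c^{\geq 0}(Y)$ for $0\leq i\leq r$, which is what Lemma~\ref{lem:technical} delivers. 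Only the top cohomology is handled this way; the lower $H^i_n$ are then dealt with by the outer induction. Your ``Main obstacle'' paragraph correctly identifies that this bookkeeping is unresolved, so as written the proposal has a genuine gap at its core step.
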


\begin{proof}
We claim that it suffices to prove that the highest nonzero cohomology group of $F$ stabilises. Indeed, boundedness of the t-structure $(D_0^{\leq 0},D_0^{\geq 0})$ gives $a,b\in \ZZ$ such that $F\in D_0^{[a,b]}$, so the inclusions \eqref{eqn:1st-inclusion} and \eqref{eqn:2nd-inclusion} imply that for all $n\geq 0$ we have $F\in D_n^{[a-r,b]}$. If we can find $N>0$ such that $H^b_n(F) \cong H^b_{n+1}(F)$ for $n\geq N$, then truncating at $b$ gives an isomorphism 
\[
\xymatrix{
 \tau_n^{\leq b-1}F \ar[r]\ar[d] & F \ar[d]^{=}\ar[r] & H^b_n(F) \ar[d]^{\cong} \\
\tau_{n+1}^{\leq b-1}F \ar[r] & F \ar[r] & H^b_{n+1}(F)\\
}
\]
of exact triangles and hence $\tau_N^{\leq b-1}F\in D_n^{[a-r,b-1]}$ for $n\geq N$. Now consider $\tau_N^{\leq b-1}F$, and apply induction on the length of the interval $[a-r,b]$ to obtain the statement of the proposition.

To simplify the claim, shift $F$ by $b$ to obtain $F\in D_0^{\leq 0}$ and hence $F \in D_n^{\leq 0}$ for $n \geq 0$ by \eqref{eqn:1st-inclusion}. Proving the claim is equivalent to proving that $\tau_n^{\geq 0}F \ra \tau_{n+1}^{\geq 0}F$ is an isomorphism for $n \gg 0$. Since $\tau_{n}^{\geq 0}$ annihilates all objects in $D_0^{\leq -1}\subseteq D_{n}^{\leq -1}$ by \eqref{eqn:1st-inclusion}, applying $\tau_{n}^{\geq 0}$ to the triangle $\tau_0^{\leq -1}F\to F\to \tau_0^{\geq 0}F$ gives $\tau_{n}^{\geq 0}F\cong \tau_{n}^{\geq 0}\tau_0^{\geq 0} F$. Thus, we may replace $F$ by $\tau_0^{\geq 0}(F)$ throughout, i.e., we may assume $F \in D_0^{[0,0]}$. Similarly, since $\tau_{n+1}^{\geq 0}$ annihilates objects in $D_n^{\leq -1}\subseteq D_{n+1}^{\leq -1}$ by \eqref{eqn:1st-inclusion}, we have $\tau_{n+1}^{\geq 0}F\cong \tau_{n+1}^{\geq 0}\tau_n^{\geq 0} F$. The claim now reduces to proving that $\tau_n^{\geq 0}(F) \in D_{n+1}^{\geq 0}$. This means $H^0_n(F) \in D_{n+1}^{\geq 0}$, or equivalently, $H^0_n(F)(n+1)\in D_0^{\geq 0}$. Tensoring by $\cO(n)$ defines an autoequivalence of $\cdc(\ypr)$ that takes the t-structure $(D_n^{\leq 0}, D_n^{\geq 0})$ to $(D_0^{\leq 0}, D_0^{\geq 0})$, so $H^0_n(F) = H^0_0(F(n))(-n)$. As a result, the claim is equivalent to requiring that each $F\in D_0^{[0,0]}$ satisfies $H^0_0(F(n))(1)\in D_0^{\geq 0}$ for $n\gg 0$. To complete the proof, it remains to show that for $F\in D_0^{[0,0]}$, the object $G_n:=H^0_0(F(n))$ satisfies $G_n(1) \in D_0^{\geq 0}$. This is equivalent to showing that $p_*G_n(i+1)\in D_c^{\geq 0}(Y)$ for $0\leq i\leq r$ and $n\gg 0$ which is immediate from Lemma~\ref{lem:technical}.
\end{proof}

\begin{theorem}
\label{thm:limitingtstructure}
Consider the t-structure on $\cdc(Y)$ from Assumption~\ref{assumption}. Then there is a sheaf of t-structures on $\cdc(\ypr)$ over $\PP^r$, where for any interval $[a,b]$ that may be infinite on one side, we have
\[
D_c^{[a,b]}(\ypr) = \{ F\in D(\ypr) \mid p_* F(n) \in D_c^{[a,b]}(Y) \text{ for all } n \gg 0 \}.
\]
 Moreover, this t-structure satisfies 
\begin{equation}
\label{eqn:relations}
D_c^{\leq 0}(\ypr) = \bigcup_{n \geq 0} D_c^{\leq 0}(\ypr)_n\quad \text{and}\quad D_c^{\geq 0}(\ypr) = \bigcap_{n \geq 0} D_c^{\geq 0}(\ypr)_n. 
\end{equation}
\end{theorem}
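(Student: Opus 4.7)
The plan is to first establish $(\cdcn(\ypr), \cdcp(\ypr))$ as a bounded t-structure on $\cdc(\ypr)$ by combining the family of t-structures from Proposition~\ref{prop:t-str-n} with the stabilisation result of Proposition~\ref{prop:stab}, and then to promote it to a sheaf of t-structures over $\PP^r$ by applying Theorem~\ref{thm:sheaf-criterion} with the ample line bundle $\cO_{\PP^r}(1)$. I would begin by verifying the two equalities \eqref{eqn:relations}, which are formal consequences of the chains in Lemma~\ref{lem:inclusion}: the increasing chain \eqref{eqn:1st-inclusion} makes ``$F \in D_n^{\leq 0}$ for $n \gg 0$'' equivalent to ``$F \in D_n^{\leq 0}$ for some $n$'', giving $\cdcn(\ypr) = \bigcup_n D_n^{\leq 0}$; the decreasing chain \eqref{eqn:2nd-inclusion} makes ``$F \in D_n^{\geq 0}$ for $n \gg 0$'' equivalent to ``$F \in D_n^{\geq 0}$ for all $n \geq 0$'' because $F \in D_N^{\geq 0}$ automatically forces $F \in D_n^{\geq 0}$ for $n \leq N$, giving $\cdcp(\ypr) = \bigcap_n D_n^{\geq 0}$.

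Next I would verify the t-structure axioms. The inclusion $\cdc^{\leq -1}(\ypr) \subseteq \cdcn(\ypr)$ is immediate. For orthogonality, given $F \in \cdcn(\ypr)$ and $G \in \cdc^{\geq 1}(\ypr)$, the relations \eqref{eqn:relations} supply an index $n$ with $F \in D_n^{\leq 0}$ and simultaneously $G \in D_n^{\geq 1}$, so $\Hom(F,G)=0$ follows from the t-structure on $D_n$. The existence of truncation triangles is the heart of the argument. Given $F \in \cdc(\ypr)$, I would use Proposition~\ref{prop:stab} to choose $N$ so that the natural maps $H^i_n(F) \to H^i_{n+1}(F)$ are isomorphisms for all $i$ and every $n \geq N$. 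I then claim that the $D_n$-truncations themselves stabilise: specifically, $\tau_N^{\leq 0}(F) \to \tau_{N+1}^{\leq 0}(F)$ is an isomorphism, equivalently $\tau_N^{\geq 1}(F) \in D_{N+1}^{\geq 1}$. Granting this, I set $\tau^{\leq 0}(F) := \tau_N^{\leq 0}(F)$ and $\tau^{\geq 1}(F) := \tau_N^{\geq 1}(F)$; iterating, $\tau^{\geq 1}(F) \in D_n^{\geq 1}$ for every $n \geq N$, and also for $n \leq N$ by the chain, so $\tau^{\geq 1}(F) \in \cdc^{\geq 1}(\ypr)$ by \eqref{eqn:relations}, while $\tau^{\leq 0}(F) \in D_N^{\leq 0} \subseteq \cdcn(\ypr)$ is immediate. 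Boundedness of the limit t-structure is inherited from the boundedness of each $D_n$-t-structure together with the bounds given by Corollary~\ref{cor:push}.

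For the sheaf property, tensoring by $\cO(1) = q^*\cO_{\PP^r}(1)$ is visibly t-exact for the limit t-structure, since the defining condition ``$p_*F(n) \in \cdc^{[a,b]}(Y)$ for all $n \gg 0$'' is invariant under the reindexing $n \mapsto n-1$. Applying Theorem~\ref{thm:sheaf-criterion} with $S = \PP^r$ and $L = \cO_{\PP^r}(1)$ then produces the desired sheaf of t-structures on $\cdc(\ypr)$ over $\PP^r$ by restriction, noting that $\cdlc(Y\times \PP^r) = \cdc(Y\times \PP^r)$ by Lemma~\ref{lem:cpt-supp-new} since $\PP^r$ is proper.

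The main anticipated obstacle is the stabilisation of truncations (not merely of cohomology) invoked in axiom~(iii). The inclusion $D_N^{\leq 0} \subseteq D_{N+1}^{\leq 0}$ is strict in general, so the equivalent statement $\tau_N^{\geq 1}(F) \in D_{N+1}^{\geq 1}$ is not formal. I expect to prove it by applying $p_*(-)(N+1+i)$, for $0 \leq i \leq r$, to the triangle $\tau_N^{\leq 0}(F) \to F \to \tau_N^{\geq 1}(F)$ and extracting the long exact cohomology sequence on $Y$. The input is threefold: the extension (via Corollary~\ref{cor:push}) of the containment $p_*(\tau_N^{\leq 0}(F))(m) \in \cdc^{\leq 0}(Y)$ from $m \in [N, N+r]$ to every $m \geq N$; the stabilisation isomorphisms from Proposition~\ref{prop:stab}; and the technical input of Lemma~\ref{lem:technical}. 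Together these should force the relevant cokernels controlling $H^i_Y(p_*(\tau_N^{\geq 1}(F))(N+1+i))$ in degrees $i \leq 0$ to vanish once $N$ is sufficiently large.
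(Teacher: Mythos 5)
Your proposal follows the paper's proof essentially step for step: the relations \eqref{eqn:relations} from Lemma~\ref{lem:inclusion}, orthogonality by finding a common index $n$, truncations defined as $\tau_N^{\leq 0}$ for $N\gg 0$ via Proposition~\ref{prop:stab}, and the sheaf property from Theorem~\ref{thm:sheaf-criterion} with $L=\cO_{\PP^r}(1)$. The one place you diverge is the step you flag as the main obstacle, and there you are making it harder than it is: you do not need a second analytic argument through Lemma~\ref{lem:technical} and long exact sequences. The stabilisation of truncations is a formal consequence of Proposition~\ref{prop:stab} together with boundedness: $\tau_N^{\geq 1}F$ is a finite iterated extension of the objects $H^i_N(F)[-i]$ with $i\geq 1$, and the stabilisation isomorphisms identify each $H^i_N(F)$ with $H^i_n(F)\in D_n^{[0,0]}$ for every $n\geq N$, so $H^i_N(F)[-i]\in D_n^{\geq 1}$ and extension-closedness of $D_n^{\geq 1}$ gives $\tau_N^{\geq 1}F\in\bigcap_{n\geq N}D_n^{\geq 1}\subseteq D_c^{\geq 1}(\ypr)$ directly (this is exactly how the paper phrases it). With that replacement your argument is complete and agrees with the paper's.
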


\begin{proof}
 Lemma~\ref{lem:semiorthog}\one\ implies that each $D_c^{[a,b]}(\ypr)$ is contained in $\cdc(\ypr)$. Write $D^{\leq 0}=D_c^{\leq 0}(\ypr)$ and $D^{\geq 0}=D_c^{\geq 0}(\ypr)$. Now \eqref{eqn:relations} follows from \eqref{eqn:1st-inclusion} and \eqref{eqn:2nd-inclusion}, and hence $D^{\leq -1}\subseteq D^{\leq 0}$. Moreover, $D^{\geq 1}$ is right-orthogonal to $D^{\leq 0}$, because for $F\in D^{\geq 1}=\bigcap_{n\geq 0} D_n^{\geq 1}$, we have $\Hom(E,F)=0$ for all $n\geq 0$ and $E\in D_n^{\leq 0}$, so $F$ is orthogonal to $\bigcup_{n\geq 0}D_n^{\leq 0}=D^{\leq 0}$.  To define the truncation functors, Proposition~\ref{prop:stab} associates to each $F\in \cdc(\ypr)$ an integer $N$ such that $H^i_N(F)\in D_n^{[0,0]}$ for all $n\geq N$ and $i\in \ZZ$. In particular, $\tau_N^{\leq 0}F\in  \bigcup_{n \geq N} D^{\leq 0}_n\subseteq D^{\leq 0}$ and $\tau_N^{\geq 1}F\in  \bigcap_{n \geq N} D^{\geq 1}_n\subseteq D^{\geq 1}$ by \eqref{eqn:1st-inclusion} and \eqref{eqn:2nd-inclusion} respectively. Define 
\begin{equation}
\label{eqn:limitingtruncation}
\tau^{\leq 0}\colon \cdc(\ypr)\lra D^{\leq 0}\quad\text{and}\quad\tau^{\geq 0}\colon \cdc(\ypr)\lra D^{\geq 0}
\end{equation}
by setting $\tau^{\leq 0}F=\tau_n^{\leq 0}F$ for $n\gg 0$, and $\tau^{\geq 0}F=\tau_n^{\geq 0}F$ for $n\gg 0$. For any $F\in \cdc(\ypr)$, the truncation triangle is $\tau_n^{\leq 0}F\ra F\ra \tau_n^{\geq 1}F$ for $n\gg 0$, so $(D^{\leq 0},D^{\geq 0})$ is a t-structure. For boundedness, let $F\in \cdc(\ypr)$. Since the t-structures $(D_n^{\leq 0},D_n^{\geq 0})$ are bounded for $n\geq 0$, there exists $a_n, b_n\in \ZZ$ such that $F\in D_n^{[a_n,b_n]}$ for $n\geq 0$. Proposition~\ref{prop:stab} gives $N\in \ZZ$ such that the cohomology groups stabilise for t-structures indexed by $n\geq N$ and hence $F\in D_n^{[a_N,b_N]}$ for $n\geq N$. For $a:=\min_{0\leq i\leq N} a_i$ and $b:=\max_{0\leq i\leq N} b_i$, we have $F\in D_n^{\geq a}\cap D_n^{\leq b}=D_n^{[a,b]}$ for $n\geq 0$. Therefore $F\in D_c^{[a,b]}(\ypr)$ as required. Finally, tensoring by $q^*\cO_{\PP^r}(1)$ preserves the heart, so it's t-exact, and the result follows from Theorem \ref{thm:sheaf-criterion}.
\end{proof}

For $i\in \ZZ$, let $H^i(-):=\tau^{\geq 0}\tau^{\leq 0}(-[i])$ denote the $i$th cohomology functor of the t-structure from Theorem~\ref{thm:limitingtstructure}, where the truncation functors are given by \eqref{eqn:limitingtruncation}.

\subsection{The Noetherian property}
Theorem~\ref{thm:limitingtstructure} provides a bounded t-structure on $\cdc(Y\times U)$ for each open subset $U\subseteq \PP^r$. We now show that every such t-structure has a Noetherian heart. For this, associate to each $F\in D^{[0,0]}$ a graded $S$-module $M(F)$ in the category $\cA$ by setting
\[
M(F)_n:=\left\{\begin{array}{cc} H^0_Y(p_*F(n)) & \text{for } n>0 \\ 0 & \text{otherwise,}\end{array}\right.
\]
where $H^i_Y(-)$ is the $i$th cohomology functor for the t-structure on $\cdc(Y)$ from Assumption~\ref{assumption}, and where the maps $S_1\otimes M(F)_n\lra M(F)_{n+1}$ giving the $S$-module structure are obtained by applying $H^0_Y(p_*F(-\otimes \cO(n+1)))$ to the right-hand map from the Euler sequence on $\PP^r$. Note that $M(-)_n\colon D^{[0,0]}\to \cA$ is a functor for each $n\in \ZZ$, and hence so is $M(-):=\oplus_n M(-)_n$.

\begin{lemma}
\label{lem:leftexactfunctor}
The functors $M(-)_n$ and $M(-)$ are left-exact, while $M(-)_n$ is exact for $n\gg 0$. Moreover, for $F\in D^{[0,0]}$ we have that:
\begin{enumerate}
\item[\one] the graded $S$-module $M(F)$ in $\cA$ is of finite type. 
\item[\two] if a subobject $F'\subseteq F$ satisfies $M(F')_n=M(F)_n$ for $n\gg 0$, then $F'=F$.
\end{enumerate}
\end{lemma}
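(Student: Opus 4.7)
\textit{Left-exactness and eventual exactness of $M(-)_n$.} Given a short exact sequence $0 \to F' \to F \to F'' \to 0$ in the heart $D^{[0,0]}$, I would apply the triangulated functor $p_*\bigl((-)(n)\bigr)$ to obtain an exact triangle in $\cdc(Y)$ and then use the long exact sequence of $H_Y^\bullet$ for the t-structure of Assumption~\ref{assumption}. Since $D^{[0,0]} \subseteq D_0^{\geq 0}$ forces $p_*F''(n) \in \cdc^{\geq 0}(Y)$ for $n \geq 0$, we get $H_Y^{-1}(p_*F''(n)) = 0$, proving that each $M(-)_n$ is left-exact; left-exactness of $M(-)$ then follows because kernels of graded $S$-modules are computed degreewise. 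For $n \gg 0$, Theorem~\ref{thm:limitingtstructure} places all of $p_*F'(n), p_*F(n), p_*F''(n)$ in $\cA$, so $H^1_Y(p_*F'(n))$ also vanishes and the triangle becomes a short exact sequence in $\cA$, making $M(-)_n$ exact.

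\textit{Part \one.} By Proposition~\ref{prop:stab} and Theorem~\ref{thm:limitingtstructure}, I can choose $n_0 \geq 0$ with $F(n_0) \in D_0^{[0,0]}$. Applying Corollary~\ref{cor:push}\one\ to $F(n_0)$, the object $p_*F(n_0+m)$ is quasi-isomorphic to the complex
\[
\bigl[\, V^m_r \otimes p_*F(n_0) \to \cdots \to V^m_1 \otimes p_*F(n_0+r-1) \to V^m_0 \otimes p_*F(n_0+r) \,\bigr]
\]
of objects of $\cA$, concentrated in non-positive degrees with the rightmost term in degree $0$. In particular, for $m \geq r+1$,
\[
M(F)_{n_0+m} \;=\; H^0_Y\bigl(p_*F(n_0+m)\bigr) \;=\; \cok\Bigl(V^m_1 \otimes p_*F(n_0+r-1) \to V^m_0 \otimes p_*F(n_0+r)\Bigr).
\]
Summing over $m \geq r+1$, the target is the degree-$\geq r+1$ part of the free graded $S$-module $S(-r) \otimes p_*F(n_0+r)$, hence of finite type by Examples~\ref{exa:fintype}; the displayed map is $S$-linear, so the cokernel $\bigoplus_{m \geq r+1} M(F)_{n_0+m}$ is of finite type. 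This cokernel sits inside $M(F)$ as a graded $S$-submodule, and its complement consists of the finitely many objects $M(F)_1, \dots, M(F)_{n_0+r} \in \cA$, each contributing a single free generator via $S(-j) \otimes M(F)_j \to M(F)$; combining all these surjections yields a surjection from a free graded $S$-module of finite type onto $M(F)$.

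\textit{Part \two.} Set $F'' := F/F' \in D^{[0,0]}$. The eventual short exact sequence $0 \to M(F')_n \to M(F)_n \to M(F'')_n \to 0$ and the hypothesis $M(F')_n = M(F)_n$ for $n \gg 0$ force $M(F'')_n = 0$ for $n \gg 0$. Since $F'' \in D^{[0,0]}$ implies $p_*F''(n) \in \cA$ for $n \gg 0$, this gives $p_*F''(n) = 0$ for all $n \geq n_1$, some $n_1$. The vanishings $p_*F''(n_1+r-i) = 0$ for $0 \leq i \leq r$, combined with Lemma~\ref{lem:semiorthog}\two\ applied to the twist $F''(n_1+r)$, force $F''(n_1+r) = 0$ and hence $F'' = 0$, so $F' = F$.

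\textit{Main obstacle.} The substantive step is the finite-type claim in \one: the graded piece $M(F)_n$ coincides with $p_*F(n)$ only for $n \gg 0$, so the low-degree behaviour has to be controlled. The strategy is to twist so that $F(n_0) \in D_0^{[0,0]}$, to present the tail of $M(F)$ via the Beilinson-type resolution of Corollary~\ref{cor:push} as the cokernel of an $S$-linear map into a free $S$-module of finite type (to which Examples~\ref{exa:fintype}\ applies), and then to absorb the bounded low-degree discrepancy as a finite collection of extra free generators.
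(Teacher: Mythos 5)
Your argument for left-exactness, for exactness when $n\gg 0$, and for part \two{} is correct and coincides with the paper's own proof: the long exact sequence of $H^\bullet_Y$ applied to the triangle $p_*F'(n)\to p_*F(n)\to p_*F''(n)$, the vanishing $H^1_Y(p_*F'(n))=0$ once $p_*F'(n)\in\cA$, and then $p_*F''(n)=0$ for $n\gg 0$ combined with Lemma~\ref{lem:semiorthog}\two{} (the paper's citation of part \one{} there is a typo; your reference to \two{} is the right one). The only genuine divergence is part \one: the paper simply defers to the claim proved inside \cite[Proof of Theorem~2.3.6(2)]{AP06}, whereas you give a self-contained argument by twisting so that $F(n_0)\in D_0^{[0,0]}$ and presenting $\bigoplus_{m\geq r+1}M(F)_{n_0+m}$ as the cokernel of the $S$-linear map $\bigoplus_m V^m_1\otimes p_*F(n_0+r-1)\to\bigoplus_m V^m_0\otimes p_*F(n_0+r)$; this is exactly the mechanism the paper itself deploys in Step~2 of Lemma~\ref{lem:technical}, so it is a legitimate and arguably preferable route. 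The one point you should make explicit is that the graded $S$-module structure on this cokernel (induced from multiplication in $S$ on the free target) agrees with the $S$-module structure on $M(F)$ defined via the Euler sequence — the degreewise identification $C_{m}\cong H^0_Y(p_*F(n_0+m))$ alone does not automatically transport finite generation unless the two module structures match. With that compatibility checked, absorbing the finitely many low-degree pieces $M(F)_1,\dots,M(F)_{n_0+r}$ as extra free generators finishes the argument as you say.
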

\begin{proof}
For the first statement, consider an exact sequence
\[
0\lra F_1\lra F_2\lra F_3\lra 0
\]
in $D^{[0,0]}$. Apply $p_*(-\otimes \cO(n))$ and take the long exact sequence in cohomology for the t-structure on $\cdc(Y)$ from Assumption~\ref{assumption} to see that $M(-)_n$ and hence $M(-)$ is left-exact. Since $p_*F_1(n)\in \cA_Y$ for $n\gg 0$, we have $H^1_Y(p_*F_1(n))=0$ for $n\gg 0$, so
\[
0\lra M(F_1)_n\lra M(F_2)_n\lra M(F_3)_n\lra 0
\]
is an exact sequence in $\cA$ for $n\gg 0$. This completes the proof of the first statement. For $F\in D^{[0,0]}$, part \one\ is stated and proved as a claim in the course of \cite[Proof of Theorem~2.3.6(2)]{AP06}. For \two, let $f\colon F'\ra F$ denote the inclusion and set $F^{\prime\prime}:=\cok(f)\in D^{[0,0]}$. For $n\gg 0$, apply the exact functor $M(-)_n$ to the short exact sequence $0\ra F'\ra F\ra F^{\prime\prime} \ra 0$ in $D^{[0,0]}$ to obtain $M(F^{\prime\prime})_n = 0$. Since $F^{\prime\prime}\in D^{[0,0]}$, we have $p_*F^{\prime\prime}(n)= 0$ for $n\gg 0$, so we can twist to get $p_*F^{\prime\prime}(-i)=0$ for $0\leq i\leq r$. Lemma~\ref{lem:semiorthog}\one\ gives $F^{\prime\prime}=0$, so $F^\prime=F$ as required.
\end{proof}

\begin{proposition}
\label{prop:noetherian}
The sheaf of t-structures on $\cdc(\ypr)$ over $\PP^r$ from Theorem~\ref{thm:limitingtstructure} is Noetherian, i.e., for every open $U\subseteq \PP^r$, the heart of the t-structure on $\cdlc(Y\times U)$ is Noetherian. 
\end{proposition}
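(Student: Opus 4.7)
The plan is to first establish the Noetherian property for $U = \PP^r$ via the functor $M(-)$ of Lemma~\ref{lem:leftexactfunctor}, and then to transfer it to an arbitrary open $U \subseteq \PP^r$ using the sheaf-of-t-structures framework together with standard properties of Serre quotients of abelian categories.

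For $U = \PP^r$, consider an ascending chain $F_1 \subseteq F_2 \subseteq \cdots \subseteq F$ in $D^{[0,0]}$. Left-exactness of $M(-)$ from Lemma~\ref{lem:leftexactfunctor} yields an ascending chain $M(F_1) \subseteq M(F_2) \subseteq \cdots \subseteq M(F)$ of subobjects of $M(F)$. By Lemma~\ref{lem:leftexactfunctor}\one, the graded $S$-module $M(F)$ is of finite type in $\cA$, and by Theorem~\ref{thm:abcats} the category of such modules is Noetherian; hence the chain stabilises at some $k$. Then $M(F_k)_n = M(F_i)_n$ for every $n$ and every $i \geq k$, and Lemma~\ref{lem:leftexactfunctor}\two applied to each inclusion $F_k \hookrightarrow F_i$ forces $F_k = F_i$.

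For a general open $U \subseteq \PP^r$, put $T := \PP^r \setminus U$ and $\cA' := \cdlc^{[0,0]}(Y \times U)$, and let $\mathcal{B} \subseteq \cA$ denote the Serre subcategory of objects in $\cA$ with support contained in $Y \times T$. The restriction $j^*\colon \cA \to \cA'$ is exact (by t-exactness of the sheaf of t-structures) and essentially surjective (by the construction in \eqref{eqn:open-restriction-t-str}), and Proposition~\ref{prop:localisation} identifies $\cA'$ with the Serre quotient $\cA/\mathcal{B}$. Given an ascending chain $F_1 \subseteq F_2 \subseteq \cdots \subseteq F$ in $\cA'$, fix a lift $\tilde F \in \cA$ of $F$ and inductively construct subobjects $\tilde F_i \subseteq \tilde F$ in $\cA$ with $j^*\tilde F_i = F_i$ and $\tilde F_i \subseteq \tilde F_{i+1}$: given $\tilde F_i$, lift the surjection $F \twoheadrightarrow F/F_{i+1}$ in $\cA/\mathcal{B}$ to an actual morphism out of some $\tilde F^* \subseteq \tilde F$ with $\tilde F/\tilde F^* \in \mathcal{B}$, take its kernel to obtain a subobject $H_{i+1} \subseteq \tilde F$ with $j^*H_{i+1} = F_{i+1}$, and set $\tilde F_{i+1} := \tilde F_i + H_{i+1}$. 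The case $U = \PP^r$ then forces $(\tilde F_i)$, and hence $(F_i) = (j^*\tilde F_i)$, to stabilise.

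The main obstacle is the case $U = \PP^r$, which hinges on the graded-module description of the heart via $M(-)$; the crucial inputs are the finite-type conclusion of Lemma~\ref{lem:leftexactfunctor}\one and, through it, the Koszul-strand exactness of Lemma~\ref{lem:exact}. The general-$U$ case is then a routine exercise in Serre quotients of abelian categories once the sheaf-of-t-structures framework identifies $\cA'$ with $\cA/\mathcal{B}$.
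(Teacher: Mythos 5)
Your argument for $U=\PP^r$ is the same as the paper's: apply the left-exact functor $M(-)$, use that $M(F)$ is of finite type (Lemma~\ref{lem:leftexactfunctor}\one) together with Theorem~\ref{thm:abcats} to stabilise the image chain, and conclude via Lemma~\ref{lem:leftexactfunctor}\two. For general $U$ you take a genuinely different route. The paper reduces to $U$ affine with complement an effective divisor $D$, extends $F$ and each $F_i$ to objects of $D_c^{[0,0]}(\ypr)$ via Corollary~\ref{cor:essential-surjective} and truncation, and then uses the extension-of-morphisms lemma \cite[Lemma 2.1.8]{AP06} (twisting each $\bar{F_i}$ by $\cO(-k_iD)$) to turn the inclusions $F_i\hookrightarrow F$ into an honest ascending chain upstairs. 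You instead identify the heart over $U$ with the Serre quotient of the heart over $\PP^r$ by the Serre subcategory $\mathcal{B}$ of objects supported on $Y\times T$ (note that $\mathcal{B}$ is indeed Serre by Lemma~\ref{lem:quotient-closed}, and that you are overloading $\cA$, which in the paper denotes the heart on $\cdc(Y)$), and then lift the chain using the standard fact that subobject lattices surject under a Gabriel quotient. This buys a cleaner argument that works for arbitrary open $U$ without reducing to the affine/divisorial case and without invoking \cite[Lemma 2.1.8]{AP06}; the cost is that the identification of $\cdlc^{[0,0]}(Y\times U)$ with the Serre quotient is asserted rather than proved --- Proposition~\ref{prop:localisation} only describes the Verdier quotient of triangulated categories, and upgrading this to an equivalence of hearts requires the roof-truncation manipulations of Step~1 of the proof of Theorem~\ref{thm:sheaf-criterion} together with Lemmas~\ref{lem:coh-support} and~\ref{lem:quotient-closed} (or an external reference). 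With that point filled in, your proof is complete.
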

\begin{proof}
Consider first the case $U=\PP^r$. For $F\in D^{[0,0]}$, consider an increasing chain
\begin{equation}
\label{eqn:chainFs}
F_1 \subseteq F_2 \subseteq F_3 \subseteq \cdots \subseteq F
\end{equation}
of subobjects in $D^{[0,0]}$. Applying the left-exact functor $M(-)$ gives an increasing chain of subobjects in the category of graded $S$-modules of finitely type in $\cA$. This latter chain stabilises by Theorem~\ref{thm:abcats}, so Lemma~\ref{lem:leftexactfunctor}\two\ implies that \eqref{eqn:chainFs} also stabilises as required.

For arbitrary open $U\subseteq \PP^r$, suppose \eqref{eqn:chainFs} is an increasing chain of objects in the heart $D^{[0,0]}$ of the bounded t-structure on $\cdlc(Y\times U)$ given by Theorem~\ref{thm:limitingtstructure}. Each $F_i$ is a complex of sheaves, so an inclusion $F_i\subseteq F_{i+1}$ is equality if the restriction to each open subset in an open cover is equality. Thus, we may assume $U$ is affine with complement $D$ given by a section of $\cO_{\PP^r}(d)$ for some $d\in \ZZ$. Note that $F$ and hence each $F_i$ has left-compact support. Corollary~\ref{cor:essential-surjective} gives an extension $\bar{F}\in\cdc(\ypr)$ of $F$. Restriction to the open subset $Y\times U$ is t-exact, so $H^0(\bar{F})|_{Y\times U}=H^0(\bar{F}|_{Y\times U})=F$. Replacing $\bar{F}$ by $H^0(\bar{F})$ if necessary, we may assume $\bar{F}\in D^{[0,0]}$, and similarly for each $\bar{F_i}$. The result of \cite[Lemma 2.1.8]{AP06} extends verbatim to the case of left-compact support, so we may replace $\bar{F_i}$ by $\bar{F_i}(-k_iD)$ and hence assume that the injection $F_i \hookrightarrow  F$ extends to a morphism $\phi_i\colon \bar{F_i}\lra \bar{F}$. The restriction to $Y\times U$ is unchanged by this, as is the property of having left-compact support, so we obtain an increasing chain 
\begin{equation}
\label{eqn:2nd-increasing}
\bar{F_1} \subseteq \bar{F_2} \subseteq\bar{F_3}\subseteq  \cdots \subseteq \bar{F}
\end{equation}
of objects in $D^{[0,0]}_c(\ypr)$ satisfying $\overline{F_i}|_{Y\times U}=F_i$ for all $i>0$. The sequence \eqref{eqn:2nd-increasing} stabilises by the case $U=\PP^r$ above, and restricting this chain to $Y\times U$ shows that \eqref{eqn:chainFs} also stabilises.
\end{proof}

\section{Sheaf of t-structures over an arbitrary base}
In this section we follow closely the approach of Polishchuk~\cite{P07} in extending the construction of the sheaf of t-structures on $\cdc(\ypr)$ over $\mathbb{P}^r$ to an arbitrary base scheme $S$ that is separated and of finite type. We then extend the work of Abramovich--Polishchuk~\cite{AP06} to show these these t-structures satisfy the open heart property.

\subsection{Extending t-structures to the quasi-coherent setting}
 Let $D$ be a triangulated category. A full subcategory $P$ is a \emph{pre-aisle} if $P$ is closed under extensions and the shift functor $X \to X[1]$ for any $X \in P$. For any subcategory  $S \subseteq D$, the \emph{pre-aisle generated by $S$} is the smallest pre-aisle containing $S$, denoted $\pa_D[S]$. A full subcategory $P \subseteq D$ is an \emph{aisle} if $P = D^{\leq 0}$ for some t-structure $(D^{\leq 0}, D^{\geq 0})$ on $D$. Every aisle is a pre-aisle, but the converse is false in general; see \cite[Remark of Section~2.1]{P07}. If we assume further that $D$ is a triangulated category in which all small coproducts exist, then a pre-aisle $P$ is \emph{cocomplete} if it is closed under small coproducts. For any subcategory $S \subseteq D$, the \emph{cocomplete pre-aisle generated by $S$} is the smallest cocomplete pre-aisle containing $S$, denoted by $\pa_D[[S]]$. 

\begin{lemma}
\label{lem:t-str-qcy}
Let $Y$ and $S$ be Noetherian schemes. Any t-structure $(D^{\leq 0},D^{\geq 0})$ on $\cdlc(Y\times S)$ can be extended to a t-structure on $\DD(\Qcoh(Y\times S))$ that satisfies
\begin{align}
\DD^{\leq 0}(\Qcoh(Y\times S)) &= \pa_{\DD(\Qcoh(Y\times S))}[[D^{\leq 0}]]; \label{eqn:left-qcoh-U}\\
\DD^{\geq 0}(\Qcoh(Y\times S)) &= \big\{ F \in \DD(\Qcoh(Y\times S)) \mid\Hom (\DD^{\leq -1}, F)=0 \big\}.\label{eqn:right-qcoh-U}
\end{align}
Furthermore, for every interval $[a,b]$ that may be infinite on one side, we have that
\begin{equation}
\label{eqn:intersection}
\DD^{[a,b]}\big(\!\Qcoh(Y\times S)\big) \cap \cdlc(Y\times S) = \cdlc^{[a,b]}(Y\times S).
\end{equation}
\end{lemma}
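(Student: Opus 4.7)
The plan is to construct the t-structure on $\DD(\Qcoh(Y\times S))$ via a standard abstract result (as used in Polishchuk~\cite[\S 2.1]{P07}, going back to Alonso Tarr\'\i o--Jerem\'\i as L\'opez--Souto Salorio): in a triangulated category admitting arbitrary small coproducts, every cocomplete pre-aisle generated by a set of objects is the left aisle of a (unique) t-structure, with right aisle given by the Hom-orthogonal. Since $\DD(\Qcoh(Y\times S))$ has small coproducts and $D^{\leq 0}\subseteq \cdlc(Y\times S)\subseteq D(Y\times S)$ is essentially small, we may set $\DD^{\leq 0}(\Qcoh(Y\times S)):=\pa_{\DD(\Qcoh(Y\times S))}[[D^{\leq 0}]]$ and define $\DD^{\geq 0}(\Qcoh(Y\times S))$ by the orthogonality formula \eqref{eqn:right-qcoh-U}. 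This immediately yields the desired t-structure satisfying \eqref{eqn:left-qcoh-U} and \eqref{eqn:right-qcoh-U}.

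The substantive content is the intersection property \eqref{eqn:intersection}, which I would prove by two inclusions. For ``$\supseteq$'', the inclusion $\cdlc^{\leq b}\subseteq \DD^{\leq b}$ is immediate from the definition of $\DD^{\leq 0}$ as a cocomplete pre-aisle (after shifting by $-b$). For $\cdlc^{\geq a}\subseteq \DD^{\geq a}$, given $F\in \cdlc^{\geq a}$ I would verify $\Hom(G,F)=0$ for every $G\in \DD^{\leq a-1}$. The vanishing holds for $G\in \cdlc^{\leq a-1}$ by the original t-structure, and it propagates through the generation of $\DD^{\leq a-1}$: positive shifts $G\mapsto G[1]$ send $\cdlc^{\leq a-1}$ into $\cdlc^{\leq a-2}\subseteq \cdlc^{\leq a-1}$; coproducts convert $\Hom$ into a product of zeros; and extensions preserve Hom-vanishing via the long exact sequence.

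For ``$\subseteq$'', given $F\in \DD^{[a,b]}\cap \cdlc$, consider the truncation triangle in $\cdlc(Y\times S)$
\[
\tau^{\leq b}_{\cdlc}F \lra F \lra \tau^{\geq b+1}_{\cdlc}F.
\]
By the inclusion just proved we have $\tau^{\leq b}_{\cdlc}F\in \DD^{\leq b}$ and $\tau^{\geq b+1}_{\cdlc}F\in \DD^{\geq b+1}$, so this is also a truncation triangle for the t-structure on $\DD(\Qcoh(Y\times S))$. Uniqueness of such triangles, combined with $F\in \DD^{\leq b}$, forces $\tau^{\geq b+1}_{\cdlc}F=0$, giving $F\in \cdlc^{\leq b}$. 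A symmetric argument using the truncation at $a$ gives $F\in \cdlc^{\geq a}$, which completes the proof of \eqref{eqn:intersection}.

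The main obstacle is simply invoking the abstract existence result cleanly, and then propagating the Hom-vanishing $\Hom(\cdlc^{\leq a-1}, \cdlc^{\geq a})=0$ through the closure operations defining the cocomplete pre-aisle; once that is in place, the rest is standard manipulation with truncation triangles and their uniqueness.
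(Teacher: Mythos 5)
Your proposal is correct and is essentially the paper's argument: the paper simply notes that $\DD(\Qcoh(Y\times S))$ has small coproducts and that $\cdlc(Y\times S)$ is essentially small, and then cites \cite[Lemma~2.1.1]{P07}, which is precisely the abstract extension-plus-intersection statement you unpack (cocomplete pre-aisle generated by a set is an aisle, orthogonality propagated through shifts, coproducts and extensions, and the two inclusions via uniqueness of truncation triangles).
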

\begin{proof}
All small coproducts exist in $\DD(\Qcoh(Y\times S))$ by \cite[Ex~1.3]{Ne96a}. Since $\cdlc(Y\times S)$ is an essentially small full subcategory of $\DD(\Qcoh(Y\times S))$, we're done by \cite[Lemma~2.1.1]{P07}.
\end{proof}

\begin{remark}
\label{rem:left-inf-sum}
Equation \eqref{eqn:intersection} implies that if two t-structures on $\cdlc(Y\times S)$ extend to the same t-structure on $\DD(\Qcoh(Y\times S))$, then the original t-structures are equal.
\end{remark}

\subsection{Sheaf of t-structures over an affine base}
\label{sec:affinebase}
 Given a Noetherian, bounded t-structure on $D_c(Y)$, Theorem~\ref{thm:limitingtstructure} gives a sheaf of t-structures on $\cdc(\ypr)$ over $\PP^r$. We now study the restriction of this t-structure to subcategories $\cdlc(Y\times U)$, where $U$ is an affine scheme of finite type over $\kk$. The main result of this section is the following:

\begin{proposition}
\label{prop:t-str-affine}
Let $U, Y$ be schemes of finite type, with $Y$ separated and $U$ affine. Extend the t-structure on $\cdc(Y)$ from Assumption~\ref{assumption} to a t-structure on $\DD(\Qcoh(Y))$ using Lemma~\ref{lem:t-str-qcy}. There exists a sheaf of Noetherian t-structures on $\cdlc(Y \times U)$ over $U$, such that for any interval $[a,b]$ that may be infinite on one side, we have
\begin{equation}
\label{eqn:t-str-U-formula}
\dlabc(Y \times U)=\left\{ F \in \cdlc(Y \times U) \mid p_*F \in \DD^{[a,b]}(\Qcoh(Y)) \right\},
\end{equation}
 where $p\colon Y\times U\to Y$ is the projection to the first factor. Moreover, $p^*\colon \cdc(Y) \ra \cdlc(Y \times U)$ is t-exact with respect to these t-structures.
\end{proposition}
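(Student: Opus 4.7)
The strategy is to reduce Proposition~\ref{prop:t-str-affine} to the sheaf of t-structures on $\cdc(Y\times \mathbb{P}^r)$ supplied by Theorem~\ref{thm:limitingtstructure}. Since $U$ is affine of finite type over $\kk$, I choose a closed embedding $\iota\colon U\hookrightarrow\mathbb{A}^r$ for some $r$, realising $U$ as a locally closed subscheme of $\mathbb{P}^r$ via the open immersion $\mathbb{A}^r\hookrightarrow\mathbb{P}^r$. The sheaf property of Theorem~\ref{thm:limitingtstructure} then provides, by restriction to the open $Y\times\mathbb{A}^r$, a bounded t-structure on $\cdlc(Y\times\mathbb{A}^r)$ whose heart is Noetherian by Proposition~\ref{prop:noetherian}; this will serve as the ambient t-structure.

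The first step is to verify formula \eqref{eqn:t-str-U-formula} in the base case $U=\mathbb{A}^r$. Given $F_0\in\cdlc(Y\times\mathbb{A}^r)$, I select an extension $\tilde F\in\cdc(Y\times\mathbb{P}^r)$ via Corollary~\ref{cor:essential-surjective}. Writing $j\colon Y\times\mathbb{A}^r\hookrightarrow Y\times\mathbb{P}^r$ and $p''\colon Y\times\mathbb{P}^r\to Y$, and using that the complement of $\mathbb{A}^r$ in $\mathbb{P}^r$ is a hyperplane, one obtains
\[
p_*F_0 \;=\; p''_*\bigl(j_*j^*\tilde F\bigr) \;=\; \mathrm{colim}_n \, p''_*\tilde F(n).
\]
Combined with the description of $\cdc^{[a,b]}(Y\times\mathbb{P}^r)$ from Theorem~\ref{thm:limitingtstructure} and the closure of $\DD^{[a,b]}(\Qcoh(Y))$ under the filtered colimits arising here (closure of $\DD^{\leq b}$ is automatic from the cocomplete pre-aisle description in Lemma~\ref{lem:t-str-qcy}; closure of $\DD^{\geq a}$ for these specific colimits is more delicate), this yields both directions of \eqref{eqn:t-str-U-formula} in the base case.

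For general affine $U$, set $\tilde\iota=\id_Y\times\iota$, and note that $p=p'\circ\tilde\iota$ where $p'\colon Y\times\mathbb{A}^r\to Y$, so by the base case \eqref{eqn:t-str-U-formula} is equivalent to the condition $\tilde\iota_*F\in\dlabc(Y\times\mathbb{A}^r)$. The subcategories so defined inherit the shift and Hom-vanishing axioms from the ambient t-structure on $\cdlc(Y\times\mathbb{A}^r)$ via fully faithful exact $\tilde\iota_*$. The main obstacle is the truncation axiom: given $F\in\cdlc(Y\times U)$, the ambient truncation produces a triangle $G_1\to\tilde\iota_*F\to G_2$ in $\cdlc(Y\times\mathbb{A}^r)$, and I must show $G_1,G_2$ lie in the essential image of $\tilde\iota_*$. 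This image is characterised by the vanishing as derived endomorphisms of multiplication by each generator $f_j\in\kk[x_1,\ldots,x_r]$ of the ideal $\mathcal{I}_U$; since $f_j\cdot\tilde\iota_*F=0$ and the truncation functors are functorial, $f_j$ also acts trivially on $G_1$ and $G_2$, placing them in the essential image. Boundedness of the heart transfers through $\tilde\iota_*$, Noetherianity transfers from Proposition~\ref{prop:noetherian} via the exact fully faithful embedding, and the sheaf-of-t-structures property over $U$ is then immediate from Corollary~\ref{cor:affine-sheaf} since $U$ is affine.

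Finally, for t-exactness of $p^*$: given $F\in\cdc^{[a,b]}(Y)$, the support of $p^*F$ is contained in $\supp(F)\times U$, so $p^*F\in\cdlc(Y\times U)$, and the projection formula gives $p_*p^*F=F\otimes_\kk R$ with $R=\Gamma(U,\mathcal{O}_U)$. Writing $R$ as the filtered union of its finite-dimensional $\kk$-subspaces expresses $p_*p^*F$ as a filtered colimit of finite direct sums of $F$, which lies in $\DD^{[a,b]}(\Qcoh(Y))$ by the same colimit-closure argument used in the base case. The principal technical hurdle throughout is thus the base case analysis: specifically, controlling the filtered colimit $\mathrm{colim}_n p''_*\tilde F(n)$ inside the extended t-structure on $\DD(\Qcoh(Y))$, where closure of $\DD^{\geq a}(\Qcoh(Y))$ under such colimits does not follow formally from its definition as a right orthogonal and must be obtained by exploiting the specific structure of the systems appearing via pushforward off the hyperplane at infinity.
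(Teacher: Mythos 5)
Your high-level reduction (restrict the sheaf of t-structures on $\cdc(\ypr)$ to $Y\times\AA^r$, then descend along a closed immersion $U\hookrightarrow\AA^r$) is the same as the paper's, but three of your key steps have genuine gaps. First, in the base case your entire argument for \eqref{eqn:t-str-U-formula} rests on $\DD^{\geq a}(\Qcoh(Y))$ being closed under the filtered colimits $\mathrm{colim}_n\, p''_*\tilde F(n)$ — you explicitly flag this as ``delicate'' and never prove it, and it does not follow from the definition of $\DD^{\geq a}$ as a right orthogonal. The paper sidesteps this entirely: it first proves $\cdlcn(Y\times\AA^r)=\pa[\,p^*\cdcn(Y)\,]$ using the resolution of the diagonal (the restriction of $\Omega^{r-i}(-N-i)$ to $\AA^r$ is trivial, so the extension $\tilde F$ decomposes into objects of the form $p^*G_{N+i}\otimes(\text{trivial bundle})$), and then obtains right t-exactness of $p_*$ by pure adjunction against this pre-aisle, invoking Lemma~\ref{lem:zero-kernel} to get \emph{both} directions of \eqref{eqn:t-str-U-formula}. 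Your colimit computation, even if completed, would only give the forward implication. The same unproven closure property reappears at the end of your argument for t-exactness of $p^*$ when $H^0(U,\cO_U)$ is infinite-dimensional; the paper's resolution is Polishchuk's trick of identifying $H^0(U,\cO_U)\cong H^0(\AA^1,\cO_{\AA^1})$ as $\kk$-vector spaces and writing $F\otimes_\kk V\cong p''_*p''^*F$ for the already-settled case $U=\AA^1$.

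Second, your descent to general affine $U$ is based on two claims that are false as stated: $\tilde\iota_*$ is \emph{not} fully faithful on derived categories for a closed immersion (e.g.\ $\Hom_{\AA^1}(\cO_0,\cO_0[1])=\kk$ while $\Hom_{\mathrm{pt}}(\kk,\kk[1])=0$), and the essential image of $\tilde\iota_*$ is \emph{not} characterised by the generators $f_j$ acting as zero as derived endomorphisms (that condition is necessary but not sufficient, and neither condition defines a triangulated subcategory — consider $\cO/x^2$ as an extension of $\cO_0$ by $\cO_0$). So your argument that the ambient truncations $G_1,G_2$ of $\tilde\iota_*F$ lie in the essential image does not go through. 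The paper instead invokes Lemma~\ref{lem:t-str-closed}, i.e.\ Polishchuk's transfer theorem for a finite morphism of finite Tor dimension: one checks that $(\id_Y\times\iota)^*(\id_Y\times\iota)_*(-)\cong -\otimes(\id_Y\times\iota)_*\cO$ is right t-exact (using a finite flat resolution of $\cO_U$ over $\cO_{\AA^r}$) and that $(\id_Y\times\iota)_*$ is conservative, and the truncation functors on $\cdlc(Y\times U)$ are then constructed directly by that machinery rather than by truncating inside $\cdlc(Y\times\AA^r)$ and descending.
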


 We prove this result in two stages. We first restrict along an open immersion $Y\times \AA^r\to Y\times \PP^r$ to prove the special case $U=\AA^r$; our proof runs parallel to that of \cite[Lemma 3.3.2-3.3.4]{P07}. We then restrict further along a closed immersion $Y\times U\to Y\times \AA^r$ to prove the general case following \cite[Proof of Theorem~3.3.6]{P07}.

\begin{proof}[Proof of Proposition~\ref{prop:t-str-affine} for $\AA^r$]
Let $j\colon\mathbb{A}^r\to \mathbb{P}^r$ be an open immersion, and let $p\colon Y\times \AA^r\to Y$ denote the first projection. The local nature of the sheaf of t-structures on $D_c(\ypr)$ over $\PP^r$ from Theorem~\ref{thm:limitingtstructure} (and Proposition~\ref{prop:noetherian}) induces a sheaf of Noetherian t-structures on $\cdlc(Y\times \mathbb{A}^r)$ over $\mathbb{A}^r$ such that $(\id_Y \times j)^*$ is t-exact. The projection formula shows that pullback along the first projection $p'\colon \ypr \to Y$ is t-exact, hence so is $p^*=(\id_Y \times j)^* \circ p'^*$. It remains to show that the t-structure on $\cdlc(Y\times \mathbb{A}^r)$ satisfies \eqref{eqn:t-str-U-formula}. We proceed in three steps:
 
\smallskip

\indent \textsc{Step 1}: We first claim that
\begin{equation}
\label{eqn:prep-Ar}
\cdlcn(Y \times \AA^r) = \pa [ p^*\cdcn(Y) ].
\end{equation}
For one inclusion, let $F \in \cdlcn(Y \times \AA^r)$ and write $F = (\id_Y \times j)^*G$ for some $G \in \cdcn(Y \times \PP^r)$. Let $N\in \NN$ satisfy $G_n:=p'_*G(n) \in \cdcn(Y)$ for $n \geq N$. Deduce from Remark~\ref{rem:orderswitch} that $G$ lies in the extension closure of $\{ p'^*G_{N+i} \otimes \Omega^{r-i}(-N-i)\}_{0\leq i\leq r}$. Pull back to $Y \times \AA^r$ to see that $F$ lies in the extension closure of $\{p^*G_{N+i}\otimes E_i\}_{0\leq i\leq r}$, where $E_i=(\id_Y \times j)^*(\Omega^{r-i}(-N-i))$ is a trivial bundle of finite rank on $Y \times \AA^r$ for all $0 \leq i \leq r$. Thus $F \in \pa [ p^*\cdcn(Y) ]$. For the opposite inclusion,  we need only show that $p^*\cdcn(Y)\subseteq \cdlcn(Y \times \AA^r)$, which follows from the t-exactness of $p^*$ shown above.

\smallskip

\indent \textsc{Step 2}: We deduce \eqref{eqn:t-str-U-formula} for $\AA^r$ by applying Lemma~\ref{lem:zero-kernel} to $p_*\colon \cdlc(Y \times \AA^r) \ra \DD(\Qcoh(Y))$, and for this we must check that $p_*$ is t-exact and has trivial kernel. For left t-exactness, let $F \in \cdcn(Y)$. We know $\DD^{\leq 0}(\Qcoh(Y))$ is closed under small coproducts by \eqref{eqn:left-qcoh-U}. The projection formula gives $p_*p^*F = F \otimes_\kk H^0(\AA^r, \cO_{\AA^r})\in \DD^{\leq 0}(\Qcoh(Y))$ and hence $p_*p^*\cdcn(Y) \subseteq \DD^{\leq 0}(\Qcoh(Y))$. Since the image of a pre-aisle under $p_*$ is a pre-aisle and since $\DD^{\leq 0}(\Qcoh(Y))$ is itself a pre-aisle, we deduce from Step 1 above that
\begin{equation*}
p_*\cdlcn(Y \times \AA^r) = p_* \pa [p^*\cdcn(Y)] \subseteq \DD^{\leq 0}(\Qcoh(Y))
\end{equation*}
as required. For right t-exactness, use Lemma~\ref{lem:t-str-qcy}, adjunction and Step 1 to obtain
\begin{align}
\cdlc^{\geq 0}(Y \times \AA^r) &= \{ F \mid \Hom( \cdlc^{\leq -1}(Y \times \AA^r), F)=0 \} &\nonumber \\
&= \{ F \mid \Hom( \pa [ p^*D_c^{\leq -1}(Y) ], F)=0 \} & \nonumber\\
&= \{ F \mid \Hom( p^*D_c^{\leq -1}(Y), F)=0 \} & \nonumber \\
&= \{ F \mid \Hom( D_c^{\leq -1}(Y), p_*F)=0 \}. &\nonumber 
\end{align}
 Now $p_*\cdlcp(Y\times \AA^r)\subseteq \DD^{\geq 0}(\Qcoh(Y))$ by \eqref{eqn:right-qcoh-U}, so $p_*$ is right t-exact. Finally, if $F \in\cdlc(Y \times \AA^r)$ satisfies $p_*F = 0$, then $F = 0$ by \cite[Tag 08I8]{stacks-project} because $p\colon Y \times \AA^r \ra Y$ is affine. 

\smallskip

\noindent This establishes Proposition~\ref{prop:t-str-affine} for $U=\AA^r$.
\end{proof}

 The proof of the general case relies on the following result which extends to our setting the statement and proof of \cite[Theorem~2.3.5]{P07}.

\begin{lemma}
\label{lem:t-str-closed}
Let $Y, S$ and $S'$ be separated schemes of finite type,  and let $g\colon S' \ra S$ be a finite morphism of finite Tor dimension. Let $(\cdlc^{\leq 0}(Y \times S), \cdlc^{\geq 0}(Y \times S))$ be a Noetherian t-structure on $\cdlc(Y \times S)$. There exists a Noetherian t-structure on $\cdlc(Y \times S')$, where for any interval $[a,b]$ that may be infinite on one side, we have
\begin{equation}
\label{eqn:finTordim}
\dlabc(Y \times S') = \left\{ F \in D(Y \times S') \mid (\id_Y \times g)_*F \in \dlabc(Y \times S) \right\}.
\end{equation}
\end{lemma}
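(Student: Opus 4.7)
The plan is to adapt \cite[Theorem~2.3.5]{P07} to the left-compactly supported setting. Set $h := \id_Y\times g\colon Y\times S'\to Y\times S$, and let $\mathcal{A}:= h_*\mathcal{O}_{Y\times S'}$, a coherent sheaf of $\mathcal{O}_{Y\times S}$-algebras whose category of modules is equivalent (via $h_*$) to $\Qcoh(Y\times S')$. First I would extend the given Noetherian t-structure on $\cdlc(Y\times S)$ to a t-structure $(\DD^{\leq 0}, \DD^{\geq 0})$ on $\DD(\Qcoh(Y\times S))$ via Lemma~\ref{lem:t-str-qcy}, and then define
\[
\DD^{[a,b]}(\Qcoh(Y\times S'))\;:=\;\big\{F\in \DD(\Qcoh(Y\times S'))\mid h_*F\in \DD^{[a,b]}(\Qcoh(Y\times S))\big\}.
\]
The desired t-structure on $\cdlc(Y\times S')$ is then obtained by intersection, mirroring the relation \eqref{eqn:intersection}.

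To verify this is a t-structure, note that $h_*$ is exact on quasi-coherent sheaves (since $h$ is affine), so the containment $\DD^{\leq -1}\subseteq\DD^{\leq 0}$ and the orthogonality of $\DD^{\leq 0}$ and $\DD^{\geq 1}$ transfer from $\DD(\Qcoh(Y\times S))$ once morphisms in $\DD(\Qcoh(Y\times S'))$ are identified with $\mathcal{A}$-linear morphisms in $\DD(\Qcoh(Y\times S))$. The crux is constructing truncation triangles: for $F\in \DD(\Qcoh(Y\times S'))$, truncate $h_*F$ in $\DD(\Qcoh(Y\times S))$ to obtain
\[
\tau^{\leq 0}(h_*F)\lra h_*F\lra \tau^{\geq 1}(h_*F),
\]
and argue that both truncations inherit a natural $\mathcal{A}$-module structure compatible with these maps. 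This is where the main obstacle lies: by functoriality of $\tau^{\leq 0}$ and $\tau^{\geq 1}$, each local multiplication-by-$a$ endomorphism $m_a$ on $h_*F$ (for a section $a$ of $\mathcal{A}$) induces a corresponding endomorphism on each truncation, and one must check via a local-to-global patching argument that these endomorphisms assemble consistently into genuine $\mathcal{A}$-actions (rather than merely commuting endomorphisms), thereby lifting the triangle back to $\DD(\Qcoh(Y\times S'))$.

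The final step is to restrict to $\cdlc(Y\times S')$ via \eqref{eqn:finTordim}. Boundedness relies on the finite Tor dimension hypothesis on $g$, which ensures that objects in $\cdlc(Y\times S')$ have truncations that remain bounded with coherent cohomology; left-compact support is preserved under $h_*$ by Lemma~\ref{lem:cpt-supp}\three{} and reflected by Lemma~\ref{lem:Flcsupp1} (since $h$ is affine). Noetherianity of the heart on $\cdlc(Y\times S')$ then follows by pushing forward ascending chains of subobjects under the exact, conservative functor $h_*$ to the Noetherian heart on $\cdlc(Y\times S)$: the pushed-forward chain stabilises, and conservativity forces the original chain to stabilise as well.
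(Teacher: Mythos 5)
Your overall strategy---define the candidate t-structure on $\DD(\Qcoh(Y\times S'))$ by the condition $h_*F\in\DD^{[a,b]}(\Qcoh(Y\times S))$ and then intersect with $\cdlc(Y\times S')$---matches the shape of the statement, and your handling of left-compact support (via Lemmas \ref{lem:cpt-supp} and \ref{lem:Flcsupp1}) and of Noetherianity of the heart is sound. But the step you yourself flag as ``the main obstacle'' is a genuine gap, and it is precisely the hard part of the lemma. Given the truncation triangle $\tau^{\leq 0}(h_*F)\to h_*F\to\tau^{\geq 1}(h_*F)$ in $\DD(\Qcoh(Y\times S))$, there is no routine ``local-to-global patching'' that upgrades the induced commuting endomorphisms $m_a$ on the truncations into objects of $\DD(\Qcoh(Y\times S'))$: an object of the derived category of $\mathcal{A}$-modules is not the same thing as an $\mathcal{A}$-module object in the derived category of $\cO_{Y\times S}$-modules, and passing from the latter to the former requires coherence data (compatibilities among homotopies) that functoriality of truncation does not supply. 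Cones in a triangulated category are not functorial enough to make such a lift canonical, so your construction of the truncation functors does not go through as written.

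The paper sidesteps this entirely by verifying the hypotheses of Polishchuk's transfer theorem \cite[Theorem~2.1.2]{P07}, whose proof builds the aisle on $\DD(\Qcoh(Y\times S'))$ as the cocomplete pre-aisle generated by $h^*\DD^{\leq 0}(\Qcoh(Y\times S))$ --- for aisles generated in this way the truncation adjoints exist automatically --- and then uses the adjunction between $h^*$ and $h_*$ together with the right t-exactness of $h_*h^*(-)\cong -\otimes h_*\cO_{Y\times S'}$ to identify that aisle with $\{F\mid h_*F\in\DD^{\leq 0}\}$. This identification is exactly where the finite-Tor-dimension hypothesis enters (it makes $h_*\cO_{Y\times S'}$ quasi-isomorphic to a bounded complex of flat sheaves), whereas your proposal never uses the left adjoint $h^*$ at all and attributes the Tor-dimension hypothesis only vaguely to ``boundedness''. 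To repair the argument you would need either to check the hypotheses of \cite[Theorem~2.1.2]{P07} as the paper does, or to give an independent construction of the adjoints to the inclusions of your proposed aisle and coaisle.
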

\begin{proof}
 Any object $F \in D(Y \times S')$ satisfying $(\id_Y \times g)_*F \in \cdlc(Y \times S)$ has left-compact support by Lemma~\ref{lem:Flcsupp1}, so the right-hand side of \eqref{eqn:finTordim} is contained in $\cdlc(Y\times S^\prime)$. The functors 
\[
\Phi:=(\id_Y \times g)_* \colon \cdlc(Y \times S') \lra \cdlc(Y \times S) \text{ and }\Psi:=(\id_Y \times g)^* \colon \cdlc(Y \times S) \lra \cdlc(Y \times S')
\]
 are well-defined by Lemma~\ref{lem:cpt-supp}. We check that the hypotheses of \cite[Theorem 2.1.2]{P07} hold:
 \begin{enumerate}
\item $\Phi$ is obtained by restriction from $(\id_Y \times g)_*\colon \DD(\Qcoh(Y\times S')\to \DD(\Qcoh(Y \times S))$, and $\Psi$ by restriction from its left adjoint $(\id_Y \times g)^*\colon\DD(\Qcoh(Y \times S))\to \DD(\Qcoh(Y\times S'))$.
\item $(\id_Y \times g)_* \colon \DD(\Qcoh(Y \times S')) \ra \DD(\Qcoh(Y \times S))$ commutes with small coproducts by \cite[Lemma 1.4]{Ne96a} because $Y\times S^\prime$ is quasi-compact and separated.
\item $\Phi\circ \Psi$ is right t-exact with respect to the given t-structure. For this, let $F\in \cdlc(Y\times S)$. The projection formula gives $(\Phi\circ \Psi)(F)\cong F\otimes (\id_Y \times g)_*\cO_{Y \times S'}$, so we need only show that tensoring by $(\id_Y \times g)_*\cO_{Y \times S'}$ is right t-exact. Since $g$ has finite Tor dimension, $(\id_Y \times g)_*\cO_{Y \times S'}$ is quasi-isomorphic to a bounded complex of flat sheaves, and the result follows as in the proof of \cite[Theorem 2.3.5]{P07}.
\item if $F \in \DD(\Qcoh(Y \times S'))$ satisfies $(\id_Y \times g)_*F \in \cdlc(Y \times S)$, then we have $F \in\cdlc(Y \times S')$ by Lemma~\ref{lem:Flcsupp1}.
\item if $F \in \cdlc(Y \times S')$ satisfies $(\id_Y \times g)_*F = 0$, then we have $F = 0$ by \cite[Tag 08I8]{stacks-project}. 
\end{enumerate}
The result now follows from \cite[Theorem 2.1.2]{P07}. 
\end{proof}

\begin{remark} \label{rem:applylemtstrclosed}
We will apply Lemma \ref{lem:t-str-closed} when $S$ is smooth, or when $g$ is the inclusion of an
effective Cartier divisor $T \subset S$. In the former case,
the morphism is of finite Tor dimension as every bounded complex of coherent sheaves on a smooth scheme admits a locally free resolution by \cite[Corollary 19.8]{Eis95}, and in the latter case as $\cO_S(-T) \to \cO_S$ is a finite locally free resolution of $\cO_T$.
\end{remark}

\begin{proof}[Proof of Proposition~\ref{prop:t-str-affine} in general] Let $U$ be an affine scheme of finite type. Choose a closed immersion $i\colon U \hookrightarrow \AA^r$, and write $p\colon Y\times U\ra Y$ and $p^\prime\colon Y\times\AA^r\ra Y$ for the first projections, so $p=p^\prime\circ (\id_Y\times i)$.
 
 We first show that \eqref{eqn:t-str-U-formula} defines a sheaf of Noetherian t-structures on $\cdlc(Y\times U)$ over $U$. Applying Lemma~\ref{lem:t-str-closed} (see Remark \ref{rem:applylemtstrclosed}) to the t-structure on $\cdlc(Y\times \AA^r)$ constructed in the special case above gives a t-structure on $\cdlc(Y \times U)$, satisfying 
\begin{align*}
\dlabc(Y \times U) &= \big\{ F \in \cdlc(Y \times U) \mid (\id_Y \times i)_*(F) \in \dlabc(Y \times \AA^r) \big\} \\
&= \big\{ F \in \cdlc(Y \times U) \mid p_*(F) \in \DD^{[a,b]}(\Qcoh(Y))\big\}
\end{align*}
 for any interval $[a,b]$ that may be infinite on one side. This gives \eqref{eqn:t-str-U-formula}. Corollary~\ref{cor:affine-sheaf} and Proposition~\ref{prop:noetherian} imply that we obtain a sheaf of Noetherian t-structures on $\cdlc(Y\times U)$ over $U$.
  
  To prove that $p^*$ is t-exact, let $F \in \cdc^{[a,b]}(Y)$. By \eqref{eqn:t-str-U-formula} and the projection formula, we must show that $p_* p^* F = F \otimes_\kk H^0(U, \cO_U) \in \DD^{[a,b]}(\Qcoh(Y))$. For this, the $\kk$-vector space $V$ underlying $H^0(U, \cO_U)$ has a countable basis. If $V$ has a finite $\kk$-basis, then clearly $F \otimes_\kk V \in \dabc(Y) \subseteq \DD^{[a,b]}(\Qcoh(Y))$. Otherwise, $V$ has a countably infinite $\kk$-basis. The trick \cite[Lemma~3.3.5]{P07} is to observe that $V \cong H^0(\AA^1, \cO_{\AA^1})$ as a $\kk$-vector space. Proposition~\ref{prop:t-str-affine} holds for $U=\AA^1$, so pullback along the first projection $p^{\prime\prime}\colon Y \times \AA^1\ra Y$ is t-exact and hence ${p''}^*(F) \in \dlabc(Y \times \AA^1)$.  The projection formula and \eqref{eqn:t-str-U-formula} for $U=\AA^1$ give 
\[
F \otimes_\kk V = F \otimes_\kk H^0(\AA^1, \cO_{\AA^1}) \cong {p''}_* {p''}^*(F) \in \DD^{[a,b]}(\Qcoh(Y)).
\]
This completes the proof that $p^*$ is t-exact and hence concludes the proof of Proposition~\ref{prop:t-str-affine}.
\end{proof}

\subsection{Construction over an arbitrary base}
We are now in a position to establish the first main result of this section following \cite[Theorem 3.3.6]{P07}. 

\begin{theorem}
\label{thm:t-str-any}
Let $S, Y$ be separated schemes of finite type. Extend the t-structure on $\cdc(Y)$ from Assumption~\ref{assumption} to a t-structure on $\DD(\Qcoh(Y))$ using Lemma~\ref{lem:t-str-qcy}. Then there is a sheaf of Noetherian t-structures on $\cdlc(Y \times S)$ over $S$ such that for any interval $[a,b]$ that may be infinite on one side, we have
\begin{equation}
\label{eqn:t-str-S-formula}
\dlabc(Y \times S)=\left\{ F \in \cdlc(Y \times S) \mid \begin{array}{cc} p_*(F|_{Y \times U})  \in \DD^{[a,b]}(\Qcoh(Y))\\ \text{for any open affine }U\subseteq S\end{array}\right\}
\end{equation}
where we abuse notation by writing $p$ for the first projection from $Y \times U$. Moreover
\begin{enumerate}
\item[\one] for any finite open affine covering $S=\bigcup_i U_i$, we have that 
\[
F \in \dlabc(Y \times S)\iff p_*(F|_{Y \times U_i}) \in \DD^{[a,b]}(\Qcoh(Y))\text{ for every }i; 
\]
\item[\two] the functor $p^*\colon \cdc(Y) \ra \cdlc(Y \times S)$ is t-exact with respect to these t-structures.
 \item[\three] Assume in addition that $S$ is projective. Then this t-structure satisfies 
\begin{equation}
\label{eqn:t-str-proj}
F \in D_c^{[a,b]}(Y \times S)\iff p_*(
F \otimes  q^*L^n) \in \dabc(Y) \quad \text{ for } n \gg 0,
\end{equation}
where $L$ is any ample bundle on $S$ and $q\colon Y \times S\ra S$ is the second projection.
\end{enumerate}
\end{theorem}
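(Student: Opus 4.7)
The plan is to construct the $t$-structure on $\cdlc(Y\times S)$ by gluing the affine $t$-structures of Proposition~\ref{prop:t-str-affine} over a finite open affine cover of $S$, and then to deduce properties \one--\three\ from the gluing formula combined with the pushforward characterization on affines.

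Since $S$ is separated and of finite type over $\kk$, it is Noetherian and admits a finite open affine cover $S=\bigcup_{i=1}^N U_i$ whose pairwise intersections $U_i\cap U_j$ are again affine (by separatedness). Proposition~\ref{prop:t-str-affine} equips each $\cdlc(Y\times U_i)$ with a sheaf of Noetherian $t$-structures over $U_i$, characterized by $F\in\dlabc(Y\times U_i)\iff p_*F\in\DD^{[a,b]}(\Qcoh(Y))$. The same characterization describes the $t$-structure on $\cdlc(Y\times(U_i\cap U_j))$ directly, so the restrictions of the $t$-structures on $U_i$ and on $U_j$ to this common intersection agree. Lemma~\ref{lem:gluing-property} then produces a unique sheaf of $t$-structures on $\cdlc(Y\times S)$ over $S$ satisfying the gluing formula \eqref{eqn:gluing-t-str}, which combined with the affine characterization yields both \eqref{eqn:t-str-S-formula} and item \one. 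Noetherianity of the heart on any open $U\subseteq S$ follows by covering $U$ with finitely many affines and applying the Noetherian property from Proposition~\ref{prop:t-str-affine} to each piece, since an increasing chain in the heart stabilizes globally iff each of its finitely many local restrictions does. For item \two, by \one\ we have $p^*F\in\dlabc(Y\times S)$ iff each $(p^*F)|_{Y\times U_i}$ lies in $\dlabc(Y\times U_i)$, which holds by the $t$-exactness statement of Proposition~\ref{prop:t-str-affine}.

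For item \three, choose $n_0\geq 1$ with $L^{n_0}$ very ample and let $i\colon S\hookrightarrow\PP^r$ be the resulting closed immersion, so $L^{n_0}=i^*\cO_{\PP^r}(1)$. Since $\PP^r$ is smooth, $\id_Y\times i$ has finite Tor dimension (cf.\ Remark~\ref{rem:applylemtstrclosed}), and Lemma~\ref{lem:t-str-closed} characterizes the $t$-structure on $\cdlc(Y\times S)=\cdc(Y\times S)$ (using that $S$ is proper, Lemma~\ref{lem:cpt-supp-new}) via pushforward to $Y\times\PP^r$. The projection formula gives $p'_*\bigl((\id_Y\times i)_*F\otimes q'^*\cO_{\PP^r}(k)\bigr)=p_*(F\otimes q^*L^{n_0k})$, where $p',q'$ denote the projections from $Y\times\PP^r$; combined with Theorem~\ref{thm:limitingtstructure}, this establishes the characterization in \three\ for $n$ running through the multiples $n_0k$ with $k\gg 0$. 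The reverse implication of \three\ follows at once: the hypothesis forces the subsequence condition at $n=n_0k$, which via the embedding places $F$ in $\dabc(Y\times S)$, while Proposition~\ref{prop:proj-cpt-supp} supplies compact support. For the forward implication, Lemma~\ref{lem:tensorlinebundle} shows $F\otimes q^*L^r\in\dabc(Y\times S)$ for every $0\leq r<n_0$; invoking the multiples-of-$n_0$ characterization on each of these finitely many twists produces thresholds $K_r$, and taking $N=n_0\max_r K_r$ yields $p_*(F\otimes q^*L^n)\in\dabc(Y)$ for all $n\geq N$.

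The main technical obstacle is reconciling in part \three\ the characterization obtained via the very-ample embedding, which only controls multiples of $n_0$, with the required statement for all $n\gg 0$; the twist-by-$L^r$ argument resolves this. A minor but essential book-keeping point in the gluing step is that separatedness of $S$ makes pairwise intersections of affines affine, so that Proposition~\ref{prop:t-str-affine} applies uniformly on intersections and guarantees the compatibility required by Lemma~\ref{lem:gluing-property}.
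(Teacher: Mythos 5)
Your overall architecture is the same as the paper's: glue the affine t-structures of Proposition~\ref{prop:t-str-affine} over a finite affine cover via Lemma~\ref{lem:gluing-property}, deduce \one\ and \two\ from the pushforward characterisation, and obtain \three\ by embedding $S$ into $\PP^r$ and invoking Lemma~\ref{lem:t-str-closed} together with Theorem~\ref{thm:limitingtstructure}. However, there is a genuine gap at the compatibility step. You write that ``the same characterization describes the $t$-structure on $\cdlc(Y\times(U_i\cap U_j))$ directly, so the restrictions \ldots agree,'' but this is not a tautology. The sheaf of t-structures on $\cdlc(Y\times U_i)$ over $U_i$ assigns to the open subset $V=U_i\cap U_j$ the t-structure of \eqref{eqn:open-restriction-t-str}, i.e.\ $F_0\in\dlabc(Y\times V)$ iff $F_0$ extends to some $F\in\dlabc(Y\times U_i)$; whereas applying Proposition~\ref{prop:t-str-affine} to the affine scheme $V$ itself gives the condition $p_{V*}F_0\in\DD^{[a,b]}(\Qcoh(Y))$. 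There is no base-change identity relating $p_{V*}(F|_{Y\times V})$ to $p_{U_i*}F$, so neither inclusion between these two classes is obvious. This is precisely the content of Lemma~\ref{lem:compatibility}, whose proof requires extending both t-structures to $\DD(\Qcoh(Y\times U_i))$ and $\DD(\Qcoh(Y\times V))$ via Lemma~\ref{lem:t-str-qcy} and comparing the cocomplete pre-aisles they generate (using Remark~\ref{rem:left-inf-sum}). The same lemma is also needed to pass from the chosen cover $\{U_i\}$ to an arbitrary open affine $U\subseteq S$ in \eqref{eqn:t-str-S-formula}. Without this input your gluing step, and hence the whole construction, is unjustified.

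Two smaller points on \three. First, Lemma~\ref{lem:t-str-closed} applied to $\id_Y\times i$ produces \emph{a} Noetherian t-structure on $\cdc(Y\times S)$, and you use its pushforward description as if it were the t-structure already constructed by gluing; these must be shown to coincide, which the paper does by verifying condition \one\ on an affine cover and appealing to the uniqueness in Lemma~\ref{lem:gluing-property}. Second, your interpolation from multiples of $n_0$ to all $n\gg 0$ via Lemma~\ref{lem:tensorlinebundle} is correct and is a mild reorganisation of the paper's reduction (the paper instead replaces $L$ by a very ample power at the outset); either route is fine once the identification of the two t-structures is in place.
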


 Before the proof we present a compatibility result for open immersions of affine schemes. This extends to our setting a statement from the proof of \cite[Theorem~3.3.6]{P07}.

\begin{lemma}
\label{lem:compatibility}
Let $j\colon U_1 \hookrightarrow U_2$ be an open immersion of affine schemes of finite type. Then 
\begin{equation*}
(\id_Y \times j)^*\colon \cdlc(Y \times U_2) \lra \cdlc(Y \times U_1)
\end{equation*}
is t-exact with respect to the t-structures on both sides given by \eqref{eqn:t-str-U-formula}.  
\end{lemma}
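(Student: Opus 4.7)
The strategy is to lift the question to the unbounded quasi-coherent derived category, where the adjoint pair $(g^*, g_*)$ for $g = \id_Y\times j$ is available in full generality. I will extend the t-structures on $\cdlc(Y\times U_i)$ to t-structures on $\DD(\Qcoh(Y\times U_i))$ via Lemma~\ref{lem:t-str-qcy}, characterise these extended classes by the condition $p_{i,*}F \in \DD^{[a,b]}(\Qcoh(Y))$, and then exploit the identity $p_{2,*}\circ g_* = p_{1,*}$ coming from $p_2\circ g = p_1$.

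The heart of the argument is to show that $p_{i,*}\colon \DD(\Qcoh(Y\times U_i)) \to \DD(\Qcoh(Y))$ is t-exact with trivial kernel for the extended t-structures, so that Lemma~\ref{lem:zero-kernel} yields
\[
\DD^{[a,b]}(\Qcoh(Y\times U_i)) = \big\{F \mid p_{i,*}F \in \DD^{[a,b]}(\Qcoh(Y))\big\}.
\]
Trivial kernel is immediate from affineness of $p_i$ via \cite[Tag~08I8]{stacks-project}. For left t-exactness I will combine the description $\DD^{\leq 0}(\Qcoh(Y\times U_i)) = \pa[[\cdlcn(Y\times U_i)]]$ from \eqref{eqn:left-qcoh-U} with the fact that $p_{i,*}$ is exact and commutes with small coproducts (both because $p_i$ is affine): it maps this cocomplete pre-aisle into $\pa[[p_{i,*}\cdlcn(Y\times U_i)]]$, and the inclusion $p_{i,*}\cdlcn(Y\times U_i)\subseteq \DD^{\leq 0}(\Qcoh(Y))$ is immediate from the defining formula in Proposition~\ref{prop:t-str-affine}. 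Right t-exactness then follows from the adjunction identity $\Hom(L, p_{i,*}G) = \Hom(p_i^*L, G)$ together with the dual observation: $p_i^*$ is exact, commutes with small coproducts as a left adjoint, and is t-exact on $\cdlc$ by Proposition~\ref{prop:t-str-affine}, so it sends $\DD^{\leq -1}(\Qcoh(Y)) = \pa[[\cdc^{\leq -1}(Y)]]$ into $\pa[[\cdlc^{\leq -1}(Y\times U_i)]] = \DD^{\leq -1}(\Qcoh(Y\times U_i))$.

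Once the characterisation is in place, the lemma becomes formal. Since $p_2\circ g = p_1$ we have $p_{2,*}\circ g_* = p_{1,*}$, so for $G \in \DD^{[a,b]}(\Qcoh(Y\times U_1))$ we get $p_{2,*}(g_*G) = p_{1,*}G \in \DD^{[a,b]}(\Qcoh(Y))$, whence $g_*G \in \DD^{[a,b]}(\Qcoh(Y\times U_2))$. Hence $g_*$ is t-exact on $\DD(\Qcoh)$, and the adjunction $g^*\dashv g_*$ forces $g^*$ to be t-exact there as well. Lemma~\ref{lem:cpt-supp}\two\ ensures $g^*$ preserves left-compact support, and $\dlabc(Y\times U_i) = \cdlc(Y\times U_i) \cap \DD^{[a,b]}(\Qcoh(Y\times U_i))$ by \eqref{eqn:intersection}, so $g^*\colon \cdlc(Y\times U_2) \to \cdlc(Y\times U_1)$ is t-exact. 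The main obstacle will be the characterisation step: while each ingredient is elementary, assembling the cocomplete pre-aisle descriptions of the extended t-structures with the adjunction between $p_i^*$ and $p_{i,*}$ into a clean $p_{i,*}$-detection criterion demands some care.
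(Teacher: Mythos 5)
Your reduction to the unbounded quasi-coherent categories and the detection criterion $\DD^{[a,b]}(\Qcoh(Y\times U_i)) = \{F \mid p_{i,*}F \in \DD^{[a,b]}(\Qcoh(Y))\}$ (via Lemma~\ref{lem:zero-kernel}), together with the resulting t-exactness of $g_*$ for $g := \id_Y\times j$, are sound and match the first half of the paper's argument (up to a harmless swap of the words ``left'' and ``right'' relative to the conventions of Section~\ref{sec:t-structures}). The gap is in the sentence ``the adjunction $g^*\dashv g_*$ forces $g^*$ to be t-exact there as well.'' For an adjoint pair the standard implication is one-sided: the right adjoint $g_*$ preserving $\DD^{\geq 0}$ is equivalent to the left adjoint $g^*$ preserving $\DD^{\leq 0}$. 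So from t-exactness of $g_*$ you obtain only \emph{right} t-exactness of $g^*$; its \emph{left} t-exactness would follow by adjunction from t-exactness of the right adjoint of $g_*$, which for a non-proper open immersion is not $g^*$. Nor is there an easy direct verification: $p_{1,*}g^*F \cong p_{2,*}(F\otimes g_*\cO_{Y\times U_1})$ by the projection formula, and it is not clear that tensoring with $g_*\cO_{Y\times U_1}$ preserves $\DD^{\geq 0}$, because $\DD^{\geq 0}$ is defined only by orthogonality in \eqref{eqn:right-qcoh-U} and need not be closed under the relevant filtered colimits.

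This missing half is exactly where the paper has to work. Its Step~2 shows $\DD^{\leq 0}(\Qcoh(Y\times U_1)) = g^*\DD^{\leq 0}(\Qcoh(Y\times U_2))$ (the nontrivial inclusion uses your t-exactness of $g_*$ together with $g^*g_*\cong\id$), and its Step~3 then compares two \emph{aisles} in $\cdlc(Y\times U_1)$: the intrinsic one from \eqref{eqn:t-str-U-formula}, and $g^*\cdlc^{\leq 0}(Y\times U_2)$, which is known to be an aisle because $U_2$ is affine, so Corollary~\ref{cor:affine-sheaf} and \eqref{eqn:open-restriction-t-str} make the restriction of the $U_2$-t-structure into a genuine bounded t-structure on $\cdlc(Y\times U_1)$. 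Both aisles generate the same cocomplete pre-aisle in $\DD(\Qcoh(Y\times U_1))$, so the uniqueness statement of Remark~\ref{rem:left-inf-sum} forces the two t-structures to coincide; t-exactness of $g^*$, including the $\geq 0$ half, is then built into the very definition of the restricted t-structure. Your proposal never invokes Corollary~\ref{cor:affine-sheaf}, \eqref{eqn:open-restriction-t-str} or Remark~\ref{rem:left-inf-sum}, and without some input of this kind the left t-exactness of $g^*$ remains unproved.
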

\begin{proof}
 We have all the elements in place to reproduce the proof of this statement from \cite[Proof of Theorem~3.3.6]{P07} so we provide only an outline. Extend the t-structures on $\cdc(Y)$ and $\cdlc(Y\times U_i)$ to t-structures on $\DD(\Qcoh(Y))$ and $\DD(\Qcoh(Y\times U_i))$ by Lemma~\ref{lem:t-str-qcy}. 

\smallskip

\indent \textsc{Step 1}: For any interval $[a,b]$ that may be infinite on one side, we prove an analogue of \eqref{eqn:t-str-U-formula}, namely that
\begin{equation*}
\label{eqn:t-str-qcoh-U}
\DD^{[a,b]}(\Qcoh(Y \times U_2)) = \big\{ F \in \DD(\Qcoh(Y \times U_2)) \mid {p_2}_*F \in \DD^{[a,b]}(\Qcoh(Y)) \big\},
\end{equation*}
 where $p_i\colon Y\times U_i\to Y$ is the projection to the first factor. We have $\ker {p_2}_* =0$ by \cite[Tag 08I8]{stacks-project}, so it suffices by Lemma~\ref{lem:zero-kernel} to show that ${p_2}_*\colon \DD(\Qcoh(Y \times U_2)) \ra \DD(\Qcoh(Y))$ is t-exact. This follows exactly as in the proof of \cite[Theorem~3.3.6]{P07}.

\smallskip

\indent \textsc{Step 2}: We now claim that
\begin{equation}
\label{eqn:pullbackQcohAffine}
\DD^{\leq 0}(\Qcoh(Y \times U_1))=(\id_Y \times j)^* \DD^{\leq 0}(\Qcoh(Y \times U_2)).
\end{equation}
Indeed, for $F \in \DD^{[a,b]}(\Qcoh(Y \times U_1))$, we have ${p_2}_*((\id_Y \times j)_* F) = {p_1}_*F \in \DD^{[a,b]}(\Qcoh(Y))$, and hence $(\id_Y \times j)_* F \in \DD^{[a,b]}(\Qcoh(Y \times U_2))$ by \textsc{Step 1}.
Equation \eqref{eqn:right-qcoh-U} and adjunction now imply
\[
\DD^{\leq 0}(\Qcoh(Y \times U_2)) \subseteq \{ F \mid \Hom((\id_Y \times j)^*F, \DD^{\geq 1}(\Qcoh(Y \times U_1)))=0 \},
\]
so the right-hand side of \eqref{eqn:pullbackQcohAffine} is contained in the left. The opposite inclusion follows because each $F\in \DD^{\leq 0}(\Qcoh(Y \times U_1))$ satisfies $F = (\id_Y \times j)^*(\id_Y \times j)_*F\in (\id_Y \times j)^* \DD^{\leq 0}(\Qcoh(Y \times U_2))$.

\smallskip

\indent \textsc{Step 3}: On the one hand, Lemma~\ref{lem:t-str-qcy} shows that $\cdlc^{\leq 0}(Y \times U_1)$ is an aisle in $\cdlc(Y \times U_1)$ which extends to a t-structure on $\DD(\Qcoh(Y\times U_1))$ that satisfies
\begin{equation}
\label{eqn:1st-gen}
\DD^{\leq 0}(\Qcoh(Y \times U_1)) = \pa_{\DD(\Qcoh(Y \times U_1))} [[ \cdlc^{\leq 0}(Y \times U_1) ]].
\end{equation}
On the other hand, since pulling back along $\id_Y \times j$ commutes with extensions and left shifts, and respects coproducts (see \cite[Proposition~1.21]{Neemanbook}), \textsc{Step 2} combined with the analogue of \eqref{eqn:1st-gen} for $U_2$ gives
\begin{equation}
\DD^{\leq 0}(\Qcoh(Y \times U_1)) = \pa_{\DD(\Qcoh(Y \times U_1))} [[ (\id_Y \times j)^* \cdlc^{\leq 0}(Y \times U_2) ]]. \label{eqn:2nd-gen}
\end{equation}
Equation~\eqref{eqn:open-restriction-t-str} shows that $(\id_Y \times j)^* \cdlc^{\leq 0}(Y \times U_2)$ is also an aisle in $\cdlc(Y \times U_1)$ which by \eqref{eqn:2nd-gen} extends to the same t-structure on $\DD(\Qcoh(Y\times U_1))$. 
Comparing \eqref{eqn:1st-gen} and \eqref{eqn:2nd-gen}, Remark~\ref{rem:left-inf-sum} implies that these t-structures coincide as desired.
\end{proof}

\begin{proof}[Proof of Theorem~\ref{thm:t-str-any}]
 Given a finite open affine cover $S=\bigcup_i U_i$, Proposition~\ref{prop:t-str-affine} determines a sheaf of t-structures on $\cdlc(Y\times U_i)$ over $U_i$ for each $i$. For any pair $i, j$ and for any open affine $V \subseteq (U_i \cap U_j)$ of finite type, restricting from $\cdlc(Y \times U_i)$ and $\cdlc(Y \times U_j)$ gives two a priori different sheaves of t-structures on $\cdlc(Y \times V)$. These agree by Lemma~\ref{lem:compatibility}, and  
Lemma~\ref{lem:gluing-property} gives a unique sheaf of t-structures on $\cdlc(Y\times S)$ over $S$ that, by \eqref{eqn:t-str-U-formula}, is characterised by the condition from Theorem~\ref{thm:t-str-any}\one. This t-structure is independent of the choice of open cover because property \eqref{eqn:t-str-S-formula} follows from Theorem~\ref{thm:t-str-any}\one, Proposition~\ref{prop:t-str-affine}, Lemma~\ref{lem:compatibility} and Lemma~\ref{lem:gluing-property}. 

To see that this t-structure is Noetherian, the restriction of any ascending chain of subobjects in $\cdlc^{[0,0]}(Y \times S)$ is an ascending chain of subobjects in $\cdlc^{[0,0]}(Y \times U_i)$. This latter chain stabilises by Proposition~\ref{prop:t-str-affine}, and since the open cover is finite, the original chain stabilises. 

Finally, to show that $p^*$ is t-exact, write $p_i\colon Y\times U_i\to Y$ and $p\colon Y\times S\to Y$ for the first projections. If $F \in \cdc^{[a,b]}(Y)$, then Proposition~\ref{prop:t-str-affine}  gives $p^*(F)\vert_{Y\times U_i} = p_i^*F \in \dabc(Y \times U_i)$ for each $i$, and hence $p^*F \in \cdlc^{[a,b]}(Y \times S)$ because we have a sheaf of t-structures over $S$.

For the final statement, let $S$ be projective. Recall from Proposition~\ref{prop:proj-cpt-supp} that $F\in D(Y\times S)$ has compact support if and only if $p_*(F\otimes q^*L^n)\in D_c(Y)$ for $n\gg 0$. First, reduce to the case when $L$ is very ample using Lemma~\ref{lem:tensorlinebundle}. In this case, let $i\colon S\to \PP^r$ be the closed immersion such that $L=i^*\cO(1)$. Given the sheaf of t-structures on $\cdc(Y\times \PP^r)$ over $\PP^r$ from Theorem~\ref{thm:limitingtstructure}, pullback along the closed immersion $\id_Y\times i$ using Lemma~\ref{lem:t-str-closed} and Remark \ref{rem:applylemtstrclosed} to obtain a sheaf of Noetherian t-structures on $\cdc(Y\times S)$ over $S$. To see that this coincides with the above t-structure, take an open affine cover of $S$, confirm that Theorem \ref{thm:t-str-any}\one\ holds, and apply Lemma~\ref{lem:gluing-property}.
 \end{proof}
 
\subsection{The open heart property}
Let $Y$ and $S$ be a separated schemes of finite type, and consider the sheaf of Noetherian t-structures on $\cdlc(Y \times S)$ over $S$ from Theorem~\ref{thm:t-str-any}. For any subset $V \subseteq S$ that is either open or closed,  let $\mathcal{A}_V$ denote the heart of the induced t-structure on $\cdlc(Y \times V)$ and let $H^i_V(F)$ denote the $i$-th cohomology of an object $F \in \cdlc(Y \times V)$. 

\begin{lemma}
\label{lem:-1-0-exact}
Let $T\subset S$ be an effective Cartier divisor. Any object $F \in \cdlc^{[a,b]}(Y \times S)$ satisfies $F|_{Y \times T} \in \cdlc^{[a-1,b]}(Y \times T)$.
\end{lemma}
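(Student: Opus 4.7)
The plan is to combine the standard short exact sequence associated to an effective Cartier divisor with the t-exactness of tensoring by a line bundle pulled back from $S$, and then descend the resulting cohomological bound from a pushforward to $F|_{Y \times T}$ itself using the affine-local characterisation of the t-structure.

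First I would consider the short exact sequence $0 \to \cO_S(-T) \to \cO_S \to i_*\cO_T \to 0$, where $i \colon T \hookrightarrow S$ is the inclusion. Pulling back along the flat projection $q \colon Y \times S \to S$ and tensoring with $F$ produces an exact triangle
\[ F \otimes q^*\cO_S(-T) \lra F \lra (\id_Y \times i)_*(F|_{Y \times T}) \]
in $\cdlc(Y \times S)$, where the identification of the third term uses flat base change for $q$ together with the projection formula. All three terms lie in $\cdlc(Y \times S)$ by Lemma~\ref{lem:cpt-supp}. By Lemma~\ref{lem:tensorlinebundle}, tensoring with $q^*\cO_S(-T)$ is t-exact for the sheaf of t-structures over $S$, so the left-hand term lies in $\cdlc^{[a,b]}(Y \times S)$. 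The long exact cohomology sequence for the triangle then forces $(\id_Y \times i)_*(F|_{Y \times T}) \in \cdlc^{[a-1,b]}(Y \times S)$.

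The substantive remaining step is descending this bound from the pushforward to $F|_{Y \times T}$ itself. By Remark~\ref{rmk:degree-preserving}, applied to the sheaf of t-structures on $\cdlc(Y \times T)$ over $T$, it suffices to verify the bound after restricting to an open affine cover. I would choose such a cover of the form $\{V_\alpha = U_\alpha \cap T\}$ with $U_\alpha \subseteq S$ open affine. By the characterisation in Proposition~\ref{prop:t-str-affine}, the claim $F|_{Y \times V_\alpha} \in \cdlc^{[a-1,b]}(Y \times V_\alpha)$ is equivalent to $p_{V_\alpha,*}(F|_{Y \times V_\alpha}) \in \DD^{[a-1,b]}(\Qcoh(Y))$ for the first projection $p_{V_\alpha}$. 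Writing $p_{V_\alpha} = p_{U_\alpha} \circ (\id_Y \times i_\alpha)$ for the inclusion $i_\alpha \colon V_\alpha \hookrightarrow U_\alpha$, this pushforward coincides with $p_{U_\alpha,*}$ applied to the restriction of $(\id_Y \times i)_*(F|_{Y \times T})$ to $Y \times U_\alpha$, which belongs to $\cdlc^{[a-1,b]}(Y \times U_\alpha)$ by the sheaf property. A second application of Proposition~\ref{prop:t-str-affine}, this time to $U_\alpha$, then yields $p_{V_\alpha,*}(F|_{Y \times V_\alpha}) \in \DD^{[a-1,b]}(\Qcoh(Y))$, completing the argument.

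The main potential obstacle is the implicit comparison between the two natural t-structures on $\cdlc(Y \times T)$: the one induced from the sheaf of t-structures on $S$ via restriction, and one that could alternatively be defined using Lemma~\ref{lem:t-str-closed} through the pushforward $(\id_Y \times i)_*$. Rather than identifying these t-structures abstractly, the argument bypasses the comparison by reducing to $\DD^{[a-1,b]}(\Qcoh(Y))$ on affine patches, where Proposition~\ref{prop:t-str-affine} makes the characterisation unambiguous.
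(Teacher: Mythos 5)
Your proof is correct and follows essentially the same route as the paper: the exact triangle $F \otimes q^*\cO_S(-T) \to F \to (\id_Y\times i)_*(F|_{Y\times T})$, the t-exactness of twisting by $q^*\cO_S(-T)$ from Lemma~\ref{lem:tensorlinebundle}, and the cone argument placing the pushforward in $\cdlc^{[a-1,b]}(Y\times S)$ are exactly the paper's steps. The only divergence is the final descent: the paper simply invokes Lemma~\ref{lem:t-str-closed} (the t-structure on $\cdlc(Y\times T)$ is by construction detected by $(\id_Y\times i)_*$), whereas you re-derive this affine-locally via Proposition~\ref{prop:t-str-affine} and Remark~\ref{rmk:degree-preserving} — a slightly longer but valid route that has the merit of making explicit the compatibility between the pushforward-defined t-structure on $Y\times T$ and the sheaf of t-structures over $T$, which the paper leaves implicit.
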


\begin{proof}
Let $i\colon T \hookrightarrow S$ denote the closed immersion, and let $f\in H^0(\mathcal{O}_S(T))$ be a defining section for $T$. Lemma~\ref{lem:tensorlinebundle} implies that $F\otimes q^*\mathcal{O}_S(-T)\in \cdlc^{[a,b]}(Y \times S)$. Since 
\begin{equation}
\label{eqn:timesf}
F \otimes q^*\cO_S(-T) \stackrel{\cdot f}{\longrightarrow} F\longrightarrow (\id\times i)_*(F\vert_{Y\times T})
\end{equation}
is an exact triangle in $D_{lc}(Y \times S)$, it follows from the cone construction that $(\id\times i)_*(F\vert_{Y\times T})\in \cdlc^{[a-1,b]}(Y \times S)$. The result follows from Lemma~\ref{lem:t-str-closed}, see again Remark \ref{rem:applylemtstrclosed}.
\end{proof}

Recall from \cite[Definition 3.1.1]{AP06} that an object $F\in \cA_S$ is \emph{$S$-torsion} if it is the pushforward of an object $E\in D(Y\times T)$ for some closed subscheme $T$ in $S$. Equivalently, for every effective Cartier divisor $D\subset S$ containing $T$ with defining section $f\in H^0(\mathcal{O}_S(D))$, there is an integer $k$ such that the morphism $f^k\colon F\to F\otimes q^*\mathcal{O}_S(kD)$ is zero. We say that $F\in \cA_S$ is \emph{$S$-torsion-free} if it contains no nonzero $S$-torsion subobject.  The next result follows \cite[Lemma 3.3.4]{AP06}.

\begin{lemma}
\label{lem:heart-restrict-zero}
Let $T\subset S$ be an effective Cartier divisor, and let $E \in \cA_S$. If $H^0_T(E|_{Y \times T})=0$ then there is an open neighborhood $T \subseteq U \subseteq S$ such that $E|_{Y \times U}=0$.
\end{lemma}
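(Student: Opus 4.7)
The plan is to run a Nakayama-style argument in the heart $\mathcal{A}_S$: first show that multiplication by a defining section $f$ of $T$ is an isomorphism $E \to E \otimes q^*\cO_S(T)$ in $\mathcal{A}_S$, then convert the resulting disjointness $\supp(E)\cap(Y\times T)=\emptyset$ into a genuine open neighbourhood using left-compact support.

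To obtain surjectivity of $\cdot f\colon E \to E\otimes q^*\cO_S(T)$ I would tensor the exact sequence $0\to\cO_S\to\cO_S(T)\to i_*\cO_T(T|_T)\to 0$ with $E$ to produce the exact triangle
\[
E \xrightarrow{\cdot f} E\otimes q^*\cO_S(T) \to (\id_Y\times i)_*\bigl(E|_{Y\times T}\otimes q_T^*\cO_T(T|_T)\bigr).
\]
Lemma~\ref{lem:-1-0-exact} places $E|_{Y\times T}$ in $\cdlc^{[-1,0]}(Y\times T)$, and the hypothesis $H^0_T(E|_{Y\times T})=0$ improves this to $\cdlc^{[-1,-1]}$. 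Because the line bundle twist (Lemma~\ref{lem:tensorlinebundle}) and the closed pushforward (Lemma~\ref{lem:t-str-closed}) are t-exact, the third vertex of the triangle lies in $\cdlc^{[-1,-1]}(Y\times S)$, and the long exact sequence in $\mathcal{A}_S$ immediately produces a short exact sequence $0 \to K \to E \xrightarrow{\cdot f} E\otimes q^*\cO_S(T) \to 0$ with $K$ supported on $Y\times T$.

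The hard part is controlling injectivity. Iterating yields surjections $\cdot f^k\colon E\to E\otimes q^*\cO_S(kT)$ for all $k$, whose kernels $F^{(k)}$ form an ascending chain of subobjects of $E$ in $\mathcal{A}_S$. By Theorem~\ref{thm:t-str-any} the heart is Noetherian, so the chain stabilises at some $F^{(N)}$. The quotient $E'':=E/F^{(N)}$ has $\cdot f$ injective by stabilisation and surjective as a quotient of $\cdot f$ on $E$, hence an isomorphism, which is equivalent to $E''|_{Y\times T}=0$ as a derived restriction. Restricting $0\to F^{(N)}\to E\to E''\to 0$ to $Y\times T$ and using the long exact sequence in $\mathcal{A}_T$, together with the containment in $\cdlc^{[-1,0]}$, the hypothesis $H^0_T(E|_{Y\times T})=0$, and $E''|_{Y\times T}=0$, forces $H^0_T(F^{(N)}|_{Y\times T})=0$. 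Applying the first step to $F^{(N)}$ then gives a surjection $\cdot f^N\colon F^{(N)}\to F^{(N)}\otimes q^*\cO_S(NT)$ which is simultaneously zero since $F^{(N)}$ is $f^N$-torsion by construction, so $F^{(N)}=0$. The key obstacle is this re-run of the Nakayama argument on the kernel: it requires Noetherianity to halt the iteration, and the long exact sequence bookkeeping to transport the vanishing of $H^0_T$ from $E$ to $F^{(N)}$.

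Once $\cdot f\colon E\to E\otimes q^*\cO_S(T)$ is known to be an isomorphism in $\mathcal{A}_S$, it is an isomorphism of underlying complexes of sheaves, and each ordinary cohomology sheaf of $E$ has support disjoint from $Y\times T$. Hence $\supp(E)\cap(Y\times T)=\emptyset$. Because $E$ has left-compact support, $\supp(E)\subseteq Z\times S$ for some proper $Z\subseteq Y$, so the projection $Z\times S\to S$ is proper and $q(\supp(E))$ is closed in $S$; being disjoint from $T$, its complement $U := S\setminus q(\supp(E))$ is the desired open neighbourhood of $T$ on which $E$ vanishes.
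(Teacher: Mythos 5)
Your proposal is correct and follows essentially the same route as the paper's proof: both split $E$ into its maximal $f$-power-torsion subobject (your stabilised kernel $F^{(N)}$ coincides with the paper's $E^{tor}$) and a complementary piece on which $\cdot f$ is an isomorphism, both derive surjectivity of $\cdot f$ from the containment of the restriction in $\cdlc^{[-1,-1]}(Y\times T)$, and both kill the torsion piece by playing surjectivity against nilpotence. The differences are cosmetic reorderings --- you establish surjectivity on all of $E$ up front and get injectivity on the quotient from Noetherian stabilisation rather than from torsion-freeness --- together with your (welcome) extra care in invoking left-compact support to see that $q(\supp E)$ is closed in $S$.
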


\begin{proof}
The support of $E$ is closed, so it suffices to prove that $E|_{Y \times T}=0$. Let $f\in H^0(\cO_S(T))$ be a defining section for $T$. Since the abelian category $\cA_S$ is Noetherian, there is a maximal $S$-torsion subobject $E^{tor}\subset E$ supported in $T$ and a short exact sequence 
$$ 0 \lra E^{tor} \lra E \lra F \lra 0 $$
in $\cA_S$, where $F$ has no torsion subobject with support in $T$. By restricting to $Y\times T$ and applying Lemma~\ref{lem:-1-0-exact}, we obtain an exact triangle 
\begin{equation}
\label{eqn:EtorEF}
 E^{tor}|_{Y \times T} \lra E|_{Y \times T} \lra F|_{Y \times T}, 
 \end{equation}
all of whose terms lie in $\cdlc^{[-1,0]}(Y\times T)$. It suffices to prove that $F|_{Y \times T}= 0 = E^{tor}|_{Y \times T}$.

 First consider $F|_{Y\times T}$. Since $H^0_T(E|_{Y \times T})=0$, the cohomology sequence for \eqref{eqn:EtorEF} implies that $H^0_T(F|_{Y \times T})=0$. We claim that $H^{-1}_T(F|_{Y\times T})=0$, in which case $F|_{Y\times T}=0$. For this, the morphism $F\otimes q^*\cO_S(-T)\ra F$ in the category $\cA_S$ is injective because $F$ has no torsion subobject  with support in $T$. It follows from \eqref{eqn:timesf} that $(\id\times i)_*(F|_{Y\times T})$ is the cokernel and hence also lies in $\cA_S$. We obtain $F\vert_{Y\times T}\in \cA_S$ by Lemma~\ref{lem:t-str-closed}, so $H^{-1}_T(F|_{Y\times T})=0$ which proves the claim.

 It remains to show that $E^{tor}|_{Y \times T}=0$. Since $F|_{Y\times T}=0$, we may assume from the beginning that $E$ is $S$-torsion with support in $T$. Let $k \geq 0$ be the minimal value such that $E$ is annihilated by $f^k$. Lemma \ref{lem:-1-0-exact} and the assumption $H^0_T(E|_{Y \times T})=0$ imply that $E|_{Y \times T} \in \cdlc^{[-1,-1]}(Y \times T)$, so $(\id \times i)_*(E|_{Y \times T}) \in \cdlc^{[-1,-1]}(Y \times S)$ by Lemma~\ref{lem:t-str-closed}. In particular, $H_S^0((\id \times i)_*(E|_{Y \times T})) =0$, so the cohomology sequence for the exact triangle
\[
 E \otimes q^*\cO_S(-T) \stackrel{\cdot f}{\lra} E \lra (\id \times i)_*(E|_{Y \times T}) 
 \]
 shows that the morphism $E \otimes q^*\cO_S(-T) =H^0_S(E \otimes q^*\cO_S(-T))\stackrel{\cdot f}{\lra} H^0_S(E) = E$ is surjective. If $k>0$, then $f^{k-1}$ annihilates $f(E\otimes q^*\cO_S(-T))=E$ which contradicts minimality of $k$, so $k=0$ and hence $E|_{Y\times T}=0$.
\end{proof}

The next result extends \cite[Proposition 3.3.2]{AP06} and \cite[Proposition 2.3.7]{P07} to our setting.

\begin{proposition}[The open heart property]
\label{prop:open-heart}
Let $Y$ and $S$ be separated schemes of finite type, and let $T\subset S$ be a local complete intersection. Let $F \in \cdlc(Y \times S)$. If $F|_{Y \times T} \in \cA_T$, then there is an open neighborhood $T \subseteq U \subseteq S$ such that $F|_{Y \times U} \in \cA_U$.
\end{proposition}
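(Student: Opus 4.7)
The plan is to reduce to the case when $T$ is an effective Cartier divisor and then to prove that case by induction on the length of $F$ using Lemma~\ref{lem:heart-restrict-zero}. Since the sheaf of t-structures property (Theorem~\ref{thm:t-str-any}) together with Remark~\ref{rmk:degree-preserving} allow the heart condition to be checked locally on $S$, it suffices to produce, for every point $t\in T$, an affine open neighbourhood $V\subseteq S$ of $t$ together with an open $U\subseteq V$ containing $t$ such that $F|_{Y\times U}\in\cA_U$; the union of these opens then yields the required neighbourhood of $T$. Shrink $V$ so that $T\cap V$ is cut out by a regular sequence $f_1,\dots,f_r$, set $V_i:=V(f_1,\dots,f_i)$, and consider the chain
\[
V=V_0\supset V_1\supset\cdots\supset V_r=T\cap V
\]
in which each $V_i$ is an effective Cartier divisor in $V_{i-1}$. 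Assuming the Cartier divisor case, I will propagate the heart property upwards: given an open $U_i\subseteq V_i$ containing $T\cap V$ with $F|_{Y\times U_i}\in\cA_{U_i}$, the subset $U_i$ is an effective Cartier divisor in the open subscheme $V_{i-1}\setminus(V_i\setminus U_i)\subseteq V_{i-1}$, so the Cartier divisor case produces an open $U_{i-1}\subseteq V_{i-1}$ containing $T\cap V$ with $F|_{Y\times U_{i-1}}\in\cA_{U_{i-1}}$. Iterating $r$ times yields $U:=U_0$.

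Now suppose $T\subseteq S$ is an effective Cartier divisor and let $F\in\cdlc^{[a,b]}(Y\times S)$. The plan is to induct on $b-a$: whenever $b\geq 1$, I will show that after shrinking $S$ to an open neighbourhood of $T$, the top cohomology sheaf $H^b_S(F)$ vanishes, dropping $b$ by one; the case $a\leq -1$ is handled symmetrically. Consider the truncation triangle
\[
\tau^{\leq b-1}F\lra F\lra H^b_S(F)[-b]
\]
in $\cdlc(Y\times S)$ and restrict it to $Y\times T$. By Lemma~\ref{lem:-1-0-exact}, the outer terms satisfy $(\tau^{\leq b-1}F)|_{Y\times T}\in\cdlc^{[a-1,b-1]}(Y\times T)$ and $(H^b_S(F)[-b])|_{Y\times T}\in\cdlc^{[b-1,b]}(Y\times T)$, while $F|_{Y\times T}\in\cA_T$ by hypothesis. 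Reading the resulting long exact sequence in $\cA_T$ in degrees $b-1,b,b+1$, the vanishing $H^k_T(F|_{Y\times T})=0$ for $k\neq 0$ combined with the degree bounds on the two outer terms forces $H^0_T(H^b_S(F)|_{Y\times T})=0$. Since $H^b_S(F)\in\cA_S$ and $\cA_S$ is Noetherian by Theorem~\ref{thm:t-str-any}, Lemma~\ref{lem:heart-restrict-zero} provides an open neighbourhood $T\subseteq U\subseteq S$ on which $H^b_S(F)|_{Y\times U}=0$, so that $F|_{Y\times U}\in\cdlc^{[a,b-1]}(Y\times U)$ and the inductive hypothesis applies. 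For $a\leq -1$, the parallel argument based on the triangle $H^a_S(F)[-a]\to F\to\tau^{\geq a+1}F$ produces $H^0_T(H^a_S(F)|_{Y\times T})=0$, and Lemma~\ref{lem:heart-restrict-zero} again reduces $a$ by one.

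The main obstacle will be isolating the key vanishing $H^0_T(H^b_S(F)|_{Y\times T})=0$ from the long exact sequence in $\cA_T$. The source of difficulty is that restriction along the closed immersion $Y\times T\hookrightarrow Y\times S$ is not t-exact: Lemma~\ref{lem:-1-0-exact} shows that it can shift the lower bound by one, so the restricted truncation triangle has outer terms spanning wider degree ranges than one would naively expect, and the extremal cases $b=1$ and $a=-1$ require separate bookkeeping because the ranges of the outer terms then overlap with the range of $F|_{Y\times T}$. Once this vanishing is established, Lemma~\ref{lem:heart-restrict-zero} combined with Noetherian induction finishes the Cartier divisor case, and the reduction to this case via the chain of Cartier divisors in the local complete intersection is then formal.
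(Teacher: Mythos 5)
Your proof is correct, and for most of its length it coincides with the paper's argument: the reduction to the case of an effective Cartier divisor, and the descending induction on the top cohomological degree --- extracting $H^0_T\bigl(H^b_S(F)|_{Y\times T}\bigr)=0$ from the long exact sequence of the restricted truncation triangle via the degree bounds of Lemma~\ref{lem:-1-0-exact}, then feeding this to Lemma~\ref{lem:heart-restrict-zero} --- are exactly the paper's Step~1. Where you genuinely diverge is the bottom degree. You run the mirror-image argument: from the triangle $H^a_S(F)[-a]\to F\to\tau^{\geq a+1}F$ you deduce $H^0_T\bigl(H^a_S(F)|_{Y\times T}\bigr)=0$ and apply Lemma~\ref{lem:heart-restrict-zero} to $H^a_S(F)\in\cA_S$ once for each unit by which $a$ must rise; I checked the relevant segment of the long exact sequence and it does give the vanishing you claim, including in the boundary case $a=-1$. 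The paper instead, once $F\in\cdlc^{[a,0]}(Y\times S)$, disposes of the entire lower truncation in one step: restricting $\tau^{\leq -1}F\to F\to H^0_S(F)$ to $Y\times T$ shows that $\tau^{\leq -1}F|_{Y\times T}$ lies simultaneously in $\cdlc^{[a-1,-1]}(Y\times T)$ (by Lemma~\ref{lem:-1-0-exact}) and in $\cdlc^{[0,1]}(Y\times T)$ (as the shift of the cone of a map from an object in $\cA_T$ to one in $\cdlc^{[-1,0]}$), hence vanishes; since $\supp(\tau^{\leq -1}F)$ is closed, it then vanishes on an open neighbourhood of $T$ with no further appeal to Lemma~\ref{lem:heart-restrict-zero}. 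Both routes are valid: yours is more uniform, treating the two bounds symmetrically, while the paper's avoids re-invoking the Noetherian/torsion machinery of Lemma~\ref{lem:heart-restrict-zero} for the lower bound. Two minor points: when you take the union of the opens produced for each $t\in T$, extract a finite subcover (automatic, since the schemes are Noetherian) before invoking Remark~\ref{rmk:degree-preserving}, which is stated for finite covers; and your explicit chain of Cartier divisors cut out by a regular sequence usefully makes precise the induction on codimension that the paper leaves to the reader.
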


\begin{proof}
It suffices to prove the statement under the additional assumption that $T$ is an effective Cartier divisor in $S$, as an induction on the codimension of $T$ in $S$ proves the general case. Let $a,b\in \ZZ$ be such that $F\in \cdlc^{[a,b]}(Y\times S)$ with $b>0$. We proceed in two steps:

\smallskip

\noindent \textsc{Step 1}:  We find an open neighbourhood $T \subseteq U \subseteq S$ such that $F|_{Y \times U} \in \cdlc^{[a,b-1]}(Y\times U)$, and hence by induction, shrinking $U$ at each step if necessary, we deduce that $F|_{Y \times U} \in \cdlc^{[a,0]}(Y\times U)$. For this, restrict to $Y\times T$ a truncation exact triangle for $F$ to obtain an exact triangle
\begin{equation}
\label{eqn:b-1restrict}
\tau^{\leq b-1}F|_{Y \times T} \lra F|_{Y \times T} \lra H^b_S(F)[-b]|_{Y \times T}.
\end{equation}
Lemma \ref{lem:-1-0-exact} gives $\tau^{\leq b-1}F|_{Y \times T} \in \cdlc^{[a-1,b-1]}(Y \times T)$, so the long exact sequence in cohomology for \eqref{eqn:b-1restrict} gives
\[
H^0_T\big(H^b_S(F)|_{Y \times T}\big) = H^b_T\big(H^b_S(F)|_{Y \times T}[-b]\big) = H^b_T(F|_{Y \times T}) = 0
\]
because $b>0$. Applying Lemma~\ref{lem:heart-restrict-zero} gives an open neighbourhood $T\subseteq U\subseteq S$ such that $H^b_S(F)\vert_{Y\times U}=0$, and hence $F|_{Y \times U} \in \cdlc^{[a,b-1]}(Y\times U)$ as required. 

\smallskip

We have proven $F|_{Y \times U} \in \cdlc^{[a,0]}(Y \times U)$. If $a=0$ the claim is proved. Otherwise, we can replace $S$ by $U$ and hence have $F \in \cdlc^{[a,0]}(Y \times S)$ for some $a < 0$.

\smallskip

\noindent \textsc{Step 2}: We show that, in this case, we also have $\tau^{\leq -1}F|_{Y \times T}=0$. On one hand, we have $\tau^{\leq -1}F|_{Y \times T} \in \cdlc^{[a-1,-1]}(Y \times T)$ by Lemma \ref{lem:-1-0-exact}. On the other hand, restricting a truncation exact triangle for $F$ to $Y\times T$ gives
\begin{equation}
\label{eqn:-1restrict}
\tau^{\leq -1}F|_{Y \times T} \lra F|_{Y \times T} \lra H^0_S(F)|_{Y \times T}.
\end{equation}
Applying Lemma \ref{lem:-1-0-exact} to $H^0_S(F)$ gives $H^0_S(F)|_{Y \times T} \in \cdlc^{[-1,0]}(Y \times T)$, and since $F|_{Y \times T} \in \cA_T$ holds by assumption, the exact triangle \eqref{eqn:-1restrict} shows that the object
\[
 \tau^{\leq -1}F|_{Y \times T} = \cone(F|_{Y \times T} \to H^0_S(F)|_{Y \times T})[-1]
 \]
 lies in $\cdlc^{[0,1]}(Y \times T)$. These two statements force $\tau^{\leq -1}F|_{Y \times T} = 0$ as claimed. 
 
 \smallskip
 
 To conclude, the support of $\tau^{\leq -1}F$ is closed in $S$, so there exists an open neighbourhood $T \subseteq U \subseteq S$ such that $\tau^{\leq -1}F|_{Y \times U} = 0$. Thus $F|_{Y\times U} = H^0_S(F)|_{Y\times U} \in \cA_U$ as required.
\end{proof}

\section{Numerical Bridgeland stability conditions for compact support}
\label{sect:stabilityconditions}

The goal of this section is to provide the right setting for stability conditions for objects with compact support on a non-compact quasi-projective variety $Y$. Note that the $K$-group of $D_c(Y)$ almost always has infinite rank (for example, skyscraper sheaves of points that do not lie on proper subvarieties of positive dimension have linearly independent classes), and yet the numerical $K$-group of $D_c(Y)$ is not defined when $Y$ is singular. Even when $Y$ is smooth, the class of skyscraper sheaves of points is 0, so $D_c(Y)$ is unlikely to admit numerical stability conditions (where
one requires that the central charge factors via the numerical Grothendieck group). To get around these problems, many authors (see Section \ref{subsec:moremotivation}) have instead considered stability conditions on $D_Z(Y)$, the derived category with objects supported on a proper subvariety $Z \subset Y$. However, this does not lead to moduli spaces of finite type: even the moduli space of skyscraper sheaves of points would be the completion of $Y$ at $Z$.

We therefore propose to use a variant of the numerical Grothendieck group of $D_c(Y)$, defined via the Euler pairing with perfect complexes.

\subsection{Numerical Grothendieck groups} \label{subsec:numericalKgroups}
Let $Y$ be a separated scheme of finite type over $\kk$. 
For any objects $E\in \Dperf(Y)$ and $F\in D_c(Y)$, the vector space $\bigoplus_i \Hom_{D(Y)}(E,F[i])$ is of finite dimension over $\kk$. The Euler form  
\[
\chi_Y\colon K(\Dperf(Y))\times K(D_c(Y))\to \ZZ
\]
between the Grothendieck groups of these categories is the bilinear form given by 
\begin{equation}
\label{eqn:eulerform}
\chi_Y(E,F)= \sum_{i\in \ZZ} (-1)^i\dim_\kk \Hom(E,F[i]).
\end{equation}
The quotient of $K(\Dperf(Y))$ and $K(D_c(Y))$ with respect to the kernel of $\chi$ on each factor defines the \emph{numerical Grothendieck groups} $\Knumperf(Y)$ and $\Knumc(Y)$ respectively, and we use the same notation  
\[
\chi_Y\colon \Knumperf(Y)\times \Knumc(Y)\to \ZZ
\]
 for the induced perfect pairing. Our interest lies in studying the category $D_c(Y)$ when $\Knumc(Y)$ has finite rank. Here we present a sufficient condition for this to hold.
 
\begin{lemma}
\label{lem:finiterank}
Let $Y$ be a normal, quasi-projective scheme of finite type over a field $\kk$ of characteristic zero. Then $\Knumc(Y)$ has finite rank.
\end{lemma}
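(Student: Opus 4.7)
The plan is to combine the non-degeneracy of the Euler pairing with a resolution-of-singularities reduction. Since $\chi_Y\colon \Knumperf(Y)\times \Knumc(Y) \to \ZZ$ is non-degenerate on both sides by construction, $\Knumc(Y)$ injects into $\Hom_\ZZ(\Knumperf(Y),\ZZ)$, whose rank is at most that of $\Knumperf(Y)$; hence it suffices to show that $\Knumperf(Y)$ has finite rank.

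I would first handle the case when $Y$ is smooth, in which $\Dperf(Y)=\cD(Y)$. Using Hironaka (available as $\operatorname{char}\kk=0$), embed $Y$ as an open subscheme of a smooth projective variety $\bar Y$: compactify $Y$ in some projective scheme and resolve the singularities of this compactification away from the smooth locus containing $Y$. The localisation sequence in K-theory gives a surjection $K_0(\bar Y)\twoheadrightarrow K_0(Y)$, and I would verify that it descends to a surjection $\Knumperf(\bar Y)\twoheadrightarrow \Knumperf(Y)$. Indeed, for $E\in K_0(\bar Y)$ numerically trivial and $F\in \cdc(Y)$, the support of $F$ is proper and thus closed in $\bar Y$, so the pushforward $j_*F$ along the open immersion $j\colon Y\hookrightarrow \bar Y$ lies in $\cdc(\bar Y)=\cD(\bar Y)$, and adjunction yields $\chi_Y(E\vert_Y, F)=\chi_{\bar Y}(E,j_*F)=0$. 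Since $\bar Y$ is smooth projective, $\Knumperf(\bar Y)$ is the usual numerical Grothendieck group, of finite rank by the classical Chern-character argument; hence so is $\Knumperf(Y)$.

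For the general normal case, take a resolution $\pi\colon \tilde Y\to Y$ with $\tilde Y$ smooth quasi-projective. Derived pullback preserves perfect complexes and thus defines $\pi^*\colon \Knumperf(Y)\to \Knumperf(\tilde Y)$, whose target has finite rank by the smooth case. For $E$ in the kernel of $\pi^*$ and $F\in \cdc(Y)$, the projection formula gives
\[
0=\chi_{\tilde Y}(\pi^* E,\pi^* F)=\chi_Y\bigl(E,\,F\otimes \pi_*\cO_{\tilde Y}\bigr).
\]
Normality of $Y$ implies that the zeroth cohomology of $\pi_*\cO_{\tilde Y}$ is $\cO_Y$, whereas the higher cohomology sheaves are supported on the singular locus $Z\subset Y$, which has strictly smaller dimension. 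Inducting on $\dim Y$, applied to the normalisations of the irreducible components of $Z$, then forces the defect terms to factor through finite-rank groups, bounding the kernel of $\pi^*$.

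The main obstacle will be executing the induction cleanly: the singular locus $Z$ is typically neither normal nor irreducible, so one must descend along the normalisations of its components and carefully track how the Euler pairing on $Y$ relates to pairings on these lower-dimensional schemes, in particular identifying the contributions of the higher direct images of $\cO_{\tilde Y}$ with classes pulled back from such strata. A potentially more uniform alternative would be Baum--Fulton--MacPherson Riemann--Roch, factoring the Euler pairing through numerical Chow groups of cycles with proper support on $Y$.
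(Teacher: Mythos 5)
Your opening reduction (non-degeneracy of $\chi_Y$ gives $\Knumc(Y)\hookrightarrow \Hom(\Knumperf(Y),\ZZ)$, so it suffices to bound $\Knumperf(Y)$) is valid, and your smooth case is correct; it is essentially the mirror image of the paper's argument, which instead shows that $j_*\colon \Knumc(Y)\to \Knum(\bar Y)$ is injective. Both rest on the same two facts: the adjunction $\chi_{\bar Y}(P,j_*F)=\chi_Y(j^*P,F)$ and the extendability of perfect complexes from $Y$ to $\bar Y$.

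The normal singular case, however, contains a genuine gap. The identity $\chi_{\tilde Y}(\pi^*E,\pi^*F)=\chi_Y(E,F\otimes\pi_*\cO_{\tilde Y})$ is not available: a resolution $\pi\colon \tilde Y\to Y$ of a singular $Y$ does not have finite Tor-dimension, so for $F\in \cdc(Y)$ the derived pullback $\pi^*F$ is in general an unbounded complex and does not lie in $\cdc(\tilde Y)$ (equivalently, $F\otimes\pi_*\cO_{\tilde Y}$ is unbounded); neither Euler characteristic is a finite sum. In particular the hypothesis $E\in\ker\pi^*$ — which by definition only asserts $\chi_{\tilde Y}(\pi^*E,G)=0$ for $G\in \cdc(\tilde Y)$ — does not yield the displayed vanishing. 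Unwinding the adjunction, $E\in\ker\pi^*$ says precisely that $E$ is orthogonal to the image of $\pi_*\colon \Knumc(\tilde Y)\to \Knumc(Y)$, so what your argument actually needs is surjectivity of that pushforward. This is exactly what the paper proves directly: normality gives $\overline{\pi_*}\cO_{\tilde Y}=\cO_Y$, the underived projection formula then shows every coherent sheaf on $Y$ is $\overline{\pi_*}$ of a sheaf on $\tilde Y$, and since $\Knumc(Y)$ is generated by sheaf classes the map $\Knumc(\tilde Y)\to \Knumc(Y)$ is onto — no stratification of the singular locus is required. Your proposed substitute, an induction over normalisations of the components of the singular locus, is acknowledged in your own write-up to be unexecuted, and as it stands the argument does not close.
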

\begin{proof}
First assume that $Y$ is smooth. We may choose a smooth projective completion $\bar{Y}$ of $Y$, hence we have $\Dperf(\bar{Y}) = D(\bar{Y})$. Write $j\colon Y \rightarrow \bar{Y}$ for the open immersion.  Let $E, E' \in D_c(Y)$ satisfy $[E] = [E'] \in \Knumc(Y)$. For any $P \in \Dperf(\bar{Y})$, we have   
$$ \chi_{\bar{Y}}(P, j_*E) = \chi_Y(j^*P, E) = \chi_Y(j^*P, E') = \chi_{\bar{Y}}(P, j_*E') $$
by adjunction, so the map $j_*\colon \Knumc(Y) \ra \Knum(\bar{Y})$ given by $j_*([E]) = [j_*E]$ is well-defined. We claim that $j_*$ is injective. Indeed, let $E, E' \in D_c(Y)$ satisfy $[j_*E] = [j_*E'] \in \Knum(\bar{Y})$. Each $P \in \Dperf(Y)$ is of the form $j^*\bar{P} = P$ for some $\bar{P} \in \Dperf(\bar{Y})$ by \cite[Lemma 2.3.1]{P07}, so
$$ \chi_Y(P, E) = \chi_Y(j^*\bar{P}, E) = \chi_{\bar{Y}}(\bar{P}, j_*E) = \chi_{\bar{Y}}(\bar{P}, j_*E') = \chi_Y(j^*\bar{P}, E') = \chi_Y(P, E'). $$
It follows $[E] = [E'] \in \Knumc(Y)$ as required. Since $\bar{Y}$ is smooth and projective, the numerical Grothendieck group $\Knum(\bar{Y}) = \Knumc(\bar{Y})$ has finite rank by Hirzebruch-Riemann-Roch.

Now assume that $Y$ has normal singularities. Let $\pi \colon Y' \to Y$ be a resolution of singularities. Write $\overline{\pi_*} \colon \Coh Y' \to \Coh Y$ and $\underline{\pi^*} \colon \Coh Y \to \Coh Y'$ for the underived pushforward and pullback respectively. Since $Y$ is normal, we have $\overline{\pi_*} \cO_{Y'} = \cO_Y$. Pushforward and pullback give a pair of adjoint functors $\cdc(Y') \to \cdc(Y)$ and $\Dperf(Y) \to \Dperf(Y')$, and thus an induced map $\Knumc(Y') \to \Knumc(Y)$. By the (underived) projection formula, the functor $\overline{\pi_*}$ is essentially surjective. Since $\Knumc(Y)$ is generated by classes of coherent sheaves, it follows that the map $\Knumc(Y') \to \Knumc(Y)$ is surjective.
\end{proof}

\begin{remark} \label{remark:finiterank}
From now on, we assume for simplicity that $\Knumc(Y)$ has finite rank. All our arguments work equally well if we assume instead that the central charge of each stability condition on $\cdc(Y)$ factors through a fixed finite rank lattice  $\Lambda$ via a homomorphism $\alpha \colon \Knumc(Y) \to \Lambda$.
\end{remark}

\subsection{Stability conditions for compact support}
We assume that the reader is familiar with the notion of stability condition as introduced in \cite{Bri07}, in particular the notion of \emph{slicing}. We note that typically
the category $\cdc(Y)$ is decomposable into infinitely many factors; indeed, any closed point $y \in Y$ that does not lie on a proper subvariety of positive dimension of $Y$ gives rise to such a factor. Hence, instead of applying the notion of stability condition verbatim to the category $\cdc(Y)$, we restrict to the situation where $\Knumc(Y)$ is a finite rank lattice and we allow only central charges that factor through $\Knumc(Y)$.

\begin{definition} \label{def:stabilitycondition}
Assume that $\Knumc(Y)$ has finite rank. A \emph{numerical stability condition for compact support} on $Y$ is a pair $(Z, \cP)$, where $Z \colon \Knumc(Y) \to \CC$ is a group homomorphism and $\cP$ is a slicing of $\cdc(Y)$, such that the following properties hold:
\begin{enumerate}
\item For any $\phi\in \RR$ and any non-zero $E\in \cP(\phi)$, we have $Z([E]) \in \RR_{>0}\cdot e^{\pi i \phi}$; and
\item There exists a quadratic form $Q$ on $\Knumc(Y) \otimes \RR$ such that:
\begin{itemize}
\item for any $\phi\in \RR$ and any $E\in \cP(\phi)$, we have
$Q([E]) \ge 0$; and
\item $Q$ is negative definite on $\mathrm{Ker} Z \subseteq \Knumc(Y) \otimes \RR$.
\end{itemize}
\end{enumerate}
 Let $\Stab(D_c(Y))$ denote the space of numerical stability conditions for compact support on $Y$.
\end{definition}

 The deformation results of Bridgeland~\cite{Bri07} extend to this setting (see e.g. \cite[Appendix A]{BMS} for a discussion under the assumptions as formulated above);  in particular, $\Stab(D_c(Y))$ is a complex manifold of dimension equal to the rank of $\Knumc(Y)$. 

Moreover, the results of \cite[Section 9]{Bri08} carry over completely
to give a wall-and-chamber structure on $\Stab(D_c(Y))$ for any given class $\vv \in \Knumc(Y)$. More precisely, there exists a locally finite set of walls (real codimension one submanifolds) such that the set of $\sigma$-semistable objects of class $\vv$ does not change as $\sigma$ varies within a connected component of the complement of walls (called a \emph{chamber}),
and such that on every wall there exist strictly semistable objects that become unstable on one side of the wall.

\section{The linearisation map with compact support}
 \label{sec:linearisation}
 The goal of this section is to prove the main result.
Let $Y$ and $S$ be separated schemes of finite type, and write $p\colon Y\times S\to Y$ and $q\colon Y\times S\to S$ for the first and second projection respectively.

 \subsection{The linearisation map}
 For any closed point $s\in S$, define $Y_s:= Y\times \{s\}$ and write $i_{Y \times \{s\}}\colon Y_s \to Y \times S$ for the closed immersion. For $\cE\in D(Y\times S)$, we identify $Y_s\cong Y$ and let 
\[
\cE_s:=i^*_{Y \times \{s\}}\cE\in D(Y)
\]
 denote the derived pullback of $\cE$ to $Y_s$.

\begin{definition}
\label{def:lc-flat}
 Let $\cA\subset D_c(Y)$ be the heart of a bounded t-structure on $D_c(Y)$. An $S$-perfect object $\cE \in D(Y \times S)$ is a \emph{flat family over $S$ with respect to $\cA$} if there exists $\vv\in \Knumc(Y)$ such that $\cE_s\in \cA$ and $[\cE_s]=\textbf{v}$ for every closed point $s\in S$. When we wish to make the reference to $\vv$ explicit, we call this a \emph{flat family of class} $\textbf{v}$ \emph{over $S$ with respect to $\cA$}.
\end{definition}

\begin{remark}
When $Y$ is proper and $S$ is connected, the condition that $[\cE_s]=\textbf{v}$ for every closed point $s\in S$ and some $\vv \in \Knumc(Y)$ is superfluous, because the class $[\cE_s]\in \Knumc(Y)$ is constant over $S$. Thus, our notion of flat family generalises the standard one, see \cite[Definition~3.1]{BM12}.
\end{remark}

\begin{definition} Let $S, Y$ be separated and of finite type over $\kk$. Let $(Z, \cP)$ be a numerical Bridgeland stability condition for compact support on $Y$ in the sense of Definition \ref{def:stabilitycondition}. We say that $\cE \in D(Y \times S)$ is a \emph{family of semistable objects of class $\vv\in \Knumc(Y)$} if $\cE$ is a flat family over $S$ with respect to $\cP((\phi, \phi+1])$ of class $\vv$ for some $\phi \in \RR$, and if in addition each object $\cE_s$ is semistable with respect to $(Z, \cP)$. 
\end{definition}

Assume that $S$ is separated and of finite type over $\kk$. Let $N^1(S)$ denote the vector space of real Cartier divisor classes modulo numerical equivalence; here numerical equivalence is taken with respect to proper curves $C \subset S$. Dually, $N_1(S)$ denotes the space of proper 1-cycles in $S$ modulo numerical equivalence (with respect to Cartier divisor classes on $S$). Let $[C]\in N_1(S)$ denote the class of a 1-cycle.
 
 \begin{theorem} 
 \label{thm:linearisation}
 Let $S, Y$ be separated schemes of finite type. Assume that $\Knumc(Y)$ has finite rank. Let $\sigma = (Z_\sigma, \cP_\sigma)$ be a numerical Bridgeland stability condition for compact support on $Y$, and let $\cE$ be a family of $\sigma$-semistable objects of class $\vv \in \Knumc(Y)$. Assume that the support of $\cE$ is proper over $S$. There is a nef Cartier divisor class $\ell_{\cE, \sigma} \in N^1(S)$ on $S$, defined dually by
 \begin{equation}
 \label{eqn:ellC}
 \ell_{\cE,\sigma}\big([C]\big) = \Im\left(\frac{Z_{\sigma}(\Phi_{\cE}(\cO_C)\big)}{-Z_\sigma(\vv)}\right) \quad \in \Hom(N_1(S), \RR) \cong N^1(S).
 \end{equation}
 Moreover, $\ell_{\cE, \sigma}([C]) = 0$ if and only if for two general closed points $c, c^\prime\in C$, the corresponding objects $\cE_c, \cE_{c'}\in \cdc(Y)$ are $S$-equivalent.
 \end{theorem}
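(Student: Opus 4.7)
The plan is to reduce to the setting of a smooth proper curve and then run the argument of \cite{BM12} essentially verbatim, with the constant family of t-structures on $\cdlc(Y \times \tilde C)$ from Theorem~\ref{thm:t-str-any} and its open heart property (Proposition~\ref{prop:open-heart}) playing the role of the Abramovich--Polishchuk family. Since nefness is tested against classes of integral proper curves $C \subseteq S$, I first pull back $\cE$ along $\id_Y \times \nu$, where $\nu\colon \tilde C \to C$ is the normalisation. The pulled-back family $\tilde\cE$ has support proper over the smooth proper curve $\tilde C$, so Lemma~\ref{lem:lcvsproper} places $\tilde\cE$ in $\cdlc(Y \times \tilde C)$. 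A short base-change computation using the projection formula, together with the fact that the cokernel of $\cO_C \to \nu_*\cO_{\tilde C}$ is a finite-length torsion sheaf supported on the singular points of $C$, shows that $[\Phi_{\tilde\cE}(\cO_{\tilde C})]$ differs from $[\Phi_\cE(\cO_C)]$ by an integer multiple of $\vv$; hence $\ell_{\tilde\cE,\sigma}([\tilde C]) = \ell_{\cE,\sigma}([C])$ and I may replace $C$ by $\tilde C$ throughout.

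Next, after rotating the slicing I may assume every fibre $\tilde\cE_c$ lies in the heart $\cA \subseteq \cdc(Y)$ of $\cP_\sigma((0,1])$. Applying Proposition~\ref{prop:open-heart} at each closed point $c \in \tilde C$ produces an open neighbourhood on which the restriction of $\tilde\cE$ lies in the heart $\cA_U$, and a finite subcover together with Remark~\ref{rmk:degree-preserving} promotes this to $\tilde\cE \in \cA_{\tilde C}$. With the family in the heart, the BM12 positivity argument transfers directly: choose a sufficiently ample line bundle on $\tilde C$, apply Lemma~\ref{lem:tensorlinebundle} to stay inside $\cA_{\tilde C}$ when tensoring, and use the generation-by-sections and Koszul-style arguments of \cite{BM12} to express $\Phi_\cE(\cO_C) = \tilde p_*\tilde\cE$ via exact sequences in $\cA_{\tilde C}$ whose terms are pullbacks from $\cA$. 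Semistability of each fibre $\tilde\cE_c$ combined with condition~(2) of Definition~\ref{def:stabilitycondition} then forces the phase of $Z_\sigma(\Phi_\cE(\cO_C))$ into the half-plane that makes $\Im(Z_\sigma(\Phi_\cE(\cO_C))/(-Z_\sigma(\vv))) \ge 0$, yielding the required nefness.

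The main obstacle I anticipate is bookkeeping: verifying that each intermediate step of BM12---choice of very ample bundle, sufficiently-many-sections, exactness in $\cA_{\tilde C}$, and the final phase estimate---respects left-compact support. The tools for this are all in place, namely Lemma~\ref{lem:tensorlinebundle}, Lemma~\ref{lem:cpt-supp} and the Noetherian property of Proposition~\ref{prop:noetherian}. For the equality characterisation, $\ell_{\cE,\sigma}([C]) = 0$ forces every phase inequality in the estimate above to become an equality; tracking this through the Harder--Narasimhan filtration of $\Phi_\cE(\cO_C)$ and the Jordan--H\"older filtrations of the fibres shows that the Jordan--H\"older graded pieces of a general $\tilde\cE_c$ have numerical classes independent of $c$, which is precisely the definition of $S$-equivalence. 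The converse is immediate: common Jordan--H\"older graded pieces over a dense open subset of $\tilde C$ force $Z_\sigma(\Phi_\cE(\cO_C))$ to be a real multiple of $Z_\sigma(\vv)$, and hence $\ell_{\cE,\sigma}([C]) = 0$.
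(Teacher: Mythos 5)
Your route to nefness is the paper's: restrict to a proper curve, use the constant family of t-structures on $\cdlc(Y\times C)$ and the open heart property to get $\cE_C\in\cA_C$ and $\Phi_\cE(L)\in\cA$ for $L$ sufficiently positive, and conclude $\Im Z_\sigma(\Phi_\cE(L))\geq 0$ (this is Lemma~\ref{lem:openheartgeom} plus the first half of the paper's proof; your normalisation step and the verification that $[\Phi_{\tilde\cE}(\cO_{\tilde C})]$ differs from $[\Phi_\cE(\cO_C)]$ by a multiple of $\vv$ are correct and harmless). The genuine gap is that you never prove $\ell_{\cE,\sigma}$ is an element of $N^1(S)$ at all: formula \eqref{eqn:ellC} a priori only assigns a real number to each proper curve, and one must show this assignment is induced by a real Cartier divisor class, hence descends to $N_1(S)$ and is linear. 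This is Lemma~\ref{lem:welldefined}, and it is precisely where the hypothesis that $\sigma$ is \emph{numerical} enters: one writes $\Im\bigl(Z_\sigma(-)/(-Z_\sigma(\vv))\bigr)=\sum_i a_i\chi_Y(P_i,-)$ with $P_i\in\Dperf(Y)$ and $\chi_Y(P_i,\vv)=0$, observes via Proposition~\ref{prop:functorpair} that $\Psi_\cE(P_i^\vee)\in\Dperf(S)$ has rank zero, and uses Riemann--Roch to identify $\chi_Y(P_i,\Phi_\cE(\cO_C))$ with the degree of the determinant line bundle $\det\Psi_\cE(P_i^\vee)$ restricted to $C$. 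Without this, the assertion ``nef Cartier divisor class'' is unproved.

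The equality characterisation also needs repair. Knowing that the Jordan--H\"older graded pieces of a general $\tilde\cE_c$ have ``numerical classes independent of $c$'' is not $S$-equivalence, which requires the graded pieces themselves to be isomorphic. The paper's forward direction instead exhibits $\cE_c$ as a quotient in $\cA$ of the phase-one object $\Phi_\cE(L)$ (apply $\Phi_\cE$ to $0\to L(-c)\to L\to\kk(c)\to 0$), so the Jordan--H\"older factors of $\cE_c$ are drawn from the fixed finite set of factors of $\Phi_\cE(L)$, and then invokes \cite[Lemma~3.7]{BM12}. The converse is not ``immediate'': it relies on the nontrivial global filtration statement (the analogue of \cite[Lemma~3.9]{BM12}) producing a filtration of $\cE|_{Y\times C}$ with subquotients $p^*F_i\otimes q^*L_i$, whence $Z_\sigma(\Phi_\cE(\cO_C))=\sum_i\chi_C(L_i)Z_\sigma(F_i)$ is real. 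Finally, a small misattribution: nefness comes from condition (1) of Definition~\ref{def:stabilitycondition} (the central charge sends the heart to the semi-closed upper half-plane), not from the support property in condition (2), which plays no role in this argument.
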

 
 In fact, for a fixed family $\cE$, equation \eqref{eqn:ellC} defines a numerical Cartier divisor on $S$ for \emph{any} numerical stability condition for compact support on $Y$. The resulting map
 \[
 \ell_{\cE}\colon \Stab(\cdc(Y))\to N^1(S)
 \]
 obtained by sending a stability condition $\sigma^\prime$ to the divisor class $\ell_{\cE,\sigma^\prime}$ is the \emph{linearisation map} of the family $\cE$. 
 
We present the proof of Theorem~\ref{thm:linearisation} in two stages. We first prove that the linearisation map is well-defined, postponing until the next subsection the proof of the positivity statements.

 \begin{lemma} \label{lem:welldefined}
The assignment of \eqref{eqn:ellC} defines a numerical Cartier divisor class $\ell_{\cE,\sigma}\in N^1(S)$.
\end{lemma}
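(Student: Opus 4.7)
The plan is to use the Euler pairing between $\Knumperf(Y)$ and $\Knumc(Y)$ together with Grothendieck--Riemann--Roch on the proper curve $C$ to exhibit $\ell_{\cE,\sigma}\cdot C$ as intersection with an explicit real Cartier divisor class on $S$.

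First, I would verify that $[\Phi_\cE(\cO_C)]\in \Knumc(Y)$ is well-defined. Since $\cE$ has proper support over $S$ and $\cO_C\in \cdc(S)$ (as $C$ is proper), Proposition~\ref{prop:functorpair} gives $\Phi_\cE(\cO_C)\in \cdc(Y)$; the running hypothesis that $\Knumc(Y)$ has finite rank then makes the numerical class, and hence its value under $Z_\sigma$, well-defined. Next, because $\cE$ is also $S$-perfect by Definition~\ref{def:lc-flat}, Proposition~\ref{prop:functorpair} additionally provides the companion functor $\Psi_\cE\colon \Dperf(Y)\to \Dperf(S)$. The finite rank hypothesis makes the Euler pairing $\chi_Y$ perfect after tensoring with $\CC$, so I can write $Z_\sigma=\sum_{i=1}^r z_i\,\chi_Y(P_i,-)$ for some $P_i\in \Dperf(Y)$ and $z_i\in \CC$. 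A standard projection formula and adjunction computation then yields the identity
\[
\chi_Y(P_i,\Phi_\cE(\cO_C)) \;=\; \chi\big(C,\,\Psi_\cE(P_i^\vee)|_C\big).
\]

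The core step is to apply Riemann--Roch on the proper one-dimensional scheme $C$ to the perfect complex $G_i:=\Psi_\cE(P_i^\vee)$, giving $\chi(C,G_i|_C)=c_1(G_i)\cdot C+\rk(G_i)\,\chi(\cO_C)$. A fibrewise calculation at any closed point $s\in S$, using $\Phi_\cE(\cO_s)\cong \cE_s$ and $[\cE_s]=\vv$, identifies $\rk(G_i)$ with $\chi_Y(P_i,\vv)$, so that $\sum_i z_i\,\rk(G_i)=Z_\sigma(\vv)$. Summing the contributions produces
\[
Z_\sigma(\Phi_\cE(\cO_C)) \;=\; D_\sigma\cdot C \;+\; Z_\sigma(\vv)\,\chi(\cO_C), \qquad D_\sigma:=\sum_i z_i\,c_1(G_i)\in N^1(S)\otimes\CC.
\]
Dividing by $-Z_\sigma(\vv)$ and taking imaginary parts, the real term $\chi(\cO_C)$ drops out, leaving $\ell_{\cE,\sigma}\cdot C=-\Im(D_\sigma/Z_\sigma(\vv))\cdot C$. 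This exhibits $\ell_{\cE,\sigma}$ as the numerical class of the real Cartier divisor $-\Im(D_\sigma/Z_\sigma(\vv))\in N^1(S)$; a brief check that a different choice of decomposition of $Z_\sigma$ alters $D_\sigma$ only by a numerically trivial class shows that this class is independent of the choice made.

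The main technical obstacle will be the Riemann--Roch step in the required generality, namely for an arbitrary perfect complex on a possibly reducible or non-reduced proper curve inside a singular base scheme. However, by additivity on exact triangles together with the rank filtration, this reduces to the classical Riemann--Roch formula for line bundles on proper one-dimensional schemes, and the resulting intersection formula is automatically linear in the cycle class and thus descends to $N_1(S)$.
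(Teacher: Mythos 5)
Your argument is correct and follows essentially the same route as the paper: decompose the central charge via the Euler pairing with perfect complexes, transfer to $S$ through $\Psi_\cE(P_i^\vee)$ using the projection-formula identity of Lemma~\ref{lem:EulerCharComparison}, and apply Riemann--Roch on $C$; the paper merely streamlines the bookkeeping by decomposing the real functional $\Im\bigl(Z_\sigma(-)/(-Z_\sigma(\vv))\bigr)$ directly, so that each $P_i$ satisfies $\chi_Y(P_i,\vv)=0$, the complexes $\Psi_\cE(P_i^\vee)$ have rank zero, and the $\chi(\cO_C)$ term never appears. The one point you should make explicit is that ``$c_1(G_i)$'' as a genuine Cartier divisor class on the possibly singular scheme $S$ (rather than just a degree on each curve $C$) comes from the Knudsen--Mumford determinant line bundle of the perfect complex $\Psi_\cE(P_i^\vee)$, together with its compatibility with restriction to $C$, which is exactly the input the paper invokes.
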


\begin{lemma}
\label{lem:EulerCharComparison}
 Let $S, Y$ be schemes of finite type. Let $\cE \in D_{lc}(Y \times S)$ be $S$-perfect and let $F \in \Dperf(Y)$. For any proper subscheme $i\colon T\hookrightarrow S$, we have
\begin{equation}
\label{eqn:Eulercomparison}
\chi_Y\big(F,p_*(\cE\otimes q^*i_*\cO_T)\big) = \chi_T\big(i^*q_*(\cE\otimes p^*F^\vee)\big).
\end{equation}
\end{lemma}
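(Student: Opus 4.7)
The plan is to show that both sides of \eqref{eqn:Eulercomparison} compute the Euler characteristic of the single object $\mathbf{R}\Gamma_{Y \times S}(\cE \otimes p^*F^\vee \otimes q^*i_*\cO_T)$, by pushing forward in two different orders via adjunction and the projection formula. The starting observation is that, for a perfect complex $F$ on $Y$, one has $\Hom_Y(F, G) \cong \mathbf{R}\Gamma_Y(F^\vee \otimes G)$, and hence $\chi_Y(F, G) = \chi(\mathbf{R}\Gamma_Y(F^\vee \otimes G))$ whenever the latter is a bounded complex of finite-dimensional vector spaces.

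Applied to $G = p_*(\cE \otimes q^*i_*\cO_T)$, the projection formula $F^\vee \otimes p_*(-) \cong p_*(p^*F^\vee \otimes -)$ together with the identification $\mathbf{R}\Gamma_Y \circ p_* = \mathbf{R}\Gamma_{Y\times S}$ rewrites the LHS as $\chi\bigl(\mathbf{R}\Gamma_{Y\times S}(\cE \otimes p^*F^\vee \otimes q^*i_*\cO_T)\bigr)$. Now I would push forward in the opposite order: using $\mathbf{R}\Gamma_{Y\times S} = \mathbf{R}\Gamma_S \circ q_*$ and the projection formula $q_*(A \otimes q^*B) \cong q_*A \otimes B$, this becomes $\chi\bigl(\mathbf{R}\Gamma_S(q_*(\cE \otimes p^*F^\vee) \otimes i_*\cO_T)\bigr)$. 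Finally, the projection formula for the closed immersion $i$ gives $H \otimes i_*\cO_T \cong i_*(i^*H)$, and since $\mathbf{R}\Gamma_S \circ i_* = \mathbf{R}\Gamma_T$ this produces $\chi\bigl(\mathbf{R}\Gamma_T(i^*q_*(\cE \otimes p^*F^\vee))\bigr) = \chi_T\bigl(i^*q_*(\cE \otimes p^*F^\vee)\bigr)$, which is the RHS.

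The content of the argument is therefore just a chain of projection formulas, and the main obstacle is confirming that every intermediate object lives in the derived category for which the notation makes sense. For the LHS, one needs $p_*(\cE \otimes q^*i_*\cO_T) \in D_c(Y)$: the support of $\cE \otimes q^*i_*\cO_T$ lies inside $\supp(\cE) \cap (Y \times T)$, which is proper since $\supp(\cE)$ is proper over $S$ and $T$ is proper, so Lemma \ref{lem:pushforwardcompact}\two\ applies. For the RHS, one needs $i^*q_*(\cE \otimes p^*F^\vee) \in \Dperf(T)$: here $\cE$ is $S$-perfect and $p^*F^\vee$ is perfect, so $q_*(\cE \otimes p^*F^\vee) \in \Dperf(S)$ by the second half of Proposition \ref{prop:functorpair}, and $i^*$ preserves perfection. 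Once these observations are in place, the chain of identifications above is entirely formal.
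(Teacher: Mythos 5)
Your proof is correct and follows essentially the same route as the paper's: both sides are reduced, via the projection formula for $p$, $q$ and $i$ together with the compatibility of derived global sections with pushforward, to the Euler characteristic of $\cE \otimes p^*F^\vee \otimes q^*i_*\cO_T$ computed on $Y \times S$. Your additional checks that the intermediate objects lie in $D_c(Y)$ and $\Dperf(T)$ respectively are correct and slightly more explicit than the paper, which leaves them implicit.
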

\begin{proof}
 Use the projection formula repeatedly to obtain 
 \begin{align*}
\chi_{Y}(F,p_*(\cE\otimes q^*i_*\cO_T) & = \chi_Y(F^\vee\otimes p_*(\cE\otimes q^*i_*\cO_T)) \\
 & = \chi_Y(p_*(p^*F^\vee\otimes \cE\otimes q^*i_*\cO_T)) &   \\
  & = \chi_{S} (q_*(p^*F^\vee\otimes \cE\otimes q^*i_*\cO_T)) &  \\
& = \chi_{S} (q_*(\cE\otimes p^*F^\vee)\otimes i_*\cO_T) & \\
& = \chi_{S} (i_*(i^*q_*(\cE\otimes p^*F^\vee))) & \\
   & = \chi_{T} (i^*q_*(\cE\otimes p^*F^\vee)) & 
 \end{align*}
 as required.
 \end{proof}

 \begin{proof}[Proof of Lemma~\ref{lem:welldefined}]
 Note first that the integral functor $\Phi_{\cE}\colon D_c(S)\to D_c(Y)$ is well-defined by Proposition~\ref{prop:functorpair}. Since the stability condition is assumed to be numerical, we can choose $P_i\in \Dperf(Y)$ and $a_i\in \RR$ for $1\leq i\leq m$ such that 
\[
\sum_{i=1}^m a_i \chi_Y(P_i,-) = \Im \frac{Z_\sigma(-)}{-Z_\sigma(\vv)}\in \Hom(\Knumc(Y),\RR) \quad \text{and} \quad \chi_Y(P_i, \vv) = 0 \ \text{for all $i$}.
\]
It is sufficient to show that for each $i$, there exists a Cartier divisor class $L_i$ on $S$, such that
 \begin{equation}
 \label{eqn:numericalequivalence}
 \chi_{Y}(P_i,\Phi_{\cE}(\cO_C)) =  L_i.C
 \end{equation}
 for all projective curves $C \subseteq S$.
 Proposition~\ref{prop:functorpair} gives $\Psi_{\cE}(P_i^\vee):=q_*(\cE \otimes p^*P_i^\vee)\in \Dperf(S)$. We claim that the object $\Psi_{\cE}(P_i^\vee)$ has rank zero. Indeed, for any closed point $s\in S$, apply Lemma~\ref{lem:EulerCharComparison} to the closed immersion $i\colon \spec \kk(s)\hookrightarrow S$ to obtain  
\[
\rk\big(\Psi_{\cE}(P_i^\vee)\big) = \chi_{\spec\kk(s)}(i^*q_*(\cE \otimes p^*P_i^\vee)) = \chi_Y(P_i,p_*(\cE\otimes q^*i_*\kk(s))) =  \chi_Y(P_i,\vv)
= 0.
\]
 Now apply Lemma~\ref{lem:EulerCharComparison} to the closed immersion $i\colon C\hookrightarrow S$ and deduce from Riemann--Roch that
\[
 \chi_{Y}(P_i,\Phi_{\cE}(\cO_C)) =\chi_C(C,i^*\Psi_{\cE}(P_i^\vee)) = 0\cdot(1-g(C)) + \deg \Psi_{\cE}(P_i^\vee)\vert_C = \deg \Psi_{\cE}(P_i^\vee)\vert_C.
\]
Since $\Psi_{\cE}(P_i^\vee)$ is perfect, it has a determinant line bundle $L_i$ by \cite{KnudsenMumford}. By the compatibility of the determinant construction
with restriction to $C$ we conclude $L_i.C = \deg \Psi_{\cE}(P_i^\vee)\vert_C$ and thereby also equation \eqref{eqn:numericalequivalence}.
\end{proof}

 \subsection{Positivity}
 We now establish the positivity statements from Theorem~\ref{thm:linearisation}, and for this we follow closely the approach of \cite[Section~3]{BM12}.
 
 We continue to work under the assumptions of Theorem~\ref{thm:linearisation}. In particular, $\cdc(Y)$ carries a Noetherian bounded t-structure with heart $\cA$. For any proper curve $C\subseteq S$, we obtain a sheaf of Noetherian t-structures on $\cdlc(Y\times C)$ over $C$ by Theorem~\ref{thm:t-str-any}. Write $\cA_C$ for the heart of this t-structure.

\begin{lemma}
\label{lem:openheartgeom}
 Given the assumptions of Theorem~\ref{thm:linearisation}, let $C\subseteq S$ be a proper curve and let $\cE_C$ denote the derived restriction of $\cE$ to $Y\times C$. Then $\cE_C\in \cA_C$. Moreover, for any line bundle $L$ of sufficiently high degree on $C$, we have $\Phi_{\cE}(L)\in \cA$.
\end{lemma}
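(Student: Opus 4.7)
The plan is to first check that $\cE_C$ has left-compact support, then deduce $\cE_C \in \cA_C$ from the open heart property (Proposition~\ref{prop:open-heart}), and finally extract the statement about $\Phi_\cE(L)$ from the projective version of our sheaf of t-structures (Theorem~\ref{thm:t-str-any}\three). Writing $i\colon C\hookrightarrow S$ for the closed immersion, we have $\cE_C = (\id_Y \times i)^*\cE$; since $\cE$ has proper support over $S$ and $C$ is itself proper over $\kk$, base change shows that $\supp(\cE_C)$ is proper. Because $C$ is proper, Lemma~\ref{lem:cpt-supp-new} identifies $\cdc(Y\times C) = \cdlc(Y\times C)$, giving $\cE_C \in \cdlc(Y\times C)$.

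For the main step, at every closed point $c \in C$ the flat-family hypothesis gives $\cE_c \in \cA$. Wherever $\{c\}$ is a local complete intersection in $C$---in particular at every smooth closed point of the curve $C$---Proposition~\ref{prop:open-heart} produces an open neighbourhood $V_c \subseteq C$ with $\cE_C|_{Y\times V_c} \in \cA_{V_c}$. By quasi-compactness of $C$ together with the sheaf property of the t-structure from Theorem~\ref{thm:t-str-any}, finitely many such $V_c$ assemble to give $\cE_C \in \cA_C$, at least when every closed point of $C$ is lci. The main obstacle is thus handling any non-lci singular points of $C$. I would circumvent this by normalising via a finite birational morphism $\pi\colon \tilde C \to C$ with $\tilde C$ smooth, running the open-heart argument on $\tilde C$, and descending to $C$; an alternative is to exploit the $S$-perfectness of $\cE$ to control the cohomology objects of $\cE_C$ directly at the finitely many singular points.

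For the second assertion, since $C$ is projective, Theorem~\ref{thm:t-str-any}\three\ applies: choosing any ample line bundle $L_0$ on $C$, the conclusion $\cE_C \in \cA_C$ yields $p_*(\cE_C \otimes q^*L_0^n) \in \cA$ for all $n \gg 0$. A base-change computation along $i$ identifies this pushforward with $\Phi_\cE(L_0^n)$, so $\Phi_\cE(L_0^n) \in \cA$ for $n \gg 0$. To pass from powers of $L_0$ to arbitrary line bundles of sufficiently high degree, Lemma~\ref{lem:tensorlinebundle} ensures that tensoring by $q^*L'$ is t-exact on $\cA_C$, so applying the same theorem to $\cE_C \otimes q^*L' \in \cA_C$ yields $\Phi_\cE(L'\otimes L_0^n) \in \cA$ for $n \gg 0$; a uniformity argument (using that $\Pic^0(C)$ is a bounded family) then gives a degree threshold independent of $L'$, completing the proof.
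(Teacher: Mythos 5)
Your skeleton is the one the paper follows: show $\cE_C$ has left-compact support via Lemma~\ref{lem:lcvsproper}, apply the open heart property at closed points and glue using the sheaf property, then deduce the claim about $\Phi_\cE(L)$ from the pushforward characterisation of the t-structure. The paper delegates both main steps to \cite[Lemmas~3.5 and~3.6]{BM12}, and the two places where you deviate are exactly where your write-up has genuine gaps. First, you rightly note that Proposition~\ref{prop:open-heart} requires $T\subset S$ to be a local complete intersection, and a closed point of $C$ is lci in $C$ only where $C$ is regular. But your normalise-and-descend fix does not work as stated: if $\pi\colon \tilde C\to C$ is the normalisation, knowing that $(\id_Y\times\pi)^*\cE_C$ lies in the heart over $\tilde C$ does not descend to $\cE_C\in\cA_C$. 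The only available comparison tool, Lemma~\ref{lem:t-str-closed}, tests membership via $(\id_Y\times\pi)_*$, and by the projection formula $(\id_Y\times\pi)_*(\id_Y\times\pi)^*\cE_C\cong\cE_C\otimes q^*\pi_*\cO_{\tilde C}$, which differs from $\cE_C$ precisely at the singular points you are trying to treat; the ``alternative via $S$-perfectness'' is a gesture, not an argument. (In the application to Theorem~\ref{thm:linearisation} one can instead reduce the positivity statement to smooth curves mapping to $S$, because $[\pi_*\cO_{\tilde C}]-[\cO_C]$ is a torsion class whose image under $\Phi_\cE$ is a multiple of $\vv$ and hence does not change $\Im\bigl(Z_\sigma(-)/(-Z_\sigma(\vv))\bigr)$; but that modifies the theorem's proof rather than proving the lemma as stated.)

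Second, your use of Theorem~\ref{thm:t-str-any}\three{} only gives $\Phi_\cE(L_0^n)\in\cA$ for powers of one fixed ample, and the passage to arbitrary line bundles of large degree is where the content of the second assertion lies. ``A uniformity argument using that $\Pic^0(C)$ is a bounded family'' is not a proof: you would need openness (or at least constructibility) of the condition $\Phi_\cE(L)\in\cA$ as $L$ varies in a bounded family, which is not established anywhere in this framework. The argument of \cite[Lemma~3.6]{BM12}, which the paper invokes verbatim, is more elementary: once $\deg L$ exceeds $\deg L_0^{n_0}$ plus the arithmetic genus of $C$, one can write $L\cong L_0^{n_0}(D)$ for an effective divisor $D$, and applying $\Phi_{\cE_C}$ to the short exact sequence $0\to L_0^{n_0}\to L\to L\otimes\cO_D\to 0$ exhibits $\Phi_\cE(L)$ as an extension of $\Phi_\cE(L\otimes\cO_D)$ --- a successive extension of fibres $\cE_c\in\cA$ --- by $\Phi_\cE(L_0^{n_0})\in\cA$, whence $\Phi_\cE(L)\in\cA$ by extension-closure of the heart. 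I recommend replacing both of your sketches with these arguments.
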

\begin{proof}
The support of $\cE_C$ is proper over $C$, so the object $\cE_C\in D(Y\times C)$ has left-compact support by Lemma~\ref{lem:lcvsproper} because $C$ is proper. In particular,  the object $\cE_C\in \cdlc(Y\times C)$ satisfies the open heart property from Proposition~\ref{prop:open-heart}. The first statement now follows as in \cite[Lemma~3.5]{BM12}. The projection formula and flat base change give that $\Phi_{\cE}((i_C)_*F)=\Phi_{\cE_C}(F)$ for any $F\in D(C)$, where $i_C: C \to S$ is the inclusion.
The proof of \cite[Lemma~3.6]{BM12} applies verbatim to give the second statement.  
\end{proof}

\begin{proof}[Proof of Theorem~\ref{thm:linearisation}]
 For any $\sigma\in \Stab(\cdc(Y))$, we may assume that $Z_\sigma(\vv)=-1$ using the $\CC$-action on $\Stab(\cdc(Y))$.  
 
 We first prove that the numerical divisor class $\ell_{\cE,\sigma}\in N^1(S)$ is nef. Let $C$ be a proper curve in $S$. As in \cite[Proposition~3.2]{BM12}, it is straightforward to show that the value of $\ell_{\cE,\sigma}([C])$ from \eqref{eqn:ellC} is unchanged if we replace $\cO_C$ by any line bundle $L$ on $C$. In particular, if $L$ is of sufficiently high degree on $C$, Lemma~\ref{lem:openheartgeom} gives $\Phi_{\cE}(L)\in \cA$ and hence   
 \[
 \ell_{\cE,\sigma}\cdot C = \Im\big(Z_{\sigma}(\Phi_{\cE}(\cO_C))\big) = \Im\big(Z_{\sigma}(\Phi_{\cE}(L)))\big) \geq 0
 \]
 as required, because $Z_\sigma$ sends objects of $\cA$ to the semi-closed upper half plane. 
 
 To prove the second statement, suppose first that $\ell_{\cE,\sigma}\cdot C = 0$. For any smooth point $c\in C$ and for any $L\in \Pic(C)$ of sufficiently high degree, applying $\Phi_{\cE}$ to the short exact sequence 
 \[
 0\lra L(-c)\lra L\lra \kk(c)\lra 0
 \]
 and invoking Lemma~\ref{lem:openheartgeom} gives a short exact sequence
  \[
 0\lra \Phi_{\cE}(L(-c))\lra \Phi_{\cE}(L)\lra \cE_c\lra 0.
 \]
 of objects in $\cA$. We have $0=\ell_{\cE,\sigma}\cdot C = \Im Z_\sigma(\Phi_{\cE}(L))$ and $Z_\sigma(\vv)=-1$, so both $\Phi_{\cE}(L)$ and $\cE_c$ have phase 1. Since $\cE_c$ is a quotient of $\Phi_{\cE}(L)$ in $\cA$, each Jordan--H\"{o}lder factor of $\cE_c$ is a Jordan--H\"{o}lder factor of $\Phi_{\cE}(L)$. The latter factors don't depend on the choice of the smooth point $c\in C$. Since $\kk$ is an infinite field, \cite[Lemma~3.7]{BM12} implies that $\cE_c$ is $S$-equivalent to $\cE_{c'}$ for any $c, c'\in C$. For the other direction, assume $\cE_c$ is $S$-equivalent to $\cE_{c'}$ for any two general closed points $c, c'\in C$. The analogue of \cite[Lemma~3.9]{BM12} gives a filtration of $\cE|_{Y\times C}$ of length $n$, say, whose successive quotients are of the form $p^*F_i\otimes q^*L_i$, where each $L_i\in \Pic(C)$ and each $F_i\in \cA_Y$ has phase 1. The projection formula and flat base change give
 \[
 Z_\sigma(\Phi_{\cE}(\cO_C)) = \sum_{i=1}^n Z_\sigma(F_i\otimes p_*q^*L_i) =  \sum_{i=1}^n Z_\sigma(F_i\otimes \mathbf{R}\Gamma (L_i)) =   \sum_{i=1}^n \chi_C(L_i) Z_\sigma(F_i),
 \]
which lies on the real axis. Therefore $\ell_{\cE,\sigma}([C])= 0$ as required. 
\end{proof}

\begin{proof}[Proof of Theorem~\ref{thm:positivity}]
This is immediate from Lemma~\ref{lem:finiterank} and Theorem~\ref{thm:linearisation}.
\end{proof}

\subsection{A geometric condition to ensure proper support} 

The goal of this subsection is to show that one of the assumptions of Theorem \ref{thm:linearisation}, namely that the universal family has proper support over $S$, holds for moduli spaces of simple objects when $Y$ is semi-projective.

We continue to assume that all our schemes are separated and of finite type over $\kk$.

\begin{proposition} \label{prop:simpleimpliesproper}
Assume that $Y$ admits a proper morphism $\tau \colon Y \to X$ to an affine scheme $X$. Choose a nonzero class $\vv \in \Knumc(Y)$, and let $\cE$ be a flat family of class $\vv$ over $S$ with respect to some bounded t-structure on $\cdc(Y)$.  
Assume that for all closed points $s \in S$, the object $\cE_s$ satisfies
$\Hom(\cE_s, \cE_s) = \kk$.
Then $\cE$ has proper support over $S$.
\end{proposition}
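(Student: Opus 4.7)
The plan is to use the properness of $\tau$ and the simplicity of each $\cE_s$ to force every $\supp(\cE_s)$ to map to a single closed point of $X$, and then to assemble those points into a classifying morphism $S \to X$ whose graph governs the image of $\supp \cE$ under $\tau \times \id_S$.

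First I would show that $\tau(\supp(\cE_s))$ is a single closed point of $X$ for every closed point $s \in S$. Since $\cE_s \in \cA \subset \cdc(Y)$ is nonzero (because $[\cE_s] = \vv \neq 0$) with support proper over $\kk$, and $\tau$ is proper, $\tau(\supp(\cE_s))$ is a closed subset of $X$ that is itself proper over $\kk$; because $X$ is affine, it must be a finite set of closed points $\{x_1, \dots, x_n\}$. If $n \geq 2$, choose disjoint open neighborhoods $U_1 \ni x_1$ and $U_2 \supset \{x_2, \dots, x_n\}$ in $X$. Their preimages $V_i := \tau^{-1}(U_i)$ are then disjoint opens of $Y$ whose union contains $\supp(\cE_s)$, and the open-closed decomposition of $V_1 \sqcup V_2$ yields a nontrivial direct sum decomposition of $\cE_s$ in $D(Y)$ with both summands nonzero. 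The projections onto the summands give nontrivial idempotents in $\End(\cE_s)$, contradicting $\End(\cE_s) = \kk$; hence $\tau(\supp(\cE_s)) = \{x(s)\}$.

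Next I would set $W := (\tau \times \id_S)(\supp \cE) \subset X \times S$. This is closed because $\tau \times \id_S$ is proper, and each closed-point fiber satisfies $W_s = \{x(s)\}$. Since $\supp \cE \to W$ is the restriction of the proper morphism $\tau \times \id_S$ to a closed subscheme it is automatically proper, so it suffices to show $W \to S$ is proper. The strategy is to build a morphism $\phi \colon S \to X$ with $\phi(s) = x(s)$, so that $W_{\mathrm{red}}$ coincides with the graph $\Gamma_\phi$; then $\Gamma_\phi \to S$ is an isomorphism and hence proper. To construct $\phi$ I would work locally with $S = \Spec B$ affine: pullbacks along $Y \times S \to Y \to X$ yield a ring homomorphism $A \to \End_{D(Y \times S)}(\cE)$ where $A := \cO(X)$, and the flatness of $\cE$ combined with $\End(\cE_s) = \kk$ for every closed $s$ should force $\End(\cE) = B \cdot \id_{\cE}$; the induced map $A \to B$ then corresponds to the desired $\phi$.

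The main obstacle is this last claim, because the standard cohomology-and-base-change argument for $q_* \shom(\cE, \cE)$ requires proper support over $S$ -- precisely what we are trying to establish. I would sidestep this by instead studying the coherent $\cO_{X \times S}$-module $\mathcal{A} := (\tau \times \id_S)_* \cO_{\supp \cE}$, whose fiber at each closed $s$ is the nonzero finite-length skyscraper $\tau_* \cO_{\supp(\cE_s)}$ supported at $x(s)$. Combining Zariski's Main Theorem applied to the quasi-finite separated morphism $\supp(\mathcal{A}) \to S$ with a valuative-criterion argument that exploits the flatness of $\cE$ under DVR base changes of $S$ should show that $\supp(\mathcal{A}) \to S$ is finite, equivalently that the classifying morphism $\phi$ exists and $W_{\mathrm{red}} = \Gamma_\phi$. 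The composition $\supp \cE \to W \to S$ of proper morphisms then gives the result.
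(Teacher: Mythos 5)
Your first step (each $\tau(\supp(\cE_s))$ is a single closed point) and your reduction to the properness of $W := (\tau\times\id_S)(\supp\cE) \to S$ both match the paper's argument (Lemma~\ref{lem:one-point} and the final paragraph of its proof); the paper rules out a disconnected image via a non-constant function rather than idempotents, but that is cosmetic. The genuine gap is the step you label ``should show'': bijectivity of $W \to S$ on closed points is far from sufficient for properness, and neither Zariski's Main Theorem nor an unspecified valuative-criterion argument closes the distance. ZMT only factors the quasi-finite separated map $W \to S$ as an open immersion into a scheme finite over $S$; the entire content of the proposition is to show that $W$ is in fact closed there. The model degenerate behaviour is $W = V(xy-1) \cup \{(0,0)\} \subset \AA^1 \times \AA^1 \to \AA^1$: closed, bijective on closed points, yet not proper, because the dominating component escapes to infinity over $s_0 = 0$ while an isolated point of $W$ sits over $s_0$ to preserve bijectivity. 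Translated back, this corresponds to $\cE$ splitting as $\cE_0 \oplus \cE'$ with $\supp \cE_0 \subseteq Y \times \{s_0\}$ and $\supp \cE'$ disjoint from $Y \times \{s_0\}$, a configuration that flatness and fibrewise simplicity do not by themselves forbid.

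This is exactly where the hypothesis $\vv \neq 0$ must enter, and your proposal never uses it. The paper's Lemma~\ref{lem:Wirred} reduces to $S$ a smooth affine curve, observes that reducibility of $W$ forces such an isolated point and hence such a splitting, and then shows---by filtering the cohomology sheaves of $\cE_0$ by kernels of powers of a defining equation of $s_0$ and using the two-term Koszul resolution of $(i_{s_0})_*\cF$---that $[i_{s_0}^*\cE_0] = 0$ in $\Knumc(Y)$, contradicting $[i_{s_0}^*\cE] = [i_{s_0}^*\cE_0] = \vv \neq 0$. Once $W$ is known to be irreducible (after proper surjective base change so that $S$ is normal), bijectivity together with characteristic zero and ZMT does give that $W_{\mathrm{red}} \to S$ is an isomorphism, and universal closedness follows. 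So your outline is recoverable, but only after you supply the $K$-theoretic argument that kills isolated components of $W$; your appeal to ``flatness under DVR base changes'' does not identify any invariant that would detect this defect, and the classifying-morphism formulation adds nothing once finiteness of $W \to S$ is the real issue.
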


We begin with two Lemmas, for which we make the same assumptions as in Proposition \ref{prop:simpleimpliesproper}.
\begin{lemma}
\label{lem:one-point}
Let $s\in S$ be any closed point. Then $\supp(\cE_s)$ is connected and $\tau(\supp(\cE_s))$ is a single closed point in $X_s:=X\times \{s\}$.
\end{lemma}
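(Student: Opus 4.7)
The plan is to use the simplicity of $\cE_s$ to force the natural action of the ring $R:=H^0(X,\cO_X)$ on $\cE_s$ to factor through evaluation at a single closed point of $X$, and then to rule out a disconnected support via a Mayer--Vietoris decomposition that would contradict simplicity. For the first ingredient, every $f\in R$ induces an endomorphism $m_f\colon\cE_s\to\cE_s$ by multiplication with $\tau^*f\in H^0(Y,\cO_Y)$. Simplicity forces $m_f=\chi(f)\cdot\id_{\cE_s}$ for a unique scalar $\chi(f)\in\kk$, and $\chi\colon R\to\kk$ is a $\kk$-algebra homomorphism. Its kernel is a maximal ideal $\mathfrak m_x\subset R$ corresponding to a unique closed point $x\in X$ (equivalently, to $(x,s)\in X\times\{s\}=X_s$), and by construction $m_f=0$ for every $f\in \mathfrak m_x$.

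To identify $\tau(\supp(\cE_s))$ with $\{x\}$, I would show $\supp(\cE_s)\subseteq\tau^{-1}(x)$. On the basic open $U_f:=X\setminus V(f)$ for $f\in\mathfrak m_x$, the function $f$ is invertible, so $\tau^*f$ acts as an isomorphism on $\cE_s|_{\tau^{-1}(U_f)}$; since it simultaneously acts as the zero morphism $m_f|_{\tau^{-1}(U_f)}$, we conclude $\cE_s|_{\tau^{-1}(U_f)}=0$, whence $\supp(\cE_s)\cap\tau^{-1}(U_f)=\varnothing$. Intersecting over $f\in\mathfrak m_x$ then gives $\supp(\cE_s)\subseteq\tau^{-1}(V(\mathfrak m_x))=\tau^{-1}(x)$, and the hypothesis $\vv\ne 0$ precludes $\cE_s=0$, so $\tau(\supp(\cE_s))=\{x\}$, settling the second claim of the lemma.

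For connectedness, suppose for contradiction $\supp(\cE_s)=Z_1\sqcup Z_2$ with $Z_1,Z_2$ disjoint nonempty closed subsets of $Y$. Put $V_i:=Y\setminus Z_j$ for $j\ne i$; then $V_1\cup V_2=Y$ while $V_1\cap V_2=Y\setminus(Z_1\cup Z_2)$ is disjoint from $\supp(\cE_s)$, so $\cE_s|_{V_1\cap V_2}=0$. The Mayer--Vietoris distinguished triangle
\[
\cE_s\lra (j_{V_1})_*(\cE_s|_{V_1})\oplus (j_{V_2})_*(\cE_s|_{V_2})\lra (j_{V_1\cap V_2})_*(\cE_s|_{V_1\cap V_2})\lra \cE_s[1]
\]
has vanishing third term, producing an isomorphism $\cE_s\cong \cE_1\oplus \cE_2$ in $\DD(\Qcoh(Y))$, with $\cE_i:=(j_{V_i})_*(\cE_s|_{V_i})$ supported on $Z_i$ and hence nonzero. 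Thickness of $\cdc(Y)$ inside $\DD(\Qcoh(Y))$ (any summand of a bounded complex with coherent cohomology and proper support is itself such) yields $\cE_1,\cE_2\in\cdc(Y)$, so $\dim_\kk\End(\cE_s)\ge 2$, contradicting simplicity. The main technical point to verify will be that the Mayer--Vietoris summands genuinely land in $\cdc(Y)$ rather than only in the unbounded quasi-coherent derived category, which is the role of the thickness argument.
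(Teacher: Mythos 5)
Your proof is correct and follows essentially the same route as the paper's: the paper also deduces connectedness from the fact that a disconnected support would split $\cE_s$ into two summands with disjoint supports, and deduces the single-point claim from the action of functions on the affine base $X$ by multiplication on $\cE_s$, both contradicting $\End(\cE_s)=\kk$. You merely supply the details the paper leaves implicit (the Mayer--Vietoris justification of the splitting, and the character $\chi\colon R\to\kk$ identifying the point), and these details are sound.
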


\begin{proof}
If $\supp(\cE_s)$ is disconnected, we can write $\cE_s = \cE_s' \oplus \cE_s''$ where the summands have disjoint support; this contradicts the assumption that $\cE_s$ has only $\kk$ as endomorphism. Similarly, assume that $\tau(\supp(\cE_s))$ contains more than one point. Since
$X$ is affine, there exists a function on $X$ whose pullback to $Y$ is non-constant on the support of $\cE_s$. Multiplication with this function would give a non-scalar endomorphism of $\cE_s$
which is absurd.
\end{proof}

 Let $\tau_S:=\tau\times \id_S\colon Y \times S \to X \times S$,
and consider $W := \tau_S(\supp(\cE))$ as a topological subspace of $X \times S$. 
Note that by Lemma~\ref{lem:easycompact}\two, the formation of
$W$ commutes with base change. The induced map of topological spaces $q \colon W \to S$ is bijective on closed points by Lemma \ref{lem:one-point}. 

\begin{lemma} \label{lem:Wirred}
Assume additionally that $S$ is irreducible. 
Then $W$ is irreducible.  
\end{lemma}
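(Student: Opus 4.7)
The plan is to realise $W$ as the graph of a morphism $\sigma\colon S\to X$, which forces $W\cong S$ and hence $W$ is irreducible.

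First I would observe that Lemma \ref{lem:easycompact}\two{} applied to the closed immersion $i_{Y\times\{s\}}\colon Y_s\hookrightarrow Y\times S$ gives the set-theoretic identity $\supp(\cE_s)=\supp(\cE)\cap Y_s$, and combined with Lemma \ref{lem:one-point} this shows
\[
W\cap(X\times\{s\})=\tau_S\big(\supp(\cE)\cap Y_s\big)=\tau(\supp(\cE_s))\times\{s\}
\]
is a single closed point $(x(s),s)$. Thus the set-theoretic fibre of $q\colon W\to S$ over every closed $s$ is a single point, which is strictly stronger than what was already recorded before Lemma \ref{lem:Wirred}.

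Next, to construct $\sigma$, set $A=\Gamma(X,\cO_X)$; since $X$ is affine, a morphism $S\to X$ is the same as a $\kk$-algebra homomorphism $A\to\Gamma(S,\cO_S)$. For each $f\in A$, multiplication by $p^*\tau^*f$ defines an endomorphism $m_f\in\End_{D(Y\times S)}(\cE)$. Because $\cE$ is $S$-perfect and the fibre dimension $\dim_{\kk(s)}\Hom(\cE_s,\cE_s)=1$ is constant, cohomology and base change applied to the complex $\shom(\cE,\cE)$ shows that $q_*\shom(\cE,\cE)$ is a line bundle on $S$ whose formation commutes with arbitrary base change; the canonical section $\id_{\cE}$ trivialises it, so that $\End_{D(Y\times S)}(\cE)\cong\Gamma(S,\cO_S)$ canonically. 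The composition $A\to\End_{D(Y\times S)}(\cE)\cong\Gamma(S,\cO_S)$ is then a ring homomorphism defining a morphism $\sigma\colon S\to X$. To identify $W$ with $\Gamma(\sigma)$, note that for each closed $s\in S$ the composition $A\to\End(\cE)\to\End(\cE_s)=\kk$ sends $f$ to $f(x(s))$, since $\tau^*f$ is constant with value $f(x(s))$ on $\supp(\cE_s)$ by Lemma \ref{lem:one-point}. Hence $\sigma(s)=x(s)$, so the graph $\Gamma(\sigma)\subseteq X\times S$ (closed because $X$ is separated) agrees with $W$ on all closed points; since closed points are dense in both closed subsets, we conclude $W=\Gamma(\sigma)\cong S$ is irreducible.

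The main obstacle is the cohomology-and-base-change step in Step 2: one needs the version for $S$-perfect complexes rather than for a single coherent sheaf, and some care is required because the higher $\Ext$ groups $\Ext^i(\cE_s,\cE_s)$ may vary in dimension with $s$. However, only the rank of the zeroth piece enters the argument, and this is pinned down by the constant-rank hypothesis $\dim\Hom(\cE_s,\cE_s)=1$, so the standard machinery applies.
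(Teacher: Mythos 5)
Your overall strategy -- realising $W$ as the graph of a morphism $\sigma\colon S\to X$ -- is genuinely different from the paper's, which instead assumes $W$ reducible, reduces to the case of a smooth affine curve $S$, and derives a contradiction from the observation that an irreducible component of $W$ lying over a single point $s_0$ would force $[\cE_{s_0}]=0$ in $\Knumc(Y)$. Your first paragraph is fine (and essentially already recorded in the paper just before the lemma). The problem is in the second step.

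The gap is the claim that cohomology and base change applies to $q_*\shom(\cE,\cE)$, yielding $\End_{D(Y\times S)}(\cE)\cong\Gamma(S,\cO_S)$ with fibres $\Hom(\cE_s,\cE_s)$. Every version of that machinery requires the morphism to be proper on the support of the complex: one needs $q|_{\supp\shom(\cE,\cE)}$, and hence in effect $q|_{\supp(\cE)}$, to be proper over $S$ in order for $q_*\shom(\cE,\cE)$ to be a perfect (or even coherent) complex whose derived fibres compute $\mathbf{R}\Hom(\cE_s,\cE_s)$. But the properness of $\supp(\cE)$ over $S$ is exactly the conclusion of Proposition~\ref{prop:simpleimpliesproper}, of which this lemma is an intermediate step; at this stage we only know that each \emph{fibre} $\supp(\cE_s)$ is proper, and fibrewise properness does not give properness over the base (the support could ``escape to infinity'' as $s$ varies, which is precisely the degenerate behaviour -- a component of $W$ collapsing to a point of $S$ -- that the lemma must rule out). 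Without properness, $q_*\shom(\cE,\cE)$ need not have coherent cohomology (the second factor $X\times S\to S$ of $q$ is affine, not proper), and there is no base change isomorphism, so the homomorphism $A\to\Gamma(S,\cO_S)$ is not constructed. The secondary concerns you flag (Grauert-type constancy arguments for complexes, reducedness of $S$, behaviour of higher $\Ext$'s) are real but subordinate; the circularity is the fatal issue, and I do not see a way to repair it short of reverting to an argument like the paper's K-theoretic one, which deliberately avoids any pushforward along $q$.
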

\begin{proof}
Assume that $W$ is reducible. Since $S$ is irreducible, there has to be an affine curve in $S$, intersecting the images of at least two irreducible components of $W$ under $q$.
Without loss of generality, we may therefore assume that $S$ itself is an affine curve.
After base change to the normalisation, we may assume further that $S$ is smooth.

Since $q$ is injective, there is an irreducible component of $W$ that maps to a point $s_0 \in S$. It follows that this component is a point, and is therefore a connected component of $W$; consequently, $\cE = \cE_0 \oplus \cE'$ where the support of $\cE_0$ is contained in $Y \times \{s_0\}$, and the support of $\cE'$ is disjoint from $Y \times \{s_0\}$. 

Let $i_{s_0} \colon Y \times \{s_0\} \to Y \times S$ be the inclusion. We claim that $\left[i_{s_0}^* \cE \right] = 0$ in $\Knumc(Y)$, in contradiction to our assumption. Replacing $S$ by an open subset if necessary, we may assume that $s_0 \in S$ is the scheme-theoretic zero locus of a regular function $f \in H^0(\cO_S)$. Each cohomology sheaf $H^j(\cE_0)$ has a filtration
$0 \subset \mathrm{ker} f \subseteq \mathrm{ker} f^2 \subseteq \dots$ whose successive quotients are isomorphic to the pushforward
$(i_{s_0})_* \cF$ of a coherent sheaf $\cF$ on $Y$. Restricting the short exact sequence
\[ \cF \boxtimes \cO_S \xrightarrow{\cdot f} \cF \boxtimes \cO_S \to (i_{s_0})_* \cF
\]
to $Y \times \{s_0\}$ 
shows that the class of $i_{s_0}^* (i_{s_0})_* \cF$ in the $K$-group of $\cdc(Y)$ vanishes. Since $\cE_0$ is a successive extension of its (finitely many nonzero) cohomology sheaves $H^j(\cE_0)$, the same holds for the class of $i_{s_0}^* \cE_0$. However, this is absurd because $\left[ i_{s_0}^* \cE_0 \right] = \left[ i_{s_0}^* \cE \right]  = \vv \neq 0$ in $\Knumc(Y)$.
\end{proof}

\begin{proof}[Proof of Proposition \ref{prop:simpleimpliesproper}]
We first claim that the bijective morphism $q \colon W \to S$ is a homeomorphism, for which it only remains to prove that $q$ is closed. This can be checked after base change via any proper and surjective map $\tilde S \to S$. Hence we may assume that $S$ is normal and, by restricting to one of its connected components, also irreducible. By Lemma \ref{lem:Wirred}, $W$ is irreducible. Let $\underline{W}$ be the reduced subscheme $\underline{W} \subseteq X \times S$; then the induced morphism $\underline{q}\colon \underline{W} \to S$ is a bijective map of varieties over $\kk$, with $S$ normal. Since $\kk$ is algebraically closed of characteristic zero, the dominant morphism $\underline{q}$ is birational. The original form of Zariski's main theorem implies that $\underline{q}$ is an open immersion, so it's an isomorphism, and hence $q$ is a homeomorphism. 

Since the same arguments apply after base change, it follows that $q$ is universally closed, and thus proper. 
 Since $\tau_S$ is proper, and $\supp(\cE)$ is a closed subset of
$\tau_S^{-1}(W)$, it follows that the support of $\cE$ is proper over $S$.
\end{proof}

\section{On schemes admitting a tilting bundle} \label{sec:tilting}

The goal of this section is to prove Theorem~\ref{thm:pullbackample}. To this end, we modify slightly the standard set-up (see Section \ref{subsec:moremotivation} for references) for stability conditions for quiver algebras of finite global dimension: rather than working with the category of nilpotent representations, we work with representations that
are finite-dimensional over $\kk$, but insist that the central charge factors via a variant of the numerical Grothendieck group, see Section \ref{subsec:stabquiver}; this is analogous to our set-up in Section \ref{sect:stabilityconditions}. 
In fact, when $Y$ admits a tilting bundle, we show that these notions yield a compatible notion of stability conditions in Section \ref{subsec:tilting}, a compatible notion of flat families in Section \ref{subsec:flatfamilies}, and finally compatible nef and semiample line bundles in the sense of Theorem \ref{thm:pullbackample} in Section \ref{subsec:comparison}.

\subsection{Stability conditions for quiver algebras} \label{subsec:stabquiver}
 We first recall notation and some standard facts from the representation theory of quivers; see, for example, \cite[Chapters~II-III]{ASS06} or \cite[Section~4.2]{10authors}.
 
 Let $Q$ be a connected quiver where both the vertex set $Q_0$ and the arrow set $Q_1$ are finite. The path algebra $\kk Q$ is graded by path length, and the part in degree zero has a basis of orthogonal idempotents $e_i$ for $i\in Q_0$. We do not require that $Q$ is acyclic, so $\kk Q$ may be infinite-dimensional as a $\kk$-vector space.  
 
 Our interest lies with associative $\kk$-algebras $A$ that can be presented in the form $A\cong \kk Q/I$, where $Q$ is a quiver and $I\subset \kk Q$ is a two-sided ideal generated by linear combinations of paths of length at least one; we refer to any such algebra $A$ as a \emph{quiver algebra}. For each vertex $i\in Q_0$, there is an indecomposable projective $A$-module $P_i:=Ae_i$ corresponding to paths in $Q$ emanating from vertex $i$. In addition, our assumption on the ideal $I$ ensures that each vertex of the quiver also gives rise to a one-dimensional simple $A$-module $S_i$ on which the class in $A$ of every arrow of the quiver acts as zero. Examples of quiver algebras include finite-dimensional algebras \cite{ASS06}, finitely generated graded algebras whose degree zero part is finite-dimensional semisimple \cite[Appendix~A]{BridgelandStern}, and algebras whose ideal of relations is defined in terms of a superpotential \cite{BocklandtSchedlerWemyss}.
 
 For a quiver algebra $A$, let $D_{\text{perf}}(A)$ and $D_{\text{fin}}(A)$ denote the bounded derived categories of perfect $A$-modules and finite-dimensional $A$-modules respectively, and let $K_{\text{perf}}(A)$ and $K_{\text{fin}}(A)$ respectively denote the Grothendieck groups of these categories. The Euler form $\chi_A\colon K_{\text{perf}}(A)\times K_{\text{fin}}(A)\to \ZZ$ is 
\[
\chi_A(E,F)=\sum_{i\in \ZZ} (-1)^i \dim_{\kk} \Ext_A^i(E,F).
\]
We write $\Knum(A)$ for the quotient $K_{\text{fin}}(A)/K_{\text{perf}}(A)^\perp$; by abuse of language, we call it the \emph{numerical Grothendieck group} of $D_{\text{fin}}(A)$. Note that the Euler form descends to a perfect pairing $\chi_A\colon K_{\text{perf}}(A)/K_{\text{fin}}(A)^\perp\times \Knum(A)\to \ZZ$, and we write $\Knum(A)^\vee = K_{\text{perf}}(A)/K_{\text{fin}}(A)^\perp$.

\begin{lemma}
\label{lem:perfectpair}
Let $A$ be a quiver algebra. The following are equivalent:
\begin{enumerate}
\item[\one] there is an isomorphism $\Knum(A) \cong \bigoplus_{i\in Q_0} \ZZ [S_i];$
\item[\two] the classes $[S_i]$ for $i\in Q_0$ generate $\Knum(A);$ 
\item[\three] the classes $[P_i]$ for $i\in Q_0$ generate $\Knum(A)^\vee;$
\item[\four] there is an isomorphism $\Knum(A)^\vee \cong \bigoplus_{i\in Q_0} \ZZ [P_i]$.
\end{enumerate}
\end{lemma}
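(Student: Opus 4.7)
The cornerstone of the proof will be the elementary computation
\[
\chi_A(P_i,S_j)=\delta_{ij},
\]
which holds because $P_i=Ae_i$ is projective (so $\Ext^{\geq 1}_A(P_i,S_j)=0$) and $\Hom_A(Ae_i,S_j)=e_i S_j$ is one-dimensional exactly when $i=j$. Since $\chi_A$ descends to a nondegenerate pairing $\chi_A\colon\Knum(A)^\vee\times\Knum(A)\to\ZZ$, the classes $[P_i]$ and $[S_j]$ pair by the identity matrix of size $|Q_0|$ on the numerical groups.

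My plan is to establish the three equivalences (i)$\Leftrightarrow$(ii), (iii)$\Leftrightarrow$(iv), and (ii)$\Leftrightarrow$(iii); the remaining equivalences then follow formally. The first two run by the same argument: the directions (i)$\Rightarrow$(ii) and (iv)$\Rightarrow$(iii) are tautological, while the converses reduce to linear independence. Indeed, any relation $\sum_i a_i[S_i]=0$ in $\Knum(A)$ means the class $\sum_i a_i[S_i]\in K_{\mathrm{fin}}(A)$ lies in $K_{\mathrm{perf}}(A)^\perp$; pairing against $[P_j]$ then gives $a_j=0$. So generation automatically upgrades to a $\ZZ$-basis, and similarly on the perfect side.

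For the bridge (ii)$\Leftrightarrow$(iii) I will use the identity-matrix pairing to manufacture explicit coordinates. Assuming (ii), take $[E]\in\Knum(A)^\vee$ and set $a_j:=\chi_A(E,S_j)\in\ZZ$. Then $E':=\sum_j a_j[P_j]\in K_{\mathrm{perf}}(A)$ satisfies $\chi_A(E',S_i)=a_i=\chi_A(E,S_i)$ for every $i$, and since the $[S_i]$ span $\Knum(A)$ by hypothesis, the functional $\chi_A(E-E',-)$ vanishes on all of $\Knum(A)$. Thus $[E-E']\in K_{\mathrm{fin}}(A)^\perp$, which gives $[E]=\sum_j a_j[P_j]$ in $\Knum(A)^\vee$. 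The converse (iii)$\Rightarrow$(ii) is entirely symmetric, using $\chi_A(P_j,E)$ in place of $\chi_A(E,S_j)$.

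I do not anticipate any real obstacle, as the whole argument is elementary linear algebra once the identity-matrix pairing is in hand. The only care needed is to work correctly modulo the orthogonal complements defining $\Knum(A)$ and $\Knum(A)^\vee$, and to note that even though $P_i$ can be infinite-dimensional when $Q$ contains cycles, it is always a finitely generated projective $A$-module and therefore does define a class in $K_{\mathrm{perf}}(A)$.
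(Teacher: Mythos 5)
Your proof is correct and follows essentially the same route as the paper: the paper's argument also rests on the computation $\Ext^k_A(P_j,S_i)=\kk$ for $k=0$, $i=j$ and $0$ otherwise, from which it concludes that $\bigoplus_i\ZZ[S_i]$ and $\bigoplus_i\ZZ[P_i]$ sit as free subgroups of $\Knum(A)$ and $\Knum(A)^\vee$ respectively, and then declares the four statements clearly equivalent. You have simply made explicit the duality argument (expressing an arbitrary class via its pairings against the dual basis) that the paper leaves to the reader.
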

\begin{proof}
 The vertex simple $A$-modules define classes $[S_i]\in \Knum(A)$ for $i\in Q_0$, and the indecomposable projective $A$-modules define classes $[P_i]\in \Knum(A)^\vee$. Since 
\begin{equation}
\label{eqn:ExtPS}
\Ext^k_A(P_j,S_i) = \left\{\begin{array}{cr} \kk & \text{for }k=0\text{ and }i=j; \\ 0 & \text{otherwise}\end{array},\right.
\end{equation}
 it follows that $\bigoplus_{i\in Q_0} \ZZ [S_i]$ is a subgroup of $\Knum(A)$ and that $\bigoplus_{i\in Q_0} \ZZ [P_i]$ is a subgroup of $\Knum(A)^\vee$. The statements \one-\four\ are now clearly equivalent. 
 \end{proof}

\begin{remarks}
\begin{enumerate}
\item The assumptions of Lemma~\ref{lem:perfectpair} hold when a quiver algebra $A$ satisfies the Krull--Schmidt theorem (see \cite[I.4.10]{ASS06} for the statement when $A$ is a finite dimensional $\kk$-algebra). Proposition~\ref{prop:suffgeometric} presents a geometric situation where the assumptions of Lemma~\ref{lem:perfectpair} hold. 
 \item To simplify notation, write $\theta(v):= \chi_A(\theta, v)$ for $\theta\in \Knum(A)^\vee$, $v\in \Knum(A)$.
\end{enumerate}
\end{remarks}

From now one we assume that any of the equivalent conditions in Lemma~\ref{lem:perfectpair} holds. Fix once and for all a dimension vector $\vv:= \sum_{i\in Q_0} v_i [S_i]\in \Knum(A)$. Consider the vector subspace of $\Knum(A)^\vee\otimes_\ZZ \RR$ given by  
\[
\Theta_{\vv}:= \vv^\perp=\big\{\theta\in \Hom(\Knum(A),\RR) \mid \theta(\vv) = 0\big\}.
\]
For $\theta\in \Theta_{\vv}$, an $A$-module $M$ of class $\vv\in \Knum(A)$ is \emph{$\theta$-semistable} if $\theta(N) \geq 0$ for every nonzero proper $A$-submodule $N$ of $M$. The notion of $\theta$-stability is defined by replacing  $\geq$ with $>$; if $\vv$ is primitive, we say $\theta\in \Theta_{\vv}$ is \emph{generic} if every $\theta$-semistable $A$-module is $\theta$-stable. The choice of dimension vector $\vv\in \Knum(A)$ therefore determines a wall and chamber structure on the space $\Theta_{\vv}$ of stability parameters,  where two generic parameters $\theta, \theta^\prime\in \Theta_{\vv}$ lie in the same chamber if and only if the notions of $\theta$-stability and $\theta^\prime$-stability coincide. 

For any integral parameter $\theta\in \Theta_{\vv}$, King~\cite{King94}, and more generally, Van den Bergh~\cite{VdB04}, constructs the coarse moduli space $\overline{\mathcal{M}_A}(\vv,\theta)$ of S-equivalence classes of $\theta$-semistable $A$-modules of dimension vector $\vv$ as a GIT quotient 
\[
\overline{\cM_A}(\vv,\theta) = X/\!\!/\!_{\chi_\theta} G,
\]
where $X$ is an affine scheme,  $G=\big(\prod_{i\in Q_0} \mathrm{GL}(v_i)\big)/\kk^\times$, and 
$\chi_\theta\in G^\vee$ is a character determined by $\theta$. In particular,  $\overline{\mathcal{M}_A}(\vv,\theta)$ is projective over an affine scheme, where the polarising ample line bundle $L(\theta)$ on $\overline{\cM_A}(\vv,\theta)$ descends from the linearisation of $\cO_X$ by $\chi_\theta$. Note that $\overline{\mathcal{M}_A}(\vv,\theta)$ is projective when the quiver $Q$ is acyclic.

 If the dimension vector $\vv\in \Knum(A)$ is primitive and if $\theta\in \Theta_{\vv}$ is generic, then $\overline{\mathcal{M}_A}(\vv,\theta)$ coincides with the fine moduli space $\mathcal{M}_A(\vv,\theta)$ of isomorphism classes of $\theta$-stable $A$-modules of class $\vv$. The universal family on $\mathcal{M}_A(\vv,\theta)$ is a flat family of $\theta$-stable $A$-modules of dimension vector $\vv$, that is, a locally free sheaf $T=\bigoplus_{i\in Q_0} T_i$ with $\rk(T_i)=v_i$ such that the fibre of $T$ at any closed point of $\mathcal{M}_A(\vv,\theta)$ is a $\theta$-stable $A$-module of dimension vector $\vv$.

\smallskip

Let $\mathcal{A}$ denote the abelian category of finite-dimensional $A$-modules, so $\cA$ is the heart of the standard t-structure on $D_{\text{fin}}(A)$. Define 
\[
\Lambda := \big\{ \lambda \in \Hom(\Knum(A), \RR) \mid \lambda([S_i])>0 \text{ for all } i \in Q_0 \big\}.
\]
For $M\in \cA$, we have $\lambda(M) \geq 0$ for all $\lambda \in \Lambda$, where equality holds iff $M=0$. The next results extend the observation of Bridgeland~\cite[Example~5.5]{Bri07} on finite-dimensional algebras. 

\begin{lemma}
\label{lem:stab-cond}
For $\theta \in \Theta_{\vv}$, $\lambda \in \Lambda$ and $\xi \in \RR$, define $Z_{\theta, \lambda, \xi}\colon \Knum(A)\to \CC$ by setting 
\[
Z_{\theta, \lambda, \xi}(M):= \theta(M)+\left(\sqrt{-1} + \xi\right) \cdot\lambda(M).
\]
Then $\sigma_{\theta,\lambda, \xi}:=(Z_{\theta,\lambda, \xi}, \cA)$ is a Bridgeland stability condition on $\Dfin(A)$, satisfying the support property with respect to $\Knum(A)$.
\end{lemma}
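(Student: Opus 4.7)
The plan is to verify in turn the three ingredients in Definition~\ref{def:stabilitycondition} for $\sigma_{\theta,\lambda,\xi}=(Z_{\theta,\lambda,\xi},\cA)$, exploiting the fact from Lemma~\ref{lem:perfectpair} that $\Knum(A)\cong\bigoplus_{i\in Q_0}\ZZ[S_i]$ is a finite rank lattice with basis the simple classes.

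I would first observe that $Z_{\theta,\lambda,\xi}$ is a stability function on $\cA$. For any nonzero $M\in\cA$, writing $[M]=\sum_{i\in Q_0}v_i[S_i]$ with $v_i\in\ZZ_{\geq 0}$ not all zero, one has
\[
\Im Z_{\theta,\lambda,\xi}(M)=\lambda(M)=\sum_{i\in Q_0}v_i\,\lambda([S_i])>0,
\]
because $\lambda([S_i])>0$ by the definition of $\Lambda$. Thus $Z_{\theta,\lambda,\xi}(M)\in\RR_{>0}\cdot e^{\pi i\phi(M)}$ for a unique $\phi(M)\in(0,1)$, and I would define the slicing $\cP_\sigma$ in the usual way: declare such an $M\in\cA$ to be semistable with phase $\phi(M)$ whenever $\phi(N)\leq\phi(M)$ for every nonzero proper subobject $N\subset M$, and extend by $\cP_\sigma(\phi+n)=\cP_\sigma(\phi)[n]$ for $n\in\ZZ$.

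To confirm that this defines a slicing of all of $\Dfin(A)$, I would then establish the Harder--Narasimhan property. Every finite-dimensional $A$-module has finite length with composition factors among the $S_i$, so $\cA$ is simultaneously noetherian and artinian. The standard argument, see for example \cite[Proposition~2.4]{Bri07}, then shows that any stability function on such a heart automatically admits Harder--Narasimhan filtrations and hence determines a stability condition on the bounded derived category.

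The step requiring most care is the support property, where I would produce an explicit quadratic form $Q$ on $\Knum(A)\otimes_{\ZZ}\RR$. Identifying this vector space with $\RR^{Q_0}$ via the basis $\{[S_i]\}_{i\in Q_0}$ and letting $\|\cdot\|$ denote the associated Euclidean norm, set
\[
Q(v):=\bigl|Z_{\theta,\lambda,\xi}(v)\bigr|^2-c^2\|v\|^2,\qquad c:=\tfrac12\min_{i\in Q_0}\lambda([S_i])>0.
\]
Then $Q$ is manifestly negative definite on $\ker Z_{\theta,\lambda,\xi}$. For non-negativity on classes of semistable objects, note that after a shift every semistable object lies in $\cA$, and compatibility with shifts is automatic because $Q$ is even. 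For $v=[M]$ with $M\in\cA$ one has $v_i\geq 0$ for all $i$, so
\[
\bigl|Z_{\theta,\lambda,\xi}(v)\bigr|\geq\lambda(v)\geq 2c\sum_{i}v_i\geq 2c\,\|v\|,
\]
where the last inequality uses $\sum_iv_i\geq\sqrt{\sum_iv_i^2}=\|v\|$ for non-negative entries. Hence $Q(v)\geq 3c^2\|v\|^2\geq 0$, completing the support property and thereby the proof.
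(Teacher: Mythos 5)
Your proof is correct and follows essentially the same route as the paper: $Z_{\theta,\lambda,\xi}$ is a stability function on the finite-length heart $\cA$, Harder--Narasimhan filtrations exist by \cite[Proposition~2.4]{Bri07}, and the support property holds because every semistable object lies, up to shift, in $\cA$, whose classes have non-negative coordinates in the basis $\{[S_i]\}$, so that $|Z_{\theta,\lambda,\xi}|$ is bounded below by a positive multiple of a norm --- the paper phrases this via the equivalent norm formulation of \cite[Lemma~A.4]{BMS} instead of writing out the quadratic form, which is a purely cosmetic difference. One small caveat: when $Q$ has cycles the composition factors of a finite-dimensional $A$-module need not be among the vertex simples $S_i$ (e.g.\ non-nilpotent representations of a loop), but this remark is not load-bearing, since finite length already follows from finite dimensionality and the non-negativity $v_i\geq 0$ follows from $v_i=\chi_A(P_i,M)=\dim_\kk e_iM$.
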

\begin{proof}
For $\theta\in \Theta_{\vv}$, the image under $Z_{\theta,\lambda, \xi}$ of any nonzero object of $\cA$ lies in the upper half plane, so $Z_{\theta,\lambda, \xi}$ is a stability function on $\cA$. Objects of $\cA$ have finite dimension over $\kk$, so Harder--Narasimhan filtrations exist \cite[Proposition~2.4]{Bri07} and hence $\sigma_{\theta,\lambda}$ is a stability condition on the bounded derived category of $\cA$, that is, on $\Dfin(A)$.  

By the original definition of the support property \cite[Lemma~A.4]{BMS}, we must exhibit $C > 0$ such that
\[ \left| Z_{\theta,\lambda, \xi}(E) \right| \geq C \| [E] \|
\]
for all semistable objects $E$, and
with respect to some norm $\| \cdot \|$ on $\Knum(A) \otimes \RR \cong \RR^{Q_0}$. We may choose the supremum norm on $\RR^{Q_0}$. Up to shift,
any semistable object lies in the heart $\cA \subset \Dfin(A)$, so its class in $\Knum(A)$ is a non-negative linear combination of the classes
$[S_i]$ for the simple objects for $i \in Q_0$. Setting $C := \min_{i \in Q_0} \lambda([S_i])$, the claim becomes evident.
\end{proof}

\begin{remark}
 Since stability conditions are characterised by their heart and central charge \cite[Lemma 3.5]{Bri08}, the set of stability conditions of the form $\sigma_{\theta,\lambda,\xi}$ can be identified with the interior of the set of stability conditions whose heart is the category $\cA$ of finite-dimensional $A$-modules.
 \end{remark}

\begin{lemma}
\label{lem:thetavsbridgeland}
For $\theta \in \Theta_{\vv}, \lambda \in \Lambda$, $\xi\in \RR$,
an object $E \in \Dfin(A)$ of class $\vv$ is $\sigma_{\theta, \lambda, \xi}$-(semi)stable and of phase in $(0,1]$ if and only if it is 
a $\theta$-(semi)stable $A$-module.
\end{lemma}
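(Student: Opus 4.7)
The plan is to set up a direct dictionary between Bridgeland phases and the usual slope $\theta(M)/\lambda(M)$ on $\cA$. For any nonzero $M\in\cA$, the hypothesis $\lambda\in\Lambda$ gives $\lambda(M)>0$, so $Z_{\theta,\lambda,\xi}(M)$ has real part $\theta(M)+\xi\lambda(M)$ and strictly positive imaginary part $\lambda(M)$. In particular its phase $\phi(M)$ lies in $(0,1)$, and
\[
\cot(\pi\phi(M))=\frac{\theta(M)}{\lambda(M)}+\xi.
\]
Since $\cot$ is strictly decreasing on $(0,\pi)$, this yields the key equivalence: for any two nonzero $M,N\in\cA$,
\[
\phi(M)\leq\phi(N)\iff\frac{\theta(M)}{\lambda(M)}\geq\frac{\theta(N)}{\lambda(N)},
\]
and the same holds with strict inequalities.

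Next I would verify that every $\sigma_{\theta,\lambda,\xi}$-semistable object $E$ of class $\vv$ with phase in $(0,1]$ actually lies in $\cA$ (the case $\vv=0$ being vacuous). Since $\lambda(\vv)>0$, the central charge $Z_{\theta,\lambda,\xi}(\vv)=Z_{\theta,\lambda,\xi}(E)$ has strictly positive imaginary part, so in fact $\phi(E)\in(0,1)$; hence $E\in\cP((0,1))\subseteq\cA$.

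The comparison now proceeds by direct translation. Because $\cA$ coincides with the category of finite-dimensional $A$-modules, subobjects of $E$ in $\cA$ are precisely $A$-submodules $N\subseteq E$. Since $\theta\in\Theta_\vv=\vv^\perp$ and $[E]=\vv$, we have $\theta(E)=0$, so the dictionary specialises (using $\lambda(N)>0$) to
\[
\phi(N)\leq\phi(E)\iff\frac{\theta(N)}{\lambda(N)}\geq 0\iff\theta(N)\geq 0.
\]
This is exactly the definition of $\theta$-semistability, so $E$ is $\sigma_{\theta,\lambda,\xi}$-semistable (as an object of the heart) iff it is $\theta$-semistable. The stable case follows identically by replacing all $\leq$ with $<$ and noting that strictness passes through both equivalences.

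No serious obstacle arises: the argument is essentially a one-line computation of $\cot(\pi\phi)$ together with the observation that existence of Harder--Narasimhan filtrations on $\cA$ (used in Lemma~\ref{lem:stab-cond}) reduces the test of $\sigma$-(semi)stability in $\Dfin(A)$ to a test against subobjects in the heart. The only point requiring a moment of care is confirming that the semistable object $E$ of phase in $(0,1]$ lies in $\cA$, which as noted follows from $\lambda(\vv)>0$.
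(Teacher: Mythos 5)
Your proof is correct and follows essentially the same route as the paper: both arguments reduce $\sigma_{\theta,\lambda,\xi}$-(semi)stability of an object of class $\vv$ to the comparison of phases of submodules inside the heart $\cA$, and then use $\theta(\vv)=0$ (so that $Z_{\theta,\lambda,\xi}(E)\in\RR_{>0}\cdot(\sqrt{-1}+\xi)$) to translate that comparison into the condition $\theta(N)\geq 0$ (resp.\ $>0$). Your explicit $\cot(\pi\phi)$ dictionary and the remark that $\lambda(\vv)>0$ forces the phase into $(0,1)$ are just slightly more detailed versions of the paper's one-line computation.
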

\begin{proof}
An object $E\in \Dfin(A)$ of class $\vv$ is $\sigma_{\theta,\lambda,\xi}$-semistable of phase in $(0, 1]$ if and only if $E$ lies in the heart $\cA$, and the phase of  $Z_{\theta, \lambda, \xi}(F)$ is smaller than the phase of $Z_{\theta, \lambda, \xi}(E)$ for every proper nonzero submodule $F\subset E$. Since $\theta(\vv)=0$, we have $Z_{\theta, \lambda, \xi}(E) \in \RR_{>0} \cdot \left(\sqrt{-1} + \xi\right)$, so this is equivalent to $\theta(F) \geq 0$. Thus, the $\sigma_{\theta,\lambda, \xi}$-(semi)stable objects in $\Dfin(A)$ of class $\vv$ are precisely the $\theta$-(semi)stable $A$-modules of class $\vv$.
\end{proof}

Let $\Stab(\Dfin(A))$ denote the space of numerical stability conditions on 
$\Dfin(A)$ that satisfy the support property with respect to $\Knum(A)$. Combining the above results gives the following picture.  

\begin{proposition} 
\label{prop:thetatostab}
Let $\vv \in \Knum(A)$. Then there is a continuous map
\begin{equation} \label{thetatostab}
f\colon \Theta_{\vv} \times \Lambda \times \RR \to \Stab(\Dfin(A)), \quad \left(\theta, \lambda, \xi\right) \mapsto \sigma_{\theta, \lambda, \xi}
\end{equation}
 such that for any fixed $\lambda\in \Lambda$, $\xi\in \RR$, the wall-and-chamber structure on $\Theta_{\vv}$ is obtained by pulling back the wall-and-chamber structure on $\Stab(\Dfin(A))$ with respect to $\vv$. Moreover, for each triple $(\theta, \lambda, \xi)$, the moduli stack of $\sigma_{\theta, \lambda, \xi}$-(semi-)stable 
objects gets identified with the moduli stack of $\theta$-(semi)stable quiver representations.
\end{proposition}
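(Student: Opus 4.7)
The plan is to establish the three assertions of the proposition separately: well-definedness and continuity of $f$, the matching of wall-and-chamber structures for fixed $(\lambda,\xi)$, and the isomorphism of moduli stacks.

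For continuity, the key observation is that every stability condition in the image of $f$ has the same heart $\cA$ by Lemma~\ref{lem:stab-cond}. On the subset of $\Stab(\Dfin(A))$ consisting of stability conditions with heart $\cA$, the central charge map to $\Hom(\Knum(A),\CC)$ is injective, and on any connected component of $\Stab(\Dfin(A))$ it is a local homeomorphism by the deformation theorem (applied in the form referenced after Definition~\ref{def:stabilitycondition}). Since the assignment $(\theta,\lambda,\xi) \mapsto Z_{\theta,\lambda,\xi}$ is visibly continuous (in fact real-linear) as a map into $\Hom(\Knum(A),\CC)$, continuity of $f$ follows.

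For the wall-and-chamber correspondence, fix $\lambda \in \Lambda$ and $\xi \in \RR$, and consider the class $\vv$. Walls in $\Stab(\Dfin(A))$ for $\vv$ are defined as those loci where some strictly $\sigma$-semistable object of class $\vv$ admits a Jordan--H\"older factor of class $\vv' \neq 0,\vv$ whose central charge lies on a common ray with $Z_\sigma(\vv)$. For $\sigma = \sigma_{\theta,\lambda,\xi}$, the equality $\theta(\vv)=0$ reduces this linear-dependence condition to the single linear equation $\theta(\vv')=0$, which is precisely the defining condition for walls on $\Theta_{\vv}$ with respect to $\vv$. Combined with Lemma~\ref{lem:thetavsbridgeland}, which tells us that the notions of $\sigma_{\theta,\lambda,\xi}$-(semi)stability and $\theta$-(semi)stability agree for each triple, this shows that the chambers in $\Theta_{\vv}$ are precisely the preimages under $f$ of the chambers in $\Stab(\Dfin(A))$.

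For the moduli stack comparison, recall that $\cA$ is the abelian category of finite-dimensional $A$-modules. Under this identification, a flat family of objects in $\cA$ of class $\vv$ in the sense of Definition~\ref{def:lc-flat} corresponds to a flat family of $A$-modules of dimension vector $\vv$; this is made explicit in the forthcoming Proposition~\ref{prop:flatfamAmods}. The moduli stack of $\sigma_{\theta,\lambda,\xi}$-(semi)stable objects of class $\vv$ is, by definition, the open substack of the stack of such families cut out by the (semi)stability condition, and Lemma~\ref{lem:thetavsbridgeland} identifies this substack with the stack of $\theta$-(semi)stable quiver representations.

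I expect the main obstacle to be the stack-level comparison of flat families in the third step, rather than the mere pointwise one; however, this will be taken care of by Proposition~\ref{prop:flatfamAmods}, and the underlying reason is that objects of $\cA$ are cohomologically concentrated in degree zero, so $S$-perfectness of a family in the derived sense reduces to $S$-flatness of the corresponding family of $A$-modules.
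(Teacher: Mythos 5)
Your argument is correct and takes essentially the same route as the paper, which offers no separate proof but simply notes that the statement follows by combining Lemma~\ref{lem:stab-cond} and Lemma~\ref{lem:thetavsbridgeland} --- exactly the ingredients you elaborate. The only cosmetic point is that your appeal to Proposition~\ref{prop:flatfamAmods} for the stack comparison imports the later geometric (tilting-bundle) setting, whereas in the purely algebraic setting here the identification of families is immediate because all $\sigma_{\theta,\lambda,\xi}$-semistable objects of class $\vv$ lie in the fixed heart $\cA$ of finite-dimensional $A$-modules.
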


When $\vv$ is primitive and $\theta$ generic, the map \eqref{thetatostab} gives an identification of fine moduli spaces; otherwise, the moduli stack of $\sigma_{\theta, \lambda, \xi}$-semistable objects of class $\vv$ has $\overline{\cM_A}(\vv,\theta)$ as coarse moduli space, which, as noted above, is projective over an affine.

\subsection{On schemes with a tilting bundle} \label{subsec:tilting}
Let $Y$ be a smooth scheme that admits a projective morphism $\tau\colon Y \to X = \Spec \:R$, and let $E$ be a locally-free sheaf of finite rank on $Y$. 


 We begin with a few comments about our conventions concerning left- and right-modules; in this paragraph all of our functors are underived. For any coherent sheaf $F$ on $Y$, the space $\Hom(E,F)$ is a right $\End(E)$-module and therefore a left $\End(E)^{\operatorname{op}}$-module; equivalently, and more geometrically, $\Hom(E,F)$ is a left module over the algebra 
\[
A:= \End(E^\vee).
\]
 Also, since $E$ is a left $\End(E)$-module and hence a right $A$-module, so $E\otimes_A M$ is well-defined for any left $A$-module $M$. 

Recall that a \emph{tilting bundle} on $Y$ is a locally-free sheaf $E$ of finite rank such that $\Ext^k(E,E)=0$ for $k>0$, and such that if $F\in \cD(Y)$ satisfies $\Hom(E,F)=0$, then $F = 0$.

\begin{theorem}
\label{thm:hilleVdB}
Let $Y$ be a smooth scheme that is projective over an affine scheme, and let $E$ be a tilting bundle on $Y$. Then $A:=\End(E^\vee)$ is an algebra of finite global dimension, and the derived $\Hom$ functor gives an exact equivalence
\begin{equation}
\label{eqn:Ddiagram}
\Hom(E,-)\colon D(Y) \longrightarrow\cD(A)
\end{equation}
 with quasi-inverse $E\otimes_A -$. Moreover, the restriction of this equivalence is an exact equivalence between $\cdc(Y)$ and $\Dfin(A)$, and there is an isomorphism $\Knumc(Y)\cong \Knum(A)$. 
\end{theorem}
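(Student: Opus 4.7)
My plan is to reduce the bulk of this statement to the classical tilting theorem, and then to transfer compact support and perfectness across the resulting equivalence. The first two assertions---that $A$ has finite global dimension and that $\Hom(E,-)\colon D(Y)\to \cD(A)$ is an equivalence with quasi-inverse $E\otimes_A-$---are exactly the conclusion of the Hille--Van den Bergh tilting theorem in the semi-projective setting, so I would simply cite it.

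To see that this equivalence restricts to $\cdc(Y)\simeq \Dfin(A)$, I would factor through the affine base. For any $F\in D(Y)$, the projection formula gives
\[
\Hom(E,F)\;\cong\;\mathbf{R}\Gamma\bigl(X,\,\tau_*(E^\vee\otimes F)\bigr),
\]
so the $A$-module $\Hom(E,F)$ agrees, as an $R$-module, with the finitely generated $R$-module $\tau_*(E^\vee\otimes F)$ on $X$. Since $E$ is locally free, $\supp(E^\vee\otimes F)=\supp(F)$; and since $X$ is affine and of finite type over $\kk$, this $R$-module is finite-dimensional over $\kk$ precisely when $\tau(\supp F)$ is a finite set of closed points in $X$, which in turn---by properness of $\tau$---is equivalent to $\supp(F)$ being proper over $\kk$. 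Thus $\Hom(E,-)$ sends $\cdc(Y)$ into $\Dfin(A)$, and applying the same reasoning to the quasi-inverse $E\otimes_A-$ gives the opposite inclusion.

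For the K-group statement, the equivalence already induces an isomorphism of Grothendieck groups $K(\cdc(Y))\cong K(\Dfin(A))$, so I only need to check that this descends to the numerical quotients. Since $Y$ is smooth, $\Dperf(Y)=D(Y)$, and since $A$ has finite global dimension every object of $\cD(A)$ is perfect as well. The tilting functor is $\kk$-linear and exact, so it preserves Ext groups, yielding
\[
\chi_Y(P,F)\;=\;\chi_A\bigl(\Hom(E,P),\,\Hom(E,F)\bigr)
\]
for any $P\in \Dperf(Y)$ and $F\in \cdc(Y)$. Hence the radicals of the two Euler pairings correspond under the equivalence, and the induced map on numerical quotients is the desired isomorphism $\Knumc(Y)\cong \Knum(A)$.

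The main delicate point I anticipate is matching the ``perfect'' side of the Euler pairing across the equivalence---specifically, verifying that $\Hom(E,\Dperf(Y))$ is exactly the class of objects paired against in the definition of $\Knum(A)$. The smoothness of $Y$ and the finite global dimension of $A$ should reduce this to routine bookkeeping, since both perfect subcategories then collapse to the whole bounded derived category.
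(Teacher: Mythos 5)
Your reduction to Hille--Van den Bergh and your treatment of the numerical $K$-groups match the paper. The genuine gap is in the middle step, where you assert that $\Hom^\bullet(E,F)\cong\mathbf{R}\Gamma\bigl(X,\tau_*(E^\vee\otimes F)\bigr)$ is finite-dimensional over $\kk$ ``precisely when $\tau(\supp F)$ is a finite set of closed points.'' The direction needed for essential surjectivity is: non-proper $\supp(F)$ forces $\dim_\kk\Hom^k(E,F)=\infty$ for some $k$. Your justification silently identifies $\supp\bigl(\tau_*(E^\vee\otimes F)\bigr)$ with $\tau(\supp F)$, but pushforward only satisfies $\supp(\tau_*G)\subseteq\overline{\tau(\supp G)}$ (Lemma~\ref{lem:easycompact}\three), and the inclusion can be strict because derived pushforward can annihilate cohomology entirely (e.g.\ $\tau_*\cO_{\PP^1}(-1)=0$). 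Indeed, for an arbitrary locally free $E$ the claimed equivalence is false: take $Y=\PP^1\times\AA^1\to\AA^1=X$, $E=\cO_Y$ and $F=\cO_{\PP^1}(-1)\boxtimes\cO_{\AA^1}$, which has non-proper support while $\Hom^\bullet(\cO_Y,F)=0$. So any correct argument must invoke the generation half of the tilting hypothesis at exactly this point, and yours never does; the delicate point is here, not in matching the perfect sides of the Euler pairing, which is indeed routine.

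This implication is where the paper spends essentially all of its effort. It first treats the case where $G=E^\vee\otimes F$ is a sheaf, exhibiting an infinite-dimensional subspace of $H^0$ by multiplying a section that is nonzero at the generic point of a non-proper component $Z=\Spec R/I$ of the support by the infinitely many elements of $R/I$; it then handles general complexes by an induction through the hypercohomology spectral sequence $E_2^{p,q}=H^p(Y,\cH^q(G))\Rightarrow H^{p+q}(Y,G)$, isolating the lowest cohomology sheaf with non-proper support and showing its contribution survives to the $E_\infty$ page. Your remark that ``applying the same reasoning to the quasi-inverse $E\otimes_A-$ gives the opposite inclusion'' does not repair this: showing that $E\otimes_A M$ has proper support for every $M\in\Dfin(A)$ is the same hard implication in disguise, since it amounts to knowing that finite-dimensionality of $\Hom^\bullet(E,-)$ detects properness of support.
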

\begin{proof}
 Since $Y$ is smooth and projective over a Noetherian affine scheme of finite type, applying the result of Hille--Van den Bergh~\cite[Theorem~7.6]{HilleVandenBergh04} and then the equivalence from right $\End(E)$-modules to left $A$-modules gives the equivalence \eqref{eqn:Ddiagram}, together with the fact that $A$ has finite global dimension. Composing the right/left-equivalence $(-)^{\operatorname{op}}$ with the quasi-inverse of the equivalence from [ibid.] shows that \eqref{eqn:Ddiagram} has quasi-inverse $(-)^{\operatorname{op}}\otimes_{A^{\operatorname{op}}} E$, which we express more succinctly as $E\otimes_A -$. Once we establish the equivalence between $\cdc(Y)$ and $\Dfin(A)$, the isomorphism on numerical Grothendieck groups follows from the fact that \eqref{eqn:Ddiagram} preserves the Euler forms. 
 
 It remains to prove the equivalence between $\cdc(Y)$ and $\Dfin(A)$. For any coherent sheaf $F$ on $Y$ with proper support, the $\kk$-vector space $\Hom^i(E,F)=\Ext^i(E,F)$ has finite-dimension, so \eqref{eqn:Ddiagram} restricts to a functor from $\cdc(Y)$ to $\Dfin(A)$. To show this functor is essentially surjective, we need only show that for any $F \in D(Y)$ with non-proper support, there exists $k \in \ZZ$, such that $\dim\Hom^k(E, F)=\infty$. For simplicity, the object $G:= E^\vee \otimes F$ also has non-proper support and satisfies $\Hom^k(E,F) = H^k(Y,G)$.
 
 When $G$ is a sheaf, we claim that $\dim H^0(Y,G)=\infty$. Indeed, $H^0(Y, G) = H^0(X, \tau_*G)$, where $\tau_*G$ also has non-proper support. Let $Z$ be a non-proper irreducible component of $\supp\tau_*G$. Since $Z$ is a closed subscheme of $X$, we have $Z = \Spec\: R/I$ for some ideal $I$ satisfying where $\dim_\kk (R/I) = \infty$. Since all higher cohomology vanishes on an affine scheme, there is a surjective map $ H^0(X,\tau_*G) \twoheadrightarrow H^0(Z,\tau_*G|_Z)$ and we need only prove $\dim H^0(Z,\tau_*G|_Z) = \infty$. Assume $s \in H^0(Z,\tau_*G|_Z)$ does not vanish at the generic point of $Z$. Then the $\kk$-linear map $R/I \to H^0(Z,\tau_*G|_Z)$ given by $f \mapsto fs$ is injective and has image equal to a subspace of $H^0(Z,\tau_*G|_Z)$ of infinite dimension, as desired.
 
 For the general case, let $G \in D(Y)$ have non-proper support and let $\cH^i(G)$ denote the $i$th cohomology sheaf of $G$. By \cite[p.56, (2.6)]{Hu06a} we have the spectral sequence
\[
E_2^{p,q} = H^p(Y, \cH^q(G)) \Longrightarrow H^{p+q}(Y, G).
\]
Since $G$ is bounded, there exists $a,b \in \ZZ$ such that $\cH^q(G)=0$ unless $a \leq q \leq b$. Let $k$ be the smallest index such that $\supp\cH^k(G)$ is non-proper. We show by induction for $r \geq 2$ that in the $r$-th page of the spectral sequence, we have
\begin{itemize}
\item $E_r^{p,q} = 0$ if $q < a$ or $p < 0$;
\item $\dim E_r^{p,q} < \infty$ if $a \leq q \leq k-1$;
\item $\dim E_r^{0,k} = \infty$.
\end{itemize}
 These statement hold by the previous paragraph when $r=2$. Assume they hold for $r$. Since $E_{r+1}^{p,q}$ is a subquotient of $E_r^{p,q}$, we have $\dim E_{r+1}^{p,q} \leq \dim E_r^{p,q}$, and the first two statements follow. For the third statement, notice that the arrow pointing towards $E_r^{0,k}$ is zero while the arrow emanating from it has head at a vector space of finite dimension, so $\dim E_{r+1}^{0,k}=\infty$. This completes the induction, giving $\dim E_\infty^{0,k} = \infty$ and hence $\dim H^k(Y, G)=\infty$ as desired.
\end{proof}

 From now on we work under the assumptions of Theorem~\ref{thm:hilleVdB}. In this case, it is well known (see for example Karmazyn~\cite[Section~2.4]{Karmazyn14}) that:
 \begin{enumerate}
 \item after replacing $A$ by a Morita equivalent algebra if necessary, we may assume that $E$ admits a decomposition of the form  $E=\bigoplus_{1\leq i\leq k} E_i$, where each locally-free summand is indecomposable and where $E_i$ and $E_j$ are non-isomorphic for $i\neq j$; and
  \item the algebra $A=\End(E^\vee)$ can be presented as the quotient $A\cong \kk Q/I$ of the path algebra of a quiver with relations. 
 \end{enumerate}
 However, in order to apply the results of the previous subsection, we require that the equivalent conditions of Lemma~\ref{lem:perfectpair} are satisfied. Here we present a sufficient geometric condition:

\begin{proposition}
\label{prop:suffgeometric}
 Under the assumptions of Theorem \ref{thm:hilleVdB}, suppose that the locus $Z$ in $Y$ contracted by $\tau\colon Y \to X$ is proper and connected. Assume in addition that $E = \bigoplus_{1\leq i\leq m} E_i$ admits a splitting with the following properties:
\begin{enumerate}
\item[\one] $A:=\End(E)$ is a quiver algebra generated by $\id_{E_1}, \dots, \id_{E_m}$ and $\bigoplus_{i \neq j} \Hom(E_i, E_j);$
\item[\two] the restrictions $E_i|_Z$ are simple sheaves that are pairwise non-isomorphic.
\end{enumerate}
Then $\Knum(A) = \bigoplus_{1\leq i\leq m} \ZZ[S_i]$, where $S_1, \dots, S_m$ are the vertex simple $A$-modules.
\end{proposition}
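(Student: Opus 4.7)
The plan is to apply Lemma~\ref{lem:perfectpair}: it suffices to show that the projective classes $[P_1],\dots,[P_m]$ generate $\Knum(A)^\vee$, since their linear independence (and that of the $[S_i]$) is immediate from \eqref{eqn:ExtPS}. Because $A$ has finite global dimension by Theorem~\ref{thm:hilleVdB}, the group $K_{\operatorname{perf}}(A)$ coincides with $K_0$ of the exact category of finitely generated projective $A$-modules. If $A$ can be shown to be semiperfect, then Krull--Schmidt yields a decomposition of every finitely generated projective as $\bigoplus P_i^{a_i}$, so $K_{\operatorname{perf}}(A) = \bigoplus_i \ZZ [P_i]$, and passing to the quotient $\Knum(A)^\vee$ then gives the claim.

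The core task is therefore to verify semiperfectness of $A$ from (i) and (ii). Hypothesis~(i) expresses $A \cong \kk Q/I$ with vertex set $\{1,\dots,m\}$ and primitive orthogonal idempotents $e_i = \id_{E_i}$ summing to $1$, so $P_i = A e_i$ and $\End_A(P_i) = \End_Y(E_i)$. Since $Z$ is proper and connected while $X = \Spec R$ is affine, the image $\tau(Z)$ is a single closed point $x_0 \in X$, and scheme-theoretically $\mathfrak m_{x_0}\cO_Y \subseteq \cI_Z$. Hypothesis~(ii) then provides a surjective $\kk$-algebra homomorphism
\[ A = \End_Y(E) \twoheadrightarrow \End_Z(E|_Z) = \bigoplus_{i,j} \Hom_Z(E_i|_Z, E_j|_Z) = \bigoplus_i \kk\cdot \id_{E_i|_Z}, \]
the off-diagonal blocks vanishing by pairwise non-isomorphism and the diagonal blocks being $\kk$ by simplicity. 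Restricting to the $i$-th summand yields $\End_Y(E_i) \twoheadrightarrow \kk$; its kernel consists of endomorphisms $\phi$ satisfying $\phi(E_i) \subseteq \cI_Z E_i$, hence $\phi^k(E_i) \subseteq \cI_Z^k E_i$ for all $k \geq 1$. Combining this with the finiteness of $\End_Y(E_i) = \Gamma(X, \tau_*\shom(E_i,E_i))$ as an $R$-algebra together with $\mathfrak m_{x_0}\cO_Y \subseteq \cI_Z$, I will argue that this kernel lies in the Jacobson radical of $\End_Y(E_i)$, so that $\End_Y(E_i)$ is local and therefore $A$ is semiperfect.

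The main obstacle is precisely this last algebraic step: the asymptotic containment $\phi^k(E_i) \subseteq \cI_Z^k E_i$ expresses only a formal vanishing of $\phi$ along $Z$, and showing that the corresponding element of the finite $R$-algebra $\End_Y(E_i)$ is actually nilpotent (or at least contained in every maximal left ideal) requires exploiting both the finiteness over $R$ and the inclusion $\mathfrak m_{x_0}\cO_Y \subseteq \cI_Z$, via Artin--Rees together with the faithful flatness of $\mathfrak m_{x_0}$-adic completion. Once this local-ring statement is secured, Krull--Schmidt for finitely generated projectives over the semiperfect ring $A$ gives $K_{\operatorname{perf}}(A) = \bigoplus \ZZ [P_i]$, and Lemma~\ref{lem:perfectpair} completes the proof.
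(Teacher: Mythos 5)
Your strategy has a fatal flaw at its core: the ring $\End_Y(E_i)=e_iAe_i$ is \emph{not} local under the hypotheses of the proposition, so $A$ is not semiperfect and the identification $K_{\operatorname{perf}}(A)=\bigoplus_i\ZZ[P_i]$ fails. Concretely, since $X=\Spec R$ is affine and only the proper connected locus $Z$ is contracted, $\tau(Z)$ is a single point $x_0$, but $R$ itself is typically far from local (think of $Y$ a minimal resolution of an affine surface with one rational double point, $E=\cO_Y\oplus L$). Any $f\in R$ with $f(x_0)=0$ but $f(x_1)\neq 0$ for some other closed point $x_1$ gives an element $f\cdot\id_{E_i}$ of the kernel of $\End_Y(E_i)\to\End_Z(E_i|_Z)=\kk$; this element becomes a unit after localising at a maximal ideal of $\End_Y(E_i)$ lying over $\mathfrak m_{x_1}$, so it does not lie in the Jacobson radical. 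The containment $\phi^k(E_i)\subseteq\cI_Z^kE_i$ only records vanishing along $Z$ and says nothing about the behaviour of $\phi$ away from $Z$; no amount of Artin--Rees or completion at $\mathfrak m_{x_0}$ will rescue the step you flagged as the ``main obstacle,'' because the statement you are trying to prove there is simply false. (Indeed, for $E_i=\cO_Y$ one has $\End_Y(E_i)\supseteq R$, and $R$ is local only in degenerate cases.) Once semiperfectness is gone, you have no handle on $K_{\operatorname{perf}}(A)$, and the weaker assertion that the $[P_i]$ generate the quotient $\Knum(A)^\vee$ is exactly condition \three{} of Lemma~\ref{lem:perfectpair}, i.e.\ the thing to be proved; your argument does not touch it.

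The paper avoids this entirely by proving the dual condition \two{} of Lemma~\ref{lem:perfectpair}: that the vertex simples $[S_i]$ generate $\Knum(A)$. Via the tilting equivalence $\Knum(A)\cong\Knumc(Y)$, one notes that $\Knumc(Y)$ is generated by classes of simple sheaves; each such sheaf is either a skyscraper $\cO_y$ with $y\notin Z$ (all of which are numerically equivalent) or is supported on $Z$, so finitely many sheaves $\cF_j$ supported on $Z$ suffice. After twisting by a large ample power, $M_j:=\Hom(E,\cF_j)$ is an honest $A$-module, and hypotheses \one{} and \two{} are used to show that every nontrivial cycle of $Q$ acts as zero on $M_j$ (a cycle at vertex $i$ factors through some $E_k|_Z$, and simplicity plus non-isomorphism force the induced endomorphism of $E_i|_Z$ to vanish). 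Hence each $M_j$ is a nilpotent representation, i.e.\ an iterated extension of the $S_i$, which gives generation. If you want to keep your own framework, you would need to replace the semiperfectness claim by an argument of this kind on the module side.
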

\begin{proof}
 Note that $\Knumc(Y)$ is generated by simple sheaves. For any such sheaf $\cF$, the image $\tau(\supp \cF) \subset X$ is a point, so either $\cF = \cO_y$ for some $y \notin Z$, or $\supp \cF \subset Z$. In the former case, the numerical class $[\cO_y]$ does not depend on $y \in Y$; thus there is a finite set of sheaves $\cF_1, \dots, \cF_n$ supported on $Z$ whose classes generate $\Knumc(Y)$. After a further filtration, we may assume that each $\cF_j$ is scheme-theoretically supported on $Z$. After tensoring each by a sufficiently high power of an ample line bundle on $Y$, we may assume that $\Ext^i(E, \cF_j) = H^i(E^\vee \otimes \cF_j) = 0 $ for all $i>0$ and $1\leq j\leq n$; in other words, each object $M_j := \Hom(E,\cF_j) \in D(A)$ is actually an $A$-module, and the classes $[M_1],\dots, [M_n]$ generate $\Knum(A)$.

 Present $A=\kk Q/I$ as a quiver algebra. We claim that every nontrivial cycle in $Q$ acts as zero on each $M_j$. Indeed, assumption \one\ ensures that every nontrivial cycle starting at vertex $i$ corresponds to an endomorphism of $E_i$ that factors via at least one other summand $E_k$; assumption \two\ ensures that any such endomorphism acts as zero on $E_i|_Z$. It therefore acts as zero on $\Hom(E_i|_Z, \cF_j) = \Hom(E_i, \cF_j) = M_j$ as claimed. It follows that each $A$-module $M_j$ is pulled back from a representation of the finite-dimensional quotient
$A/\langle \text{nontrivial cycles} \rangle$ of $A$. In particular, $M_j$ is a nilpotent representation of $A$, so $M_j$ can be written as an extension of the vertex simple $A$-modules $S_1,\dots, S_m$. The classes $[S_1],\dots, [S_m]$ therefore generate $\Knum(A)$, and the result follows from Lemma~\ref{lem:perfectpair} and Theorem~\ref{thm:hilleVdB}.
\end{proof}

\begin{remark}
If the morphism $\tau$ from Proposition~\ref{prop:suffgeometric} is not birational, then $Y$ is forced to be projective, in which case the fact that any tilting bundle admits a splitting such that the equivalent assumptions of Lemma~\ref{lem:perfectpair} hold was well known \cite{Kingpreprint}. Otherwise, the typical situation where Proposition~\ref{prop:suffgeometric} applies is to resolutions of an isolated singularity. 
\end{remark}

The tilting equivalence identifies the space $\Stab(\Dfin(A))$ with the space of numerical stability conditions on $Y$ for compact support
in the sense of Definition \ref{def:stabilitycondition}. For any class $\vv\in \Knumc(Y)$ and for any $\theta\in \Theta_{\vv}$, $\lambda\in \Lambda$ and $\xi\in \RR$, we abuse notation and also write $\sigma_{\theta,\lambda,\xi}$ for the resulting stability condition on $\cdc(Y)$. 

 Assume now that the equivalent assumptions of Lemma~\ref{lem:perfectpair} hold. We now compute explicitly the image of $\sigma_{\theta,\lambda,\xi}$ under the linearisation map $\ell_{\cE}$ determined by any flat family $\cE$. For this, let $S$ be any separated scheme of finite type, and for any numerical Bridgeland stability condition $(Z, \cP)$ for compact support on $Y$, let $\cE \in D(Y \times S)$ be a family of semistable objects of class $\vv\in \Knumc(Y)$.

 \begin{lemma}
 \label{lem:ellsigmatheta}
  For any $\vv\in \Knumc(Y)$, assume that the flat family $\cE$ of semistable objects of class $\vv$ is proper over $S$. Then for any $\theta=\sum_{} \theta_i [P_i] \in \Theta_{\vv}$, $\lambda\in \Lambda$ and $\xi\in \RR$, we have that 
  \begin{equation}
\label{eqn:ellE}
\ell_\cE(\sigma_{\theta,\lambda,\xi}) = \frac{1}{(\xi^2+1)\lambda(\vv)} \cdot \sum_{1\leq i\leq k} \theta_i \cdot \det \left( \Psi_\cE(E_i^\vee) \right).
\end{equation}
 \end{lemma}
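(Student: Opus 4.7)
Since both sides of \eqref{eqn:ellE} lie in $N^1(S)=\Hom(N_1(S),\RR)$, it suffices to verify the identity after intersecting with the class of an arbitrary proper curve $C\subseteq S$. The plan is to reduce \eqref{eqn:ellC} to the claimed expression in four steps.

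First I would simplify $\ell_{\cE,\sigma}\cdot C$ purely at the level of central charges. Write $W:=\Phi_{\cE}(\cO_C)\in D_c(Y)$. Since $\theta(\vv)=0$, we have $Z_{\sigma}(\vv)=(\sqrt{-1}+\xi)\lambda(\vv)$, and rationalising the denominator $-Z_\sigma(\vv)=-(\sqrt{-1}+\xi)\lambda(\vv)$ by its complex conjugate, a direct computation yields
\[
\Im\left(\frac{Z_{\sigma}(W)}{-Z_\sigma(\vv)}\right)=\frac{\theta(W)}{(\xi^2+1)\lambda(\vv)}.
\]
So the problem is reduced to computing $\theta(W)$.

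Next I would transport $\theta$ across the tilting equivalence of Theorem \ref{thm:hilleVdB}. Under the equivalence $\Hom(E,-)\colon D_c(Y)\to \Dfin(A)$, the summand $E_i$ maps to the indecomposable projective $P_i=Ae_i$. As the equivalence preserves Euler pairings, this gives $\chi_A(P_i,\Hom(E,M))=\chi_Y(E_i,M)$ for any $M\in D_c(Y)$, and hence
\[
\theta(M)=\sum_{1\le i\le k}\theta_i\,\chi_A(P_i,\Hom(E,M))=\sum_{1\le i\le k}\theta_i\,\chi_Y(E_i,M).
\]

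Now I would invoke Lemma \ref{lem:EulerCharComparison}, applied to the closed immersion $i_C\colon C\hookrightarrow S$ with $F=E_i\in \Dperf(Y)$, to identify
\[
\chi_Y\bigl(E_i,\Phi_{\cE}(\cO_C)\bigr)=\chi_C\bigl(i_C^*\Psi_{\cE}(E_i^\vee)\bigr).
\]
Since $\Psi_{\cE}(E_i^\vee)\in \Dperf(S)$ by Proposition \ref{prop:functorpair}, its restriction to $C$ is a perfect complex, and Riemann--Roch gives
\[
\chi_C\bigl(\Psi_{\cE}(E_i^\vee)|_C\bigr)=\det\bigl(\Psi_{\cE}(E_i^\vee)\bigr)\cdot C+\rk\bigl(\Psi_{\cE}(E_i^\vee)\bigr)\cdot\chi(\cO_C).
\]

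Finally, summing over $i$ with weights $\theta_i$, the rank contribution is
\[
\chi(\cO_C)\sum_i\theta_i\,\rk\bigl(\Psi_{\cE}(E_i^\vee)\bigr)=\chi(\cO_C)\sum_i\theta_i\,\chi_Y(E_i,\vv)=\chi(\cO_C)\cdot\theta(\vv)=0,
\]
where the first equality uses the rank computation from the proof of Lemma \ref{lem:welldefined} (namely $\rk \Psi_{\cE}(P^\vee)=\chi_Y(P,\vv)$ for any $P\in \Dperf(Y)$), and the last uses $\theta\in\Theta_{\vv}$. Combining all the steps yields
\[
\ell_{\cE,\sigma_{\theta,\lambda,\xi}}\cdot C=\frac{1}{(\xi^2+1)\lambda(\vv)}\sum_{1\le i\le k}\theta_i\,\det\bigl(\Psi_{\cE}(E_i^\vee)\bigr)\cdot C,
\]
as required. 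There is no real obstacle in this argument; the only mildly delicate point is keeping track of the sides (left/right modules) when translating $[P_i]$ through the tilting equivalence, which is handled by the identification $P_i=\Hom(E,E_i)$ together with the standard Euler-form compatibility.
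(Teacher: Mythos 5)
Your proof is correct and follows the same route as the paper: reduce to the central-charge computation $\Im\left(Z_{\theta,\lambda,\xi}(-)/(-Z_{\theta,\lambda,\xi}(\vv))\right)=\theta(-)/\big((\xi^2+1)\lambda(\vv)\big)$, transport $\theta$ through the tilting equivalence via $[E_i]\mapsto[P_i]$, and then run the determinant argument from the proof of Lemma~\ref{lem:welldefined}. The paper simply cites that proof at the last step; you correctly supply the one adjustment needed there, namely that the individual ranks $\rk\Psi_{\cE}(E_i^\vee)=\chi_Y(E_i,\vv)=v_i$ need not vanish, but their $\theta$-weighted sum does because $\theta\in\Theta_{\vv}$, so the Riemann--Roch rank terms cancel.
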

 \begin{proof}
The central charge of the stability condition $f(\theta, \lambda, \xi) = \sigma_{\theta,\lambda,\xi}$ on $\cdc(Y)$ is identified with the central charge $Z_{\theta,\lambda,\xi}$ from Lemma~\ref{lem:stab-cond} under the isomorphism from $\Knumc(Y)^\vee_{\CC}$ to $\Knum(A)^\vee_{\CC}$ that sends $[E_i]$ to $[P_i]$ for $1\leq i\leq k$. A simple calculation shows that
\[
\Im\left( \frac{Z_{\theta,\lambda,\xi}(-)}{-Z_{\theta,\lambda,\xi}(\vv)} \right) = \frac{\theta(-)}{(\xi^2+1)\lambda(\vv)}.
\]
 Now, $\theta = \sum_{1\leq i\leq k} \theta_i[P_i] \in \Knum(A)^\vee_{\RR}$ is identified with  $\sum_{1\leq i\leq k} \theta_i [E_i]\in \Knumc(Y)^\vee_{\RR}$. The result follows from the proof of Lemma \ref{lem:welldefined}. 
\end{proof}

\subsection{Comparison of flat families} \label{subsec:flatfamilies}
 From now on we assume that $Y$ is a smooth scheme, projective over an affine scheme, that carries a tilting bundle $E$ such that $A:=\End(E^\vee)$ is a quiver algebra satisfying $\Knum(A)\cong \bigoplus_{i\in Q_0} \ZZ [S_i]$.
 
 Let $S$ be any separated scheme of finite type. Our next goal is to extend the functor from \eqref{eqn:Ddiagram} to obtain a natural correspondence between flat families of certain Bridgeland-semistable objects over $S$ on one hand, and flat families of King-semistable objects over $S$ on the other. 
 
 First, let $P\to A$ denote the minimal projective resolution of $A$ as an $(A,A)$-bimodule. Following Butler--King~\cite{ButlerKing}, the term of $P$ in degree $l\in \ZZ$ is  
  \[
  P^l=\bigoplus_{1\leq i,j\leq k} Ae_i\otimes V_{i,j}^l \otimes e_jA,
  \]
  where $e_1,\dots,e_k$ are the orthogonal idempotents corresponding to the summands $E_1^\vee,\dots, E_k^\vee$ of $E^\vee$, and $V_{i,j}^l = \mathrm{Tor}^l_A(S_i,S_j)$ is a finite dimensional $\kk$-vector space. Set $d_{i,j}^l:= \dim_\kk V_{i,j}^l$.
  
  Let $F$ be a locally-free sheaf on $S$ that is also a left $A$-module, and write $F=\bigoplus_i F_i$ for the idempotent decomposition as an $A$-module. The left $\End(E)$-module $E=\bigoplus_i E_i$ becomes a right $A$-module, and the derived tensor product $E\otimes_A F$ is represented by the complex $E\otimes_A P\otimes_A F$, whose term in degree $l\in \ZZ$ is the locally-free sheaf on $Y\times S$ given by
 \begin{align}
 E\otimes_A P^l\otimes_A F & = \bigoplus_{1\leq i,j\leq k} E_i\otimes_\kk V_{i,j}^l \otimes_\kk F_j & \nonumber \\
 & = \bigoplus_{1\leq i,j\leq k} \big(E_i\otimes_\kk F_j\big)^{\oplus d_{i,j}^l} & \nonumber \\
 & = \bigoplus_{1\leq i,j\leq k} \big(p^*E_i\otimes_{\cO_{Y\times S}} q^*F_j\big)^{\oplus d_{i,j}^l}, & \label{eqn:boxtimesterms}
 \end{align}
where $p\colon Y\times S\to Y$ and $q\colon Y\times S\to S$ are the first and second projections. The maps of the complex $E\otimes_A P\otimes_A F$ are morphisms of sheaves induced by the maps of the complex $P$. Note that $E\otimes_A F\in D(Y\times S)$ because $A$ has finite global dimension.  
  
 \begin{proposition}
 \label{prop:flatfamAmods}
 For any $\vv\in \Knumc(Y)$, let $\theta\in \Theta_{\vv}$, and consider $\lambda\in \Lambda, \xi\in \RR$. 
 \begin{enumerate}
 \item[\one] Let $\cF\in D(Y\times S)$ be a flat family of $\sigma_{\theta,\lambda,\xi}$-(semi)stable objects  of class $\vv$ with respect to $\cA$. If $\cF$ is proper over $S$, then $q_*(\cF\otimes p^*(E^\vee))\in D(S)$ is a flat family of $\theta$-(semi)stable $A$-modules of dimension vector $\vv$. 
 \item[\two] Conversely, let $F\in D(S)$ be a flat family of $\theta$-(semi)stable $A$-modules of dimension vector $\vv$ over $S$. Then $E\otimes_A F\in D(Y\times S)$ is a flat family of $\sigma_{\theta,\lambda,\xi}$-(semi)stable objects  of class $\vv$ with respect to $\cA$.
 \end{enumerate}
  Moreover, when the output from \two\ has proper support over $S$, these operations are mutually inverse (up to quasi-isomorphism).
 \end{proposition}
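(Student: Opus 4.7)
The proof will proceed by setting up a relative version of the tilting equivalence and then verifying fibrewise (semi)stability via Lemma~\ref{lem:thetavsbridgeland}. The plan is to prove \one\ and \two\ in parallel by constructing both functors explicitly and identifying their fibres, then to derive the mutually-inverse property from the absolute tilting equivalence.

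First I would handle the functor from \one. Write $\Phi_S\colon D(Y\times S) \to D(S)$ for the functor $\cF \mapsto q_*(\cF\otimes p^*E^\vee)$. Since $\cF$ is $S$-perfect and $p^*E^\vee$ is locally free, $\cF\otimes p^*E^\vee$ is again $S$-perfect with support proper over $S$, so Proposition~\ref{prop:functorpair} (applied via \cite[III, Prop.~4.8]{SGA6}) gives that $\Phi_S(\cF)$ is a perfect complex on $S$, carrying a left $A$-module structure inherited from the right $A$-action on $E^\vee$. Proper base change along the closed immersion $i_s\colon\spec\kk(s) \hookrightarrow S$ yields the fibrewise identification
\[
i_s^*\Phi_S(\cF) \;\cong\; R\Gamma\bigl(Y, \cF_s\otimes E^\vee\bigr) \;=\; \Hom_Y(E,\cF_s),
\]
which by Theorem~\ref{thm:hilleVdB} is a finite-dimensional $A$-module of dimension vector $\vv$ concentrated in degree zero (since $\cF_s\in\cA$). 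Semicontinuity then forces $\Phi_S(\cF)$ to be a locally free sheaf of $A$-modules on $S$, with the correct dimension vector. Lemma~\ref{lem:thetavsbridgeland} promotes the $\sigma_{\theta,\lambda,\xi}$-(semi)stability of $\cF_s$ to the $\theta$-(semi)stability of $\Hom_Y(E,\cF_s)$, so $\Phi_S(\cF)$ is a flat family of $\theta$-(semi)stable $A$-modules.

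For \two, use the $(A,A)$-bimodule resolution $P\to A$ to define $\Psi_S(F) := p^*E\otimes_A P\otimes_A q^*F$ explicitly as the bounded complex on $Y\times S$ whose degree-$l$ term is displayed in equation~\eqref{eqn:boxtimesterms}. Because each term is of the form $p^*E_i\otimes q^*F_j$ with $F_j$ locally free on $S$, the complex $\Psi_S(F)$ is a bounded complex of $q$-flat coherent sheaves, hence $S$-perfect. Restriction along $i_{Y\times\{s\}}$ commutes with the bimodule resolution since $F$ is locally free, and gives
\[
i_{Y\times\{s\}}^*\Psi_S(F) \;\cong\; E\otimes_A F_s,
\]
which is the image of the $A$-module $F_s$ under the tilting equivalence of Theorem~\ref{thm:hilleVdB}. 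In particular, $\Psi_S(F)_s\in\cA$ and has class $\vv$, and applying Lemma~\ref{lem:thetavsbridgeland} in reverse shows $\Psi_S(F)_s$ is $\sigma_{\theta,\lambda,\xi}$-(semi)stable. Thus $\Psi_S(F)$ is a flat family with respect to $\cA$.

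Finally, for the last assertion, the absolute tilting equivalence gives functorial quasi-iso\-mor\-phisms $E\otimes_A \Hom_Y(E,G)\xrightarrow{\sim} G$ and $\Hom_Y(E, E\otimes_A M) \xrightarrow{\sim} M$ for $G\in D(Y)$ and $M\in\Dfin(A)$. The unit and counit of the adjoint pair $(\Psi_S,\Phi_S)$ are obtained from these absolute versions by flat base change along $q$ and use of the bimodule resolution; they are quasi-isomorphisms after restriction to every closed fibre of $S$ by the previous step, and since both source and target of the unit (resp.\ counit) are $S$-perfect with the relevant properness hypothesis guaranteeing base change, a standard cohomology-and-base-change argument upgrades the fibrewise isomorphisms to global ones. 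The main obstacle is the bookkeeping of flatness and base change: confirming that $\Phi_S(\cF)$ is genuinely locally free (not merely perfect) and that $\Psi_S\circ\Phi_S$ and $\Phi_S\circ\Psi_S$ are canonically quasi-isomorphic to the identity in the family, where proper support of the output of \two\ is precisely the condition required to apply \one\ to it.
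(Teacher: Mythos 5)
Your proposal is correct and follows essentially the same route as the paper: base change along closed points to identify the fibres of $q_*(\cF\otimes p^*E^\vee)$ with $\Hom_Y(E,\cF_s)$, the observation that a perfect complex with degree-zero fibres is locally free, the explicit bimodule resolution $E\otimes_A P\otimes_A F$ for the converse direction, and Lemma~\ref{lem:thetavsbridgeland} to translate between the two notions of (semi)stability. The only cosmetic difference is in the mutually-inverse claim, which you phrase via the unit/counit of the adjunction and cohomology-and-base-change while the paper invokes the kernel identity $E\otimes_A E^\vee\cong\cO_\Delta$ and leaves the details to the reader; both amount to the same verification.
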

\begin{proof}
 Let $s\in S$ be a closed point. Write $i\colon \{s\}\hookrightarrow S$ and $i_s\colon Y\times \{s\}\to Y\times S$ for the closed immersions, and let $q_s\colon Y\times \{s\}\to \{s\}$ denote the second projection.  
 
 For \one, note first that $q_*((p^*E)^\vee\otimes \cF) = \Psi_{\cF}(E^\vee)$, and that the derived pullback to $s\in S$ is
\begin{equation}
\label{eqn:flatfamilyfibreone}
i^*q_*(p^*(E^\vee)\otimes \cF) = (q_s)_*(i_s)^*\big(p^*(E^\vee)\otimes \cF\big) = (q_s)_*(E^\vee\otimes \cF_s) = \Hom_Y(E,\cF_s).
\end{equation}
 We have that $\cF\in D(Y\times S)$ is $S$-perfect and proper over $S$. Since $E$ is locally-free, Proposition~\ref{prop:functorpair} implies that $\Psi_{\cF}(E^\vee)\in \Dperf(S)$. By \eqref{eqn:flatfamilyfibreone}, the derived restriction of $\Psi_{\cF}(E^\vee)$ to each closed point of $S$ is concentrated in degree zero, hence so is $\Psi_{\cF}(E^\vee)$. Thus, we've shown that $q_*(\cF\otimes p^*(E^\vee))$ is a locally-free sheaf on $S$ whose fibre over each closed point $s\in S$ is the $\theta$-semistable $A$-module $\Hom_Y(E,\cF_s)$ of dimension vector $\vv$. 

 To complete the proof of \one, it remains to show that the $A$-module structure on each fibre comes from a $\kk$-algebra homomorphism $A\to \End(q_*(\cF\otimes p^*(E^\vee)))$. For any open subset $U\subseteq S$, the space of sections of $q_*(\cF\otimes p^*(E^\vee))$ over $U$ is $\Gamma(Y\times U, p^*E^\vee\otimes \cF)$. Note that $A=\End(E^\vee)$ acts on the first factor whose restriction to any closed point $s\in U$ recovers the $A$-module structure on the fibre over $s$ by \eqref{eqn:flatfamilyfibreone}.
 
  For \two, the locally-free sheaf $F$ has a fibrewise left $A$-module structure, so $E\otimes_A F\in D(Y\times S)$ as above. Since $p^*E_i$ and $q^*F_j$ are $S$-perfect for $1\leq i,j\leq k$, we have that $E\otimes_A F$ is $S$-perfect by \eqref{eqn:boxtimesterms} and \cite[III, Proposition~4.5]{SGA6}. For a closed point $s\in S$, we have that 
  \[
  i_s^*(p^*E_i\otimes_{\cO_{Y\times S}} q^*F_j) = i_s^*p^*E_i\otimes_{\cO_{Y}} i_s^*q^*F_j = E_i\otimes_{\cO_{Y}} \big(\cO_Y\otimes_\kk(F_j)_s\big) = E_i\otimes_\kk (F_j)_s
  \]
  for all $1\leq i,j\leq k$, where $(F_j)_s$ denotes the fibre of $F_j$ over $s\in S$. The functors commute with direct sums, so just as in \eqref{eqn:boxtimesterms} above, for each $l\in \ZZ$, the $l$-th terms of $i_s^* (E\otimes_A F)$ and $E\otimes_A F_s$ coincide, where $F_s$ is the fibre of $F$ over $s\in S$. Since the maps in each complex derive from those of $P$, it follows that
  \[
  i_s^* (E\otimes_A F) = E\otimes_A F_s.
  \]
  Since each $F_s$ is a $\theta$-semistable $A$-module of dimension vector $\vv$, Lemma~\ref{lem:thetavsbridgeland} and Theorem~\ref {thm:hilleVdB} imply that $i_s^* (E\otimes_A F)$ is $\sigma_{\theta,\lambda,\xi}$-semistable of class $\vv$.  
 
 The proof that these operations are mutually inverse requires the fact that $E\otimes_A E^\vee\cong \cO_{\Delta}$ for the diagonal $\Delta\subset Y\times Y$ as in King~\cite{Kingpreprint}; we leave the details to the reader.
\end{proof}

\begin{remark}
\label{rem:stableflatfamilies}
The assumption in Proposition~\ref{prop:flatfamAmods} that $\cF$ is proper over $S$ is superfluous for a flat family of $\sigma_{\theta,\lambda,\xi}$-stable objects by Proposition~\ref{prop:simpleimpliesproper}.
\end{remark}

\begin{example}
 The flat family $E\otimes_A F$ of Bridgeland-stable objects was first studied by King~\cite{Kingpreprint} in the case when $S=Y$ and $F=E^\vee$; [ibid.] would write our $E\otimes_A F$ as $F\boxtimes_{A^{\operatorname{op}}} E$. 
\end{example}

\subsection{Comparison of line bundles} \label{subsec:comparison}
 For any class $\vv\in \Knum(A)$ and any integral parameter $\theta \in \Theta_{\vv}$, the GIT construction produces an ample line bundle $L(\theta)$ on the coarse moduli space
$\overline{\cM_{\cA}}(\vv, \theta)$. Given a family of $\theta$-semistable $A$-modules of dimension vector $\vv$ over a scheme $S$, the induced morphism $f \colon S \to \overline{\cM_{\cA}}(\vv, \theta)$ produces a semi-ample line bundle $f^* L(\theta)$. We now provide an alternative description of this line bundle using the linearisation map.

 Given a flat family $\cE\in D(Y\times S)$ of $\sigma_{\theta,\lambda,\xi}$-semistable objects of class $\vv$ with respect to $\cA$ that is proper over a separated scheme $S$ of finite type, we obtain by Proposition~\ref{prop:flatfamAmods} a flat family $q_*(\cE\otimes p^*E^\vee)$ of $\theta$-semistable $A$-modules of dimension vector $\vv$ over $S$, and hence a morphism
 \[
 f\colon S\to \overline{\cM_A}(\vv,\theta)
 \]
 to the coarse moduli space. Recall that the polarising ample line bundle $L(\theta)$ on $\overline{\cM_A}(\vv,\theta)= X/\!\!/_{\!\chi_\theta} G$ descends from the linearisation of $\cO_X$ by the character $\chi_\theta\in G^\vee$.

 \begin{theorem}
\label{thm:polarisingbundle}
Suppose that a flat family $\cE \in D(Y \times S)$ of $\sigma_{\theta,\lambda,\xi}$-semistable objects of class $\vv$ has proper support over a separated scheme $S$ of finite type. Then the numerical divisor class $\ell_{\cE}(\sigma_{\theta,\lambda,\xi})$ on $S$ and the polarising ample line bundle $L(\theta)$ on $\overline{\cM_A}(\vv,\theta)$ satisfy
 \[
 \ell_{\cE}(\sigma_{\theta,\lambda,\xi}) = c_{\lambda, \xi}\cdot  f^*L(\theta)\in N^1(S),
 \]
 where $f\colon S \to \overline{\cM_A}(\vv,\theta)$ is the classifying morphism and where $c_{\lambda, \xi}:= 1/(\xi^2+1)\lambda(\vv)\in \RR$.

\end{theorem}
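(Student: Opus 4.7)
The plan is to compute both sides of the claimed equality explicitly as sums of determinant line bundles weighted by the coordinates $\theta_i$ of $\theta = \sum_{i} \theta_i [P_i]$, and then match them term by term. Lemma~\ref{lem:ellsigmatheta} already does this work for the left-hand side: it gives
\[
\ell_{\cE}(\sigma_{\theta,\lambda,\xi}) \;=\; c_{\lambda,\xi}\cdot \sum_{1\leq i\leq k} \theta_i\cdot \det\!\left(\Psi_\cE(E_i^\vee)\right)
\quad\in N^1(S),
\]
with $c_{\lambda,\xi}=1/((\xi^2+1)\lambda(\vv))$. So the whole task reduces to showing that
\[
f^*L(\theta) \;=\; \sum_{1\leq i\leq k}\theta_i\cdot \det\!\left(\Psi_\cE(E_i^\vee)\right)\quad\in N^1(S).
\]

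First I would translate the family $\cE$ into the quiver side. By Proposition~\ref{prop:flatfamAmods}, the locally free sheaf $F := \Psi_\cE(E^\vee)=q_*(\cE\otimes p^*E^\vee)$ is a flat family over $S$ of $\theta$-semistable $A$-modules of dimension vector $\vv$, and because $E=\bigoplus_i E_i$ the idempotent decomposition of $F$ as an $A$-module is exactly $F=\bigoplus_i F_i$ with $F_i=\Psi_\cE(E_i^\vee)$ of rank $v_i$. The morphism $f\colon S\to \overline{\cM_A}(\vv,\theta)$ is, by definition, the morphism classifying this family.

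Next I would recall the GIT description of $L(\theta)$. In the King/Van den Bergh construction, $\overline{\cM_A}(\vv,\theta)=X/\!\!/_{\chi_\theta}G$ with $G=\bigl(\prod_i \mathrm{GL}(v_i)\bigr)/\kk^\times$, and $\chi_\theta\in G^\vee$ is the character $\chi_\theta(g_1,\dots,g_k)=\prod_i \det(g_i)^{\theta_i}$; the line bundle $L(\theta)$ descends from the $G$-linearisation of $\cO_X$ by $\chi_\theta$. Given a flat family $F=\bigoplus F_i$ of dimension vector $\vv$ over $S$, the universal-bundle/descent formalism identifies the pullback of $L(\theta)$ along the induced classifying morphism with the line bundle $\bigotimes_i \det(F_i)^{\otimes \theta_i}$ (up to a twist by a fixed character of the diagonal $\kk^\times$, which is trivial precisely because $\theta(\vv)=0$). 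Applying this to the family $F=\Psi_\cE(E^\vee)$ just constructed gives $f^*L(\theta)=\sum_i \theta_i \det(\Psi_\cE(E_i^\vee))$ in $N^1(S)$, which is what we want.

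The main obstacle is the verification of this last identification of $f^*L(\theta)$ with $\bigotimes_i \det(F_i)^{\theta_i}$ for an arbitrary flat family $F$ (not just the universal one on the GIT quotient, which only exists when $\vv$ is primitive and $\theta$ is generic). One way around this is to work locally on $S$, where one can lift $F$ to a $G$-equivariant morphism $S\to X^{ss}$; the pullback of the $G$-linearised sheaf $\cO_X$ twisted by $\chi_\theta$ is then manifestly the character-twisted trivial bundle, which descends to $\bigotimes_i \det(F_i)^{\theta_i}$ because the transition functions of $F_i$ are precisely the component-wise $\mathrm{GL}(v_i)$-cocycle. The condition $\theta(\vv)=0$ ensures that the diagonal $\kk^\times$ acts trivially on $\chi_\theta$, so the construction does descend through the quotient $G=\prod_i\mathrm{GL}(v_i)/\kk^\times$. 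This identification is essentially contained in \cite{King94}; once it is in hand, combining it with Lemma~\ref{lem:ellsigmatheta} completes the proof.
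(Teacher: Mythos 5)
Your proposal is correct and follows essentially the same route as the paper: reduce via Lemma~\ref{lem:ellsigmatheta} to the identity $f^*L(\theta)=\bigotimes_i\det\bigl(\Psi_\cE(E_i^\vee)\bigr)^{\otimes\theta_i}$, pass to the flat family of $\theta$-semistable $A$-modules via Proposition~\ref{prop:flatfamAmods}, and descend the $G$-equivariant identity $\pi^*L(\theta)=\bigotimes_i\det(V_i)^{\theta_i}$ on $X^{\mathrm{ss}}$ using $\theta(\vv)=0$ to absorb the scaling ambiguity. The only cosmetic difference is that you verify the descent by local trivialisations and cocycles, whereas the paper packages the same argument globally via the principal $G$-bundle of framings $\overline{S}\to S$ and a $G$-equivariant classifying map $\overline{f}\colon\overline{S}\to X^{\mathrm{ss}}$.
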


 \begin{proof}
 In light of Lemma~\ref{lem:ellsigmatheta}, it suffices to show that 
\begin{equation}
\label{eqn:LCnew}
 f^*L(\theta) = \bigotimes_{1\leq i\leq k} \det\big(\Psi_{\cE}(E_i^\vee)\big)^{\otimes \theta_i},
\end{equation}
 where $\theta = \sum_{1\leq i\leq k} \theta_i [P_i]$. The GIT construction of $\overline{\cM_A}(\vv,\theta)= X\git G$ shows that the $\theta$-semistable locus $X^{\textrm{ss}}$ in $X$ carries a universal family $V$ of framed $\theta$-semistable $A$-modules of dimension vector $\vv$, equipped with an idempotent decomposition $V = \bigoplus_{1\leq i\leq k} V_i$, such that 
 \begin{equation} \label{eqn:onXss}
 \pi^*L(\theta) = \bigotimes_{1\leq i\leq k} \det(V_i)^{\theta_i}
 \end{equation}
 holds $G$-equivariantly on $X^{\textrm{ss}}$,
 where $\pi\colon X^{\textrm{ss}} \to \overline{\cM_A}(\vv,\theta)$ is the quotient map.
 Proposition~\ref{prop:flatfamAmods} shows that $\Psi_{\cE}(E^\vee)$ is a flat family of $\theta$-semistable $A$-modules of dimension vector $\vv$ on $S$. Let $\pi_S \colon \overline{S} \to S$ be the principal $G$-bundle corresponding to a choice of framing (up to a common rescaling) of each summand $\Psi_{\cE}(E_i^\vee)$. By the universality of $V$, it comes with a 
$G$-equivariant map $\overline{f} \colon \overline{S}\to X^{\textrm{ss}}$ that induces the map $f$ between the corresponding quotients, and that satisfies $\overline{f}^* V_i \cong \pi_S^* \bigl(N \otimes \Psi_\cE(E_i^\vee)\bigr)$ for all $i$ and a fixed line bundle $N 
 \in \Pic(S)$. 
 Pulling back \eqref{eqn:onXss} along this map gives the following identity of $G$-equivariant line bundles on $\overline{S}$:
 \begin{align*}
 \pi_S^* f^* L(\theta) & = \overline{f}^* \pi^* L(\theta) = 
 \overline{f}^* \bigotimes_{1\leq i\leq k} \det(V_i)^{\theta_i} \\ &
 =  \pi_S^* \left( N^{\otimes \sum_{1\leq i\leq k}\theta_i \rk(V_i)} \otimes
\bigotimes_{1\leq i\leq k} \det\big(\Psi_{\cE}(E_i^\vee)\big)^{\otimes \theta_i}
\right) 
   =  \pi_S^* \bigotimes_{1\leq i\leq k} \det\big(\Psi_{\cE}(E_i^\vee)\big)^{\otimes \theta_i},
 \end{align*}
where the last identity used $\sum_{1\leq i\leq k}\theta_i \rk(V_i) = \sum_{1\leq i\leq k}\theta_i v_i=0$. This descends
to the identity \eqref{eqn:LCnew} on $S$, as required.
 \end{proof}

 When $\vv$ is primitive and $\theta\in \Theta_{\vv}$ is generic, let $C\subseteq \Theta_{\vv}$ denote the GIT chamber containing $\theta$, let $\cM:=\cM_A(\vv,\theta)$ denote the fine moduli space and write $T=\bigoplus_{1\leq i\leq k}T_i$ for its universal bundle.  Consider the map $L_C\colon \Theta_{\vv}\rightarrow \Pic(\cM)_{\RR}$ given by sending $\eta=\sum_{1\leq i\leq k} \eta_i[P_i]$ to
\[
L_C(\eta):=\bigotimes_{1\leq i\leq k} \det(T_i)^{\otimes \eta_i}.
\]
Note that $L_C(\theta)$ is the polarising ample line bundle on $\mathcal{M}$ determined by the GIT construction.

\begin{corollary}
 For $\vv$ primitive, for $\theta\in C\subset \Theta_{\vv}$ generic, and for any $\lambda\in \Lambda, \xi\in \RR$, let $\cE\in D(Y\times \cM)$ denote the universal family of $\sigma_{\theta, \lambda,\xi}$-stable objects of class $\vv$ with respect to $\cA$. For $c_{\lambda, \xi}:= 1/(\xi^2+1)\lambda(\vv)\in \RR$, the following diagram commutes,
\begin{equation*}
\label{eqn:ample}
\xymatrix{
\Theta_{\vv} \ar[rr]^-{f(-,\lambda,\xi)} \ar[d]_{L_C} & &\Stab(D_c(Y)) \ar[d]^{\ell_\mathcal{E}} \\
\Pic(\cM)_{\RR} \ar[rr]^{c_{\lambda,\xi}\cdot [-]} & & N^1(\cM),
}
\end{equation*}
 where the top horizontal arrow is determined by \eqref{thetatostab} and where the lower horizontal map sends a line bundle to $c_{\lambda, \xi}$ times its numerical divisor class.
 \end{corollary}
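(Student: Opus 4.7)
The plan is to deduce this corollary directly from Theorem~\ref{thm:polarisingbundle} (\texttt{polarisingbundle}) by specialising to $S=\cM$ with $\cE$ the universal family, and then identifying the classifying morphism as the identity and $L(\theta)$ as $L_C(\theta)$.

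First, since $\vv$ is primitive and $\theta\in \Theta_{\vv}$ is generic, the coarse moduli space $\overline{\cM_A}(\vv,\theta)$ is the fine moduli space $\cM=\cM_A(\vv,\theta)$, carrying the universal bundle $T=\bigoplus_i T_i$. The universal family $\cE \in D(Y\times \cM)$ of $\sigma_{\theta,\lambda,\xi}$-stable objects exists by Proposition~\ref{prop:flatfamAmods} (applied with $F=T$, so $\cE = E\otimes_A T$), and its support is proper over $\cM$ by Proposition~\ref{prop:simpleimpliesproper}, since stability forces each fibre $\cE_s$ to be simple. Hence Theorem~\ref{thm:polarisingbundle} applies to $\cE$ and yields
\[
\ell_{\cE}(\sigma_{\theta,\lambda,\xi}) \;=\; c_{\lambda,\xi}\cdot f^*L(\theta)
\]
in $N^1(\cM)$, where $f\colon \cM \to \overline{\cM_A}(\vv,\theta) = \cM$ is the classifying morphism associated to the family of $\theta$-stable $A$-modules $\Psi_\cE(E^\vee) = q_*(\cE\otimes p^*E^\vee)$.

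Next, I would use the second part of Proposition~\ref{prop:flatfamAmods} (that the two operations are mutually inverse) to identify $\Psi_\cE(E^\vee)$ with the universal bundle $T$; in particular $\Psi_\cE(E_i^\vee)\cong T_i$ for each $i\in Q_0$. By the universal property of $\cM$, the classifying morphism $f$ associated to $T$ is the identity. Substituting into the formula \eqref{eqn:LCnew} from the proof of Theorem~\ref{thm:polarisingbundle} gives
\[
f^*L(\theta) \;=\; \bigotimes_{1\leq i\leq k} \det\bigl(\Psi_\cE(E_i^\vee)\bigr)^{\otimes \theta_i} \;=\; \bigotimes_{1\leq i\leq k} \det(T_i)^{\otimes \theta_i} \;=\; L_C(\theta),
\]
exactly the definition of $L_C(\theta)$. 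Combining these identities yields
\[
(\ell_{\cE}\circ f(-,\lambda,\xi))(\theta) \;=\; c_{\lambda,\xi}\cdot [L_C(\theta)] \;=\; (c_{\lambda,\xi}\cdot[-]\circ L_C)(\theta),
\]
which is the asserted commutativity. No step is genuinely difficult here; the only subtlety to be careful about is that the line bundle $N$ from the framing ambiguity in the proof of Theorem~\ref{thm:polarisingbundle} plays no role once one fixes $\cE$ to be the universal family, since the computation of $f^*L(\theta)$ descends to an equality of line bundles on $\cM$ independent of that choice.
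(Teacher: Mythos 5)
There is a genuine gap: you verify the commutativity of the diagram only at the single point $\theta\in C$, whereas the corollary asserts that the diagram commutes as maps on all of $\Theta_{\vv}$ --- i.e.\ that $\ell_{\cE}(\sigma_{\eta,\lambda,\xi}) = c_{\lambda,\xi}\cdot[L_C(\eta)]$ for \emph{every} $\eta\in\Theta_{\vv}$, including non-integral $\eta$ and $\eta$ lying outside the chamber $C$. Your route through Theorem~\ref{thm:polarisingbundle} cannot reach those points: for $\eta$ outside $C$ the fibres $\cE_s$ need not be $\sigma_{\eta,\lambda,\xi}$-semistable, and for non-integral $\eta$ the GIT line bundle $L(\eta)$ is not even defined, so the hypotheses of that theorem fail. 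Note that $\ell_{\cE}$ itself is defined on the whole of $\Stab(D_c(Y))$ for the fixed family $\cE$, independently of any semistability of the fibres with respect to the stability condition being evaluated, so the left-hand side of the required identity does make sense for all $\eta$ --- it is only your method of computing it that breaks down.

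The paper's proof avoids Theorem~\ref{thm:polarisingbundle} entirely and instead uses Lemma~\ref{lem:ellsigmatheta}, which computes $\ell_\cE(\sigma_{\eta,\lambda,\xi}) = c_{\lambda,\xi}\cdot\sum_i \eta_i\cdot\det\bigl(\Psi_\cE(E_i^\vee)\bigr)$ for \emph{arbitrary} $\eta\in\Theta_{\vv}$, $\lambda\in\Lambda$, $\xi\in\RR$ (this lemma only requires $\cE$ to be a flat proper family of semistable objects for \emph{some} stability condition). Combining this with $T=\Psi_\cE(E^\vee)$ from Proposition~\ref{prop:flatfamAmods}, hence $T_i=\Psi_\cE(E_i^\vee)$ by indecomposability of each $T_i$, gives $L_C(\eta)=\bigotimes_i\det\bigl(\Psi_\cE(E_i^\vee)\bigr)^{\otimes\eta_i}$ and the identity for all $\eta$ at once. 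Your identifications $\cE=E\otimes_A T$, $\Psi_\cE(E_i^\vee)\cong T_i$ and $f=\mathrm{id}$ are correct and are part of the paper's argument; the missing ingredient is replacing the pointwise appeal to Theorem~\ref{thm:polarisingbundle} by the globally valid formula of Lemma~\ref{lem:ellsigmatheta}.
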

 \begin{proof}
 Proposition~\ref{prop:flatfamAmods} (see also Remark~\ref{rem:stableflatfamilies}) implies that
 \begin{equation}
 \label{eqn:TvscE}
 \cE=E\otimes_A T \quad\text{ and }\quad T=\Psi_{\cE}(E^\vee).
 \end{equation}
 Since integral functors commute with direct sum, we have that $T_i=\Psi_{\cE}(E_i^\vee)$ for all $1\leq i\leq k$ because each $T_i$ is indecomposable.  Thus, for any $\eta = \sum_{1\leq i\leq k} \eta_i[P_i]\in \Theta_{\vv}$, we have 
\begin{equation}
\label{eqn:LC}
L_C(\eta) = \bigotimes_{1\leq i\leq k} \det \left( \Psi_\cE(E_i^\vee) \right)^{\otimes \eta_i}.
\end{equation}
 The result follows by comparing this with the numerical divisor class $\ell_{\cE}(\sigma_{\eta,\lambda,\xi})$ from \eqref{eqn:ellE}.
 \end{proof}
 
 \begin{remark}
 When $\cM\cong Y$ and $\cE=\cO_{\Delta}$, equation~\eqref{eqn:TvscE} gives $T=E^\vee$; see Karmazyn~\cite{Karmazyn14} and references therein for many examples where this is known to hold. 
 \end{remark}

\begin{remark}
Given their identification in Theorem \ref{thm:polarisingbundle}, it is instructive to compare the
strengths of two constructions
of the nef divisor class. The GIT construction produces a semiample line
bundle, and consequently a projective coarse moduli space parameterising
S-equivalence classes of semistable objects. On the other hand, the construction via Theorem
\ref{thm:linearisation} works uniformly across the entire space $\Stab(D_c(Y))$ of stability conditions (not just
on the subset corresponding to one particular heart of a t-structure), and gives
a moduli-theoretic interpretation of the class of this line bundle. In particular, this can give better
control of the behaviour of this line bundle at wall-crossings. For example, if (outside a subset of
sufficiently high codimension) a wall-crossing just induces stable objects $E$ to be replaced by
$\Phi(E)$ for some auto-equivalence $\Phi$ of $D_c(Y)$, then the induced action of $\Phi$ on
$\Knumc(Y)$ completely controls the effect of the wall-crossing on the linearisation map.
\end{remark}


\newcommand{\etalchar}[1]{$^{#1}$}

\end{document}